\theoremstyle{thmstyleone}
\newtheorem{thm}{Theorem}
\newtheorem{prop}{Proposition}
\theoremstyle{thmstyletwo}
\newtheorem{rem}{Remark}
\theoremstyle{thmstylethree}
\newtheorem{defn}{Definition}
\newtheorem{lemma}[thm]{Lemma}
\newcommand{\Cas}[1]{\begin{cases*} #1 \end{cases*}}
\newcommand{\norm}[1]{\left\|#1\right\|}		
\newcommand{\abs}[1]{\lvert#1\rvert}			
\newcommand{\sett}[1]{\left\{#1\right\}}			
\newcommand{\inner}[1]{\left\langle#1\right\rangle} 
\newcommand{\rmnum}[1]{\uppercase\expandafter{\romannumeral#1}}  
\newcommand{\N}{{\mathbb N}}			
\renewcommand{\C}{{\mathbb C}}
\renewcommand{\Z}{{\mathbb Z}}
\DeclareMathOperator*{\esssup}{ess\,sup}
\newcommand{\supp}{{\mbox{supp}\ }}
\def\al{\alpha}
\def\om{\omega}
\def\si{\sigma}
\def\la{\lambda}
\def\norm#1{\left\|#1\right\|}
\def\normo#1{\|#1\|}
\def\brk#1{\left(#1\right)}			
\def\sch{\mathscr{S}}
\def\eps{\varepsilon}
\def\leq{\leqslant}
\def\geq{\geqslant}
\def\mpq{M_{p,q}}					
\def\mpqs{M_{p,q}^{s}}
\def\mpqsal{M_{p,q}^{s,\al}}
\def\wpq{W_{p,q}}
\def\wpqs{W_{p,q}^{s}}
\def\bpq{B_{p,q}}
\def\bpqs{B_{p,q}^{s}}
\def\fpqs{F_{p,q}^{s}}
\def\taupq{\tau(p,q)}
\def\sipq{\sigma(p,q)}
\def\bpoqs{B_{p_{0},q}^s}
\def\bpqos{B_{p,q_{0}}^s}
\def\tauonepq{\tau_{1}(p,q)}
\def\tauonepqo{\tau_{1}(p_{0},q)}
\def\tauonerq{\tau_{1}(r,q)}
\def\sionepq{\sigma_{1}(p,q)}
\def\sionepqo{\sigma_{1}(p_{0},q)}
\def\sionerq{\sigma_{1}(r,q)}
\def\real{{\mathbb{R}}}			
\def\rev#1{\frac{1}{#1}}
\def\FF{{\mathscr{F}^{-1}}}		
\def\F{{\mathscr{F}}}			
\begin{document}

\title[Sharp embedding of Wiener amalgam spaces]{Sharp embedding between Wiener amalgam and some classical spaces}


\author*[1]{\fnm{Yufeng} \sur{Lu}}\email{luyufeng@pku.edu.cn}



\affil*[1]{\orgdiv{School of Mathematical Sciences,}, \orgname{Peking university}, \orgaddress{\street{No.5 Yiheyuan Road}, \city{Beijing}, \postcode{100871}, \state{PR}, \country{China}}}



\abstract{We establish the sharp conditions for the embedding between Wiener amalgam spaces $\wpqs$ and some classical spaces, including Sobolev spaces $L^{s,r}$, local Hardy spaces $h_{r}$, Besov spaces $\bpqs$, which partially improve and extend the main result obtained by Guo et al. in \cite{Guo2017Characterization}. In addition, we give the full characterization of inclusion between Wiener amalgam spaces $\wpq$ and $\al$-modulation spaces $\mpqsal$. Especially, at the case of $\al=0$ with $\mpqsal = \mpqs$, we give the sharp conditions of the most general case of these embedding. When $0<p\leq 1$, we also establish the sharp embedding between Wiener amalgam spaces and Triebel spaces $F_{p,r}^{s}$. }

\keywords{embedding, Wiener amalgam spaces, Beosv spaces, Triebel-Lizorkin spaces, $\al$-modulation spaces}


\pacs[MSC Classification]{42B35, 46E30}

\maketitle

\section{Introduction}\label{sec1}
The amalgam spaces decouple the connection between local and global properties. They are first introduced by Norbert Wiener in \cite{Wiener1926representation,Wiener1932Tauberian,Wiener1959Fourier}. The first systematic study has been undertaken by Holland in \cite{Holland1975Harmonic}.
In the 1980s, H.G. Feichtinger in \cite{Feichtinger1983Banach,Feichtinger1990Generalized}, described a far-reaching generalization of the Wiener amalgam spaces, where he used $W(B,C)$ to denote the Wiener amalgam spaces with the local component in some Banach spaces $B$ and the global component in some Banach spaces $C$. Feichtinger studied the basic properties of these spaces, including inclusions, duality, complex interpolation, pointwise multiplications, and convolution.
The Wiener amalgam spaces $\wpqs$ we talk about here are a class of these spaces, which can be re-expressed as $W(\FF L^{q}_{s}, L^{p})$.

From another point of view, the Wiener amalgam spaces could be regarded as the Triebel-type space corresponding to the modulation space $\mpqs$. The modulation spaces $\mpq^{s}$ are one of the function spaces introduced by Feichtinger \cite{Feichtinger2003Modulation} in the 1980s using the short-time Fourier transform to measure the decay and the regularity of the function differently from the usual $L^{p}$ Sobolev spaces or Besov-Triebel spaces. By the frequency-uniform localization technique (\cite{Wang2007Frequency,Cunanan2015Wiener}), Wiener amalgam spaces and modulation spaces could be defined by the uniform decomposition of frequency spaces in contrast with the dyadic decomposition in the definition of Besov-Triebel spaces. Therefore, Wiener amalgam spaces have many properties different from the Besov-Triebel spaces, but similar to modulation spaces. For instance, the Fourier multiplier   $e^{i\abs{D}^{\al}} (0<\al \leq 1)$ is unbounded on any classical Lebesgue spaces $L^{p}$ or Besov spaces $\bpq$ with $p\neq 2$, but bounded on all Wiener spaces $\wpqs$ and modulation spaces $\mpqs$. One can see \cite{Miyachi1981some,Guo2020Sharp} for more details. Even so, Wiener amalgam spaces have some distinctive properties from modulation spaces.. For example, the Fourier multiplier $e^{i\abs{D}^{\al}} (1<\al \leq 2)$ is unbounded on any modulation spaces $\wpqs$ with $p\neq q$, but bounded on all modulation spaces $\mpqs$. One can refer \cite{Benyi2007Unimodular,Miyachi2009Estimates,Tomita2010Unimodular,Chen2012Asymptotic}. These Fourier multipliers play a significant role in nonlinear dispersive equations such as nonlinear Schr\"odinger and wave equations. As a result, it is natural to solve these nonlinear equations in Wiener amalgams and modulation spaces. There are numerous papers about these questions. One can see \cite{Wang2007global,Cordero2008Strichartz,Cordero2009Remarks,Benyi2009Local,Chaichenets2017existence,Chen2020dissipative,Oh2020Global,Bhimani2020Norm}.

One basic but important consideration is what these spaces are like embedded in each other, which can tell us how different they are. As for modulation spaces, Wang-Huang in \cite{Wang2007Frequency} gave the full characterization of the embedding between modulation spaces and Besov spaces. Actually, we can define the $\al$-modulation spaces (\cite{Borup2006Banach,Han2014$$}), which contain modulation spaces with $\al=0$ and Besov spaces with $\al=1$. Guo et al. in \cite{Guo2018Full} gave the sharp conditions between the $\al$-modulation spaces. Kobayashi and Sugimoto in \cite{Kobayashi2011inclusion} proved the sharp embedding between Sobolev spaces and modulation spaces. As for Wiener amalgam spaces, Cunanan et al. in \cite{Cunanan2015Inclusion} gave some necessary and sufficiency conditions for the inclusion relation between $\wpqs$ and $L^{p}$. Later their results were completely extended by Guo et al. in \cite{Guo2017Characterization}. Guo et al. characterized the embedding between $\wpqs$ and $X$, where $X \in \sett{\bpq,L^{p}, h_{p}}$ by a mild characterization of the embedding between Triebel and Wiener amalgam spaces.

In this paper, we consider the more general embeddings between $\wpqs$ and $X$, where $X\in \sett{B_{p_{0},q_{0}}, L^{r}, h_{r}, F_{p,q_{0}}, M_{p_{0},q_{0}},\mpqsal}$. Here $(p_{0},q_{0},r)$ could not be equal to $(p,q,p).$

For $a,b \in \real$, denote $a\vee b =\min \sett{a,b}, a\wedge b= \max \sett{a,b}$. For $0<p,q \leq \infty,d \in \N$, we denote \begin{align*}
	\tau_{1}(p,q) &:= d\left( 0 \vee(\frac{1}{q}-\frac{1}{2}) \vee (\frac{1}{q} +\rev{p} -1)\right);\\
	\sigma_{1}(p,q) &:= d\left( 0 \wedge(\frac{1}{q}-\frac{1}{2}) \wedge (\frac{1}{q} +\rev{p} -1)\right).
\end{align*}
As shown in Figure \ref{fig:tau1} and \ref{fig:sigma1}, we have 
\begin{align*}
	\tauonepq = \Cas{ 0, & if $(1/p,1/q) \in (1)$;\\
		d(1/q-1/2),& if $(1/p,1/q) \in (2);$\\
		d(1/p+1/q-1), & if $(1/p,1/q) \in (3).$}
\end{align*}

\begin{align*}
	\sionepq = \Cas{ 0, & if $(1/p,1/q) \in (1)$;\\
		d(1/q-1/2),& if $(1/p,1/q) \in (2);$\\
		d(1/p+1/q-1), & if $(1/p,1/q) \in (3).$}
\end{align*}

\begin{figure}
	\centering
	\includegraphics[width=0.4\linewidth]{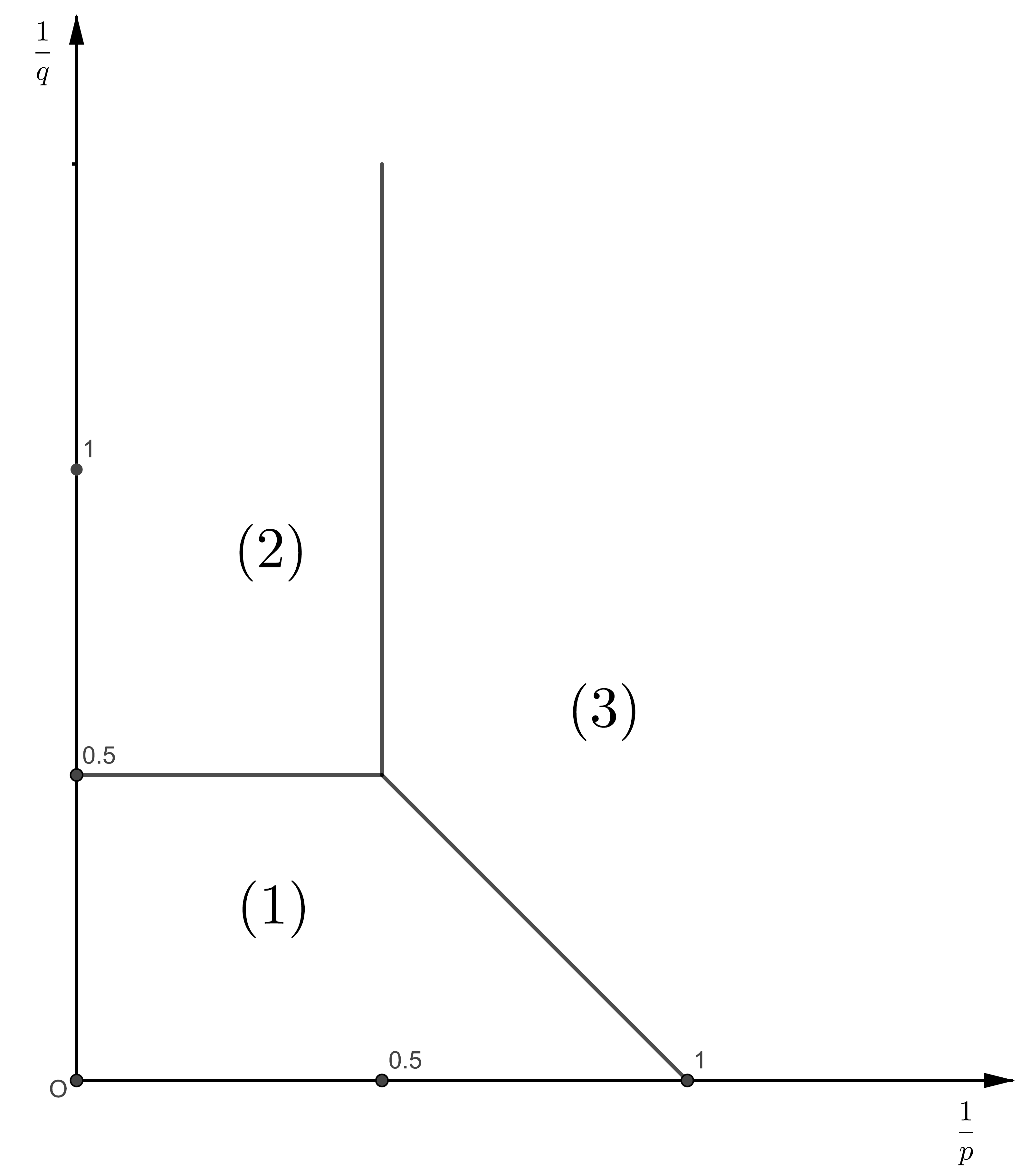}
	\caption{The index sets for $\tau_1(p,q)$}
	\label{fig:tau1}
\end{figure}

\begin{figure}
	\centering
	\includegraphics[width=0.4\linewidth]{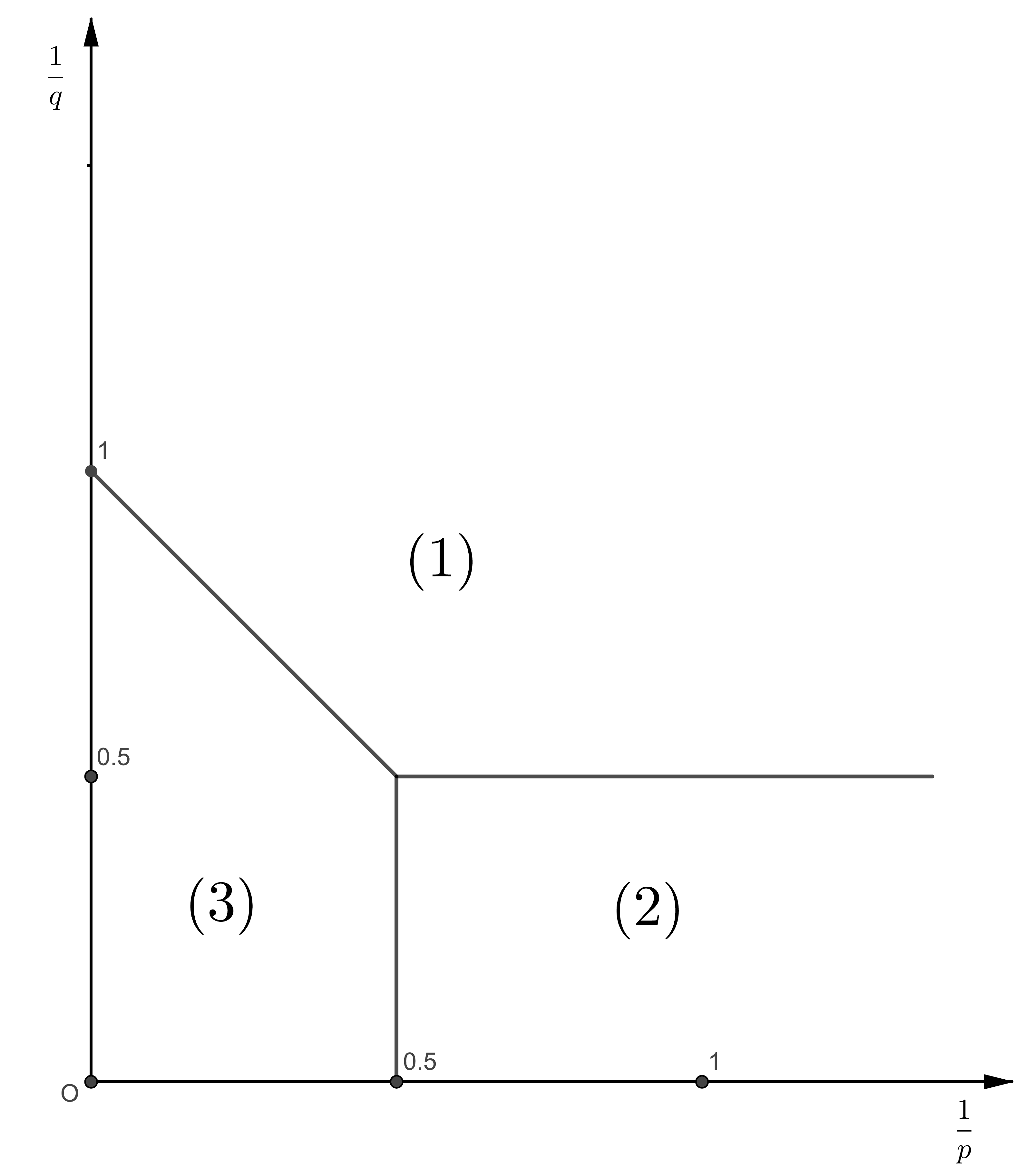}
	\caption{The index sets for $\sigma_1(p,q)$}
	\label{fig:sigma1}
\end{figure}

We first consider the sharp embedding between Sobolev spaces $L^{s,r}$ and Wiener amalgam spaces $ \wpq$, which is, in some sense, a generalization of the inclusion relation given in \cite{Guo2017Characterization}. Our main results are as follows.  
\begin{thm}\label{thm-Lr-to-wpq}
	Let $1\leq p,r \leq \infty, 0<q \leq \infty, s\in \real$. Then $L^{s,r} \hookrightarrow \wpq$ if and only if $r\leq p$ and one of the following conditions is satisfied.
	\begin{description}
		\item[(1)] $r>q,q<2, s>\tauonerq;$
		\item[(2)] $1<r,2\wedge r\leq q, s\geq \tauonerq;$
		\item[(3)] $r=1,q= \infty, s\geq \tauonerq;$
		\item[(4)] $r=1,q<\infty, s> \tauonerq$.
	\end{description}
\end{thm}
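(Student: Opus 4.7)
The plan is to split the proof into sufficiency and necessity, with the key reduction being to the diagonal case $p=r$ known from \cite{Guo2017Characterization}.

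\emph{Sufficiency.} Each frequency-localized piece $\sigma_{k}f$ has Fourier support in a ball of radius $O(1)$ around $k \in \Z^{d}$. Since $r \leq p$, Bernstein's inequality for bandlimited functions gives $\norm{\sigma_{k}f}_{L^{p}} \lesssim \norm{\sigma_{k}f}_{L^{r}}$ uniformly in $k$; taking the $\ell^{q}$-sum yields the continuous embedding $W_{r,q}^{0} \hookrightarrow \wpq$. It therefore suffices to establish $L^{s,r} \hookrightarrow W_{r,q}^{0}$ under the stated conditions. This diagonal case ($p=r$) is equivalent, via the Bessel-potential identity $\norm{J^{s}g}_{W_{r,q}^{0}} \sim \norm{g}_{W_{r,q}^{-s}}$, to the sharp embedding $L^{r} \hookrightarrow W_{r,q}^{-s}$ proved in \cite{Guo2017Characterization}; chaining with the Bernstein step completes the sufficiency.

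\emph{Necessity.} For $r \leq p$, I test on a dilated Schwartz bump $f_{R}(x) = \phi(x/R)$ with $\hat{\phi}$ supported near $0$. One computes $\norm{f_{R}}_{L^{s,r}} \sim R^{d/r}$ and $\norm{f_{R}}_{\wpq} \sim R^{d/p}$ (only $\sigma_{0}$ sees the Fourier mass), so letting $R \to \infty$ forces $r \leq p$. The conditions on $(s, q)$ come from modulated bumps $f_{N} = \sum_{k \in F_{N}} a_{k} e^{ik\cdot x} \phi(x)$ with $F_{N} \subset \fbrko{k \in \Z^{d} : |k| \sim N}$: for these, each $\sigma_{k} f_{N}$ is simultaneously space- and frequency-localized so $\norm{f_{N}}_{\wpq} \sim \norm{a}_{\ell^{q}(F_{N})}$ independent of $p$. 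Random signs $a_{k} = \eps_{k}$ with Khintchine give $\norm{f_{N}}_{L^{s,r}} \sim N^{s} |F_{N}|^{1/2}$, forcing $s \geq d(1/q - 1/2)$ via $|F_{N}| \sim N^{d}$ when $q < 2$; coherent signs $a_{k} \equiv 1$ yield a Dirichlet kernel with $\norm{f_{N}}_{L^{s,r}} \sim N^{s + d(1 - 1/r)}$ for $r > 1$, forcing $s \geq d(1/q + 1/r - 1)$. Combined with the trivial $s \geq 0$, this gives $s \geq \tauonerq$.

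\emph{Main obstacle.} The most delicate point is distinguishing strict ($s > \tauonerq$, cases (1) and (4)) from non-strict ($s \geq \tauonerq$, cases (2) and (3)) thresholds. This requires an amplification trick: superposing the single-scale counterexamples $f_{N}$ over a lacunary sequence $N = 2^{j}$ with carefully chosen weights $c_{j}$, so that $\ell^{q}$-divergence in $j$ on one side matched with $\ell^{2}$- or $\ell^{r}$-convergence on the other pinpoints the exact strict-vs-non-strict boundary. Special care is also needed at the endpoints $r = 1$ (where the Triebel-Lizorkin identification $L^{r} = F_{r,2}^{0}$ breaks down, requiring Hardy-space arguments) and $q = \infty$ (where the $\ell^{q}$-sum degenerates to a supremum).
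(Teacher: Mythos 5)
Your overall strategy coincides with the paper's: sufficiency is reduced to the diagonal case $L^{s,r}\hookrightarrow W_{r,q}$ from \cite{Guo2017Characterization} via the monotonicity $W_{r,q}\hookrightarrow\wpq$ for $r\leq p$ (exactly Lemma \ref{lemma-mpq-wpq}(1), proved by the Bernstein step you describe), and the non-strict necessary conditions come from the same test families the paper uses (a spread-out bump for $r\leq p$, a single modulated bump for $s\geq 0$, randomized and coherent sums of modulated bumps for the $1/q-1/2$ and $1/q+1/r-1$ branches). Up to the non-strict inequalities the proposal is sound.

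The gap is precisely in what you flag as the ``main obstacle'': the strict inequalities in cases (1) and (4), which are the entire content of sharpness, are left as an unexecuted sketch, and for case (4) the sketch does not work. Three concrete points. First, for the branch $s>d(1/q-1/2)$ when $q<2$, your lacunary amplification of the Khinchin test can indeed be completed; the paper does this in one stroke by randomizing over all of $\Z^{d}$ at once (Proposition \ref{prop-radomization}) and invoking the sharp weighted-$\ell^{q}$ embedding (Lemma \ref{lem-embed-of-lr}), where strictness is forced by $2>q$. Second, for $s>d(1/r+1/q-1)$ when $2\geq r>q$, a lacunary superposition of coherent Dirichlet blocks needs an upper bound on the $L^{s,r}$ norm of the sum by a square function; the paper sidesteps this by passing through $B^{s}_{r,r}\hookrightarrow L^{s,r}$ and the Besov discretization (Proposition \ref{prop-discret-besov} together with Lemma \ref{lem-embed-of-lr1}), which packages the lacunary construction and yields strictness automatically when $r>q$. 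Third, and most seriously, case (4) ($r=1$, $q<\infty$, where $\tauonerq=d/q$) is not covered by anything you propose: your coherent single-scale test gives $\norm{f_{N}}_{L^{1}}\sim(\log N)^{d}$ and hence only $s\geq d/q$, and a lacunary superposition controlled by the triangle inequality in $L^{1}$ (no square-function characterization is available at $r=1$) does not produce divergence at $s=d/q$; ``Hardy-space arguments'' do not help either, since $h_{1}\subsetneq L^{1}$ goes the wrong way. The paper needs a separate construction here: the approximate identity $f=t^{-d}\eta(t^{-1}\cdot)$ with $\widehat{f}\equiv 1$ on $t^{-1}[-1,1]^{d}$, for which $\norm{f}_{1}\approx 1$ while $\norm{f}_{\wpq^{-d/q}}\gtrsim\norm{\inner{k}^{-d/q}}_{\ell^{q}(\abs{k}\lesssim t^{-1})}\to\infty$ as $t\to 0^{+}$, a coherent sum over a full growing cube rather than an annulus. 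Without this, case (4) remains unproved.
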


\begin{rem}
	For visualization, one can see Figure \ref{fig:lr-wpq}. Note that the domains divided by the solid lines are corresponding to the conditions in Theorem \ref{thm-Lr-to-wpq}. The following figures of this paper also follow this rule. 
\end{rem}
\begin{figure}
	\centering
	\includegraphics[width=0.5\linewidth]{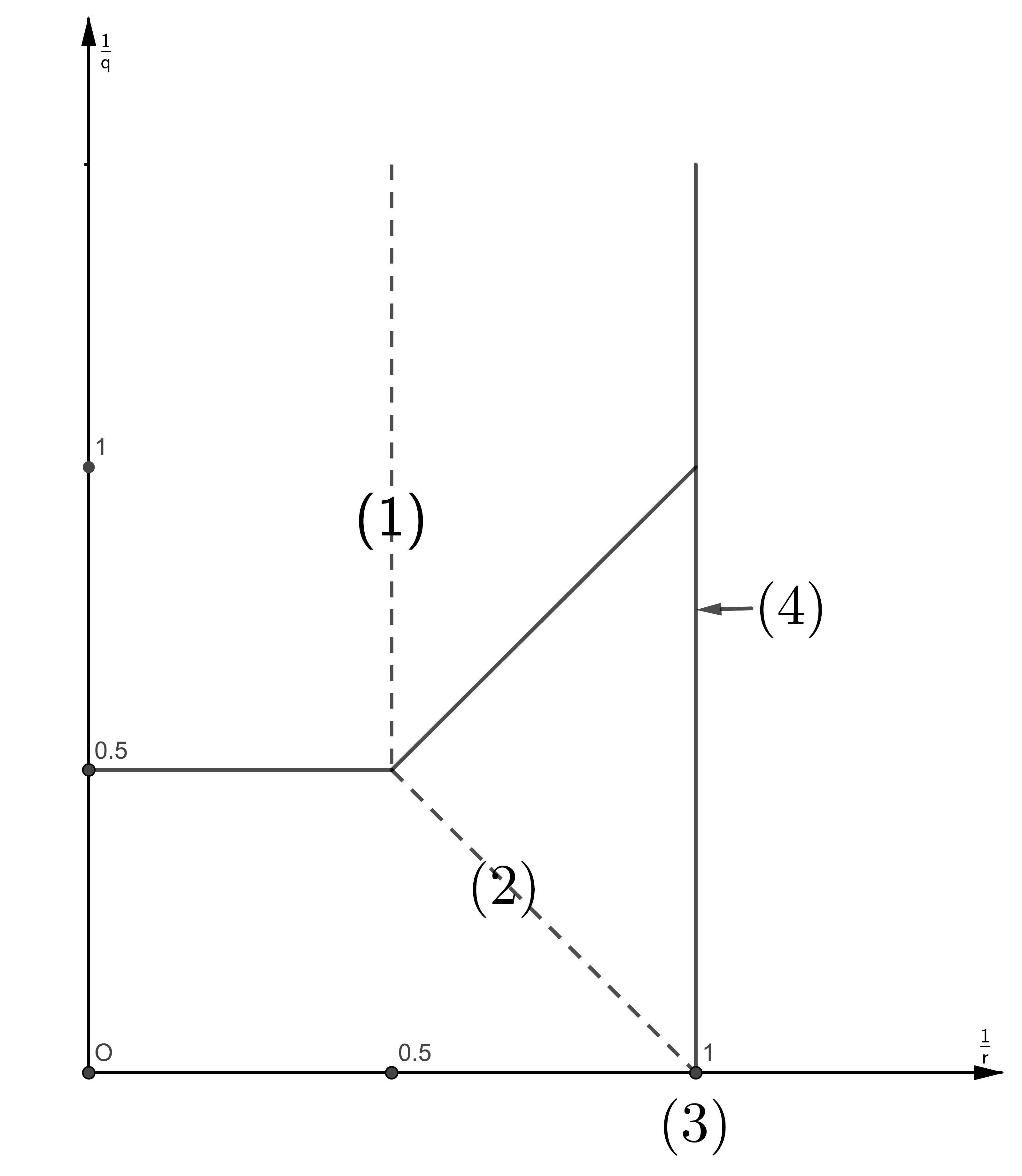}
	\caption{The index sets of Theorem \ref{thm-Lr-to-wpq}}
	\label{fig:lr-wpq}
\end{figure}
Similarly, we also have 
\begin{thm}
	\label{thm-wpq-to-Lr}
	Let $1\leq p,r\leq \infty, 0<q \leq \infty, s\in \real$. Then $\wpq \hookrightarrow L^{s,r}$ if and only if $p\leq r$ and one of the following conditions is satisfied.
	\begin{description}
		\item[(1)] $r<q, q>2, s< \sionerq;$
		\item[(2)] $r<\infty, q\leq r\vee 2, s\leq \sionerq;$
		\item[(3)] $r=\infty,0<q\leq 1, s\leq \sionerq;$
		\item[(4)] $r=\infty, 1<q \leq \infty, s< \sionerq.$
	\end{description}
\end{thm}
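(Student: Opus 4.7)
The plan is to reduce Theorem \ref{thm-wpq-to-Lr} to Theorem \ref{thm-Lr-to-wpq} by duality, supplemented with direct arguments in the quasi-Banach and endpoint regimes.

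The starting observation is the symmetry $\sigma_1(r,q) = -\tau_1(r',q')$, where $r',q'$ denote the Hölder conjugate indices. This is immediate from the definitions: replacing $1/r \mapsto 1 - 1/r$ and $1/q \mapsto 1 - 1/q$, then negating, turns the triple $\{0,\,1/q-1/2,\,1/q+1/r-1\}$ into $\{0,\,1/q'-1/2,\,1/q'+1/r'-1\}$ and swaps max with min. Consequently, after the substitution $(p,q,r,s) \mapsto (r',q',p',-s)$, each of the conditions (1)--(4) together with the inequality $p \leq r$ of Theorem \ref{thm-wpq-to-Lr} corresponds exactly to the matching condition together with $r \leq p$ of Theorem \ref{thm-Lr-to-wpq}.

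The core step is then to establish the duality equivalence
\begin{equation*}
  \wpq \hookrightarrow L^{s,r} \quad \Longleftrightarrow \quad L^{-s,r'} \hookrightarrow W_{p',q'},
\end{equation*}
valid throughout the Banach range $1 \leq p,q,r < \infty$. I would verify $(W_{p,q})^{*} = W_{p',q'}$ via the frequency-uniform decomposition, realizing $\wpq$ as a mixed-norm space on the grid of unit frequency cubes whose dual is computed componentwise, and combine this with the standard $(L^{s,r})^{*} = L^{-s,r'}$ and the density of $\sch$ in both spaces. Applying Theorem \ref{thm-Lr-to-wpq} to the right-hand embedding then yields Theorem \ref{thm-wpq-to-Lr} in this range.

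Two extensions remain. First, the endpoint cases $r=\infty$ or $q=\infty$ lie outside the reflexive range. For $r=\infty$ in conditions (3)--(4), I would argue directly: a Bernstein--Nikol'skii inequality bounds $\|\sigma_k(D)f\|_{L^{\infty}}$ by $\|\sigma_k(D)f\|_{L^{p}}$ uniformly in $k$, which reduces $\wpq \hookrightarrow L^{s,\infty}$ to an $\ell^{q}$-weighted estimate already encountered in the proof of Theorem \ref{thm-Lr-to-wpq}. For $q=\infty$ in condition (4), monotonicity $W_{p,q_1} \hookrightarrow W_{p,q_2}$ for $q_1 \leq q_2$ together with a limit $q_n \uparrow \infty$ closes the gap. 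Second, for the quasi-Banach case $0 < q < 1$ of condition (3), the elementary embedding $\ell^{q} \hookrightarrow \ell^{1}$ gives $\wpq \hookrightarrow W_{p,1}$, and since $\sigma_1(\infty,q) = 0$ throughout $0 < q \leq 1$, this reduces matters to the already handled $q=1$ case.

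The hardest technical point will be the sharpness direction, so that the strict versus non-strict inequalities on $s$ in conditions (1) and (4) versus (2) and (3) are captured exactly. My plan is to dualize the extremal functions used in the proof of Theorem \ref{thm-Lr-to-wpq}: frequency-cube sums of the form $\sum_{|k|\leq N} c_k \sigma_k$ to tune the threshold in $s$, highly concentrated bumps in a single unit frequency ball to force $p \leq r$, and classical logarithmically weighted constructions $\sum_{k} k^{-\alpha} \sigma_k$ at borderline $\alpha$ to separate the strict from the non-strict endpoints. Pairing each counterexample from Theorem \ref{thm-Lr-to-wpq} with its Plancherel dual, rather than reconstructing the combinatorics from scratch, should yield the required sharp obstructions to $\wpq \hookrightarrow L^{s,r}$.
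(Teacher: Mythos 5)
Your proposal is correct and follows essentially the same route as the paper: reduce to Theorem \ref{thm-Lr-to-wpq} by duality (using $(W_{p,q})^{*}=W_{p',q'}$ and the identity $\sigma_1(r,q)=-\tau_1(r',q')$), and handle the quasi-Banach case $0<q<1$ directly via $\ell^q\hookrightarrow\ell^1$ together with the low-frequency scaling and modulated-bump counterexamples. You are in fact somewhat more careful than the paper's one-line "by the dual argument," since you explicitly separate out the non-reflexive endpoints $r=\infty$ and $q=\infty$, which the paper leaves implicit.
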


As for the local Hardy space $h_{r}$, our main results are as follows. 

\begin{thm}
	\label{thm-hr-to-wpq}
	Let $0<r<\infty$, $0<p,q \leq \infty,s\in \real$. Then $h_{r} \hookrightarrow \wpq^{-s}$ if and only if $r\leq p$ and one of the following conditions is satisfied.
	\begin{description}
		\item[(1)] $r>q,2>q, s> \tauonerq;$
		\item[(2)] $r\leq q$ or $2\leq q, s\geq \tauonerq.$
	\end{description}
\end{thm}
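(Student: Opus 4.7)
The plan is to split the argument according to whether $r>1$ or $0<r\leq 1$. For $r>1$ we exploit $h_r=L^r$ (with equivalent norms): the Bessel potential $\jb{D}^{-s}\colon\wpq\to\wpq^{-s}$ is an isomorphism that sends $L^r$ onto the Sobolev space $L^{s,r}$, so $h_r\hookrightarrow\wpq^{-s}$ is equivalent to $L^{s,r}\hookrightarrow\wpq$ and the conclusion follows from Theorem~\ref{thm-Lr-to-wpq}. The only bookkeeping required is to verify that, under the restriction $r>1$, the assumption $2\wedge r\leq q$ of Theorem~\ref{thm-Lr-to-wpq}(2) is exactly the disjunction ``$r\leq q$ or $q\geq 2$'' of condition~(2) here, while Theorem~\ref{thm-Lr-to-wpq}(1) matches condition~(1) verbatim.

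For $0<r\leq 1$ this reduction fails, since $h_r\subsetneq L^r$ at $r=1$ and $L^r$ is not even a Banach space for $r<1$. Instead I would argue directly through the Littlewood--Paley square-function characterisation $\normo{f}_{h_r}\sim\normo{(\sum_{j\geq 0}|\Delta_j f|^2)^{1/2}}_r$. The projector $\Delta_j f$ has frequency support in a dyadic annulus of radius $\sim 2^j$ that meets only about $2^{jd}$ of the unit cubes used in the Wiener decomposition, so combining Plancherel with Nikol'skii--Bernstein estimates for frequency-localised functions yields, under $r\leq p$,
\begin{equation*}
\normo{\Delta_j f}_{\wpq}\leq C\,2^{j\tauonerq}\normo{\Delta_j f}_r,
\end{equation*}
and hence $\normo{\Delta_j f}_{\wpq^{-s}}\leq C\,2^{j(\tauonerq-s)}\normo{\Delta_j f}_r$. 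Summing in $j$ by way of a Fefferman--Stein vector-valued inequality closes the embedding under condition~(2), while condition~(1) provides a strictly positive smoothness reserve that absorbs the $\ell^q$-versus-$\ell^2$ mismatch opening up precisely when $r>q$ and $q<2$.

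Necessity is obtained by testing against three standard families. Scalings $\varphi(\lambda\cdot)$ for $\lambda\to 0$ and $\lambda\to\infty$ force $r\leq p$ after comparing $\normo{\varphi(\lambda\cdot)}_{h_r}$ with $\normo{\varphi(\lambda\cdot)}_{\wpq^{-s}}$. Frequency-modulated bumps $\varphi(x)e^{i2^j\xi_0\cdot x}$ force $s\geq\tauonerq$ by matching the Wiener amalgam weight against the trivial $h_r$ norm. To distinguish the strict case~(1) from the endpoint case~(2), lacunary superpositions $\sum_{j=1}^N\varphi(x)e^{i2^j\xi_j\cdot x}$ with well-spaced frequencies realise the $\ell^q(\ell^2)$-type discrepancy that cannot be closed at $s=\tauonerq$ exactly when $r>q$ and $q<2$.

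The main obstacle is the endpoint analysis in the quasi-Banach range $r\leq 1$: since duality and real interpolation are rigid there, the cube-by-cube Wiener amalgam estimates must be aggregated through the $\ell^2$-square function rather than through $\ell^r$ summation. Tracking the dichotomy between $2\wedge r\leq q$ and its negation throughout this aggregation is precisely what drives the split between conditions~(1) and~(2), and mirrors the splittings already visible in Theorems~\ref{thm-Lr-to-wpq} and~\ref{thm-wpq-to-Lr}.
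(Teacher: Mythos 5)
Your treatment of the case $r>1$ is sound and is essentially the paper's: there $h_r=L^r$, and the statement reduces to Theorem~\ref{thm-Lr-to-wpq} (the paper does the sufficiency slightly differently, via the diagonal result $h_r\hookrightarrow W_{r,q}^{-s}$ of Lemma~\ref{prop-hp-wpq} followed by $W_{r,q}^{-s}\hookrightarrow\wpq^{-s}$ for $r\leq p$, but your Bessel-potential reduction is equivalent). The gap is in the range $0<r\leq 1$, on both sides of the equivalence.

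For the sufficiency, your block estimate $\normo{\Delta_j f}_{\wpq}\lesssim 2^{j\tauonerq}\normo{\Delta_j f}_r$ is already not a routine ``Plancherel plus Bernstein'' computation when $p<q$ in the quasi-Banach range (Minkowski's inequality runs in the wrong direction for passing from $L^p(\ell^q)$ to $\ell^q(L^p)$), and even granting it, the summation over $j$ does not close at the endpoint $s=\tauonerq$, which is exactly the content of condition (2). With zero geometric gain you must aggregate $\{\normo{\Delta_j f}_r\}_j$ in a sequence norm no stronger than what $h_r=F_{r,2}\hookrightarrow B_{r,2}$ provides, namely $\ell^2$, while the frequency-disjoint decomposition of $\wpq^{-s}$ costs an $\ell^q$-type aggregation; for $r\leq q<2$ (a case covered by condition (2)) these do not match, and no Fefferman--Stein inequality bridges them. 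This endpoint is precisely the hard part of the diagonal theorem of Guo et al., which the paper simply cites as Lemma~\ref{prop-hp-wpq} and then upgrades to general $p\geq r$ by monotonicity in $p$ (Lemma~\ref{lemma-mpq-wpq}); your sketch in effect claims to reprove that theorem in one sentence.

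For the necessity when $0<r\leq 1$, the binding constraint is $s\geq d(1/r+1/q-1)$, whose $1/r$-dependence your test functions do not produce. In particular, the $h_r$ norm of a modulated bump is not ``trivial'': for $r<1$ one has $\normo{e^{i\lambda\cdot}\varphi}_{h_r}\approx\jb{\lambda}^{d(1/r-1)}$, and it is exactly this growth that generates the threshold. As described, your modulated bumps and lacunary superpositions only yield $s\geq 0$ and $s\geq d(1/q-1/2)$. The paper obtains the correct lower bound by passing through $B_{r,r}\hookrightarrow F_{r,2}=h_r$ and the Besov discretization of Proposition~\ref{prop-discret-besov}, where the factor $\normo{\FF\psi_j}_r\approx 2^{jd(1-1/r)}$ carries the $1/r$; you would need this (or the modulation-growth computation above) to recover condition (1)--(2) in the quasi-Banach range.
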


\begin{thm}
	\label{thm-wpq-to-hr}
	Let $0<r<\infty, 0<p,q \leq \infty,s\in \real$. Then $\wpq^{-s} \hookrightarrow h_{r}$ if and only if $p\leq r$ and one of the following conditions is satisfied.
	\begin{description}
		\item[(1)] $r<q,2<q, s< \sionerq;$
		\item[(2)] $r\geq q$ or $2\geq q, s\leq \sionerq.$
	\end{description}
\end{thm}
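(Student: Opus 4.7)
The plan is to split according to the size of $r$ and exploit the identifications of the local Hardy space $h_r$ with more familiar spaces. For $1 < r < \infty$ we have $h_r = L^r$ with equivalent norms, so the embedding $\wpq^{-s} \hookrightarrow h_r$ is literally the embedding $\wpq^{-s} \hookrightarrow L^r$ and the whole statement reduces to Theorem \ref{thm-wpq-to-Lr}. I would verify that the conditions match in this range: condition (2) of the present theorem, namely $r \geq q$ or $2 \geq q$, is identical to the condition $q \leq r \vee 2$ appearing as condition (2) of Theorem \ref{thm-wpq-to-Lr}, and condition (1) is word-for-word the same. So this regime requires only a translation of conditions.

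For the remaining regime $0 < r \leq 1$ (which, combined with the necessary condition $p \leq r$, also forces $0 < p \leq 1$), I would invoke the identification $h_r = F_{r,2}^0$ and apply the sharp embedding between Wiener amalgam and Triebel--Lizorkin spaces that the paper establishes in the $0 < p \leq 1$ range. The sufficiency side of that Triebel embedding is typically proved by reorganizing the uniform frequency decomposition $\Box_k$ into dyadic shells (each shell of radius $2^j$ contains $\approx 2^{jd}$ uniform boxes), applying Plancherel--Polya/Bernstein estimates on each $\Box_k$, and then comparing the $\ell^q$ norm over uniform boxes against the $\ell^2$ square function of the Littlewood--Paley pieces via Hölder or Minkowski; the index $\sigma_1(r,q)$ drops out precisely from the optimization of this comparison over the three regions in Figure \ref{fig:sigma1}.

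For the necessity direction in both regimes, the standard two-family test function strategy applies. A superposition $f_N = \sum_{j=1}^N \varphi(\cdot - x_j)$ of well-separated translates of a frequency-localized bump yields $\|f_N\|_{\wpq^{-s}} \sim N^{1/p}$ but $\|f_N\|_{h_r} \sim N^{1/r}$ after letting $N\to\infty$, forcing $p \leq r$. Dilated/translated frequency bumps $\hat f_k$ supported on a single $\Box_k$ with $|k|$ large then pin down $s \leq \sigma_1(r,q)$, while Knapp-type examples with $\hat f_k$ supported on a thin strip of $\Box_k$ distinguish the strict inequality $s < \sigma_1(r,q)$ from the non-strict one in the boundary cases. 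The third branch of $\sigma_1$, namely $d(1/r + 1/q - 1)$, should be captured by a sum of translates of such a Knapp example, matching the construction used for the embedding into $L^r$.

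The main obstacle will be handling the borderline $s = \sigma_1(r,q)$ when $r < q$ and $q > 2$, where the theorem demands strict inequality. In this corner one is essentially detecting the failure of the embedding $\ell^q \hookrightarrow \ell^2$ over the uniform boxes inside a single dyadic shell, and a carefully designed extremizer — a sum over $\approx 2^{jd}$ uniform boxes of a single dyadic shell with equal amplitudes — must be shown to blow up at the boundary in either $\wpq^{-s}$ or $h_r$ (depending on the direction being tested). Verifying that this extremizer gives the optimal counterexample, and that all four subregions in Figure \ref{fig:sigma1} are simultaneously covered without gaps, is the delicate part of the argument; the rest is bookkeeping along the lines of Theorems \ref{thm-Lr-to-wpq}--\ref{thm-wpq-to-Lr}.
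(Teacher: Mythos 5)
Your skeleton is the same as the paper's: for $1<r<\infty$ reduce to Theorem \ref{thm-wpq-to-Lr} via $h_{r}=L^{r}$, and for $0<r\leq 1$ work through $h_{r}=F_{r,2}^{0}$. The paper then finishes the second regime by citing the known $p=r$ characterization of Guo et al.\ (Lemma \ref{prop-hp-wpq}) together with the trivial monotonicity $\wpq^{-s}\hookrightarrow W_{r,q}^{-s}$ for $p\leq r$. Two warnings about your sufficiency route: quoting the Wiener--Triebel embedding of Theorem \ref{thm-wiener-to-triebel} would be both circular (its proof invokes the present theorem) and insufficient (it is only stated under $q\leq 2$, while the hardest branch here is $q>2$); and if you instead re-derive the $p=r$ case by the dyadic-shell/square-function comparison you sketch, note that for $r\leq 1$ the $L^{r}$ quasi-norm sits outside the $\ell^{2}$ sum, so H\"older and Minkowski alone do not close the estimate --- one needs Peetre maximal functions and vector-valued maximal inequalities, which is precisely the content of the cited lemma.

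The genuine gap is in the necessity of $s<\sionerq$ when $q>2$ and $r<q$, where $\sionerq=d(1/q-1/2)$. None of your test functions detects this exponent: a bump supported in a single $\Box_{k}$ only yields $s\leq 0$ (with one active box the index $q$ never enters), a Knapp strip inside one box likewise cannot see $q$, and the equal-amplitude sum over the $\approx 2^{jd}$ boxes of a dyadic shell is a Dirichlet-kernel-type function whose $L^{r}$ norm does not behave like $M^{1/2}$. The working mechanism is randomization: attach independent Rademacher signs $\omega_{k}$ to the $M$ boxes, use Khinchin's inequality to get $\bigl(\mathbb{E}\,\|f^{\omega}\|_{L^{r}}^{r}\bigr)^{1/r}\approx M^{1/2}$ while $\|f^{\omega}\|_{\wpq^{-s}}\approx \langle k\rangle^{-s}M^{1/q}$; composing the hypothesis with $h_{r}\hookrightarrow L^{r}$ and applying Proposition \ref{prop-radomization}(2) (valid for all $0<r<\infty$) reduces the necessity to the sequence embedding $\ell_{q}^{-s,0}\hookrightarrow\ell_{2}^{0,0}$, and Lemma \ref{lem-embed-of-lr} then delivers the strict inequality $s<d(1/q-1/2)$ in one stroke, with no separate borderline extremizer required. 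Without Khinchin your argument cannot close this case, and your proposed boundary analysis is left as an unverified assertion.
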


As for the Besov spaces $\bpqs$, our main results are as follows. 

\begin{thm}
	\label{thm-bp0q-to-wpq}
	Let $0<p,p_{0},q \leq \infty, s\in \real$. Then $ \bpoqs \hookrightarrow \wpq$ if and only if $p_{0} \leq p$ and one of the following conditions is satisfied.
	\begin{description}
		\item[(1)] $p\geq q, s\geq \tauonepqo;$
		\item[(2)] $p<q, s>\tauonepqo.$
	\end{description}
	For visualization, one can see Figure \ref{fig:bpoq-wpq}.
\end{thm}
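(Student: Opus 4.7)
The proof will proceed by the standard two-direction scheme, with sufficiency handled via a dyadic-versus-uniform frequency matching and necessity by regime-specific test functions.

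\textbf{Sufficiency.} I decompose $f \in \bpoqs$ dyadically as $f = \sum_{k\ge 0}\Delta_k f$ and use that the frequency-uniform operator $\Box_n$ interacts with dyadic pieces only on the matching scale: only the $O(1)$ indices $k$ with $2^k \sim \jb{n}$ make $\Box_n \Delta_k f$ nonzero. This regrouping yields
\[
\|f\|_{\wpq}^q \lesssim \sum_{k\ge 0}\sum_{n:\,2^k\sim \jb{n}}\|\Box_n \Delta_k f\|_{L^p}^q.
\]
The core inner estimate I will establish is a sharp Bernstein-type inequality
\[
\Bigl(\sum_{n:\,2^k\sim \jb{n}}\|\Box_n g\|_{L^p}^q\Bigr)^{1/q} \lesssim 2^{k\tauonepqo}\|g\|_{L^{p_0}}
\]
for $g$ bandlimited in $B(0,2^{k+1})$ when $p_0\le p$; I plan to prove it by combining the classical Bernstein step $L^{p_0}\to L^p$ with a Plancherel--P\'olya / $\ell^p\hookrightarrow \ell^q$ passage for the uniform decomposition, the three terms in the definition of $\tauonepqo$ arising as the three admissible intermediate routes (direct $L^p$, through $L^2$, and through the $\ell^p$-to-$\ell^q$ loss). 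Summing the resulting bound in $k$ against the Besov norm closes the proof, with $s \ge \tauonepqo$ sufficing when $p \ge q$, and the strict bound $s > \tauonepqo$ required when $p<q$ so that the resulting geometric-type sum in $k$ converges.

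\textbf{Necessity.} For $p_0 \le p$, I test with $f_N = \sum_{\ell=1}^N g(\cdot-\ell L e_1)$ where $\widehat g$ lies in a single unit frequency cube away from the origin and $L$ is large; then $\|f_N\|_{\bpoqs}\sim N^{1/p_0}$ (single dyadic level) and $\|f_N\|_{\wpq}\sim N^{1/p}$ (single uniform cube), forcing $1/p \le 1/p_0$. For sharpness of $s$ I build one test function per regime of $\tauonepqo$: (1) a single bump with spectrum in the dyadic shell $|\xi|\sim 2^k$ for the regime $\tau_1=0$; (2) a Rademacher-signed sum $\sum_{n:|n|\sim 2^k}\varepsilon_n\phi_n$ of unit-cube bumps covering that shell, analyzed via Khintchine's inequality, for $\tau_1=d(1/q-1/2)$; (3) a deterministic coherent sum $\sum_{n:|n|\sim 2^k}\phi_n$, spatially concentrated by constructive interference at a single point, for $\tau_1=d(1/q+1/p_0-1)$. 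Each construction produces a gap of exactly $2^{k\tau_1}$ between the $\bpoqs$ and $\wpq$ norms, forcing $s\ge \tauonepqo$. For the strict inequality in the case $p<q$, I sum such single-shell witnesses over infinitely many scales with coefficients chosen to produce a borderline logarithmic divergence at $s=\tauonepqo$.

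\textbf{Main obstacle.} The principal difficulty is establishing the sharp Bernstein inequality in all three regimes of $\tauonepqo$ simultaneously, particularly in regime (3), where the naive route through $L^2$ overshoots and one must exploit the spatial sparsity of band-limited functions directly at the $L^{p_0}$ level. The multi-scale construction needed to witness the strict case $p<q,\,s=\tauonepqo$ is the other delicate point, since a single-shell example cannot detect the logarithmic borderline.
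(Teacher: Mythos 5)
Your sufficiency argument breaks at both of its steps, and the second failure is decisive. First, the regrouping
\[
\norm{f}_{\wpq}^{q}\lesssim \sum_{k\ge 0}\;\sum_{n:\,2^{k}\sim\jb{n}}\norm{\Box_{n}\triangle_{k}f}_{L^{p}}^{q}
\]
interchanges the outer $L^{p}$ norm with the inner $\ell^{q}$ sum, which by Minkowski is only available when $q\le p$; in your case (2), $p<q$, this step is already unavailable. Second, and more seriously, the ``sharp Bernstein-type inequality'' $\left(\sum_{n:2^{k}\sim\jb{n}}\norm{\Box_{n}g}_{L^{p}}^{q}\right)^{1/q}\lesssim 2^{k\tauonepqo}\norm{g}_{L^{p_{0}}}$ is \emph{false} whenever $p_{0}>2$ and $q<2$, i.e.\ precisely in the regime where $\tauonepqo=d(1/q-1/2)$. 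The left-hand side is an $M_{p,q}$-type quantity, and its sharp single-scale constant against $\norm{g}_{p_{0}}$ is $2^{k\tau(p_{0},q)}$ with $\tau(p_{0},q)=d(1/q-1/p_{0})>d(1/q-1/2)$ there: take $g=\sum_{n}e^{in x}\phi(x-x_{n})$ over $|n|\lesssim 2^{k}$ with widely separated centers $x_{n}$ and $\supp\widehat{\phi}$ small; then each $\norm{\Box_{n}g}_{L^{p}}\approx 1$, so the left side is $\approx 2^{kd/q}$, while $\norm{g}_{p_{0}}\approx 2^{kd/p_{0}}$ by disjointness of supports, giving the ratio $2^{kd(1/q-1/p_{0})}\gg 2^{kd(1/q-1/2)}$. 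The improvement from $1/p_{0}$ to $1/2$ is exactly what the Wiener ordering $L^{p}(\ell^{q})$ buys through orthogonality and square-function arguments carried out \emph{with the $L^{p}$ norm kept outside}; once you pull the $\ell^{q}$ sum out, that gain is irretrievably lost, so your core lemma cannot be proved in regime (2) of $\tauonepqo$.

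The paper avoids the issue by never working at a single scale: it splits into $q\le p_{0}\le p$, $p_{0}<q\le p$ and $p_{0}\le p<q$, and composes the known diagonal embedding $\bpoqs\hookrightarrow W_{p_{0},q}$ (Lemma \ref{prop-bpq-to-wpq}) or $\bpoqs\hookrightarrow M_{p_{0},q}\hookrightarrow M_{q,q}=W_{q,q}$ (Lemma \ref{prop-bpq-mpq}) with the monotonicity $W_{p_{0},q}\hookrightarrow\wpq$ for $p_{0}\le p$ (resp.\ $W_{q,q}\hookrightarrow\wpq$ for $q\le p$). Your necessity sketch is in the right spirit: the translation test for $p_{0}\le p$, the Rademacher and coherent single-shell witnesses, and a multi-scale sum for the strict inequality when $p<q$ all correspond to what the paper extracts from Theorem \ref{thm-hr-to-wpq} (via $h_{p_{0}}^{s+\eps}\hookrightarrow\bpoqs$), Proposition \ref{prop-low-frequency-scaling} and Proposition \ref{prop-discret-besov}. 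But necessity cannot rescue the sufficiency; to repair it you must either keep the $L^{p}$ norm outside and run a Littlewood--Paley/Khinchin argument inside it, or reduce to the known diagonal cases as the paper does.
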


\begin{figure}
	\centering
	\includegraphics[width=0.5\linewidth]{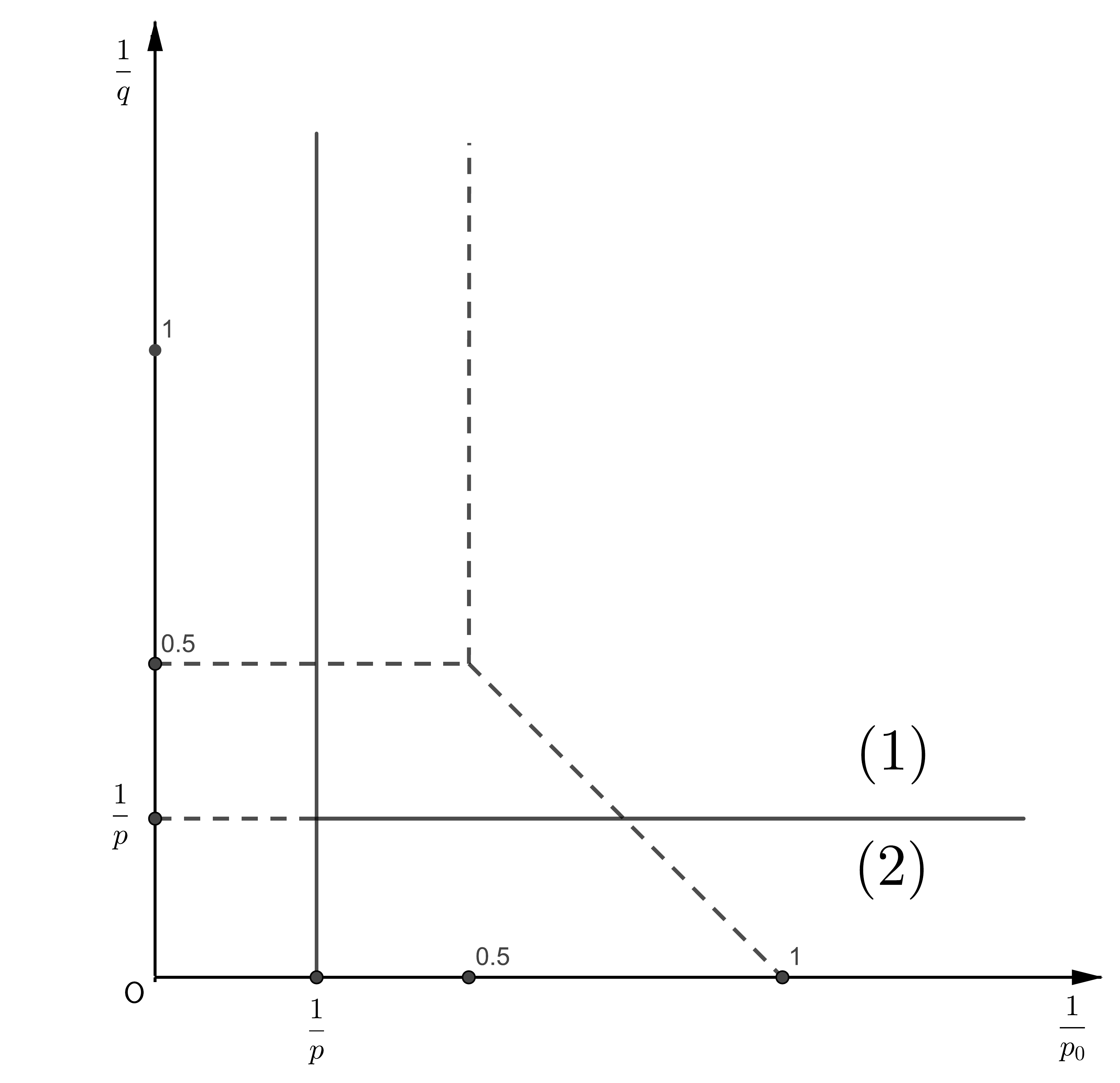}
	\caption{The index sets of Theorem \ref{thm-bp0q-to-wpq}}
	\label{fig:bpoq-wpq}
\end{figure}

\begin{thm}
	\label{thm-wpq-to-bp0q}
	Let $0<p,q,p_{0} \leq \infty, s\in \real$. Then $\wpq \hookrightarrow \bpoqs$ if and only if $p\leq p_{0}$ and one of the following conditions is satisfied.
	\begin{description}
		\item[(1)] $p\leq q, s\leq \sionepqo;$
		\item[(2)] $p>q, s< \sionepqo.$
	\end{description}
\end{thm}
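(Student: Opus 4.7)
The plan is to mirror the proof of the companion Theorem~\ref{thm-bp0q-to-wpq}, exchanging the roles of the dyadic and uniform frequency decompositions. Let $\{\sigma_k\}_{k\in\Z^d}$ denote the frequency-uniform localizers defining $\wpq$ and $\{\Delta_j\}_{j\ge 0}$ the dyadic Littlewood--Paley pieces defining $\bpoqs$. The key geometric observation is that the $j$-th dyadic annulus is covered by $\sim 2^{jd}$ unit cubes, so that $\Delta_j f = \sum_{k\in \Lambda_j}\sigma_k \Delta_j f$ with $|\Lambda_j|\sim 2^{jd}$, and only these $k$ contribute to $\Delta_j f$.

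For sufficiency, the hypothesis $p\leq p_{0}$ activates a Bernstein-type inequality $\|\sigma_k g\|_{L^{p_0}}\lesssim \|\sigma_k g\|_{L^{p}}$ uniformly in $k$, valid because each piece has Fourier support in a fixed unit set. Combining this with the standard Plancherel/Hausdorff--Young estimate for sums of functions with pairwise disjoint unit-ball Fourier supports yields $\|\Delta_j f\|_{L^{p_0}}\lesssim \bigl\|\{\|\sigma_k f\|_{L^{p}}\}_{k\in\Lambda_j}\bigr\|_{\ell^{r}}$ for a suitable exponent $r=r(p_{0})\in\{p_0,p_0',2\}$. Applying H\"older on $\Lambda_j$ then produces a factor $2^{jd(1/r - 1/q)_{+}}$, after which passage to the outer $\ell^{q}_{j}$ norm weighted by $2^{js}$ is controlled exactly under the threshold $s\leq \sionepqo$; the strict inequality $s<\sionepqo$ is forced precisely when $p>q$, because that is the regime in which the outer $\ell^{q}_{j}$ series fails to converge at the endpoint.

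For necessity, I would test the embedding on superpositions of Knapp-type packets $f = \sum_{j}2^{-js}\sum_{k\in\Lambda_j}a_{j,k}\,\sigma_k\psi$ with $\psi\in\Sch$ and $\widehat{\psi}$ supported near the origin, tuning the coefficients $(a_{j,k})$ to saturate the governing H\"older or $\ell^{q}$ inequality. Concentrating all mass on a single unit cube forces $p\leq p_{0}$; equidistributing mass across $\Lambda_j$ and then across $j$ returns the sharp threshold $s=\sionepqo$ together with the strict/non-strict dichotomy between cases (1) and (2). Dually, one may also read the Banach portion of the statement off from Theorem~\ref{thm-bp0q-to-wpq} applied to $B_{p_{0}',q'}^{-s}\hookrightarrow W_{p',q'}$, since the identities $(\sigma_{1}(p_{0},q))=-\tau_{1}(p_{0}',q')$ and $p'\geq q'\Leftrightarrow p\leq q$ match the thresholds perfectly; this provides an independent check on the conditions.

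The principal obstacle is the endpoint analysis isolating case (2): when $p>q$ and $s=\sionepqo$, the counterexample must keep its $\wpq$-norm uniformly bounded while driving its $\bpoqs$-norm to infinity. This requires a careful balancing of the $2^{jd}$ unit-cube contributions within each dyadic annulus against a divergent outer $\ell^{q}_{j}$ series, and is the place where the $(\, \cdot \,)_{+}$ aggregation exponent $d(1/p + 1/q - 1)$ appearing in $\sionepqo$ must be verified to be sharp rather than merely sufficient.
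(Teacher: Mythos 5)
The paper itself does not write this proof out: it obtains Theorem \ref{thm-wpq-to-bp0q} by dualizing the chain arguments used for Theorem \ref{thm-bp0q-to-wpq} (sufficiency via $\wpq\hookrightarrow W_{p_{0},q}\hookrightarrow \bpoqs$ using Lemma \ref{lemma-mpq-wpq} and the diagonal characterization of Lemma \ref{prop-bpq-to-wpq}(2), plus $\wpq\hookrightarrow W_{q,q}=M_{q,q}\hookrightarrow M_{p_{0},q}\hookrightarrow \bpoqs$ to recover the non-strict endpoint when $p\leq q<p_{0}$; necessity via Proposition \ref{prop-low-frequency-scaling}, Theorem \ref{thm-wpq-to-hr}, and parts (3)--(4) of Proposition \ref{prop-discret-besov}). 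Your direct approach is genuinely different, but it has a gap that sits exactly at the feature distinguishing case (2) from case (1).

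The problem is that your sufficiency estimate $\norm{\triangle_{j}f}_{L^{p_{0}}}\lesssim \normo{\{\norm{\sigma_{k}f}_{L^{p}}\}_{k\in\Lambda_{j}}}_{\ell^{r}}$ reduces everything to the quantities $\norm{\Box_{k}f}_{L^{p}}$, i.e.\ to modulation-space data: whatever you do afterwards you are bounding $\norm{f}_{\bpoqs}$ by an $M_{p,r}$-norm. By Theorem \ref{thm-wpq-to-mpqs}, $\wpq\hookrightarrow M_{p,r}$ forces $r\geq \max(p,q)$, so when $p>q$ (case (2)) you must take $r\geq p$, and the subsequent H\"older step on $\Lambda_{j}$ costs an extra $2^{jd(1/p-1/q)}$ that is absent from $\sionepqo$. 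Concretely, for $p=p_{0}=\infty$, $q=1$ the theorem asserts $W_{\infty,1}\hookrightarrow B_{\infty,1}^{s}$ for every $s<0$, while any argument routed through $M_{\infty,r}$ (necessarily $r=\infty$) reaches only $s<-d$. Worse, even when $p\leq q$ the intermediate inequality itself fails for the value of $r$ you would need: taking $g_{k}=e^{ikx}h(x-y_{k})$ with $\widehat{h}$ supported in a small ball and the $y_{k}$ far apart gives $\norm{\sum_{k\in\Lambda_{j}}g_{k}}_{1}\approx \#\Lambda_{j}$ but $\normo{\{\norm{g_{k}}_{1}\}}_{\ell^{2}}\approx(\#\Lambda_{j})^{1/2}$, so the $(1/q-1/2)$ branch of $\sionepqo$ cannot be produced from the data $\{\norm{\sigma_{k}f}_{p}\}$ alone; the Wiener norm, which keeps $L^{p}$ outside the $\ell^{q}$ sum, sees precisely the physical-space separation that this example exploits. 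Two smaller points: a single fixed unit-cube packet has comparable norms in all $L^{p}$ and does not force $p\leq p_{0}$ --- you need the dilation family $\eta_{\lambda}$, $\lambda\to0$, of Proposition \ref{prop-low-frequency-scaling}; and the necessity of the $d(1/q-1/2)$ component is not obtained by deterministically ``equidistributing'' coefficients over $\Lambda_{j}$ (that yields only the $0$ and $d(1/q+1/p_{0}-1)$ branches) but requires random signs and Khinchin's inequality. The duality remark is a reasonable consistency check for $1<p,q,p_{0}<\infty$ but cannot cover the quasi-Banach range or the endpoints.
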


\begin{thm}
	\label{thm-bpq0-to-wpq}
	Let $0<p,q \leq \infty, s\in \real$. Moreover, we assume $q\geq q_{0}\wedge2$ or $p\leq q_{0} \vee 2$. Then $\bpqos \hookrightarrow \wpq$ if and only one of the following conditions is satisfied.
	\begin{description}
		\item[(1)] $ q_{0} \leq p\wedge q, s\geq \tauonepq;$
		\item[(2)] $p <q_{0} \leq q, s> \tauonepq;$
		\item[(3)] $q<q_{0}, s> \tauonepq.$
	\end{description}
\end{thm}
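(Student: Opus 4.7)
My plan is to treat sufficiency and necessity separately and to reduce both to a single-annulus change-of-basis inequality that plays the dyadic decomposition underlying $\bpqos$ against the uniform decomposition underlying $\wpq$. The backbone is the sharp frequency-localized embedding
$$\|g\|_{\wpq} \lesssim R^{\tauonepq}\,\|g\|_{L^{p}} \qquad\text{whenever } \supp \widehat{g} \subset \{|\xi| \sim R\},$$
which is a dyadic-frequency specialization of Theorem \ref{thm-Lr-to-wpq} with $r=p$. Combined with the near-orthogonality of $\{\square_j\}_j$ on each annulus and the Littlewood--Paley expansion $f = \sum_k \Delta_k f$, this yields
$$\|f\|_{\wpq} \lesssim \left(\sum_k 2^{kq\,\tauonepq}\,\|\Delta_k f\|_{L^{p}}^{q}\right)^{1/q}.$$

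For sufficiency I would then compare this $\ell^{q}$-sum with the $\ell^{q_0}$-sum defining $\|f\|_{\bpqos}$. When $q_{0} \leq q$ one uses the elementary embedding $\ell^{q_{0}}\hookrightarrow\ell^{q}$ combined with $s \geq \tauonepq$; when $q_{0} > q$ one inserts a geometric weight $2^{k(\tauonepq - s)}$ with $s > \tauonepq$ and closes by H\"older on the dyadic scale, producing condition (3). The finer split between conditions (1) and (2) at the endpoint $s = \tauonepq$ comes from the Littlewood--Paley description of $L^{p}$: only when $q_{0} \leq p$ can one close the estimate without paying an $\varepsilon$, so condition (1) admits $s = \tauonepq$ while condition (2) does not. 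The technical assumption $q \geq q_{0} \wedge 2$ or $p \leq q_{0} \vee 2$ is exactly what guarantees that the single-annulus inequality above is sharp with the clean exponent $\tauonepq$; outside this regime the optimal constant on one annulus depends on $q_{0}$ in a way that blurs the comparison. For necessity I would test the putative embedding against three families of examples: a single dilated modulated Schwartz bump $\psi_n(x) = \psi(x)\,e^{i\,2^{n} e_{1} \cdot x}$ concentrated on the $n$-th frequency annulus, which forces the pointwise inequality $s \geq \tauonepq$; a sum $f_{N} = \sum_{n\leq N} \psi_{n}$ of bumps on distinct annuli, whose $\bpqos$-norm grows like $N^{1/q_{0}}$ and whose $\wpq$-norm grows like $N^{1/q}$, separating the two summability indices (and in particular producing the strict inequality $s > \tauonepq$ when $q_{0} > q$); and a Khintchine-type random-sign superposition $\sum_{j \in A_k} \pm \varphi(x)\,e^{i\,j \cdot x}$ supported on a single annulus, which detects $\tauonepq$ with its correct $q_{0}$-dependence and gives the sharp split between conditions (1) and (2).

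The main obstacle is the endpoint $s = \tauonepq$ in condition (1). On the sufficiency side one must reorganize
$$\sum_{j} \|\square_{j} f\|_{L^{p}}^{q} \lesssim \sum_{k} \sum_{j \in A_{k}} \|\square_{j} \Delta_{k} f\|_{L^{p}}^{q}$$
without incurring a logarithmic loss across dyadic levels, which is precisely where the assumption $q_{0} \leq p \wedge q$ is essential. On the necessity side one must design a test function that simultaneously saturates the $\ell^{q_{0}}$-summation of $\bpqos$, the $\ell^{q}$-summation of $\wpq$, and the $L^{p}$-norm on each annulus; this is where a carefully tuned Khintchine-type example is forced. The hypothesis $q \geq q_{0} \wedge 2$ or $p \leq q_{0} \vee 2$ rules out exactly the pathological regime in which none of the natural single-annulus Bernstein-type inequalities is sharp, and therefore is what allows the necessary and sufficient conditions to meet at the endpoint.
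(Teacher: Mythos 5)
Your plan has two genuine gaps, both at points where the paper is forced into different machinery.

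First, the sufficiency backbone. The displayed inequality
\begin{align*}
\norm{f}_{\wpq} \lesssim \Big(\sum_k 2^{kq\,\tauonepq}\norm{\triangle_k f}_{L^p}^{q}\Big)^{1/q}
\end{align*}
is exactly the endpoint embedding $B^{\tauonepq}_{p,q}\hookrightarrow \wpq$, and by Lemma \ref{prop-bpq-to-wpq} this is \emph{false} when $p<q$ (strict inequality in $s$ is required there); the obstruction is precisely the Minkowski step needed to pull the dyadic $\ell^q$-sum outside the $L^p$-norm. Consequently your route cannot reach condition (1) in the subregion $q_0\leq p<q$ at the endpoint $s=\tauonepq$, which the theorem does assert. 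The paper avoids the intermediate space $B^{\tauonepq}_{p,q}$ entirely: for $q_0\geq 2$ it passes through modulation spaces ($B_{p,q_0}\hookrightarrow W_{p,q_0}\hookrightarrow \wpq$ after reducing $\tauonepq$ to $0$), and for $q_0<2$ it proves three boundary cases ($p=q$ via $W_{p,p}=M_{p,p}$; $q=q_0$; $p=q_0$ via $B^s_{q_0,q_0}\hookrightarrow h^s_{q_0}\hookrightarrow W_{q_0,q}$) and then interpolates. Some such detour is unavoidable.

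Second, the necessity of the \emph{strict} inequality in condition (3) when $q<2\leq p$, where $\tauonepq=d(1/q-1/2)$. Your single modulated bump $\psi_n$ only yields $s\geq 0$ (it occupies $O(1)$ unit cubes, so its $\wpq$-norm carries no power of $2^n$); your multi-annulus sum $f_N$ yields the strict inequality $s>0$, which is weaker than $s>d(1/q-1/2)$; and your single-annulus Khintchine example yields only the non-strict $s\geq d(1/q-1/2)$, because for a function supported on one annulus the $\ell^{q_0}$-index of the Besov norm is invisible. The paper gets strictness by embedding $L^{s,p}\hookrightarrow F^s_{p,2}\hookrightarrow B^s_{p,q_0}$ (which needs $q_0\geq 2$) and invoking the \emph{global} randomization over all of $\Z^d$ available for $L^p$ (Proposition \ref{prop-radomization}); the paper's own Remark explains that the failure of such a global randomization for Besov norms is exactly why the region $q<q_0\wedge 2$, $p>q_0\vee 2$ remains open. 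Your plan walks into this same wall without a mechanism to produce the strict inequality, so the necessity of condition (3) is not established by the examples you list.
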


\begin{thm}
	\label{thm-wpq-to-bpq0}
	Let $0<p,q \leq \infty, s\in \real$. Moreover, we assume $q\leq q_{0} \vee 2$ or $p\geq q_{0} \wedge 2$. Then $\wpq \hookrightarrow \bpqos$ if and only if one of the following conditions is satisfied.
	\begin{description}
		\item[(1)] $q_{0} \geq p \vee q, s\leq \sionepq;$
		\item[(2)] $p>q_{0} \geq q, s< \sionepq;$
		\item[(3)] $q>q_{0}, s< \sionepq.$
	\end{description}
\end{thm}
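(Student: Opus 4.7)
The plan is to deduce Theorem \ref{thm-wpq-to-bpq0} from Theorem \ref{thm-bpq0-to-wpq} by duality in the Banach range $p,q,q_0\geq 1$, and by a direct frequency-decomposition argument in the quasi-Banach range. In the Banach range one has $(\wpq)^* = W_{p',q'}$ and $(\bpqos)^* = B^{-s}_{p',q_0'}$, so that $\wpq \hookrightarrow \bpqos$ is equivalent to $B^{-s}_{p',q_0'} \hookrightarrow W_{p',q'}$, which is precisely the embedding treated by Theorem \ref{thm-bpq0-to-wpq} with parameters $(p',q',q_0',-s)$. A direct computation shows the involution identity $\tau_{1}(p',q') = -\sigma_{1}(p,q)$, and that the three trichotomy cases (1)--(3) as well as the side condition ``$q\leq q_{0} \vee 2$ or $p\geq q_{0}\wedge 2$'' transform under $q\leftrightarrow q'$, $p\leftrightarrow p'$, $q_0\leftrightarrow q_0'$ into their counterparts in Theorem \ref{thm-bpq0-to-wpq}.

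For the quasi-Banach range, work directly with the two frequency partitions. Let $\{\sigma_k\}_{k\geq 0}$ be a dyadic Littlewood--Paley decomposition and $\{\square_n\}_{n\in\Z^d}$ the uniform decomposition. Each $\sigma_k f$ has spectral support in the annulus $|\xi|\sim 2^k$, which meets $|\Lambda_k|\sim 2^{kd}$ unit cubes. Writing $\sigma_k f = \sum_{n\in\Lambda_k} \square_n \sigma_k f$ and estimating $\|\sigma_k f\|_{L^p}$ by Plancherel and almost-orthogonality of the $\square_n\sigma_k f$ (for $p\geq 2$), by the $L^p$-quasi-triangle inequality (for $p\leq 1$), or by interpolation in between, together with H\"older in $n$, produces the scale-by-scale bound
\[
2^{k\sigma_{1}(p,q)}\,\|\sigma_k f\|_{L^p} \lesssim \biggl(\sum_{n\in\Lambda_k}\|\square_n f\|_{L^p}^{q}\biggr)^{1/q}.
\]
Applying $\ell^{q_0}_k$ to the left-hand side and moving the outer sum past the inner $\ell^q$-sum on the right via the discrete embedding $\ell^q\hookrightarrow \ell^{q_0}$ (when $q\leq q_0$) or via a Minkowski swap (when $q>q_0$) then yields the sufficient part of (1)--(3). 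The side hypothesis is exactly what guarantees that one of these two summation strategies succeeds at the claimed weight $s=\sigma_{1}(p,q)$.

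Necessity is obtained by testing against two families of functions. First, a single modulated bump $g_k$ with Fourier support in one uniform cube at distance $\sim 2^k$ satisfies $\|g_k\|_{\wpq}\sim 1$ and $\|g_k\|_{\bpqos}\sim 2^{ks}$, excluding $s>\sigma_{1}(p,q)$ in the region where $\sigma_{1}=0$. Second, superpositions $f_k = \sum_{n\in A_k} e^{ix\cdot n}\varphi(x)$ with $A_k\subset \Lambda_k$ of carefully chosen cardinality realise the other two analytic branches of $\sigma_{1}(p,q)$. To separate the strict from the non-strict inequalities in (1)--(3), the building blocks are summed over $k$ with coefficients calibrated so that $f\in\wpq$ while the outer $\ell^{q_0}$-sum defining $\|f\|_{\bpqos}$ just barely diverges at the threshold.

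The main obstacle is matching the two decomposition scales sharply at the borderlines of the $(1/p,1/q)$-diagram in Figure \ref{fig:sigma1} and at the corners of the trichotomy (1)--(3); this is precisely where the auxiliary hypothesis on $q_0$ is needed to avoid a logarithmic mismatch. Outside these thin transition regions the arguments reduce to interpolation and routine summation, and dualising Theorem \ref{thm-bpq0-to-wpq} covers the remaining Banach cases.
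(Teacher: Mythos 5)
There is a genuine gap in the central estimate of your direct argument. The scale-by-scale bound you display,
\begin{align*}
2^{k\sigma_{1}(p,q)}\,\norm{\sigma_k f}_{L^p} \lesssim \brk{\sum_{n\in\Lambda_k}\norm{\Box_n f}_{L^p}^{q}}^{1/q},
\end{align*}
has the $\ell^q$-sum \emph{outside} the $L^p$-norm: its right-hand side is a block of the modulation norm $\norm{f}_{\mpq}$, not of the Wiener amalgam norm $\norm{f}_{\wpq}=\normo{\,\normo{\Box_n f}_{\ell^q_n}\,}_{L^p}$. To conclude $\wpq\hookrightarrow \bpqos$ from such a bound you would additionally need $\wpq\hookrightarrow\mpq$ at the same regularity, which by Lemma \ref{lemma-mpq-wpq}(5) and Theorem \ref{thm-wpq-to-mpqs} holds only for $p\leq q$ (and with a strict loss of derivatives when $p>q$). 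Consequently your sufficiency argument cannot reach the non-strict endpoint $s=\sionepq$ in condition (1) when $p>q\,$ (which is admissible there since (1) only requires $q_0\geq p\vee q$). The inequality must be kept in $L^p(\ell^q)$ form: for $p\geq 2$ via the dual square-function estimate $\normo{\sum_n g_n}_{L^p}\lesssim \normo{(\sum_n\abs{g_n}^2)^{1/2}}_{L^p}$ followed by H\"older in $n$ \emph{inside} the $L^p$-norm, for $p\leq 1$ via the quasi-triangle inequality, and in between by interpolation of endpoint embeddings — which is exactly how the paper proceeds (it interpolates the cases $p=q$ where $W_{p,p}=M_{p,p}$, $q=q_0$ where Lemma \ref{prop-bpq-to-wpq}(2) applies, and $p=q_0$ where Theorem \ref{thm-wpq-to-hr} applies), rather than passing through a single pointwise-in-$k$ modulation-type bound.

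Your duality reduction is a legitimate alternative route where it applies, and the involution $\tau_1(p',q')=-\sionepq$ is correct; but note that $X\hookrightarrow Y$ is equivalent to $Y^*\hookrightarrow X^*$ only with density of $\sch$ in the relevant spaces, which by Lemma \ref{lemma-mpq-wpq}(2) and Lemma \ref{lemma-bpq-fpq}(6) restricts you to $1\leq p,q,q_0<\infty$; the endpoints $p,q,q_0=\infty$ and the entire quasi-Banach range are not covered, and condition (3) with $q_0<1$ (so $q_0'=\infty$) cannot be dualised at all. Finally, on the necessity side, the branch $s<d(1/q-1/2)$ of $\sionepq$ requires a Khinchin-type randomisation that is only available for $p<\infty$ (the paper handles $p=\infty$ by an extra interpolation trick, cf. step (D) in the proof of Theorem \ref{thm-mpqsal-to-wpq}), and the strict/non-strict dichotomy between (1) and (2)--(3) rests on the lacunary test functions of Proposition \ref{prop-discret-besov}(3)--(4) together with Lemma \ref{lem-embed-of-lr1}; your sketch gestures at these but does not supply them. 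As it stands the proposal establishes the reflexive Banach case by duality and outlines, but does not prove, the remaining cases.
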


As for the modulation spaces $\mpqs$, our main results are as follows. 

\begin{thm} \label{thm-mpqs-to-wpq}
	Let $0< p,p_{1},q,q_{1} \leq \infty,s\in \real$, then $M_{p_{1},q_{1}}^{s} \hookrightarrow \wpq$ if and only if $p_{1} \leq p$ and one of the following conditions is satisfied. \begin{description}
		\item[(1)] $q_{1} \leq p\wedge q, s\geq 0;$
		\item[(2)] $q_{1} > p\wedge q, s+d/q_{1} > d/(p\wedge q)$.
	\end{description} 	
\end{thm}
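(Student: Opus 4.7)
The strategy is to work with the uniform frequency decomposition $\{\Box_{k}\}_{k\in\Z^{d}}$ underlying both spaces, since
\[
\|f\|_{M_{p_{1},q_{1}}^{s}}\asymp \bigl\|\jb{k}^{s}\|\Box_{k}f\|_{L^{p_{1}}}\bigr\|_{\ell^{q_{1}}_{k}},\qquad \|f\|_{\wpq}\asymp \bigl\|\|\Box_{k}f\|_{\ell^{q}_{k}}\bigr\|_{L^{p}_{x}}.
\]
Once in this form, the embedding reduces to a chain of elementary inequalities between mixed-norm spaces on the arrays $\{\Box_{k}f\}$, and the necessary conditions can be probed with modulated bumps.

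For sufficiency in case (1) where $q_{1}\leq p\wedge q$ and $s\geq 0$, I would chain four steps: (a) the pointwise embedding $\ell^{q_{1}}\hookrightarrow \ell^{q}$, valid since $q_{1}\leq q$; (b) Minkowski's integral inequality to swap the outer $L^{p}_{x}$ with the inner $\ell^{q_{1}}_{k}$, which applies because $q_{1}\leq p$; (c) the Bernstein/Nikolski inequality $\|\Box_{k}f\|_{L^{p}}\lesssim \|\Box_{k}f\|_{L^{p_{1}}}$, legitimate since $\Box_{k}f$ has Fourier support in a fixed unit cube and $p_{1}\leq p$; and (d) the weight bound $s\geq 0$ yielding $M_{p_{1},q_{1}}^{s}\hookrightarrow M_{p_{1},q_{1}}$. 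Case (2) where $q_{1}>p\wedge q$ and $s+d/q_{1}>d/(p\wedge q)$ is handled by reducing to case (1) through the intermediate embedding $M_{p_{1},q_{1}}^{s}\hookrightarrow M_{p_{1},p\wedge q}^{0}$. Setting $1/r=1/(p\wedge q)-1/q_{1}>0$, Hölder's inequality in $\ell^{q_{1}}_{k}$ produces $\|\|\Box_{k}f\|_{L^{p_{1}}}\|_{\ell^{p\wedge q}_{k}}\leq \|\jb{k}^{-s}\|_{\ell^{r}_{k}}\cdot\|f\|_{M_{p_{1},q_{1}}^{s}}$, and the sequence weight is summable precisely when $sr>d$, which is exactly the stated strict inequality.

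For necessity I would employ three test-function families. First, dilates $\phi_{R}(x)=\phi(x/R)$ of a bump with $\widehat{\phi}$ supported in a small cube near the origin keep only $\Box_{0}$ active, giving $\|\phi_{R}\|_{\wpq}\asymp R^{d/p}\|\phi\|_{L^{p}}$ versus $\|\phi_{R}\|_{M_{p_{1},q_{1}}^{s}}\asymp R^{d/p_{1}}\|\phi\|_{L^{p_{1}}}$, so $R\to\infty$ forces $p_{1}\leq p$. Second, a single modulated bump $e^{ik\cdot x}\phi$ sent to $|k|\to\infty$ forces $s\geq 0$. Third, for the critical threshold I would use $f_{N}=\sum_{|k|\leq N}e^{ik\cdot x}\phi$, which produces $\|f_{N}\|_{\wpq}\asymp N^{d/q}$ against $N^{s+d/q_{1}}$ on the modulation side, and the spatially separated variant $g_{N}=\sum_{|k|\leq N}e^{ik\cdot x}\phi(x-y_{k})$ with widely separated $y_{k}$, so that the $\Box_{k}$ pieces are essentially $L^{p}$-orthogonal, producing $\|g_{N}\|_{\wpq}\asymp N^{d/p}$. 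Taking $N\to\infty$ jointly forces $s+d/q_{1}\geq d/(p\wedge q)$.

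The most delicate point will be upgrading this bound to the strict inequality demanded by case (2). At the critical exponent $s+d/q_{1}=d/(p\wedge q)$ the monomial tests above only match up to logarithmic factors, so I would refine the coefficients to $a_{k}=\jb{k}^{-\beta}(\log\jb{k})^{-\gamma}$ and tune $\beta,\gamma$ so that the $M_{p_{1},q_{1}}^{s}$-norm of $\sum_{k}a_{k}e^{ik\cdot x}\phi(x-y_{k})$ stays finite while the $\wpq$-norm diverges logarithmically. I would also need to treat the transition $q_{1}=p\wedge q$ carefully, confirming that $s\geq 0$ is sharp there and that no additional strict inequality is hidden, which follows by inspecting the chain of inclusions at the borderline.
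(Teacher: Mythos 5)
Your proposal is correct and follows essentially the same route as the paper: sufficiency by factoring through $M_{p,p\wedge q}\hookrightarrow\wpq$, and necessity via low-frequency dilates (for $p_{1}\leq p$), co-located modulated bumps (capturing the $\ell^{q}$ constraint), and spatially separated modulated bumps (capturing the $\ell^{p}$ constraint), which together reduce the problem to the sharp embedding $\ell_{q_{1}}^{s,0}\hookrightarrow\ell_{p\wedge q}^{0,0}$. The only difference is presentational: the paper cites its pre-established weighted-sequence lemma and known modulation-space embeddings where you inline the H\"older argument and the logarithmically corrected coefficients, and your endpoint analysis (with $\gamma$ chosen in $(1/q_{1},1/(p\wedge q)]$) does close the strictness issue correctly.
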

By dual, we also have 
\begin{thm}
	\label{thm-wpq-to-mpqs}
	Let $0< p,p_{1},q,q_{1} \leq \infty,s\in \real$, then $\wpq \hookrightarrow M_{p_{1},q_{1}}^{s}$ if and only if $p_{1} \geq p$ and one of the following conditions is satisfied. 
	\begin{description}
		\item[(1)] $q_{1} \geq p\vee  q, s\leq0;$
		\item[(2)] $q_{1} < p\vee q, s+d/q_{1} < d/(p\vee q)$.
	\end{description} 	
\end{thm}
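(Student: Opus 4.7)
The plan is to derive Theorem \ref{thm-wpq-to-mpqs} as the dual statement of Theorem \ref{thm-mpqs-to-wpq}. In the Banach range $1\leq p,q,p_{1},q_{1}<\infty$, both $\wpq$ and $M_{p_{1},q_{1}}^{s}$ admit the discrete characterizations
\[
\|f\|_{\wpq}\asymp \Bigl\|\bigl(\sum_{k}|\Box_{k}f|^{q}\bigr)^{1/q}\Bigr\|_{L^{p}},\qquad \|f\|_{M_{p_{1},q_{1}}^{s}}\asymp \Bigl(\sum_{k}\jb{k}^{sq_{1}}\|\Box_{k}f\|_{L^{p_{1}}}^{q_{1}}\Bigr)^{1/q_{1}},
\]
from which their duals under the $L^{2}$ pairing are identified with $W_{p',q'}$ and $M_{p_{1}',q_{1}'}^{-s}$ respectively. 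Hence $\wpq\hookrightarrow M_{p_{1},q_{1}}^{s}$ is equivalent to $M_{p_{1}',q_{1}'}^{-s}\hookrightarrow W_{p',q'}$.

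Applying Theorem \ref{thm-mpqs-to-wpq} to this dual embedding with parameters $(p',q',p_{1}',q_{1}',-s)$, the condition $p_{1}'\leq p'$ rewrites as $p_{1}\geq p$. Case (1) there, $q_{1}'\leq p'\wedge q'$ with $-s\geq 0$, becomes (using $p'\wedge q'=(p\vee q)'$ in the paper's convention) $q_{1}\geq p\vee q$ with $s\leq 0$, which is case (1) of the present theorem. Case (2), $q_{1}'>p'\wedge q'$ with $-s+d/q_{1}'>d/(p'\wedge q')$, becomes $q_{1}<p\vee q$ with $s+d/q_{1}<d/(p\vee q)$ after substituting $d/q_{1}'=d-d/q_{1}$ and $d/(p'\wedge q')=d-d/(p\vee q)$, matching case (2) here.

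For indices outside the reflexive Banach range, where duality is unavailable, I would argue directly from the two discrete characterizations. Sufficiency follows from the Bernstein-type inequality $\|\Box_{k}f\|_{L^{p_{1}}}\lesssim\|\Box_{k}f\|_{L^{p}}$ (valid uniformly in $k$, since $\Box_{k}f$ has uniformly bounded Fourier support and $p\leq p_{1}$), combined with either Minkowski's inequality or $\ell^{q}\hookrightarrow\ell^{q_{1}}$ depending on whether $q\leq q_{1}$ or not, with the weight $\jb{k}^{s}$ absorbing the remaining exponent gap. Necessity is obtained by testing on finite sums of frequency-localized unimodular wave packets $f_{N}=\sum_{|k|\leq N}c_{k}\phi_{k}$, with the coefficients tuned to extremize the $M_{p_{1},q_{1}}^{s}$-to-$\wpq$ ratio, in direct parallel with the examples used in the proof of Theorem \ref{thm-mpqs-to-wpq}.

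The main obstacle is verifying sharpness of the strict inequality $s+d/q_{1}<d/(p\vee q)$ in case (2) outside the reflexive range, where one cannot simply dualize the sharpness example of Theorem \ref{thm-mpqs-to-wpq}. I expect this to reduce to a superposition of roughly $N^{d}$ unimodular wave packets supported in a frequency ball of radius $\sim N$: the $\wpq$-norm of such a function should scale like $N^{d/(p\vee q)}$ while the $M_{p_{1},q_{1}}^{s}$-norm scales like $N^{s+d/q_{1}}$, so equality of the exponents would produce an unbounded family violating the embedding.
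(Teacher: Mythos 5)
Your route is essentially the paper's: the paper itself introduces this theorem with ``by dual'' and proves it by the same discretization used for Theorem \ref{thm-mpqs-to-wpq}, and your index bookkeeping under duality ($p'\wedge q'=(p\vee q)'$, $d/q_1'=d-d/q_1$) is correct, as is the reduction of necessity to the two wave-packet families (modulated packets stacked at one point to extract the inner $\ell^q$ index, and $T_{Nk}$-translated packets with $N\to\infty$ to extract the outer $\ell^p$ index), which together yield $\ell_{p\vee q}^{0,0}\hookrightarrow\ell_{q_1}^{s,0}$ and hence, via Lemma \ref{lem-embed-of-lr} and the low-frequency scaling giving $p\leq p_1$, exactly conditions (1) and (2).

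The one concrete defect is your closing sharpness argument. Testing with $\sim N^{d}$ equal-amplitude packets compares $N^{s+d/q_1}$ against $N^{d/(p\vee q)}$, and when $q_1<p\vee q$ and $s+d/q_1=d/(p\vee q)$ these exponents coincide, so the ratio stays bounded and no contradiction is produced; power counting alone cannot exclude the endpoint, which is precisely what case (2) requires. The strict inequality must instead come from Lemma \ref{lem-embed-of-lr}: having discretized to $\ell_{p\vee q}^{0,0}\hookrightarrow\ell_{q_1}^{s,0}$, one tests that sequence embedding with tuned coefficients such as $a_k=\jb{k}^{-d/(p\vee q)}$ on $|k|\leq N$ (whose $\ell^{p\vee q}$ norm grows only like $(\log N)^{1/(p\vee q)}$ while the $\ell_{q_1}^{s,0}$ norm grows polynomially at the endpoint), or directly invokes the lemma, which is stated sharply. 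Since your necessity plan already reduces everything to that sequence-space embedding, the fix is internal to your own scheme, but the last paragraph as written does not close case (2).
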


As for $\al$-modulation spaces $\mpqsal$, our main results are as follows.

\begin{thm}
	\label{thm-mpqsal-to-wpq}
	Let $0<p,q\leq \infty, s\in \real, \al\in (0,1)$. Then $\mpqsal\hookrightarrow \wpq$ if and only if one of the following conditions is satisfied.
	\begin{description}
		\item[(1)] $p\geq q, s\geq \al \tauonepq;$
		\item[(2)] $p<q, s> \al \taupq + d(1-\al) (1/p-1/q).$
	\end{description}
	For visualization, one can see Figure \ref{fig:mpqsal-wpq}.
\end{thm}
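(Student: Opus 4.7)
The plan is to exploit the hybrid geometry of the $\al$-covering underlying $\mpqsal$: within the dyadic shell $\{|\xi|\sim 2^{j}\}$, each $\al$-block has radius $\sim 2^{j\al}$ and so contains roughly $2^{j\al d}$ unit cubes of the uniform Wiener amalgam covering, while the shell itself hosts $\sim 2^{jd(1-\al)}$ distinct $\al$-blocks. This counting underlies every key estimate and makes transparent how the two summands $\al\taupq$ and $d(1-\al)(1/p-1/q)$ enter the threshold.

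For sufficiency I would decompose $f=\sum_{k}\square_{k}^{\al}f$ via the $\al$-projectors. For each $f_{k}^{\al}:=\square_{k}^{\al}f$ concentrated on an $\al$-block at height $2^{j}$, unfolding over the uniform partition yields
\[
\|f_{k}^{\al}\|_{\wpq}\approx \bigl\|\{\|\square_{n}f_{k}^{\al}\|_{L^{p}}\}_{n}\bigr\|_{\ell^{q}},
\]
and only the $O(2^{j\al d})$ unit cubes meeting the $\al$-block contribute. Bernstein's inequality on the $\al$-block and local Hausdorff--Young give a bound of the form $\|f_{k}^{\al}\|_{\wpq}\lesssim 2^{j\al\theta(p,q)}\|f_{k}^{\al}\|_{L^{p}}$, with $\theta(p,q)$ matching the coefficient of $\tauonepq$ in the region occupied by $(1/p,1/q)$. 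Aggregating across dyadic shells by Minkowski when $p\ge q$, and by H\"older when $p<q$, the extra geometric factor $2^{jd(1-\al)}$ (counting $\al$-blocks per shell) is absorbed into the weight $2^{js}$ exactly at the stated threshold. Alternatively, one can factor through inclusions among $\al$-modulation spaces of different parameters (Guo et al.~\cite{Guo2018Full}) combined with Theorem \ref{thm-bp0q-to-wpq}.

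For necessity I would test the embedding on two families of wave packets. A single bump $g=\mathcal{F}^{-1}[\eta(2^{-j\al}(\cdot-\xi_{0}))]$ supported in a single $\al$-block at height $2^{j}$ produces the core lower bound $s\ge\al\taupq$ by direct dilation computation of both norms. For case (2), a superposition $f=\sum_{|\xi_{k}|\sim 2^{j}}T_{x_{k}}g_{k}^{\al}$ running over all $\al$-blocks in a fixed shell, with well-separated translates $x_{k}$, saturates the exchange between $L^{p}$-summation over disjoint translates and $\ell^{q}$-summation in the uniform covering, and forces the sharp correction $d(1-\al)(1/p-1/q)$.

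The main obstacle is securing the sharp constant in case (2). On the sufficiency side, this demands arranging H\"older so that no logarithmic loss appears at the critical strict threshold; on the necessity side, it demands choosing the translates $x_{k}$ (or, failing a clean deterministic choice, using randomized phases) so that the $\mpqsal$-norm records exactly the $2^{jd(1-\al)}$ occupied $\al$-blocks while the $\wpq$-norm independently activates all the associated unit cubes. The endpoints $q=\infty$ and the boundaries between the three regions defining $\tauonepq$ will require separate, region-by-region arguments, particularly to distinguish the closed versus open inequalities in $s$ between cases (1) and (2).
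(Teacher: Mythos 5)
Your architecture (block decomposition for sufficiency, wave--packet testing for necessity) is broadly the right shape, and your counting of unit cubes per $\al$-block versus $\al$-blocks per dyadic shell correctly locates the source of the two terms in the threshold. The paper proceeds differently on the sufficiency side: rather than proving a uniform per-block estimate $\norm{\Box_k^{\al}f}_{\wpq}\lesssim R^{\tau_1(p,q)}\norm{\Box_k^{\al}f}_{p}$ in every region, it establishes the embedding only at the corner points $(p,q)\in\sett{(q,q),\,(p,q)\text{ with }p\le 2,\,(\infty,2),\,(\infty,q\le1),\,(p\le1,\infty)}$ (Propositions \ref{prop-mpqal-to-w_infty2}--\ref{prop-mpqal-to-w_1infty}, each using a different mechanism: $L^2_\xi$-orthogonality of the STFT, H\"older over the $\inner{k}^{\al d/(1-\al)}$ cubes in a block, and the Young inequality for $p<1$) and then interpolates. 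Your aggregation step (Minkowski for $p\ge q$, $\ell^p\hookrightarrow\ell^q$ plus weighted sequence embeddings for $p<q$) is legitimate, but be aware that your "alternative" route of factoring through inclusions among $\al$-modulation/Besov spaces cannot reach the sharp threshold outside $p\le 2$; the paper's own remark, citing \cite{Guo2016Sharpness}, notes that one cannot simply interpolate between the $\al=0$ and $\al=1$ endpoint results.

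The genuine gap is in your necessity argument for case (1). You claim a single bump occupying one $\al$-block yields $s\ge\al\taupq$ by direct dilation, but case (1) requires $s\ge\al\tauonepq$, and $\tauonepq\neq\taupq$ precisely when $p>2>q$: there $\tauonepq=d(1/q-1/2)$ while $\taupq=d(0\vee(1/q-1/p))$ is strictly smaller. A deterministic bump (or any single wave packet) only detects the $0$ and $d(1/p+1/q-1)$ branches; the $d(1/q-1/2)$ branch is intrinsically probabilistic. The paper obtains it in Proposition \ref{prop-sgeq-1/q-1/2} by placing independent random signs on \emph{all} $\approx\inner{k}^{\al d/(1-\al)}$ unit cubes inside one $\al$-block and applying Khinchin's inequality to compare the $\ell^q$-count $(\#\wedge_k)^{1/q}$ against the square-function size $(\#\wedge_k)^{1/2}$; moreover, since Khinchin requires $p<\infty$, the endpoint $p=\infty$ needs a separate interpolation-contradiction argument (step (D) of the paper's proof). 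Your proposal deploys randomization only for the translate construction in case (2), where in fact it is not needed for this branch (for $p<q$ one has $p<q\le2$ or the $d(1/p+1/q-1)$ branch dominates, so $\tauonepq=\taupq$ there); it is case (1) with $p>2>q$ that cannot be closed without it.
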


\begin{figure}
	\centering
	\includegraphics[width=0.5\linewidth]{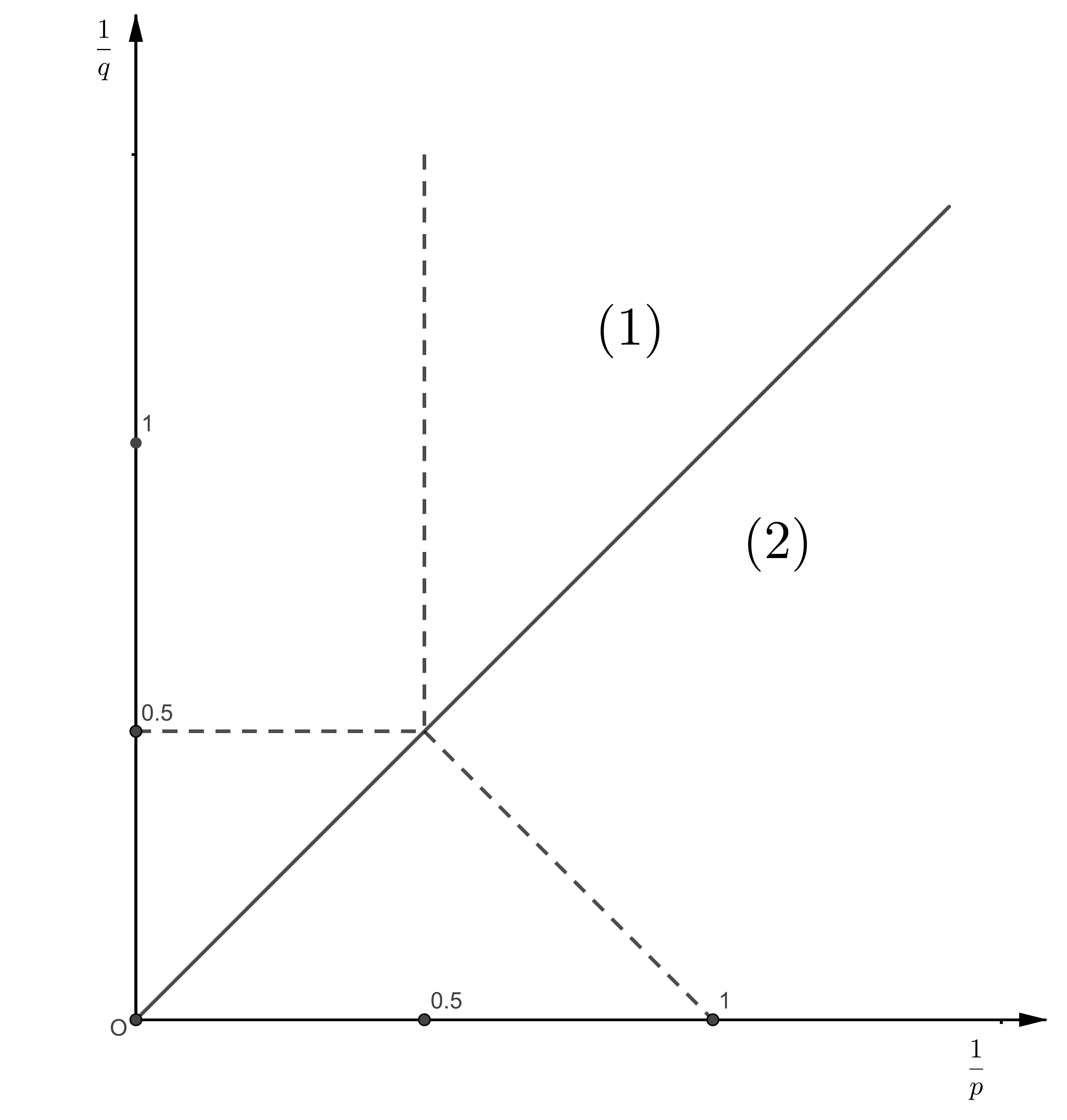}
	\caption{The index sets of Theorem \ref{thm-mpqsal-to-wpq}}
	\label{fig:mpqsal-wpq}
\end{figure}

On the other hand, we also have 

\begin{thm}
	\label{thm-wpq-mpqal}
	Let $0<p,q\leq \infty, s\in \real,\al\in (0,1)$. Then $\wpq \hookrightarrow \mpqsal$ if and only if one of the following conditions is satisfied. 
	\begin{description}
		\item[(1)] $p\leq q, s\leq \al \sionepq;$
		\item[(2)] $p>q, s< \al \sionepq +d(1-\al)(1/p-1/q).$
	\end{description}
	For visualization, one can see Figure \ref{fig:wpq-mpqsal}.
\end{thm}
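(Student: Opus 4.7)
The plan is to reduce Theorem \ref{thm-wpq-mpqal} to the already stated Theorem \ref{thm-mpqsal-to-wpq} by duality. In the Banach range $1\leq p,q<\infty$, the Wiener amalgam and $\al$-modulation spaces have well-known duals, namely $(\wpq)^{*}=W_{p',q'}$ (via Feichtinger's construction) and $(\mpqsal)^{*}=M_{p',q'}^{-s,\al}$ (via the standard short-time/frequency-adapted pairing), with the duality bracket given by the $L^{2}$ inner product extended by density from the Schwartz class. Consequently the embedding $\wpq\hookrightarrow\mpqsal$ is equivalent to its transpose $M_{p',q'}^{-s,\al}\hookrightarrow W_{p',q'}$, and Theorem \ref{thm-mpqsal-to-wpq} applied with data $(p',q',-s)$ in place of $(p,q,s)$ supplies the sharp necessary and sufficient condition for the latter.

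Next I would translate the transformed conditions back into the ones stated in Theorem \ref{thm-wpq-mpqal}. A short direct calculation on the definitions yields the symmetry identities
\begin{equation*}
\tau_{1}(p',q')=-\sigma_{1}(p,q),\qquad \tau(p',q')=-\sigma(p,q),
\end{equation*}
together with $1/p'-1/q'=-(1/p-1/q)$. Inserting these into the two alternatives of Theorem \ref{thm-mpqsal-to-wpq} with data $(p',q',-s)$ turns case (1), namely $p'\geq q'$ with $-s\geq \al\tau_{1}(p',q')$, into $p\leq q$ with $s\leq\al\sionepq$, and case (2), namely $p'<q'$ with $-s>\al\tau(p',q')+d(1-\al)(1/p'-1/q')$, into $p>q$ with $s<\al\sionepq+d(1-\al)(1/p-1/q)$. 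These are exactly conditions (1) and (2) of Theorem \ref{thm-wpq-mpqal}.

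The main obstacle is the quasi-Banach range $0<p<1$ or $0<q<1$, where the duality identification above breaks down. For these cases I would argue directly, mirroring the proof of Theorem \ref{thm-mpqsal-to-wpq} with the two norms swapped. For sufficiency I would apply the Bernstein-type inequality on $\al$-decomposition boxes (whose radii scale like $\langle k\rangle^{\al/(1-\al)}$) to pass from local $L^{p}$ norms on uniform boxes to local $L^{p_{1}}$ norms on $\al$-boxes, and then use the inclusion relations of mixed sequence--$L^{p}$ spaces, absorbing the resulting scaling loss into the factor $d(1-\al)(1/p-1/q)$ of the $s$-threshold. For necessity I would test against the families of modulated bump functions whose Fourier transforms are supported in a single $\al$-box or uniform box (the same building blocks used in Theorem \ref{thm-mpqsal-to-wpq}); evaluating both norms on these yields the sharp inequalities on $s$, including the distinction between $\leq$ in case (1) and strict $<$ in case (2), and also settles the endpoint cases $p=\infty$ or $q=\infty$ which fall outside the scope of the duality argument.
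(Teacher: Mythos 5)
Your duality reduction is a legitimately different route from the paper for the reflexive range $1<p,q<\infty$: the paper never dualizes here, but instead proves sufficiency by establishing explicit endpoint embeddings and interpolating, and proves necessity via the part-(2) statements of Propositions \ref{prop-sgeq-0}, \ref{prop-sgeq-1/q-1/2} and \ref{prop-sgeq-1/p+1/q-1}. Your symmetry identities $\tau_1(p',q')=-\sigma_1(p,q)$, $\tau(p',q')=-\sigma(p,q)$ check out, and the apparent mismatch between $\sigma$ and $\sigma_1$ in case (2) is harmless since $\sigma(p,q)=\sigma_1(p,q)$ when $p>q$ (and $\tau=\tau_1$ when $p<q$). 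Two caveats: at $p=1$ or $q=1$ the backward implication ``adjoint embedding $\Rightarrow$ original embedding'' is not automatic, since $M_{1,q}^{s,\al}$ is not the dual of $M_{\infty,q'}^{-s,\al}$; you would need to justify $\norm{f}_{M_{1,q}^{s,\al}}\approx\sup\fbrk{\abs{\inner{f,g}}:\norm{g}_{M_{\infty,q'}^{-s,\al}}\leq 1}$ for $f\in\sch$ separately.

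The genuine gap is in the quasi-Banach sufficiency, which is where essentially all of the content of this theorem lives. Your sketch (``Bernstein on $\al$-boxes plus sequence-space inclusions, absorbing the scaling loss'') does not reach the sharp thresholds. Concretely, the endpoint the paper needs at $0<p\leq 1$, $q=\infty$ is $W_{p,\infty}\hookrightarrow M_{p,\infty}^{-\al d/2,\al}$ (Proposition \ref{prop-wpq-mpqsal-1-infty}); the exponent $-\al d/2$ comes from an $L^2_\xi$-orthogonality/H\"older step ($W_{p,2}\hookrightarrow h_p\hookrightarrow L^p$ followed by H\"older over the support of $\eta_k^\al$, which has measure $\approx\inner{k}^{\al d/(1-\al)}$), combined with the convolution algebra property $W_{p,\infty}*W_{p,\infty}\subseteq W_{p,\infty}$ (Lemma \ref{lem-convolution-wpq}) and the dilation invariance of $M_{\infty,p}$ for $p\leq 1$ (Lemma \ref{lem-scaling-mpq}) to control $\norm{\Box_k^\al f}_{W_{p,\infty}}$ by $\norm{f}_{W_{p,\infty}}$ uniformly in $k$. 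A crude count of the $\approx\inner{k}^{\al d/(1-\al)}$ uniform boxes inside one $\al$-box gives a loss of $\inner{k}^{\al d/((1-\al)p)}$ rather than $\inner{k}^{\al d/(2(1-\al))}$, hence the non-sharp threshold $-\al d/p$ instead of $-\al d/2$ for $p<2$. Similarly the other endpoint $W_{p,2}\hookrightarrow M_{p,2}^{0,\al}$ (Proposition \ref{prop-wpq-mpqsal-1-2}) relies on $L^2_\xi$-orthogonality of the STFT over the bounded-overlap $\al$-covering plus Minkowski, not Bernstein. Without these endpoints the interpolation that produces the full region in cases (1)--(2) cannot be run, so this part of your argument needs to be built out rather than sketched. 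Your necessity plan is fine provided ``modulated bumps on a single box'' is understood as the paper's lattice sums $\sum_k a_k T_{Nk}(\cdot)$ with $N\to\infty$ and the Khinchin randomization over the uniform boxes inside one $\al$-box; single test functions alone would only yield the $s\leq 0$-type constraints.
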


\begin{figure}
	\centering
	\includegraphics[width=0.5\linewidth]{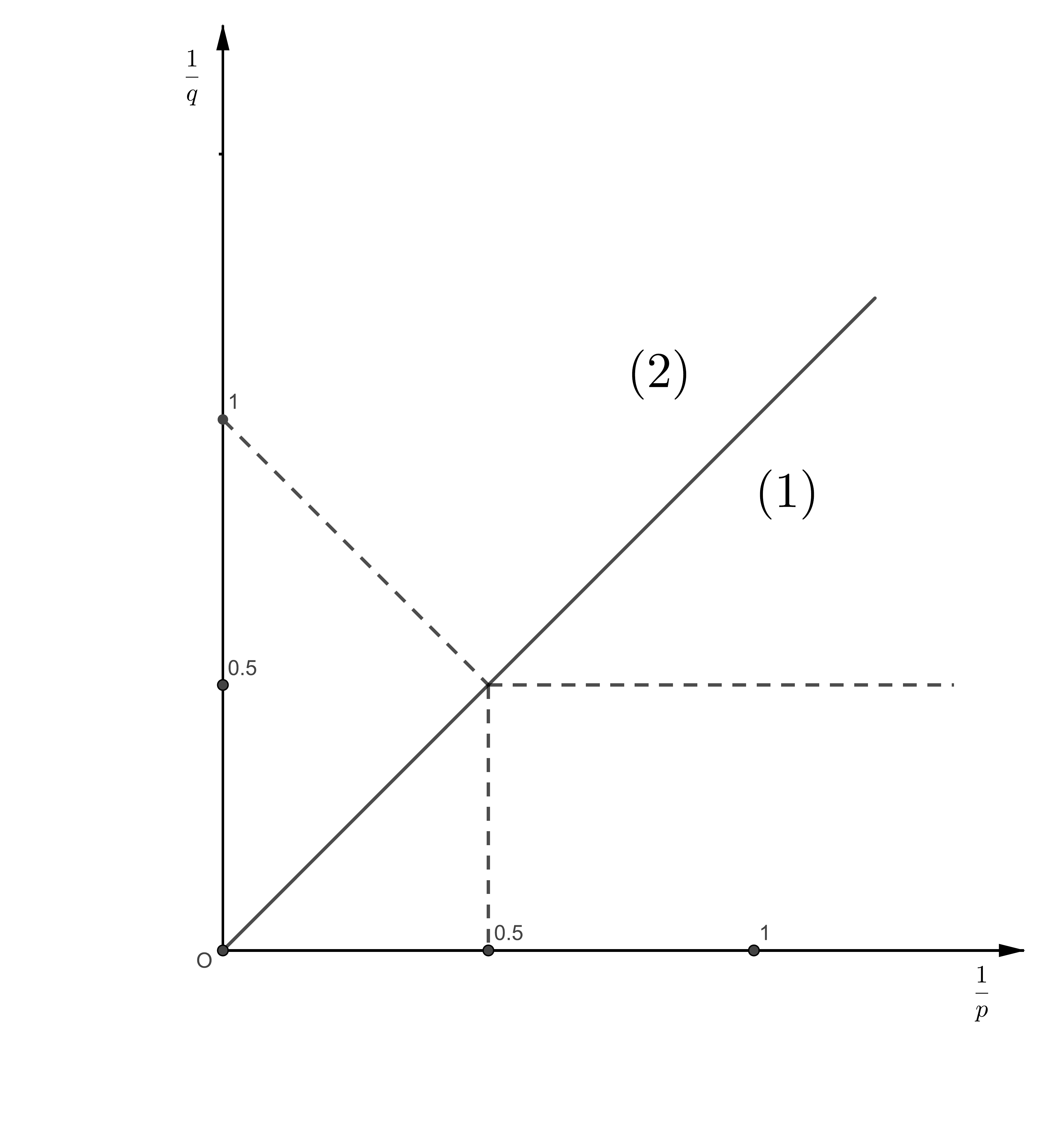}
	\caption{The index sets of Theorem \ref{thm-wpq-mpqal}}
	\label{fig:wpq-mpqsal}
\end{figure}

\begin{rem}
	One can see that when $\al=0$, $\mpqsal = \mpqs$. When $\al=1, \mpqsal =\bpqs$ (see \cite{Han2014$$}). The theorems above coincide with Theorem \ref{thm-bp0q-to-wpq} and \ref{thm-mpqs-to-wpq}. But by results in \cite{Guo2016Sharpness}, we can not only use complex interpolation with $\al=0,1$ to get the results for $\al \in (0,1)$ as desired. 
\end{rem}

As for Triebel spaces $\fpqs$ with $0<p\leq 1$, our main results are as follows. 
\begin{thm}
	\label{thm-triebel-to-wiener}
	Let $0<p \leq 1, 0<q,r\leq \infty$, the embedding $F_{p,r}^{s} \hookrightarrow \wpq$ is true if and only if one of the following conditions is satisfied.
	\begin{description}
		\item[(1)] $p\leq q, s\geq d\left(1/p+1/q-1\right);$
		\item[(2)] $p>q, s > d\left(1/p+1/q-1\right)$.
	\end{description}
\end{thm}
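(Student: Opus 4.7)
The condition in Theorem~\ref{thm-triebel-to-wiener} is independent of $r$, and the elementary monotonicity $F_{p,r_{1}}^{s}\hookrightarrow F_{p,r_{2}}^{s}$ for $r_{1}\le r_{2}$ tells us that $r=\infty$ is the hardest case for sufficiency while small $r$ is the hardest case for necessity; I reduce accordingly. For sufficiency in the strict regime $s>d(1/p+1/q-1)$, I chain through Besov spaces: the classical inclusion $F_{p,r}^{s}\hookrightarrow B_{p,\max(p,r)}^{s}$, combined with Theorem~\ref{thm-bpq0-to-wpq} applied at $q_{0}=\max(p,r)$, covers all of case~(2) and the strict part of case~(1). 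The side hypothesis $p\le q_{0}\vee 2$ is automatic for $0<p\le 1$, and either case~(2) or case~(3) of Theorem~\ref{thm-bpq0-to-wpq} yields $B_{p,q_{0}}^{s}\hookrightarrow \wpq$ under $s>\tauonepq=d(1/p+1/q-1)$.

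At the endpoint $s=d(1/p+1/q-1)$, necessarily $p\le q$. The Besov chain still succeeds when $r\le p$ via case~(1) of Theorem~\ref{thm-bpq0-to-wpq}, and for $p<r\le 2$ the chain $F_{p,r}^{s}\hookrightarrow F_{p,2}^{s}=h_{p}^{s}$ followed by Theorem~\ref{thm-hr-to-wpq} (whose case~(2) permits equality since $p\le q$) does the job. The remaining sub-case $r>2$ at the endpoint requires a direct argument: by monotonicity, reduce to $r=\infty$; decompose $f=\sum_{k}f_{k}$ with $f_{k}=\varphi_{k}\ast f$; and use the $p$-subadditivity $\|f\|_{\wpq}^{p}\le\sum_{k}\|f_{k}\|_{\wpq}^{p}$, which holds whenever $0<p\le q$ (first apply the $\ell^{q}$- or $q$-subadditivity pointwise in $x$, then the sub-additivity of the power $p/q\le 1$). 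For each $f_{k}$, only $\sim 2^{kd}$ uniform cubes meet the annulus $|\xi|\sim 2^{k}$, and each $\sigma_{j}\ast f_{k}$ is Fourier-supported in a single unit cube; the Plancherel--Polya equivalence of $L^{p}$-norms on such functions, combined with Peetre's maximal inequality for the dyadic piece $f_{k}$, yields a scale-preserving bound which matches the $2^{ks}$ weight exactly when $s=d(1/p+1/q-1)$.

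For necessity, the $r$-independence of the conditions means it suffices to exhibit counterexamples in an arbitrarily small $F_{p,r}^{s}$. A frequency-localized Schwartz bump concentrated in a single unit cube at $|j|\sim 2^{k}$ has $\|\cdot\|_{F_{p,r}^{s}}\sim 2^{ks}$ while $\|\cdot\|_{\wpq}$ stays bounded, which pins down the scaling $s\ge d(1/p+1/q-1)$. Lacunary superpositions $\sum_{k}c_{k}f_{k}$ over dyadic scales then distinguish the two regimes: when $p\le q$ the outer $L^{p}$ trades with the inner $\ell^{q}$ favorably, allowing the endpoint; when $p>q$ the order is reversed and strictness is forced. \textbf{The main obstacle} is the endpoint sufficiency at $r>2$: the Besov reduction delivers only the strict version, so the direct Plancherel--Polya/Peetre analysis sketched above is genuinely needed to capture the sharp scaling without spurious factors of $2^{kd/p}$.
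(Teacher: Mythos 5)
Your reduction of the strict regime to Theorem \ref{thm-bpq0-to-wpq} via $F_{p,r}^{s}\hookrightarrow B_{p,\max(p,r)}^{s}$ is sound (the side hypothesis of that theorem is automatic for $p\leq 1$), and the endpoint for $r\leq 2$ via $F_{p,r}^{s}\hookrightarrow F_{p,2}^{s}=h_{p}^{s}$ together with Theorem \ref{thm-hr-to-wpq} also works. But the two places where you leave these soft reductions both contain genuine gaps. For the endpoint $p\leq q$, $s=d(1/p+1/q-1)$, $r>2$: once you invoke the $p$-subadditivity $\norm{f}_{\wpq}^{p}\leq\sum_{k}\norm{\triangle_{k}f}_{\wpq}^{p}$, the best conclusion obtainable from per-block estimates of the form $\norm{\triangle_{k}f}_{\wpq}\lesssim 2^{ks}\norm{\triangle_{k}f}_{p}$ is $\norm{f}_{\wpq}\lesssim\norm{f}_{B_{p,p}^{s}}=\norm{f}_{F_{p,p}^{s}}$, and $F_{p,p}^{s}$ is strictly smaller than $F_{p,r}^{s}$ whenever $r>p$. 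No Peetre maximal inequality or Plancherel--Polya argument applied inside each dyadic block can recover the $\ell_{k}^{r}$ (or $\sup_{k}$) structure that was already destroyed at the subadditivity step; to capture $F_{p,r}^{s}$ with $r>p$ the quasi-norm in $k$ must remain inside the $L_{x}^{p}$ integral throughout. The paper avoids this issue entirely: by Lemma \ref{lem-discretization} and Lemma \ref{lem-compact-support}, for $p\leq q$ the embedding $F_{p,r}^{s}\hookrightarrow\wpq$ is \emph{equivalent} to $F_{p,r}^{s}\hookrightarrow\mpq$ (Proposition \ref{prop-equivalent}), and the latter, endpoint included for every $r$, is the known sharp result recorded as Lemma \ref{lem-triebel-mpq}.

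Second, your necessity argument rests on the wrong test functions. A single frequency-localized bump $e^{ijx}\eta(x)$ with $\abs{j}\sim 2^{k}$ satisfies $\norm{\cdot}_{F_{p,r}^{s}}\approx 2^{ks}\norm{\eta}_{p}$ and $\norm{\cdot}_{\wpq}\approx\norm{\eta}_{p}$, so it only forces $s\geq 0$, not $s\geq d(1/p+1/q-1)$, which is strictly positive whenever $p<1$; a lacunary superposition of one such bump per dyadic annulus still only yields $\ell_{r}^{s,1}\hookrightarrow\ell_{q}^{0,1}$, i.e.\ $s\geq 0$ again. The threshold $d(1/p+1/q-1)$ is only visible to test functions that fill an entire dyadic annulus with its $\sim 2^{kd}$ unit cubes, which is exactly what Proposition \ref{prop-discret-besov} packages. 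The paper obtains the lower bound by embedding $F_{p,2}^{s+\eps}=h_{p}^{s+\eps}\hookrightarrow F_{p,r}^{s}$ and invoking the already-proved sharp Hardy-space result (Theorem \ref{thm-hr-to-wpq}), and obtains strictness for $p>q$ from $B_{p_{1},p_{1}}^{s_{1}}\hookrightarrow F_{p,r}^{s}$ with $q<p_{1}<p$, $s_{1}-d/p_{1}=s-d/p$, combined with Proposition \ref{prop-discret-besov}. As written, your argument establishes neither the endpoint sufficiency for $r>2$ nor the correct necessary threshold.
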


On the other hand, we have 
\begin{thm}\label{thm-wiener-to-triebel}
	Let $0<p \leq 1, 0<q,r\leq \infty$, we assume $q\leq 2$. Then the embedding $ \wpq\hookrightarrow F_{p,r}^{s}$ is true if and only if one of the following conditions is satisfied.
	\begin{description}
		\item[(1)] $p\geq q, q\leq r, s\leq 0;$
		\item[(2)] $p\geq q, q>r, s<0;$
		\item[(3)] $p<q,q\leq 2, q\leq r, s\leq 0;$
		\item[(4)] $p<q, q\leq 2, q>r, s<0.$
	\end{description}
\end{thm}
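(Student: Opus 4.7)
\emph{Necessity.} I first rule out $s>0$. Taking $f_{N}=e^{iNx_{1}}\phi(x)$ for a fixed Schwartz bump $\phi$ with $\hat\phi$ supported in a small neighborhood of the origin, the function lives in a single unit cube centered near $Ne_{1}$ on the frequency side, so $\|f_{N}\|_{\wpq}\sim\|\phi\|_{L^{p}}$ is independent of $N$, while it also lives in a single dyadic annulus at scale $j\sim\log_{2}N$, giving $\|f_{N}\|_{F_{p,r}^{s}}\sim N^{s}\|\phi\|_{L^{p}}$; letting $N\to\infty$ forces $s\leq 0$. To rule out $s=0$ combined with $r<q$, I take $g_{N}=\sum_{n=1}^{N}e^{i2^{n}x_{1}}\phi(x)$; the $n$-th summand occupies a distinct dyadic annulus and a distinct unit cube, so $\|g_{N}\|_{\wpq}\sim N^{1/q}\|\phi\|_{L^{p}}$ and $\|g_{N}\|_{F_{p,r}^{0}}\sim N^{1/r}\|\phi\|_{L^{p}}$, so that the embedding forces $q\leq r$; otherwise the strict inequality $s<0$ is required.

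\emph{Sufficiency.} Using the classical monotonicity of Triebel--Lizorkin spaces $F_{p,q}^{0}\hookrightarrow F_{p,r}^{s}$, valid whenever $s<0$ (for any $r$) or $s=0$ together with $q\leq r$, all four cases reduce to the single critical embedding
\begin{equation*}
\wpq\hookrightarrow F_{p,q}^{0},\qquad 0<p\leq 1,\ 0<q\leq 2.
\end{equation*}
I split this into two subranges. For $0<q\leq 1$, I use the equivalent quasi-norm $\|f\|_{\wpq}\sim\|(\sum_{k}|\square_{k}f|^{q})^{1/q}\|_{L^{p}}$ together with the pointwise $q$-triangle inequality $|\Delta_{j}f|^{q}=|\sum_{k\in\Lambda_{j}}\square_{k}f|^{q}\leq\sum_{k\in\Lambda_{j}}|\square_{k}f|^{q}$, where $\Lambda_{j}$ indexes unit cubes meeting the $j$-th dyadic annulus. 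Summing over $j$ gives $\sum_{j}|\Delta_{j}f|^{q}\leq\sum_{k}|\square_{k}f|^{q}$ pointwise, and the $L^{p}$-norm of the $q$-th root then controls $\|f\|_{F_{p,q}^{0}}$ by $\|f\|_{\wpq}$. For $1<q\leq 2$ the pointwise bound fails, so I interpolate between $q=1$ (above) and $q=2$; the endpoint $W_{p,2}\hookrightarrow h_{p}=F_{p,2}^{0}$ is a special case of Theorem \ref{thm-wpq-to-hr}, using $\sigma_{1}(p,2)=0$ for $0<p\leq 1$. The complex interpolation identities $[W_{p,1},W_{p,2}]_{\theta}=W_{p,q_{\theta}}$ and $[F_{p,1}^{0},F_{p,2}^{0}]_{\theta}=F_{p,q_{\theta}}^{0}$ with $1/q_{\theta}=1-\theta/2$ then propagate the embedding to the full range $q\in(1,2]$.

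\emph{Main obstacle.} The delicate step is complex interpolation in the regime $0<p\leq 1$, which is quasi-Banach. The Triebel--Lizorkin interpolation identity is available through the Frazier--Jawerth $\varphi$-transform (or Triebel's wavelet characterization), and the corresponding Wiener amalgam identity follows from a parallel retraction/co-retraction scheme applied to the uniform frequency decomposition; verifying both identities carefully, and treating the boundary unit cubes where $\Delta_{j}$ and $\sum_{k\in\Lambda_{j}}\square_{k}$ do not exactly coincide, is the most technical part of the argument.
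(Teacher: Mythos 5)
Your proof is correct in outline and, on the sufficiency side, takes a genuinely different route from the paper. The paper disposes of the cases $p\geq q$ by reducing $\wpq\hookrightarrow F_{p,r}^{s}$ to the known embedding $\mpq\hookrightarrow F_{p,r}^{s}$ (Proposition \ref{prop-equivalent} together with Lemma \ref{lem-mpq-triebel}), and for $p<q\leq 2$ interpolates between $W_{p,p}\hookrightarrow F_{p,p}^{0}$ and $W_{p,2}\hookrightarrow h_{p}=F_{p,2}^{0}$; you instead reduce everything to the single endpoint $\wpq\hookrightarrow F_{p,q}^{0}$, prove it directly for $q\leq 1$, and interpolate from $q=1$ and $q=2$. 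Your necessity argument is the same in substance as the paper's Proposition \ref{prop-fpq-discretization} combined with Lemma \ref{lem-embed-of-lr1} (lacunary test functions giving $\ell_{q}^{0,1}\hookrightarrow\ell_{r}^{s,1}$) and is fine. The gain of your route is self-containedness --- you do not need to import the embedding of Triebel spaces into modulation spaces from Guo et al. --- at the price of the technical step below. Your interpolation identities in the quasi-Banach range $0<p\leq 1$ are exactly those the paper asserts in Lemma \ref{lemma-mpq-wpq}(3) and Lemma \ref{lemma-bpq-fpq}(7), so that step is on the same footing as the paper's own use of them.

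The one step that fails as written is the pointwise inequality in the case $0<q\leq 1$: $\triangle_{j}f$ is \emph{not} equal to $\sum_{k\in\Lambda_{j}}\Box_{k}f$; rather $\triangle_{j}f=\sum_{k\in\Lambda_{j}}\triangle_{j}\Box_{k}f$, and $\triangle_{j}\Box_{k}f\neq\Box_{k}f$ for every cube on which $\varphi_{j}$ is not identically $1$. These cubes fill two shells of thickness comparable to $2^{j}$, i.e.\ a positive proportion of $\Lambda_{j}$, not a thin boundary layer, so the claimed bound $\sum_{j}\abs{\triangle_{j}f}^{q}\leq C\sum_{k}\abs{\Box_{k}f}^{q}$ is not literally true pointwise. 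The standard repair is to dominate $\abs{\triangle_{j}\Box_{k}f}$ by a Peetre-type maximal function of $\Box_{k}f$ and invoke the Fefferman--Stein vector-valued maximal inequality in $L^{p/t}(\ell^{q/t})$ for $t$ small; you flag this as the technical obstacle but do not carry it out, and without it the $q\leq 1$ case is not closed. Alternatively you can avoid it entirely by following the paper: for $p\geq q$ quote Lemma \ref{lem-mpq-triebel} through Proposition \ref{prop-equivalent}, and for $p<q\leq 2$ interpolate between the endpoints $W_{p,p}\hookrightarrow F_{p,p}^{0}$ and $W_{p,2}\hookrightarrow F_{p,2}^{0}$, which requires no pointwise comparison of the two decompositions.
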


The paper is organized as follows. In Section \ref{sec-prelimaries}, we will give some basic notation. The definitions and some basic properties of the function spaces mentioned above also be contained there.
The proofs of our main results will be given in Section \ref{sec-Lr-wpq}-\ref{sec-triebel-wpq}.

\section{Preliminaries} \label{sec-prelimaries}

\subsection{Notation}

We write $\sch(\real^{d})$ to denote the Schwartz space of all complex-valued rapidly decreasing infinity differentiable functions on $\real^{d}$, and $\sch'(\real^{d})$ to denote the dual space of $\sch(\real^{d})$, all called the space of all tempered distributions. For simplification, we omit $\real^{d}$ without causing ambiguity. The Fourier transform is defined by $\F f(\xi) = \hat{f}(\xi) = \int_{\real^{d}} f(x) e^{-ix\xi} d\xi$, and the inverse Fourier transform by $\FF f(x) = (2\pi)^{-d} \int_{\real^{d}}f(\xi) e^{ix\xi} d\xi$.

We use the notation $I \lesssim J$ if there is an independently constant C such that $I \leq C J$. Also we denote $I \approx J$ if $I \lesssim J$ and $J\lesssim I$. For $1\leq p \leq \infty$, we denote the dual index $p'$ with $1/p+1/p'=1$, for $0<p<1$, denote $p'=\infty$. For $0<p,q \leq \infty,d \in \N$, we also denote 
\begin{align*}
	a(p,q) &:= d(1/p+1/q-1);\\
	\tau(p,q) &:= d\left( 0 \vee(\frac{1}{q}-\frac{1}{p}) \vee (\frac{1}{q} +\rev{p} -1)\right);\\
	\sigma(p,q) &:= d\left( 0 \wedge(\frac{1}{q}-\frac{1}{p}) \wedge (\frac{1}{q} +\rev{p} -1)\right).\\
\end{align*}
These indexes play a great role in the embedding between modulation spaces and Besov spaces (\cite{Wang2007Frequency}).

\subsection{Sobolev and local Hardy spaces}\ 
For $0< p <\infty$, we define the $L^{p}$ norm:
\begin{align*}
	\norm{f}_{p} = 	\left( \int_{\real^{d}} \abs{f(x)}^{p}dx\right )^{1/p}
\end{align*}
and $\norm{f}_{\infty} = \esssup_{x\in \real^{d}} \abs{f(x)}$. 
We also define the $L^{p}$ Sobolev norm :
\begin{align*}
	\norm{f}_{L^{s,p}}=\norm{(I-\triangle)^{s/2}f}_{p},
\end{align*}
where $(I-\triangle)^{s/2} = \FF (1+\abs{\xi}^{2})^{s/2} \F $ is the Bessel potential. Recall that the Sobolev spaces is defined by $L^{s,p}=\sett{f\in \sch': \norm{f}_{L^{s,p}} <\infty}$.  For more details, One can see \cite{Grafakos2009Modern}. 

Next, we turn to introduce the local Hardy space of Goldberg \cite{Goldberg1979local}. Let $\psi \in \sch$ with $\int_{\real^{d}} \psi (x) dx \neq 0$. Denote $\psi_{t} (x) = t^{-d} \psi(t^{-1} x)$. Let $0<p<\infty$, the local Hardy spaces is defined by \begin{align*}
	h_{p} := \sett{ f\in \sch': \norm{f}_{h_{p}}= \norm{ \sup_{0<t<1} \abs{\psi_{t} *f}}_{p} < \infty}.
\end{align*}
Similarly, we can define $h_{p}^{s} : = \sett{ f\in \sch': \norm{(I-\triangle)^{s/2}f}_{h_{p}} < \infty}$. We note that the definition of the local Hardy spaces is independent of the choice of $\psi \in \sch$. The local Hardy spaces could also be defined by $h_{p}$-atom. One can refer \cite{Triebel1992Theory}.

\subsection{Modulation and Wiener amalgam spaces}\ 

Let $0<p,q\leq \infty,s\in \real$, the short time Fourier transform (STFT) of $f$ respect to a window function $g\in \sch$ is defined as (see \cite{Feichtinger2003Modulation,Groechenig2001Foundations}):
\begin{align*}
	V_{g}f(x,\xi) = \int_{\real^{d}} f(t) \overline{g(t-x)} e^{-it\xi} dt.
\end{align*}
We denote 
\begin{align*}
	\norm{f}_{\mpq^{s}} &= \norm{V_{g}f(x,\xi)\inner{\xi}^{s}}_{L_{\xi}^{q} L_{x}^{p}},\\
	\norm{f}_{\wpqs} &= \norm{V_{g}f(x,\xi)\inner{\xi}^{s}}_{L_{x}^{p}L_{\xi}^{q} },
\end{align*}
where $\inner{\xi} = (1+\abs{\xi}^{2})^{1/2}$.

Modulation space $\mpq^{s}$ are defined as the space of all tempered distribution $f\in 
\sch'$ for which $\norm{f}_{\mpq^{s}}$ is finite. {W}iener space $\wpqs$ are defined as the space of all tempered distribution $f\in 
\sch'$ for which $\norm{f}_{\wpqs}$ is finite.

Also, we know another equivalent definition of modulation spaces and {W}iener spaces by uniform decomposition of frequency space (see \cite{Groechenig2001Foundations,Cunanan2015Wiener}).

Let $\sigma$ be a smooth cut-off function adapted to the unit cube $[-1/2,1/2]^{d}$ and $\sigma=0$ outside the cube $[-3/4,3/4]^{d}$, we write $\sigma_{k} = \sigma(\cdot -k)$, and assume that 
\begin{align*}
	\sum_{k\in\Z^{d}} \sigma_{k} (\xi) \equiv 1, \ \forall \xi \in \real^{d}.
\end{align*}

Denote $\sigma_{k} (\xi) =\sigma (\xi-k)$, and $\Box_{k} = \FF \sigma_{k} \F $, then we have the following equivalent norm of modulation space and {W}iener spaces: 

\begin{align*}
	\norm{f}_{\mpq^{s}} &= \norm{\inner{k}^{s}\norm{\Box_{k}f}_{L_{x}^p}}_{\ell_{k\in \Z^{d}}^{q} },\\
	\norm{f}_{\wpqs} &= \norm{\norm{\inner{k}^{s}\Box_{k}f}_{\ell_{k\in\Z^{d}}^{q}}}_{L_{x}^p}.
\end{align*}

For simplicity, we denote $X_{p,q}^{s}$ to represent $\mpqs$ or $\wpqs$ below. We simply write $X_{p,q}$ instead of $X_{p,q}^{0}$. One can prove the $X_{p,q}^{s}$ norm is independent of the choice of cut-off function $\sigma$. Also $X_{p,q}^{s}$ is a quasi Banach space and when $1\leq p,q \leq \infty$, $X_{p,q}^{s}$ is a Banach space. When $p,q <\infty$, then $\sch$ is dense in $X_{p,q}^{s}$. Also, $X_{p,q}^{s}$ has some basic properties, we list them in the following lemma (see \cite{Wang2007Frequency,Wang2011Harmonic,Cunanan2015Wiener,Groechenig2001Foundations}).
\begin{lemma}\label{lemma-mpq-wpq}
	Let $s,s_{0},s_{1},\in \real,0<p,p_{0},p_{1},q,q_{0},q_{1} \leq \infty$.
	\begin{description}
		\item[(1)] If $s_{0} \leq s_{1},p_{1} \leq p_{0},q_{1}\leq q_{0}$, we have $X_{p_{1},q_{1}}^{s_{1}} \hookrightarrow X_{p_{0},q_{0}}^{s_{0}}$.
		\item[(2)] When $p,q< \infty$, the dual space of $X_{p,q}^{s}$ is $X_{p',q'}^{-s}$.
		\item[(3)] The interpolation spaces theorem is true for $X_{p,q}^{s}$, i.e. for $0<\theta<1$
		when $$
		s=(1-\theta) s_{0}+\theta s_{1}, \quad \frac{1}{p}=\frac{1-\theta}{p_{0}}+\frac{\theta}{p_{1}}, \quad \frac{1}{q}=\frac{1-\theta}{q_{0}}+\frac{\theta}{q_{1}},$$ we have $\left(X_{p_{0}, q_{0}}^{s_{0}}, X_{p_{1}, q_{1}}^{s_{1}}\right)_{\theta}=X_{p, q}^{s}$.
		\item[(4)] When $q_{1}<q,s+d/q>s_{1} +d/q_{1}$, then we have $X_{p,q}^{s} \hookrightarrow X_{p,q_{1}}^{s_{1}}$.
		\item[(5)] When $p\geq q, \mpqs \hookrightarrow \wpqs$. When $p\leq q, \wpqs \hookrightarrow \mpqs$.
	\end{description}
\end{lemma}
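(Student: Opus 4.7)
\textbf{Proof plan for Lemma \ref{lemma-mpq-wpq}.} The statement collects five standard structural properties of the frequency-uniform spaces $X_{p,q}^{s}\in\{\mpqs,\wpqs\}$. I would treat each item separately but always work from the equivalent definition via the uniform decomposition operators $\Box_{k}=\FF\sigma_{k}\F$, since $\Box_{k}f$ has Fourier support in a translate of a fixed cube. The one unifying technical ingredient is the pointwise estimate
\begin{equation*}
|\Box_{k}f(x)|\lesssim(\rho_{N}*|\Box_{k}f|)(x),\qquad \rho_{N}(y)=(1+|y|)^{-N},
\end{equation*}
obtained by writing $\Box_{k}f=\widetilde\Box_{k}\Box_{k}f$ for a slightly fattened cutoff $\widetilde\Box_{k}$ whose kernel is $e^{ik\cdot}\FF\widetilde\sigma\in\sch$; this estimate is the vector-valued replacement for the Bernstein inequality and drives both the Young-type bound $\|\Box_{k}f\|_{L^{p_{0}}}\lesssim\|\Box_{k}f\|_{L^{p_{1}}}$ for $p_{1}\leq p_{0}$ (taking $N$ large) and its versions with an $\ell^{q}$-sum in $k$.

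For item (1), the three monotonicities decouple. The weight estimate $\jb{k}^{s_{0}}\leq\jb{k}^{s_{1}}$ is immediate from $\jb{k}\ge 1$; on counting measure $\ell^{q_{1}}\hookrightarrow\ell^{q_{0}}$ for $q_{1}\leq q_{0}$; and the $L^{p_{1}}\to L^{p_{0}}$ passage is the Bernstein-type bound above (for $\mpqs$ directly, and for $\wpqs$ by applying the convolution estimate pointwise in $k$ and then Young's inequality to the outer $L^{p}$ norm, with $N$ chosen so that $\rho_{N}\in L^{r}$ for the relevant $1/r=1-(1/p_{1}-1/p_{0})$). For item (2), I would identify $\mpqs$ and $\wpqs$ as retracts of the weighted mixed-norm sequence spaces $\ell^{q}_{k}(\jb{k}^{s};L^{p}_{x})$ and $L^{p}_{x}(\ell^{q}_{k}(\jb{k}^{s}))$ respectively, via the lifting $f\mapsto(\Box_{k}f)_{k}$ with left inverse given by summation against a dual system $\widetilde\Box_{k}$; the duality of these mixed-norm spaces is classical, and the retract structure transfers it.

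Item (3) follows from the same retract picture: complex interpolation commutes with retracts, and the interpolation of weighted $L^{p}(\ell^{q})$ spaces is the standard Stein-Weiss/Calder\'on theorem. For item (4), I would apply H\"older in the $k$-variable: writing $\jb{k}^{s_{1}}=\jb{k}^{s_{1}-s}\jb{k}^{s}$ and choosing $r$ with $1/q_{1}=1/r+1/q$, the factor $\jb{k}^{s_{1}-s}$ belongs to $\ell^{r}(\Z^{d})$ exactly when $(s_{1}-s)r<-d$, i.e.\ $s+d/q>s_{1}+d/q_{1}$; this is carried out pointwise in $x$ before integration for the $W$-case, and globally for the $M$-case. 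Finally, item (5) is Minkowski's integral inequality: for $p\ge q\ge 1$,
\begin{equation*}
\norm{F}_{L^{p}_{x}\ell^{q}_{k}}\leq\norm{F}_{\ell^{q}_{k}L^{p}_{x}},
\end{equation*}
applied to $F(x,k)=\jb{k}^{s}\Box_{k}f(x)$ gives $\norm{f}_{\wpqs}\leq\norm{f}_{\mpqs}$, and the reverse holds when $p\leq q$. In the quasi-Banach range $\min(p,q)<1$ one uses the $(p,q)$-nuclear version of Minkowski adapted to $\ell^{q}(L^{p})$, which still yields the claimed ordering.

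The only genuinely delicate point is the vector-valued Bernstein step in (1) for the $\wpqs$-case at small indices, where the naive Young inequality fails and one must pick $N$ large enough and apply $\rho_{N}$ inside the $\ell^{q}$-norm (using the quasi-triangle for $q<1$). Everything else is a direct consequence of the uniform-decomposition definition together with standard inequalities already recorded in \cite{Wang2011Harmonic,Cunanan2015Wiener,Groechenig2001Foundations}.
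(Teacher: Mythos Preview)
The paper does not prove this lemma at all: it merely records the five properties and defers to the references \cite{Wang2007Frequency,Wang2011Harmonic,Cunanan2015Wiener,Groechenig2001Foundations}. Your proposal therefore goes well beyond what the paper does, supplying actual arguments where the paper supplies none. The ingredients you invoke---Bernstein/Nikol'skii via the uniform support of $\widehat{\Box_{k}f}$, the retract identification with weighted mixed-norm sequence spaces for duality and complex interpolation, H\"older in $k$ for item~(4), and Minkowski for item~(5)---are exactly the standard ones in the cited literature, so in spirit you are reconstructing those references rather than deviating from them.

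Your sketch is essentially correct. Two minor remarks. For item~(1) in the $\wpqs$-case with $q<1$, the step ``apply the convolution estimate pointwise in $k$ and then Young'' needs the Peetre-type maximal bound (or equivalently the $\ell^{q}$-valued Nikol'skii inequality from \cite{Cunanan2015Wiener}) rather than a bare quasi-triangle, since Minkowski fails for $q<1$; you flag this but the fix is slightly more than choosing $N$ large. For item~(2), the retract argument you describe is clean when $1\le p,q<\infty$; in the quasi-Banach range $\min(p,q)<1$ the dual of $L^{p}(\ell^{q})$ is degenerate, so the duality statement as written is really a $p,q\ge 1$ assertion (which is how it is used in the paper). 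Neither point affects the overall correctness of your plan.
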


\begin{lemma}[\cite{Ruzhansky2011Changes}]
	\label{lem-compact-support}
	Let $0<p,q\leq \infty$, and $f\in \sch'$ with support in $B(0,1)$. Then $f\in \mpq$ is equivalent to $f\in \F L^{q}$, is also equivalent to $f\in \wpq$. Moreover, we have \begin{align*}
		\norm{f}_{\mpq} \approx \norm{f}_{\wpq} \approx \norm{f}_{\F L^{q}}.
	\end{align*}
\end{lemma}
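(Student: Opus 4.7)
The plan is to establish the two-sided equivalence by comparing each of $\norm{f}_{\mpq}$ and $\norm{f}_{\wpq}$ with the intermediate quantity $\norm{f}_{\F L^q}=\norm{\hat f}_{L^q}$. The structural input is that $\supp f\subset B(0,1)$ forces $\hat f$ to be smooth (indeed entire of polynomial growth) by Paley--Wiener, so that $\norm{\hat f}_{L^q}$ is the classical $L^q$ norm of a genuine function.

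First I specialize the STFT window to $g\in C_c^\infty(\real^d)$ with $\supp g\subset B(0,1)$; this is legitimate because modulation and Wiener amalgam norms are independent (up to equivalence) of the particular Schwartz window. The geometric payoff is that $V_g f(\cdot,\xi)$ is supported in $B(0,2)$ for every $\xi$, since $V_g f(x,\xi)=\int f(t)\,\overline{g(t-x)}\,e^{-it\xi}\,dt$ vanishes unless $\supp f\cap(x+\supp g)\neq\emptyset$.

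For the upper bounds $\norm{f}_{\mpq},\norm{f}_{\wpq}\lesssim\norm{\hat f}_{L^q}$, I apply Parseval to rewrite
\[
V_g f(x,\xi)=(2\pi)^{-d}\int \hat f(\eta+\xi)\,e^{-ix\eta}\,\overline{\hat g(\eta)}\,d\eta,
\]
from which the pointwise estimate $|V_g f(x,\xi)|\lesssim(|\hat f|\ast|\hat g|^{\vee})(\xi)$, uniform in $x$, follows ($|\hat g|^{\vee}$ being Schwartz). Combining this with the compact $x$-support $B(0,2)$ bounds both $\norm{V_g f}_{L^q_\xi L^p_x}$ and $\norm{V_g f}_{L^p_x L^q_\xi}$ by $|B(0,2)|^{1/p}\norm{|\hat f|\ast|\hat g|^{\vee}}_{L^q_\xi}$, and Young's convolution inequality (in its quasi-norm version when $0<q<1$) completes the argument.

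For the lower bound $\norm{\hat f}_{L^q}\lesssim\norm{f}_{\mpq}$ (and similarly for $\wpq$), I use the uniform frequency decomposition: from $\hat f=\sum_k\sigma_k\hat f$ with bounded overlap one has $|\hat f(\xi)|^q\lesssim\sum_k|\sigma_k(\xi)\hat f(\xi)|^q$ pointwise, hence $\norm{\hat f}_{L^q}^q\lesssim\sum_k\norm{\widehat{\Box_k f}}_{L^q}^q$. Since $\widehat{\Box_k f}=\sigma_k\hat f$ is supported in $B(k,1)$, Hausdorff--Young (for $q\geq 2$) or Hölder on the compact support together with Plancherel (for $q\leq 2$) converts this to $\sum_k\norm{\Box_k f}_{L^r}^q$ with $r\in\{2,q'\}$; Bernstein's inequality applied to each band-limited $\Box_k f$ then replaces $L^r$ by $L^p$ with a $k$-independent constant, yielding $\norm{\hat f}_{L^q}^q\lesssim\norm{f}_{\mpq}^q$. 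The Wiener version is treated by a parallel argument that pulls the $\ell^q_k$ norm inside before taking $L^p_x$.

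The main obstacle is running the Bernstein-type estimate in the ``wrong'' direction for the full quasi-Banach range $0<p,q\leq\infty$: the classical Bernstein inequality yields $\norm{h}_{L^{p_2}}\lesssim\norm{h}_{L^{p_1}}$ for band-limited $h$ only when $p_1\leq p_2$, whereas the reduction above may require bounding $\norm{\Box_k f}_{L^r}$ by $\norm{\Box_k f}_{L^p}$ with $p>r$. The compact support of $f$ is precisely what rescues this: via $\Box_k f=\FF\sigma_k\ast f$ and Paley--Wiener, each $\Box_k f$ inherits Schwartz decay in $x$ uniformly in $k$ (up to polynomial factors in $|k|$ absorbed by the quasi-norms on finitely many scales), which effectively confines the $L^p$ mass to a fixed compact set and restores the required comparison.
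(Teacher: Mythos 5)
Your upper bound is essentially sound: the pointwise estimate $\abs{V_gf(x,\xi)}\lesssim(\abs{\hat f}\ast\abs{\hat g(-\cdot)})(\xi)$, uniform in $x$, combined with $\supp V_gf(\cdot,\xi)\subseteq B(0,2)$, does give $\norm{f}_{\mpq}$ and $\norm{f}_{\wpq}\lesssim\norm{\hat f}_{L^q}$ — but you should say explicitly that for $0<q<1$ the ``quasi-norm version of Young'' you invoke is the band-limited one (Lemma \ref{lem-young-p<1}), and that it applies here because the spectra of $\hat f$ and $\hat g$ are $f(-\cdot)$ and $g(-\cdot)$, both contained in $B(0,1)$. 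Without that remark the step is unjustified, since Young's inequality simply fails for general functions when $q<1$.

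The genuine gap is in the lower bound, at the step $\norm{\Box_kf}_{L^r}\lesssim\norm{\Box_kf}_{L^p}$ with $p>r$. The proposed rescue — that $\Box_kf$ ``inherits Schwartz decay uniformly in $k$, which effectively confines the $L^p$ mass to a fixed compact set'' — is not a proof. $\Box_kf=\FF\sigma_k\ast f$ is not compactly supported; one can indeed show $\abs{\Box_kf(x)}\leq C_N\inner{x}^{-N}\norm{f}_{M_{\infty,\infty}}$ for large $\abs{x}$ uniformly in $k$, but this tail bound carries a global seminorm of $f$ with no decay in $k$, so the tail contributions $\norm{\Box_kf}_{L^r(\abs{x}\geq R)}$ do not sum in $\ell^q_k$ and cannot be absorbed into $\norm{f}_{\mpq}$; reverse Hölder on a fixed ball controls only the bulk. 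As written, the reverse inequality for each fixed $k$ is neither proved nor obviously true. The whole detour is also unnecessary: since the $\mpq$ and $\wpq$ norms are window-independent, take $g\in C_c^\infty$ with $g\equiv1$ on $B(0,1+\delta)$; then $V_gf(x,\xi)=\hat f(\xi)$ for every $\abs{x}\leq\delta$, so both $\norm{V_gf}_{L^p_xL^q_\xi}$ and $\norm{V_gf}_{L^q_\xi L^p_x}$ are bounded below by $\abs{B(0,\delta)}^{1/p}\norm{\hat f}_{L^q}$. This yields $\norm{\hat f}_{L^q}\lesssim\norm{f}_{\mpq}$ and $\norm{\hat f}_{L^q}\lesssim\norm{f}_{\wpq}$ for the full range $0<p,q\leq\infty$ in one stroke, with no Bernstein, Hausdorff--Young, or decomposition needed.
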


\subsection{Besov-Triebel spaces}\ 

Let $0<p,q \leq \infty,s\in \real,$ choose $\psi: \real^{d} \rightarrow [0,1] $ be a smooth radial bump function adapted to the ball $B(0,2)$: $\psi(\xi) =1$ as $\abs{ \xi} \leq1$ and $\psi(\xi) =0$ as $\abs{\xi} \geq2$. We denote $\varphi(\xi) = \psi(\xi)- \psi(2\xi)$, and $\varphi_{j}(\xi) = \varphi(2^{-j} \xi)$ for $1\leq j,j\in \Z $, $\varphi_{0}(\xi ) = 1- \sum_{j\geq1} \varphi_{j}(\xi)$. Denote  $\triangle_{j} = \FF  \varphi_{j} \F$.
We say that $\sett{\triangle_{j}}_{j\geq 0}$ are the dyadic decomposition operators. The {B}esov spaces $\bpqs$ and the {T}riebel spaces $\fpqs$ are defined in the following way :
\begin{align*}
	\bpqs = \sett{ f\in \sch'(\real^{d}) : \norm{f}_{\bpqs} = \norm{ 2^{js} \norm{\triangle_{j}f}_{L_{x}^p} }_{\ell_{j\geq0}^{q}} < \infty},\\
	\fpqs = \sett{ f\in \sch'(\real^{d}) : \norm{f}_{\fpqs} = \norm{\norm{2^{js} \triangle_{j} f} _{\ell_{j\geq0} ^{q}}}_{L_{x}^{p}} < \infty}.
\end{align*}

One can prove that the {B}esov-{T}riebel norms defined by different dyadic decompositions are all equivalent (see \cite{Triebel1992Theory}), so without loss of generality, we can assume that when $1\leq j$, $\varphi_{j}(\xi)=1$ on $D_{j} := \sett{\xi\in \real^{d}: \frac{3}{4} 2^{j} \leq \abs{\xi} \leq \frac{5}{4} 2^{j}}$ for convenience. Also, {B}esov-{T}riebel spaces have some basic properties known already (see \cite{Triebel1992Theory}).
\begin{lemma}
	\label{lemma-bpq-fpq}
	Let $s,s_{1},s_{2} \in \real,0<p,p_{1},p_{2},q,q_{1},q_{2} \leq \infty$.
	\begin{description}
		\item[(1)] If $q_{1} \leq q_{2}$, we have $B_{p,q_{1}}^{s} \hookrightarrow B_{p,q_{2}}^{s}$, $F_{p,q_{1}}^{s} \hookrightarrow F_{p,q_{2}}^{s}$.
		\item[(2)] $\forall \eps >0$, we have $B_{p,q_{1}}^{s+\eps} \hookrightarrow B_{p,q_{2}}^{s}$, $F_{p,q_{1}}^{s+\eps} \hookrightarrow F_{p,q_{2}}^{s}$.
		\item[(3)] $B_{p,p\wedge q}^{s} \hookrightarrow \fpqs \hookrightarrow B_{p,p\vee q}^{s}$.
		\item[(4)] If $p_{1} \leq p_{2},s_{1}-d/p_{1} =s_{2}-d/p_{2}$, we have $B_{p_{1},q}^{s_{1}} \hookrightarrow B_{p_{2},q}^{s_{2}}$.
		\item[(5)] If $p_{1}<p_{2},s_{1}-d/p_{1} =s_{2}-d/p_{2}$, we have $F_{p_{1},q_{1}}^{s_{1}} \hookrightarrow F_{p_{2},q_{2}}^{s_{2}}$.
		\item[(6)] When $1\leq p,q <\infty$, the dual space of $\bpqs$ is $B_{p',q'}^{-s}$, the dual space of $\fpqs$ is $F_{p',q'}^{-s}$.
		\item[(7)]  The interpolation spaces theorem is true for $\bpqs$ and $\fpqs$, i.e. for $0<\theta<1$ when $$s=(1-\theta) s_{0}+\theta s_{1}, \quad \frac{1}{p}=\frac{1-\theta}{p_{0}}+\frac{\theta}{p_{1}}, \quad \frac{1}{q}=\frac{1-\theta}{q_{0}}+\frac{\theta}{q_{1}},$$ we have $\left(B_{p_{0}, q_{0}}^{s_{0}}, B_{p_{1}, q_{1}}^{s_{1}}\right)_{\theta}=B_{p, q}^{s}$, $\left(F_{p_{0}, q_{0}}^{s_{0}}, F_{p_{1}, q_{1}}^{s_{1}}\right)_{\theta}=F_{p, q}^{s}.$ 
		\item[(8)] When $ 0<p <\infty$, we have $F_{p,2}^{s} = h_{p}^{s}$, when $1<p<\infty$, $F_{p,2}^{s} = L^{s,p}$.
	\end{description}
\end{lemma}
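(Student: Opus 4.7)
My plan is to reduce all four sufficiency cases to the single core inclusion $\wpq \hookrightarrow F_{p,q}^{0}$ (valid for $0 < p \leq 1$ and $0 < q \leq 2$), and to handle necessity by explicit test functions. Assuming this core embedding, the case $q \leq r$ with $s \leq 0$ follows by composing with $F_{p,q}^{0}\hookrightarrow F_{p,q}^{s} \hookrightarrow F_{p,r}^{s}$ (the last step is Lemma \ref{lemma-bpq-fpq}(1)); the case $q > r$ with $s < 0$ follows by inserting a smoothing step $F_{p,q}^{0}\hookrightarrow F_{p,q}^{s+\eps}$ for some $\eps \in (0,-s)$ and then applying Lemma \ref{lemma-bpq-fpq}(2) to reach $F_{p,r}^{s}$. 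This reduces the task to establishing the core embedding and to proving necessity.

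For the core embedding I would proceed in three layers. First, two endpoints come from earlier results: Theorem \ref{thm-wpq-to-hr} with $r=p$ yields $W_{p,2}\hookrightarrow h_{p} = F_{p,2}^{0}$ (its condition ``$2\geq q$'' is immediate and $\sionepq$ vanishes at $q=2$ for $0<p\leq 1$), while Theorem \ref{thm-wpq-to-bpq0} with $q_{0}=p$ yields $W_{p,p}\hookrightarrow B_{p,p}^{0} = F_{p,p}^{0}$ (its hypothesis $p\geq q_{0}\wedge 2 = p$ is automatic). Second, complex interpolation on the fixed-$p$ line, using Lemma \ref{lemma-mpq-wpq}(3) for the Wiener scale and Lemma \ref{lemma-bpq-fpq}(7) for the Triebel scale, bridges these endpoints to give $\wpq \hookrightarrow F_{p,q}^{0}$ for every $q\in[p,2]$. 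Third, the remaining regime $0<q<p\leq 1$ is treated directly: for $j\geq 1$ decompose $\Delta_{j}f = \sum_{k\in\Lambda_{j}}\Delta_{j}\Box_{k}f$ with $\Lambda_{j} = \{k\in\Z^{d}: |k|\sim 2^{j}\}$; since $q\leq 1$, the quasi-triangle inequality yields $|\Delta_{j}f|^{q} \leq \sum_{k\in\Lambda_{j}}|\Delta_{j}\Box_{k}f|^{q}$, and after swapping the $j$- and $k$-sums (each $k$ appears in $O(1)$ shells $\Lambda_{j}$) and dominating $|\Delta_{j}\Box_{k}f|$ pointwise by a Peetre-type maximal function of $\Box_{k}f$ adapted to the unit-cube band-limitation, one reduces to an $L^{p}(\ell^{q})$ maximal inequality, made accessible below $p=1$ by introducing an auxiliary exponent $t<\min(p,q)$ and passing to $L^{p/t}(\ell^{q/t})$ with the classical Fefferman--Stein estimate.

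For necessity I would use three families of test functions: (a) single-shell sums $f_{N}(x) = \sum_{k\in A_{N}}e^{ik\cdot x}\varphi(x)$ with $A_{N}$ consisting of $N$ lattice points inside a single dyadic annulus, whose $\wpq$ and $F_{p,r}^{s}$ norms scale like distinct powers of $N$, forcing $r\geq q$ at $s=0$; (b) multi-scale superpositions $\sum_{n}e^{ik_{n}\cdot x}\varphi(x)$ with $|k_{n}|\sim 2^{n}$, which rule out equality in the strict-inequality cases (2) and (4); (c) a single large-frequency Schwartz bump $e^{ik\cdot x}\varphi(x)$ sent off to infinity in frequency, isolating the role of the weight $\jb{k}^{s}$ and pinning down the critical value of $s$. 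The main obstacle is Layer 3 of the sufficiency argument: both indices lie below $1$, so the fixed-$p$ complex interpolation used in Layer 2 is unavailable, and the classical vector-valued Fefferman--Stein $L^{p}(\ell^{q})$ inequality fails. The substitute is a Plancherel--Polya/Peetre pointwise control exploiting the unit-cube band-limitation of each $\Box_{k}f$; keeping the constants from accumulating under the double sum over $j\geq 0$ and $k\in\Z^{d}$, and verifying that the $O(1)$-overlap between the shells $\Lambda_{j}$ and the lattice $\Z^{d}$ is preserved, is the technical core of the proof.
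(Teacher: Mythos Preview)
Your proposal does not address the stated lemma at all. Lemma \ref{lemma-bpq-fpq} is a list of eight classical structural facts about Besov and Triebel--Lizorkin spaces (monotonicity in $q$, lifting, the $B$--$F$ sandwich, Sobolev-type embeddings, duality, complex interpolation, and the identifications $F_{p,2}^{s}=h_{p}^{s}$ and $F_{p,2}^{s}=L^{s,p}$). The paper does not prove it; it simply quotes it from Triebel's monograph as background. Your write-up, by contrast, is an attempted proof of Theorem \ref{thm-wiener-to-triebel} (the embedding $\wpq\hookrightarrow F_{p,r}^{s}$ for $0<p\leq 1$): the ``four sufficiency cases'', the core inclusion $\wpq\hookrightarrow F_{p,q}^{0}$, and the test-function families you describe all pertain to that theorem, not to the lemma. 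Nothing in your proposal touches items (1)--(8) of the lemma.

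If your intention was in fact to prove Theorem \ref{thm-wiener-to-triebel}, note that your Layers 1 and 2 coincide with the paper's argument for its case (3) (interpolating the endpoints $W_{p,p}\hookrightarrow F_{p,p}$ and $W_{p,2}\hookrightarrow h_{p}=F_{p,2}$). For the regime $q<p\leq 1$, however, the paper bypasses your Layer 3 entirely: via Proposition \ref{prop-equivalent} the embedding $\wpq\hookrightarrow F_{p,r}^{s}$ is equivalent, when $p\geq q$, to $\mpq\hookrightarrow F_{p,r}^{s}$, and the latter is already characterized in Lemma \ref{lem-mpq-triebel}. This avoids the Peetre-maximal-function machinery you propose. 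Your direct approach is plausible but considerably heavier, and the Fefferman--Stein step you flag as the main obstacle is indeed delicate for $p,q<1$; the paper's route sidesteps it completely.
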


\subsection{$\al$-modulation spaces}

\begin{defn}
	[$\al$-covering]\label{def-al-covering}
	A countable set $\sett{Q_{i}}_{i}$, where $Q_{i} \subseteq \real^{d}$, is called a $\al$-covering of $\real^{d}$ if:
	\begin{description}
		\item[(i)]  $\real^{d} = \cup_{i} Q_{i}$,
		\item[(ii)] $\# \sett{ Q' \in Q_{i}: Q' \cap Q \neq \emptyset} \leq c(d),$ uniformly for $Q\in Q_{i}$,
		\item[(iii)] $\inner{x}^{\al d} \approx \abs{Q_{i}}$ uniformly for $x\in Q_{i}$.
	\end{description}
\end{defn}

\begin{defn}
	[$\al$-Modulation spaces, \cite{Han2014$$}]\label{def alpha mpq} 
	Let $\al <1$, denote $\al=\al/(1-\al)$, suppose that $C>c>0$ are two appropriate constants such that $\sett{B_{k}}_{k\in \Z^{d}}$ is a $\al$-covering of $\real^{d}$, where $B_{k} = B(\inner{k}^{\al}k, \inner{k}^{\al})$. We can choose a Schwartz function sequence $\sett{\eta_{k}^{\al}}_{k\in \Z^{d}}$ satisfying 
	\begin{align*}
		\begin{cases*}
			\abs{\eta_{k}^{\al} (\xi)} \gtrsim 1,  \ \mbox{ if } \abs{\xi - \inner{k}^{\frac{\al}{1-\al}}k} < c \inner{k}^{\frac{\al}{1-\al}};\\
			\supp \eta_{k}^{\al} \subseteq \sett{\xi: \abs{\xi- \inner{k}^{\frac{\al}{1-\al}}k} < C\inner{k}^{\frac{\al}{1-\al}}};\\
			\sum_{k\in\Z^{d}} \eta_{k}^{\al} (\xi) \equiv 1, \ \forall \xi \in \real^{d};\\
			\abs{\partial^{\gamma} \eta_{k}^{\al}(\xi)} \leq C_{\al} \inner{k}^{-\frac{\al\abs{\gamma}}{1-\al}}, \ \forall \xi \in \real^{d}, \gamma \in \N^{d},
		\end{cases*}
	\end{align*}
	where $C_{\al}$ is a positive constant depending only on $d$ and $\al$. We usually call these $\sett{\eta_{k}^{\al}}_{k\in \Z^{d}}$ the bounded admission partition of unity corresponding ($\al-$ BAPU) to the $\al$-covering $\sett{B_{k}}_{k\in \Z^{d}}$. The frequency decomposition operators can be defined by \begin{align*}
		\Box_{k}^{\al} := \FF \eta_{k}^{\al} \F.
	\end{align*}
	Let $1\leq p,q \leq \infty, s\in \real, \al\in[0,1)$, the $\al$-modulation space is defined by \begin{align*}
		M_{p,q}^{s,\al} = \sett{f\in \sch': \norm{f}_{M_{p,q}^{s,\al}} = \left( \sum_{k\in\Z^{d}} \inner{k}^{sq/(1-\al)} \norm{\Box_{k}^{\al} f}_{p}^{q}  \right)^{1/q}<\infty},
	\end{align*}
	with the usual modification when $q=\infty$. 
\end{defn}

When $\al=0$, we usually denote $\mpqsal$ by $\mpqs$. $\mpqsal$ have some basic properties as follows. One can find their proofs in \cite{Han2014$$}.

\begin{lemma}
	\label{lem-mpqsal}
	Let $0<p\leq \infty, 0<q, q_{1}\leq \infty, s,s_{1} \in \real, \al \in (0,1)$. Then we have 
	\begin{description}
		\item[(1)] if $s\geq 0$, $q_{1} \geq q$, then $\mpqsal \hookrightarrow M_{p,q_{1}}^{0,\al}$;
		\item[(2)] if $q>q_{1}, s> d(1-\al) (1/q_{1}-1/q)$, then $\mpqsal \hookrightarrow M_{p,q_{1}}^{0,\al}.$
	\end{description}
\end{lemma}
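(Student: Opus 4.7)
\textbf{Proof plan for Lemma \ref{lem-mpqsal}.} The lemma is a purely sequence-space statement once we set $a_{k} := \norm{\Box_{k}^{\al} f}_{p}$, since $\norm{f}_{\mpqsal}$ is exactly the weighted $\ell^{q}$-norm of $(a_{k})_{k\in\Z^{d}}$ with weight $w_{k} := \inner{k}^{s/(1-\al)}$. My plan is therefore to reduce each claim to a one-line monotonicity argument or a single application of H\"older's inequality, relying only on the elementary fact that $\sum_{k\in\Z^{d}} \inner{k}^{-t} < \infty$ if and only if $t>d$.

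For part (1) I first use that $s\geq 0$ and $\al\in(0,1)$ make $w_{k}\geq 1$ for every $k$, so trivially
\begin{align*}
\norm{f}_{M_{p,q}^{0,\al}} = \norm{a_{k}}_{\ell^{q}} \leq \norm{w_{k} a_{k}}_{\ell^{q}} = \norm{f}_{\mpqsal}.
\end{align*}
Then the standard monotonicity $\ell^{q}\hookrightarrow\ell^{q_{1}}$ for $q\leq q_{1}$ yields $\norm{f}_{M_{p,q_{1}}^{0,\al}} \leq \norm{f}_{M_{p,q}^{0,\al}}$, and composition gives the desired embedding.

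For part (2) I would apply H\"older's inequality on $\Z^{d}$ with the dual exponents $q/q_{1}$ and $q/(q-q_{1})$, both admissible since $q>q_{1}$. Writing $a_{k}^{q_{1}} = w_{k}^{-q_{1}}(w_{k}a_{k})^{q_{1}}$ and summing,
\begin{align*}
\sum_{k} a_{k}^{q_{1}} \leq \Big(\sum_{k} \inner{k}^{-\frac{sq_{1}q}{(1-\al)(q-q_{1})}}\Big)^{(q-q_{1})/q} \Big(\sum_{k} w_{k}^{q} a_{k}^{q}\Big)^{q_{1}/q}.
\end{align*}
The first factor is finite exactly when $sq_{1}q/((1-\al)(q-q_{1})) > d$, which rearranges to the hypothesis $s > d(1-\al)(1/q_{1}-1/q)$; the second factor equals $\norm{f}_{\mpqsal}^{q_{1}}$. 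Taking $q_{1}$-th roots yields $\norm{f}_{M_{p,q_{1}}^{0,\al}} \lesssim \norm{f}_{\mpqsal}$.

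No step looks genuinely difficult: the only place requiring care is bookkeeping the factor $(1-\al)$ in the weight $\inner{k}^{s/(1-\al)}$, as it enters the threshold $d(1-\al)(1/q_{1}-1/q)$ in a way that cancels with the exponent on $w_{k}$. The endpoint cases $q=\infty$ or $q_{1}=\infty$ are handled by the usual replacement of $\ell^{q}$-sums by suprema: in part (2) with $q=\infty$, one directly estimates $a_{k}\leq\inner{k}^{-s/(1-\al)}\norm{f}_{\mpqsal}$ and sums, which is the $q\to\infty$ limit of the H\"older argument above.
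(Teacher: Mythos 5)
Your proof is correct. The paper does not prove this lemma itself (it cites Han--Wang for it), but your argument is the standard one: after setting $a_{k}=\norm{\Box_{k}^{\al}f}_{p}$ the statement is exactly the sufficiency half of the weighted sequence-space embedding $\ell_{q}^{s/(1-\al),0}\hookrightarrow \ell_{q_{1}}^{0,0}$, i.e.\ of Lemma~\ref{lem-embed-of-lr} with $s_{1}=s/(1-\al)$, $s_{2}=0$ (note $d/q-d/q_{1}+s/(1-\al)>0$ is precisely $s>d(1-\al)(1/q_{1}-1/q)$), and your H\"older computation with exponents $q/q_{1}$ and $q/(q-q_{1})$ is the usual way to prove that. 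The endpoint handling ($q=\infty$ in part (2); $q_{1}=\infty$ only occurring in part (1)) is also fine, so there is nothing to correct.
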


The sharp embeddings between $\mpqsal$ have been proved before. One can refer \cite{Han2014$$} and \cite{Guo2018Full}. 
\begin{lemma}
	\label{lem-mpqsal-embed}
	Let $0<p,q\leq \infty, s\in \real, \al \in (0,1)$. Then 
	\begin{description}
		\item[(1)] $\mpqsal \hookrightarrow \mpq$ if and only if $s\geq \al \tau(p,q)$.
		\item[(2)] $\mpq \hookrightarrow \mpqsal$ if and only if $s\leq \al \sipq.$
		\item[(3)] $\bpqs \hookrightarrow \mpq^{0,\al}$ if and only if $s\geq (1-\al) \taupq.$
		\item[(4)] $\mpq^{0,\al} \hookrightarrow \bpqs$ if and only if $s\leq (1-\al) \sipq.$
	\end{description}
\end{lemma}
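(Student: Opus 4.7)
The four parts split into two dual pairs, (1)--(2) and (3)--(4). In the Banach range $1\le p,q<\infty$ I would first reduce (2) and (4) to (1) and (3) using the dualities $(\mpqsal)^{*}=M_{p',q'}^{-s,\al}$ (the $\al>0$ analogue of Lemma~\ref{lemma-mpq-wpq}(2)) and $(\bpqs)^{*}=B_{p',q'}^{-s}$ (Lemma~\ref{lemma-bpq-fpq}(6)); the quasi-Banach range would be recovered by complex interpolation with a Hilbert endpoint, using $M_{2,2}^{0,\al}=L^{2}=B_{2,2}^{0}$. The problem then reduces to proving both the embedding and its sharpness for $\mpqsal \hookrightarrow \mpq$ and $\bpqs \hookrightarrow \mpq^{0,\al}$.

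For sufficiency, the geometric input is that an $\al$-covering element $B_{k}$ has diameter $\sim \langle k\rangle^{\al/(1-\al)}$, so it contains $\sim \langle k\rangle^{\al d/(1-\al)}$ unit cubes (for (1)), while a dyadic annulus $D_{j}$ contains $\sim 2^{j(1-\al)d}$ many $\al$-covering elements on the matching scale (for (3)). Fixing $k$ (resp.\ $j$), I would estimate $\|\Box_{j}\Box_{k}^{\al} f\|_{p}$ by a convolution-multiplier bound on $\sigma_{j}\eta_{k}^{\al}$, and then assemble the $\ell^{q}$ norms over $j$ and $k$. The three regions defining $\tau(p,q)$ correspond to three multiplier estimates: a trivial $L^{1}$ bound for the ``$0$'' region; Plancherel combined with $\ell^{q}\hookrightarrow\ell^{2}$ for the middle region $d(1/q-1/p)$; and a Hausdorff--Young (or $\F L^{p}$, $p<1$) estimate backed by Lemma~\ref{lem-compact-support} for the region $d(1/q+1/p-1)$. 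The factor $\al$ in (1)--(2) arises from the dilation scale $\langle k\rangle^{\al/(1-\al)}$ interacting with the weight $\langle k\rangle^{sq/(1-\al)}$ built into the $\mpqsal$ norm; the complementary factor $(1-\al)$ in (3)--(4) comes analogously from the dyadic scale $2^{j}$.

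For necessity I would construct extremal test functions saturating each region: a single $\al$-cube bump with frequency support in one $B_{k}$ for $|k|\to\infty$ (Knapp-type, forcing the critical exponent in the $d(1/q-1/p)$ and $d(1/q+1/p-1)$ regions); a sum $\sum_{|k|\sim N} c_{k}\Box_{k}^{\al} f$ of many disjointly frequency-supported bumps with equal coefficients (forcing the $\ell^{p}$-versus-$\ell^{q}$ loss); and a low-frequency bump near the origin (forcing $s\ge 0$). The expected main obstacle is the region $\tau(p,q)=d(1/q+1/p-1)$ in the quasi-Banach range $p<1$, where Young's inequality must be replaced by the $\F L^{p}$ estimate of Lemma~\ref{lem-compact-support} and the dilation weights must be tracked carefully to land exactly on $\al\tau(p,q)$. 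A further technical issue in (3)--(4) is that dyadic annuli and $\al$-covering elements have different geometries, so the comparison must exploit the finite-overlap property from Definition~\ref{def-al-covering}(ii) rather than assuming a clean partition.
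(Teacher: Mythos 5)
First, note that the paper does not prove this lemma at all: it is quoted as a known result with references to \cite{Han2014$$} and \cite{Guo2018Full}, so there is no internal proof to match your argument against. Judged on its own merits, the core of your plan for parts (1) and (3) — direct sufficiency by counting unit cubes inside an $\al$-covering element ($\sim\inner{k}^{\al d/(1-\al)}$) and $\al$-covering elements inside a dyadic annulus ($\sim 2^{j(1-\al)d}$), combined with the three multiplier estimates matching the three regions of $\taupq$, and necessity by Knapp-type and lattice-sum test functions — is exactly the strategy of the cited references, and your bookkeeping of how the weight $\inner{k}^{s/(1-\al)}$ absorbs the counting losses to produce the factor $\al$ (resp.\ $1-\al$) is correct.

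The genuine gap is your reduction of (2) and (4) to (1) and (3). Duality works in the reflexive range and does turn $\taupq$ into $-\sipq$ under $p\mapsto p'$, $q\mapsto q'$, but your proposal to ``recover the quasi-Banach range by complex interpolation with a Hilbert endpoint'' cannot work: interpolating $M_{2,2}^{0,\al}=L^{2}$ with any Banach endpoint only produces spaces with $p,q\geq 1$, so the range $p<1$ or $q<1$ is unreachable; the endpoints $p=\infty$ or $q=\infty$ are likewise outside the scope of the duality $(\mpqsal)^{*}=M_{p',q'}^{-s,\al}$, which requires $p,q<\infty$. More fundamentally, interpolation is a one-way device: it can transfer sufficiency to intermediate indices but can never yield the \emph{necessity} half of an ``if and only if,'' and it generically destroys the sharp distinction between $\geq$ and $>$ at the boundaries of the regions defining $\taupq$ and $\sipq$ — a point the paper itself flags (Remark after Theorem \ref{thm-wpq-mpqal}, citing \cite{Guo2016Sharpness}) as the reason one cannot interpolate one's way to sharp $\al$-modulation embeddings. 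The repair is to prove (2) and (4) directly by the arguments symmetric to those you give for (1) and (3): reverse Hölder/counting for sufficiency and the dual family of test functions (mass spread over all unit cubes of a single $B_{k}$, resp.\ over all $\al$-covering elements of a single annulus) for necessity, valid for all $0<p,q\leq\infty$.
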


\subsection{Weighted sequence spaces}
\begin{defn}
	Let $0<p\leq \infty$. If $f$ is defined on $\Z^{d}$, we denote \begin{align*}
		\norm{f}_{\ell_{p}^{s,0}} = \norm{\inner{k}^{s} f(k)}_{\ell_{k\in \Z^{d}}^{p}},
	\end{align*}
	and $\ell_{s}^{p,0}$ as the (quasi) Banach space of function $f: \Z^{d} \rightarrow \C $ whose $\ell_{s}^{p,0}$ norm is finite. 
	
	If $f$ is defined on $\N$, we denote \begin{align*}
		\norm{f}_{\ell_{p}^{s,1}} = \norm{2^{js} f(j)}_{\ell_{j}^{p}},
	\end{align*}
	and $\ell_{p}^{s,1}$ as the (quasi) Banach space of function $f: \N \rightarrow \C $, whose $\ell_{p}^{s,1}$ norm is finite.
\end{defn}

We recall the sharp embedding properties of these two weighted sequence spaces (see Lemma 2.9 and 2.10 in \cite{Guo2017Characterization}).
\begin{lemma}
	[Embedding of  $\ell_{p}^{s,0}$] \label{lem-embed-of-lr}
	Suppose $0<q_{1},q_{2} \leq \infty, s_{1}, s_{2}\in \real$. Then $\ell_{q_{1}}^{s_{1},0} \hookrightarrow \ell_{q_{2}}^{s_{2},0}$ if and only if one of the following conditions is satisfied.
	\begin{description}
		\item[(1)] $q_{1} \leq q_{2}, s_{1} \geq s_{2};$
		\item[(2)] $q_{1}> q_{2}, s_{1}+d/q_{1} > s_{2}+d/q_{2}.$
	\end{description}
\end{lemma}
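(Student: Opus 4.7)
The plan is to split the proof into sufficiency and necessity, each handled by direct estimates on $\Z^d$ with the polynomial weight $\langle k\rangle$.

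For sufficiency, I would treat the two cases separately. In case (1), since $q_1\leq q_2$ we have the embedding $\ell^{q_1}\hookrightarrow\ell^{q_2}$ on $\Z^d$, and since $s_1\geq s_2$ we get the pointwise bound $\langle k\rangle^{s_2}|f(k)|\leq\langle k\rangle^{s_1}|f(k)|$. Chaining these gives
\begin{align*}
\|f\|_{\ell_{q_2}^{s_2,0}}=\|\langle k\rangle^{s_2}f(k)\|_{\ell^{q_2}}\leq\|\langle k\rangle^{s_1}f(k)\|_{\ell^{q_2}}\leq\|\langle k\rangle^{s_1}f(k)\|_{\ell^{q_1}}=\|f\|_{\ell_{q_1}^{s_1,0}}.
\end{align*}
In case (2), where $q_1>q_2$, I would choose $r\in(0,\infty]$ with $1/r=1/q_2-1/q_1$ and apply H\"older's inequality on $\Z^d$:
\begin{align*}
\|\langle k\rangle^{s_2}f(k)\|_{\ell^{q_2}}\leq\|\langle k\rangle^{s_2-s_1}\|_{\ell^{r}}\,\|\langle k\rangle^{s_1}f(k)\|_{\ell^{q_1}}.
\end{align*}
The sum $\sum_{k\in\Z^d}\langle k\rangle^{(s_2-s_1)r}$ converges exactly when $(s_1-s_2)r>d$, i.e.\ $s_1+d/q_1>s_2+d/q_2$, giving the claim.

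For necessity, I would build two families of test sequences. First, the single atom $f=\delta_{k_0}$ has $\|f\|_{\ell_{q_i}^{s_i,0}}=\langle k_0\rangle^{s_i}$ for both $i=1,2$; letting $|k_0|\to\infty$ forces $s_1\geq s_2$. Second, the characteristic function $f_n=\chi_{A_n}$ of the dyadic annulus $A_n=\{k\in\Z^d:2^{n-1}\leq|k|<2^n\}$ (with $|A_n|\approx 2^{nd}$) satisfies $\|f_n\|_{\ell_{q_i}^{s_i,0}}\approx 2^{n(s_i+d/q_i)}$, so letting $n\to\infty$ forces $s_1+d/q_1\geq s_2+d/q_2$. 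In the regime $q_1\leq q_2$, the second inequality follows automatically from the first, and condition (1) is exactly the constraint we obtained.

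The delicate part is showing that when $q_1>q_2$ the borderline $s_1+d/q_1=s_2+d/q_2$ is forbidden. Here I would introduce a logarithmic refinement: set
\begin{align*}
f(k)=\langle k\rangle^{-s_1-d/q_1}(\log\langle k\rangle)^{-a}\chi_{\{|k|\geq 2\}},
\end{align*}
and convert the sums to integrals to obtain $\|f\|_{\ell_{q_i}^{s_i,0}}^{q_i}\approx\int_2^\infty r^{-1}(\log r)^{-aq_i}\,dr$, using $s_1+d/q_1=s_2+d/q_2$ for the $i=2$ identity. Choosing $a\in(1/q_1,1/q_2)$, which is possible precisely because $q_1>q_2$, makes $f\in\ell_{q_1}^{s_1,0}\setminus\ell_{q_2}^{s_2,0}$, ruling out the endpoint and forcing the strict inequality in (2). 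Combining everything, $s_1\geq s_2$ together with the sharp second constraint yields exactly conditions (1) and (2), which is the main obstacle and the only step where a counterexample beyond pure scaling is needed.
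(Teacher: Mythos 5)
Your proof is correct and complete: the Hölder argument with exponent $1/r=1/q_2-1/q_1$ for sufficiency, the delta and dyadic-annulus test sequences for the two necessary inequalities, and the logarithmic counterexample $\langle k\rangle^{-s_1-d/q_1}(\log\langle k\rangle)^{-a}$ with $a\in(1/q_1,1/q_2)$ to exclude the endpoint when $q_1>q_2$ are exactly the right ingredients, and each step checks out (including the quasi-norm range $q_2<1$, where Hölder still applies with the conjugate pair $r/q_2$, $q_1/q_2$). The paper itself gives no proof of this lemma — it only cites Lemma 2.9 in \cite{Guo2017Characterization} — and your argument is the standard one found there, so there is nothing to compare beyond noting that you have supplied in full a proof the paper omits.
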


\begin{lemma}
	[Embedding of $\ell_{p}^{s,1}$] \label{lem-embed-of-lr1}
	Suppose $0<q_{1},q_{2} \leq \infty, s_{1}, s_{2}\in \real$. Then $\ell_{q_{1}}^{s_{1},1} \hookrightarrow \ell_{q_{2}}^{s_{2},1}$ if and only if one of the following conditions is satisfied.
	\begin{description}
		\item[(1)] $q_{1} \leq q_{2}, s_{1} \geq s_{2};$
		\item[(2)] $s_{1} > s_{2}.$
	\end{description}
\end{lemma}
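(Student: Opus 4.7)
The plan is to prove the lemma in two directions, exploiting the fact that the weight $2^{js}$ is \emph{geometric} in $j$, which is the crucial distinction from the polynomial weight $\langle k\rangle^{s}$ on $\Z^{d}$ appearing in Lemma~\ref{lem-embed-of-lr}. Because of this, any strict gap $s_{1}>s_{2}$ produces a summable factor $2^{-j(s_{1}-s_{2})}$ that completely absorbs the failure of the trivial inclusion between $\ell^{q_{1}}$ and $\ell^{q_{2}}$, so the dimension $d$ does not enter the conditions.

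For sufficiency I would argue as follows. In case $(1)$, since $q_{1}\leq q_{2}$ we have the trivial inclusion $\ell^{q_{1}}(\N)\hookrightarrow \ell^{q_{2}}(\N)$, and since $s_{1}\geq s_{2}$ the pointwise bound $2^{js_{2}}\leq 2^{js_{1}}$ (for $j\geq 0$) gives
\[
\norm{f}_{\ell_{q_{2}}^{s_{2},1}}=\norm{2^{js_{2}}f(j)}_{\ell_{j}^{q_{2}}}\leq \norm{2^{js_{1}}f(j)}_{\ell_{j}^{q_{1}}}=\norm{f}_{\ell_{q_{1}}^{s_{1},1}}.
\]
In case $(2)$, set $\eps=s_{1}-s_{2}>0$ and write $2^{js_{2}}f(j)=2^{-j\eps}\cdot 2^{js_{1}}f(j)$. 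If $q_{1}\leq q_{2}$ we are already in case $(1)$. If $q_{1}>q_{2}$, apply H\"older's inequality in $j$ with conjugate exponents $q_{1}/q_{2}$ and $q_{1}/(q_{1}-q_{2})$ (with the usual modification if $q_{1}=\infty$); the resulting series $\sum_{j\geq 0}2^{-j\eps q_{1}q_{2}/(q_{1}-q_{2})}$ converges, yielding $\norm{f}_{\ell_{q_{2}}^{s_{2},1}}\lesssim \norm{f}_{\ell_{q_{1}}^{s_{1},1}}$. The case $0<q_{2}<1$ is handled by the quasi-triangle inequality, which poses no additional difficulty since the weight is still summable.

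For necessity I would assume $\ell_{q_{1}}^{s_{1},1}\hookrightarrow\ell_{q_{2}}^{s_{2},1}$ and rule out the negations. Testing the embedding on the single Dirac sequence $f_{N}(j)=\delta_{j=N}$ gives $\norm{f_{N}}_{\ell_{q}^{s,1}}=2^{Ns}$, hence $2^{N(s_{2}-s_{1})}\leq C$ for all $N$, which forces $s_{1}\geq s_{2}$. If in addition $s_{1}=s_{2}=:s$, test with $f_{N}(j)=2^{-js}\chi_{[0,N]}(j)$: then $\norm{f_{N}}_{\ell_{q_{i}}^{s,1}}=(N+1)^{1/q_{i}}$, so the embedding yields $(N+1)^{1/q_{2}-1/q_{1}}\leq C$ uniformly in $N$, which forces $q_{1}\leq q_{2}$. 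Combining the two tests shows that if $(2)$ fails (i.e.\ $s_{1}\leq s_{2}$) then we must have $s_{1}=s_{2}$ and $q_{1}\leq q_{2}$, i.e.\ $(1)$ holds.

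The only step that requires any care is the H\"older computation in case $(2)$ when $q_{1}$ or $q_{2}$ is $\infty$ or strictly less than $1$; both degeneracies are resolved by the usual sup-norm replacement and by the quasi-norm version of H\"older, and the geometric decay of the weight makes the bookkeeping straightforward. Everything else is mechanical, and no analogue of the dimensional threshold $d/q_{1}-d/q_{2}$ that appears in Lemma~\ref{lem-embed-of-lr} is needed here.
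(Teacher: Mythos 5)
Your proof is correct. Note that the paper itself does not prove this lemma — it is quoted directly from Lemma 2.10 of \cite{Guo2017Characterization} — but your argument (pointwise comparison of the weights plus $\ell^{q_1}\hookrightarrow\ell^{q_2}$ for sufficiency in case (1), H\"older with the summable geometric factor $2^{-j(s_1-s_2)}$ in case (2), and testing on Dirac sequences and truncated sequences $2^{-js}\chi_{[0,N]}$ for necessity) is exactly the standard proof of this fact, and all the degenerate cases ($q_i=\infty$, $q_i<1$) are handled as you indicate.
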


\subsection{Useful lemmas}
In this subsection, we give some useful results.
The following Bernstein's inequality is very useful in time-frequency analysis (see \cite{Wang2011Harmonic}) :

\begin{lemma}[Bernstein's inequality]\label{lem-bernstein}
	
	Let $0< p \leq q \leq \infty, b>0,\xi_{0} \in \real^{d}$. Denote $L_{B(\xi,b)}^{p} = \sett{ f\in L^{p}: \mbox{ supp } \hat{f} \subseteq B(\xi,R)}$. Then there exists $C(d,p,q)>0$, such that \begin{align*}
		\norm{f}_{q} \leq C(d,p,q) R^{d(1/p-1/q)} \norm{f}_{p}
	\end{align*} 
	holds for all $f\in L_{B(\xi,b)}^{p}$ and $C(d,p,q)$ is independent of $b>0$ and $\xi_{0} \in \real^{d}$.
\end{lemma}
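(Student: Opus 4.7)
The plan is the classical translation–dilation reduction, followed by a reproducing formula and Young's convolution inequality in the $p\geq 1$ regime, with an absorption trick in the quasi-Banach range.

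First I would normalize. Setting $g(x) := e^{-ix\cdot\xi_{0}} f(x)$ gives $\hat g(\xi) = \hat f(\xi+\xi_{0})$ while preserving $|g|=|f|$ pointwise, so both $L^{p}$ and $L^{q}$ norms are unchanged and we may translate $\xi_{0}$ to the origin. Then setting $h(y) := g(y/R)$ moves the Fourier support into $B(0,1)$ and rescales $\|h\|_{s} = R^{d/s}\|g\|_{s}$ for every $s$, so the target inequality $\|f\|_{q}\leq C R^{d(1/p-1/q)}\|f\|_{p}$ collapses to the $R=1$ case $\|h\|_{q}\leq C\|h\|_{p}$ with a constant depending only on $d,p,q$.

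Next I would fix $\phi\in\sch(\real^{d})$ with $\hat\phi\equiv 1$ on $B(0,1)$, so that $\hat h=\hat h\cdot\hat\phi$, i.e., $h=h*\phi$. When $1\leq p\leq q\leq\infty$, Young's convolution inequality applied to this identity gives
\[
\|h\|_{q}\leq \|h\|_{p}\,\|\phi\|_{r},\qquad \tfrac{1}{r}=1+\tfrac{1}{q}-\tfrac{1}{p}\in[0,1],
\]
and $\|\phi\|_{r}<\infty$ because $\phi$ is Schwartz; the constant $C(d,p,q):=\|\phi\|_{r}$ depends only on the parameters, as desired.

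The main obstacle is the quasi-Banach range $0<p<1$, where Young's inequality is unavailable. Here I would first establish the $q=\infty$ endpoint $\|h\|_{\infty}\leq C\|h\|_{p}$ by bounding the reproducing representation pointwise:
\[
|h(x)|\leq \int|h(y)|\,|\phi(x-y)|\,dy\leq C_{N}\int|h(y)|\,(1+|x-y|)^{-N}\,dy,
\]
and then splitting $|h(y)|=|h(y)|^{1-p}|h(y)|^{p}\leq \|h\|_{\infty}^{1-p}|h(y)|^{p}$. Taking the supremum in $x$ and using $(1+|x-y|)^{-N}\leq 1$ yields $\|h\|_{\infty}\leq C_{N}\|h\|_{\infty}^{1-p}\|h\|_{p}^{p}$, hence $\|h\|_{\infty}\leq C\|h\|_{p}$ after absorption, with a brief mollification argument to guarantee $\|h\|_{\infty}<\infty$ a priori (any $L^{p}$ function with compactly supported spectrum is the restriction of an entire function and is continuous). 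Finally, for intermediate $q\in(p,\infty)$ I would interpolate via $\|h\|_{q}^{q}\leq \|h\|_{\infty}^{q-p}\|h\|_{p}^{p}$, which combines the endpoint bound with the trivial $L^{p}$ estimate to close the argument.
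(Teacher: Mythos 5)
The paper does not prove this lemma; it is quoted from \cite{Wang2011Harmonic} as a known result, so there is no internal proof to compare against. Your argument is the classical Nikol'skij--Plancherel--P\'olya proof: modulation and dilation reduce to $R=1$, $\xi_{0}=0$; the reproducing identity $h=h*\phi$ plus Young's inequality handles $1\leq p\leq q\leq\infty$; the splitting $|h|=|h|^{1-p}|h|^{p}$ with absorption gives the $q=\infty$ endpoint for $0<p<1$; and $\norm{h}_{q}^{q}\leq\norm{h}_{\infty}^{q-p}\norm{h}_{p}^{p}$ closes the intermediate range. This is correct in outline and is essentially the proof in the cited reference.

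One step needs more care than you give it. The absorption $\norm{h}_{\infty}\leq C\norm{h}_{\infty}^{1-p}\norm{h}_{p}^{p}\Rightarrow\norm{h}_{\infty}\leq C^{1/p}\norm{h}_{p}$ is legitimate only once you know $\norm{h}_{\infty}<\infty$, and your parenthetical justification (a band-limited $L^{p}$ function is continuous) yields only local boundedness, not a finite global supremum. The standard repairs are either (i) to run the same splitting on the Peetre-type maximal function $h^{*}(x)=\sup_{y}|h(y)|(1+|x-y|)^{-a}$, which is finite for every $x$ when $a$ is large because Paley--Wiener--Schwartz gives a polynomial growth bound $|h(y)|\lesssim(1+|y|)^{M}$, and then bootstrap down to $a=d/p$; or (ii) to localize, bounding $\sup_{|x|\leq R}|h(x)|$ by splitting the convolution integral into $|y|\leq 2R$ and $|y|>2R$, absorbing the finite quantity $\sup_{|x|\leq 2R}|h(x)|$, and checking the resulting bound is uniform in $R$. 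A second, trivial point: take $\hat\phi\equiv1$ on a neighborhood of the closed unit ball (say on $B(0,2)$) rather than on $B(0,1)$ itself, so that $\hat h\,(1-\hat\phi)=0$ holds in $\sch'$ without further argument. With these adjustments your proof is complete.
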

Also, by using the Bernstein's inequality, we can get the following Young type inequality for $0<p<1$:

\begin{lemma}[\cite{Kobayashi2006Modulation}]
	\label{lem-young-p<1}
	Let $0<p<1,R_{1},R_{2}>0,\xi_{1},\xi_{2} \in \real^{d}$, then there exists $C(d,p)>0$, such that 
	\begin{align*}
		\norm{\abs{f} * \abs{g}}_{p} \leq C(d,p) (R_{1}+R_{2})^{d(1/p-1)} \norm{f}_{p} \norm{g}_{p}
	\end{align*}
	holds for all $f\in L_{B(\xi_{1},R_{1})}^{p}, g\in L_{B(\xi_{2},R_{2})}^{p}$.
\end{lemma}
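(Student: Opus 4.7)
The plan is to reduce the inequality by symmetries to the normalized case $R_{1}+R_{2}=1$, $\xi_{1}=\xi_{2}=0$, and then discretize the functional convolution into a convolution of sequences on $\Z^{d}$, where the quasi-triangle inequality for $\ell^{p}$ with $p\leq 1$ does the heavy lifting. Since $|f|$ is unchanged by multiplying $f$ by a unimodular exponential, I first absorb the modulations and assume $\xi_{1}=\xi_{2}=0$. Setting $R = R_{1}+R_{2}$, the $L^{p}$-isometric rescaling $\tilde{f}(x) := R^{-d/p} f(x/R)$ shrinks the Fourier supports into $B(0,1)$ and converts the statement into
$$\| |f|*|g|\|_{p} \leq C(d,p)\|f\|_{p}\|g\|_{p}\qquad (\hat{f},\hat{g}\text{ supported in }B(0,1)),$$
with the factor $(R_{1}+R_{2})^{d(1/p-1)}$ recovered on unwinding the scaling.

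Next, I decompose $\real^{d} = \bigsqcup_{n\in\Z^{d}} Q_{n}$ into unit cubes $Q_{n} = [0,1)^{d}+n$ and set $B^{f}_{n} := \sup_{Q_{n}}|f|$, $B^{g}_{n} := \sup_{Q_{n}}|g|$. For band-limited $f$, a Plancherel--Polya (Peetre) sup-to-integral bound
$$ (B^{f}_{n})^{p} \lesssim \sum_{m\in\Z^{d}}(1+|n-m|)^{-Np}\int_{Q_{m}}|f(y)|^{p}\dy,\qquad N>d/p,$$
yields $\|B^{f}\|_{\ell^{p}(\Z^{d})}\lesssim \|f\|_{p}$ after summing and Fubini, and likewise for $B^{g}$. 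For $x\in Q_{n}$ one then has
$$(|f|*|g|)(x) = \sum_{m}\int_{Q_{m}}|f(x-y)||g(y)|\dy \leq \sum_{m} B^{f,\star}_{n-m} B^{g}_{m},$$
where $B^{f,\star}_{l} := \sup_{\|k\|_{\infty}\leq 1} B^{f}_{l+k}$ absorbs the fact that $x-y$ ranges in the double cube around $n-m$; finite overlap gives $\|B^{f,\star}\|_{\ell^{p}}\lesssim \|B^{f}\|_{\ell^{p}}$.

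The endgame is the $\ell^{p}$-convolution inequality
$$\|a*b\|_{\ell^{p}}^{p}\leq \|a\|_{\ell^{p}}^{p}\|b\|_{\ell^{p}}^{p}\qquad (0<p\leq 1),$$
immediate from the $p$-subadditivity $(\sum x_{i})^{p}\leq \sum x_{i}^{p}$ together with Fubini. Applying it with $a=B^{f,\star}$, $b=B^{g}$ closes the normalized case, and combined with the scaling in Step 1 delivers the full bound. The main technical hurdle is the Plancherel--Polya sup-to-integral estimate used in the discretization step: for $p<1$, Hölder is no longer available, so one must genuinely exploit the band-limitation of $f$, typically by writing $f$ as convolution with a reproducing Schwartz kernel (compatible with Lemma~\ref{lem-bernstein}) and then invoking the subharmonic-type mean-value inequality satisfied by $|f|^{p}$.
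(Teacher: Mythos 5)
Your argument is correct: the modulation/dilation reduction, the Plancherel--Polya bound $\norm{(\sup_{Q_n}|f|)_n}_{\ell^p}\lesssim\norm{f}_p$ for band-limited $f$, and the $\ell^p$-convolution inequality for $p\leq 1$ fit together exactly as you describe, and you rightly flag the sup-to-integral estimate as the one genuinely nontrivial ingredient for $p<1$. The paper gives no proof of this lemma (it is quoted from Kobayashi's 2006 paper), and your unit-cube discretization is essentially the standard argument found in that reference, so there is nothing to add beyond making the Plancherel--Polya step fully rigorous as you indicate.
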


\begin{lemma}[\cite{Guo2019Characterizations}]
	\label{lem-convolution-wpq}
	Let $0<p\leq 1$. Then we have $W_{p,\infty} * W_{p,\infty} \subseteq W_{p,\infty}.$
\end{lemma}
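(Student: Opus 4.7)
The plan is to exploit the Fourier-local behavior of convolution via the frequency-uniform decomposition $\{\Box_n\}_{n \in \Z^d}$. The starting observation is that $\Box_n(f*g) = \Box_n f * g$ (convolution commutes with the Fourier multiplier $\sigma_n(D)$). Decomposing $g = \sum_m \Box_m g$, the Fourier transform of $\Box_n f * \Box_m g$ equals $\sigma_n \hat f \cdot \sigma_m \hat g$, which is supported in $\supp \sigma_n \cap \supp \sigma_m$. Since this intersection is empty unless $|n-m|_\infty \leq 3/2$, one obtains the finite-sum reduction
\[
\Box_n(f*g) = \sum_{|j|_\infty \leq 3/2} \Box_n f * \Box_{n+j} g,
\]
with only $O_d(1)$ terms.

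Next, applying the $p$-subadditivity $|a+b|^p \leq |a|^p + |b|^p$ valid for $0 < p \leq 1$ and distributing the supremum over the finite sum over $j$,
\[
\sup_n |\Box_n(f*g)(x)|^p \leq C \sum_{|j|_\infty \leq 3/2} \sup_n |\Box_n f * \Box_{n+j} g(x)|^p.
\]
Writing $M_f(y) := \sup_n |\Box_n f(y)|$ so that $\normo{M_f}_{p} = \normo{f}_{W_{p,\infty}}$, the pointwise bounds $|\Box_n f(y)| \leq M_f(y)$ and $|\Box_{n+j} g(x-y)| \leq M_g(x-y)$ yield
\[
\sup_n |\Box_n f * \Box_{n+j} g(x)| \leq (M_f * M_g)(x).
\]

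The main obstacle is converting this pointwise bound into the desired $L^p$ estimate $\normo{f*g}_{W_{p,\infty}} \leq C \normo{f}_{W_{p,\infty}} \normo{g}_{W_{p,\infty}}$, because $L^p$ is not a convolution algebra for $p<1$ and so $\normo{M_f * M_g}_p$ is not directly controlled by $\normo{M_f}_p \normo{M_g}_p$. The remedy is to work at the level of the individual pieces: each $\Box_n f * \Box_{n+j} g$ has Fourier support contained in a single unit cube (the intersection $\supp \sigma_n \cap \supp \sigma_{n+j}$), so Lemma \ref{lem-young-p<1} yields the Young-type bound $\normo{\Box_n f * \Box_{n+j} g}_p \leq C \normo{\Box_n f}_p \normo{\Box_{n+j} g}_p$. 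The final bookkeeping step---the technical heart of the proof---exploits the near-disjointness of the Fourier supports of these pieces across $n$ (they lie in essentially disjoint unit cubes in frequency) to upgrade the term-wise $L^p$ estimates into the full $L^p(\ell^\infty)$ bound, rather than the trivially obtained but strictly weaker $L^p(\ell^p) = W_{p,p}$ bound that would follow from merely replacing $\sup_n$ by $\sum_n$ via $p$-subadditivity.
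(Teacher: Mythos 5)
Your setup is correct up through the reduction to a finite sum over $j$, and you have correctly diagnosed why the two obvious continuations fail; but the step you yourself call ``the technical heart'' is left unexecuted, and the mechanism you point to --- near-disjointness of the Fourier supports of the pieces $\Box_n f * \Box_{n+j} g$ across $n$ --- is not the one that closes the argument. Frequency disjointness buys orthogonality-type gains in $L^2$-flavoured situations; it does nothing to control $\normo{\sup_n\abs{\Box_n f*\Box_{n+j}g}}_{p}$ for $p<1$, since all of these pieces can simultaneously be large at the same point $x$ in physical space. As written, the term-wise use of Lemma \ref{lem-young-p<1} followed by replacing $\sup_n$ with $\ell^p_n$ proves only $W_{p,p}*W_{p,\infty}\subseteq W_{p,\infty}$, which is strictly weaker than the claim. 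So there is a genuine gap.

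The missing ingredient is a pointwise convolution inequality at the level of $p$-th powers for band-limited functions. For $u,v$ with $\supp \hat{u}\subseteq B(\xi_1,R)$, $\supp \hat{v}\subseteq B(\xi_2,R)$ and $0<p\leq 1$, the function $y\mapsto u(y)v(x-y)$ has Fourier support in a ball of radius $2R$, so Bernstein's inequality (Lemma \ref{lem-bernstein} with exponents $p\leq 1$) gives
\begin{align*}
	\abs{u*v(x)} \leq \norm{u(\cdot)v(x-\cdot)}_{1} \lesssim R^{d(1/p-1)} \norm{u(\cdot)v(x-\cdot)}_{p} = R^{d(1/p-1)} \brk{\brk{\abs{u}^{p}*\abs{v}^{p}}(x)}^{1/p}.
\end{align*}
Applying this with $u=\Box_n f$, $v=\Box_{n+j}g$, $R\approx 1$, and majorizing $\abs{\Box_n f}\leq M_f$, $\abs{\Box_{n+j}g}\leq M_g$ \emph{before} taking the supremum in $n$, you obtain $\sup_n\abs{\Box_n f*\Box_{n+j}g(x)}^{p}\lesssim (M_f^{p}*M_g^{p})(x)$ uniformly in the finitely many $j$; integrating in $x$ and using $\normo{M_f^{p}*M_g^{p}}_{1}=\normo{M_f}_{p}^{p}\normo{M_g}_{p}^{p}$ yields exactly $\norm{f*g}_{W_{p,\infty}}\lesssim \norm{f}_{W_{p,\infty}}\norm{g}_{W_{p,\infty}}$. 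With this replacing your final paragraph the argument is complete; your pointwise bound $\sup_n\abs{\Box_n f*\Box_{n+j}g}\leq M_f*M_g$ and the term-wise Young inequality should both be discarded, as neither can be upgraded to the required $L^{p}(\ell^{\infty}_n)$ estimate.
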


\begin{lemma}[\cite{Sugimoto2007dilation}]
	\label{lem-scaling-mpq}
	Let $0<q\leq1$. Then for any $0<\la\leq 1$, we have \begin{align*}
		\norm{f_{\la}}_{M_{\infty,q}} \lesssim \norm{f}_{M_{\infty,q}},
	\end{align*}
	where $f_{\la} (x) = f(\la x)$.
\end{lemma}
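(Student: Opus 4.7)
The plan is to prove the estimate via the uniform frequency-decomposition characterization of $M_{\infty,q}$, namely $\norm{f}_{M_{\infty,q}}^q \approx \sum_{k\in\Z^d}\norm{\Box_k f}_\infty^q$, exploiting the quasi-triangle inequality available when $q\leq 1$.

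First I would compute how the frequency blocks of $f_\la$ relate to those of $f$. Since $\hat{f_\la}(\xi)=\la^{-d}\hat f(\xi/\la)$, the change of variable $\eta=\xi/\la$ gives
\begin{align*}
\Box_k f_\la(x)=\FF\bigl[\sigma_k(\xi)\hat{f_\la}(\xi)\bigr](x)=\bigl[\FF(\sigma_k(\la\cdot)\hat f)\bigr](\la x).
\end{align*}
Now I insert the partition of unity $1=\sum_{j\in\Z^d}\sigma_j(\eta)$ to write $\sigma_k(\la\cdot)\hat f=\sum_j \sigma_k(\la\cdot)\sigma_j\hat f$, and use the convolution identity $\FF(gh)=\FF g*\FF h$ to obtain
\begin{align*}
\Box_k f_\la(x)=\sum_{j\in J_k}\bigl[\FF(\sigma_k(\la\cdot))*\Box_j f\bigr](\la x),
\end{align*}
where $J_k=\{j:\sigma_k(\la\cdot)\sigma_j\not\equiv 0\}$. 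A direct support check shows $J_k=\{j:|\la j-k|\lesssim 1\}$ (using $\la\leq 1$), whose cardinality is $\lesssim \la^{-d}$, while for each fixed $j$ the dual set $\{k:|\la j-k|\lesssim 1\}$ has cardinality bounded by a constant depending only on $d$.

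Next I estimate each term. By Young's inequality,
\begin{align*}
\norm{\FF(\sigma_k(\la\cdot))*\Box_j f}_{\infty}\leq \norm{\FF(\sigma_k(\la\cdot))}_1\,\norm{\Box_j f}_{\infty},
\end{align*}
and a direct computation gives $\FF(\sigma_k(\la\cdot))(x)=\la^{-d}e^{ikx/\la}(\FF\sigma)(x/\la)$, so that $\norm{\FF(\sigma_k(\la\cdot))}_1=\norm{\FF\sigma}_1$ is a constant independent of $k$ and $\la$. Using the quasi-triangle inequality $(\sum a_j)^q\leq \sum a_j^q$ valid for $0<q\leq 1$, I get
\begin{align*}
\norm{\Box_k f_\la}_\infty^q\lesssim \sum_{j\in J_k}\norm{\Box_j f}_\infty^q.
\end{align*}

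Finally, summing in $k$ and swapping the order of summation,
\begin{align*}
\norm{f_\la}_{M_{\infty,q}}^q=\sum_k \norm{\Box_k f_\la}_\infty^q\lesssim \sum_j \#\{k:|\la j-k|\lesssim 1\}\,\norm{\Box_j f}_\infty^q\lesssim \norm{f}_{M_{\infty,q}}^q,
\end{align*}
which yields the claim. The main subtlety I anticipate is the bookkeeping of the two opposite counting facts — that a single $k$ requires $\la^{-d}$ terms $j$, while each $j$ sees only $O(1)$ terms $k$ — combined with the crucial use of $q\leq 1$ to avoid picking up a factor of $\la^{-d(1-1/q)}$; the dilation constraint $\la\leq 1$ is what makes both the support count and the uniform $L^1$-bound on $\FF(\sigma_k(\la\cdot))$ work out in our favor.
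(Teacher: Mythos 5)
Your argument is correct, and every step checks out: the identity $\Box_k f_\la(x)=\bigl[\FF\bigl(\sigma_k(\la\cdot)\hat f\bigr)\bigr](\la x)$, the support computation giving $J_k=\{j:|\la j-k|\le \tfrac34(1+\la)\}$ together with the dual count $\#\{k: j\in J_k\}=O(1)$ uniformly in $j$, the $\la$-independent bound $\norm{\FF(\sigma_k(\la\cdot))}_{1}=\norm{\FF\sigma}_{1}$ from Young's inequality, and the use of $(\sum_j a_j)^q\le\sum_j a_j^q$ for $q\le 1$ before interchanging the order of summation. Note that the paper does not prove this lemma itself; it imports it from Sugimoto--Tomita, whose general dilation theorem for $M_{p,q}$ is established via the short-time Fourier transform with dilated windows and produces the full family of sharp scaling exponents for all $(p,q)$. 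Your frequency-side lattice-counting argument is a self-contained, more elementary derivation of exactly the special case needed here ($p=\infty$, $q\le 1$, $\la\le 1$, exponent $0$), and it makes transparent where each hypothesis enters: $q\le 1$ converts the $\la^{-d}$-fold overlap of a single block $\sigma_k(\la\cdot)$ with the blocks $\sigma_j$ into a harmless $O(1)$ dual count after swapping sums (for $q>1$ one would instead pick up a factor $\la^{-d(1-1/q)}$, consistent with the known sharp exponents), while $\la\le 1$ is what keeps the radius $\tfrac34(1+\la)$ in the dual count bounded.
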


\section{Proof of Theorem \ref{thm-Lr-to-wpq} and \ref{thm-wpq-to-Lr}} \label{sec-Lr-wpq}
Firstly, we recall the  characterization of embedding from $L^{s,p}$ to $\wpq$, given in \cite{Guo2017Characterization}.
\begin{lemma}\label{prop-Lp-to-wpq}
	Let $1\leq p \leq \infty, 0<q \leq \infty, s\in \real$. Then $L^{s,p} \hookrightarrow \wpq$ if and only if $r\leq p$ and one of the following conditions is satisfied.
	\begin{description}
		\item[(1)] $p>q,q<2, s>\tauonepq;$
		\item[(2)] $1<p, p \leq q$ or $2\leq q, s\geq \tauonepq;$
		\item[(3)] $p=1,q= \infty, s\geq \tauonepq;$
		\item[(4)] $p=1,q<\infty, s> \tauonepq$.
	\end{description}
\end{lemma}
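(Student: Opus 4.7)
The plan is to prove both directions by exploiting the identification $L^{s,p}=F^{s}_{p,2}$ for $1<p<\infty$ (Lemma \ref{lemma-bpq-fpq}(8)) and then reducing the problem to a comparison between the dyadic Triebel structure of $L^{s,p}$ and the uniform frequency-decomposition structure of $\wpq$. The two decompositions interact via Bernstein's inequality (Lemma \ref{lem-bernstein}): on a unit cube $Q_{k}$ centered at $k$, roughly $\langle k\rangle^{j}\sim 2^{j}$ dyadic annuli contribute, and conversely a dyadic annulus $D_{j}$ contains $\sim 2^{jd}$ unit cubes. The quantity $\tau_{1}(p,q)$ records exactly the derivative loss produced by Bernstein when one swaps the order of an $\ell^{q}$ sum over uniform cubes and an $L^{p}$ norm.

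For sufficiency I would split by region. In the interior of region (1), where $q\geq 2$ and $1/p+1/q\leq 1$, one has $\tau_{1}=0$, and the bound $\|\wpq\|\lesssim\|F^{0}_{p,2}\|$ follows from Plancherel/Hausdorff--Young on each unit cube together with the embedding $F^{0}_{p,2}\hookrightarrow F^{0}_{p,q}$ after suitable summation. For region (2), where $q<2$, we gain $d(1/q-1/2)$ derivatives by applying Bernstein to pass between $\ell^{2}$ and $\ell^{q}$ on the finitely many dyadic pieces that intersect a given unit cube. For region (3), where $1/p+1/q>1$, the loss $d(1/p+1/q-1)$ comes from Bernstein between $L^{p}$ and $L^{p'}$-type estimates on unit cubes. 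The endpoint cases $p=1$ and $p=\infty$ are handled separately: for $p=1$ one uses $h_{1}\hookrightarrow L^{1}$ together with the fact that the Fourier transform maps $h_{1}$ into a weighted $L^{\infty}$-type space, while for $p=\infty$ one uses duality $L^{s,\infty}\leftarrow L^{-s,1}$ combined with Lemma \ref{lemma-mpq-wpq}(2) relating $\wpq$ and $W_{p',q'}^{-s}$ on the dual side.

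For necessity I would use two families of test functions. First, a single frequency-localized bump $f_{k}=\FF\phi(\cdot-k)$ yields $\|f_{k}\|_{L^{s,p}}\sim\langle k\rangle^{s}$ while $\|f_{k}\|_{\wpq}\sim 1$ (by Lemma \ref{lem-compact-support}); letting $|k|\to\infty$ forces $s$ to satisfy $s\geq\tau_{1}(p,q)$ or, after summing over many $k$ with suitable coefficients $c_{k}$, forces the strict inequality when $q<\infty$. Second, to probe regions (2) and (3) one uses functions of the form $f=\sum_{k\in A}c_{k}\FF\phi(\cdot-k)$ with $A$ a cube of side $2^{j}$; comparing the resulting $L^{s,p}$ and $\wpq$ norms as $j\to\infty$ pins down the exponent $\tau_{1}(p,q)$ in the two subregions. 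The boundary between strict and non-strict inequality in the four clauses is determined by whether a geometric series in $j$ converges, which in turn dictates whether $s=\tau_{1}$ is permitted or must be exceeded.

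The main obstacle will be pinning down the endpoint clauses (3) and (4): for $p=1$ one must distinguish $q=\infty$ (where $s\geq\tau_{1}$ suffices) from $q<\infty$ (where strict inequality is required), and the corresponding counterexamples must be built as a carefully weighted sum of dyadic-frequency test packets so that the $\wpq$ norm stays bounded while the $L^{1}$-Sobolev norm at the critical exponent diverges logarithmically. A parallel difficulty arises at the transition between regions (2) and (3), where the two formulas for $\tau_{1}$ match on the line $1/p+1/q=1$ but the sharp test function changes character; I would expect to merge the two cases via complex interpolation with Lemma \ref{lemma-mpq-wpq}(3), checking the endpoints separately with the two test families above.
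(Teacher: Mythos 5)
A preliminary remark: the paper does not prove this lemma at all --- it is quoted from \cite{Guo2017Characterization} (it is the case $r=p$ of Theorem \ref{thm-Lr-to-wpq}) and used as a black box, so your proposal can only be measured against that reference and on its own merits. Your sufficiency skeleton (endpoint embeddings such as $L^{\infty}\hookrightarrow W_{\infty,2}$ and $L^{p}\hookrightarrow W_{p,p'}$, interpolation, and a derivative-for-summability trade) is workable, but the necessity half has a genuine gap: none of your test families can produce the branch $\tauonepq=d(1/q-1/2)$. A single bump $\FF\phi(\cdot-k)$ gives only $s\geq 0$ (and, incidentally, Lemma \ref{lem-compact-support} does not apply to it, since it is its Fourier transform, not the function itself, that is compactly supported), while a deterministic sum $\sum_{k\in A}c_{k}\FF\phi(\cdot-k)$ over a cube $A$ of side $2^{j}$ produces a Dirichlet-kernel profile and detects only the branch $d(1/p+1/q-1)$. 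To reach the exponent $1/2$ one must take random signs $c_{k}=\omega_{k}$ and use Khinchin's inequality, which gives $\norm{f^{\omega}}_{L^{p}}\approx(\# A)^{1/2}$ against $\norm{f^{\omega}}_{\wpq}\approx(\# A)^{1/q}$; this is exactly the randomization step (Proposition \ref{prop-radomization} here, Proposition 5.3 in \cite{Guo2017Characterization}) that your sketch never invokes. Without it you cannot exclude $s<d(1/q-1/2)$ in the region $p>2$, $q<2$, where that is the binding constraint.

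Three further problems. First, your counterexample for clause (4) is oriented the wrong way: to disprove $L^{d/q,1}\hookrightarrow W_{1,q}$ one needs the \emph{target} norm to blow up while the source norm stays bounded; the correct witness is the approximate identity $t^{-d}\eta(t^{-1}x)$, for which $\norm{f}_{1}\approx1$ while the $W_{1,q}^{-d/q}$ norm dominates $\normo{\jb{k}^{-d/q}}_{\ell^{q}(\abs{k}\lesssim t^{-1})}\to\infty$. A function with bounded $\wpq$ norm and divergent Sobolev norm, as you describe, does not contradict the embedding. Second, in the region $p\geq2\geq q$ the loss $d(1/q-1/2)$ does not come from Bernstein applied to the ``finitely many dyadic pieces meeting a unit cube'' (a unit cube meets at most two dyadic annuli, so nothing is gained or lost there); it comes from the weighted embedding $W_{p,2}^{s}\hookrightarrow W_{p,q}$ over the infinite index set $\Z^{d}$, i.e.\ Lemma \ref{lemma-mpq-wpq}(4), applied after $L^{p}\hookrightarrow W_{p,2}$. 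Third, the equality cases $s=\tauonepq$ permitted by clause (2) when $\tauonepq>0$ (for instance $1<p\leq q\leq p'$ with $p<2$, where $\tauonepq=d(1/p+1/q-1)$) are precisely where your ``convergence of a geometric series'' criterion and any $\eps$-lossy interpolation are silent: Lemma \ref{lemma-mpq-wpq}(4) only yields the strict inequality, and you cannot interpolate from the $p=1$ endpoint because clause (4) forbids equality there. These endpoint cases need a separate sharp argument that the proposal does not supply.
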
 
Then, we give some propositions of discretization and randomization.

\begin{prop}[Low frequency scaling]
	\label{prop-low-frequency-scaling}
	Let $0<p\leq \infty$, $B$ be the unit ball in $\real^{d}$, denote $L_{B}^{p} := \sett{ f\in L^{p}: \supp \widehat{f} \subseteq B}$. If  $L_{B}^{p} \hookrightarrow L_{B}^{r}$, then $p\leq r$.	
\end{prop}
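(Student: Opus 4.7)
The plan is to prove this by a dilation (scaling) argument. Take any fixed nonzero test function $f \in L_{B}^{p}$; concretely, one can pick a Schwartz function $f$ whose Fourier transform $\widehat{f}$ is a smooth bump compactly supported inside the unit ball $B$, so that $\|f\|_{p},\|f\|_{r} \in (0,\infty)$ for every $p,r \in (0,\infty]$ simultaneously.

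For $0<\la\leq 1$, define the dilate $f_{\la}(x)=f(\la x)$. The Fourier transform rescales as $\widehat{f_{\la}}(\xi) = \la^{-d}\widehat{f}(\xi/\la)$, so $\supp \widehat{f_{\la}} \subseteq \la B \subseteq B$. Hence $f_{\la} \in L_{B}^{p} \cap L_{B}^{r}$ for every $\la \in (0,1]$, and the standard change of variables gives
\begin{align*}
\norm{f_{\la}}_{p} = \la^{-d/p}\norm{f}_{p}, \qquad \norm{f_{\la}}_{r} = \la^{-d/r}\norm{f}_{r},
\end{align*}
with the convention $d/\infty = 0$ (so the endpoint cases $p=\infty$ or $r=\infty$ also fit into this framework).

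Now apply the assumed embedding $L_{B}^{p}\hookrightarrow L_{B}^{r}$ to $f_{\la}$: there exists a constant $C>0$ independent of $\la$ with $\norm{f_{\la}}_{r} \leq C \norm{f_{\la}}_{p}$. Substituting the scaling identities yields
\begin{align*}
\la^{d(1/p - 1/r)} \leq C\, \frac{\norm{f}_{p}}{\norm{f}_{r}}
\end{align*}
uniformly in $\la \in (0,1]$. Let $\la \to 0^{+}$: if $d(1/p-1/r) < 0$, the left-hand side blows up, contradicting the finite right-hand side. Therefore $1/p \geq 1/r$, i.e.\ $p \leq r$, as claimed.

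There is no serious obstacle here; the argument is a routine dimensional analysis. The only minor subtlety is to pick $f$ so that the norm ratio $\norm{f}_{p}/\norm{f}_{r}$ is finite and nonzero in all cases (which is automatic for a nontrivial Schwartz bump), and to handle $p=\infty$ or $r=\infty$ via the $d/\infty=0$ convention so that the scaling identity remains valid at the endpoints.
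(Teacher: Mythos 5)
Your proof is correct and is essentially the same dilation argument the paper uses: pick a fixed Schwartz function with Fourier support in $B$, dilate, compare the scaling exponents $\la^{-d/p}$ versus $\la^{-d/r}$, and let $\la\to 0^{+}$. The only (harmless) difference is cosmetic — the paper uses the $L^{1}$-normalized dilation $\eta_{\la}(x)=\la^{-d}\eta(\la^{-1}x)$ while you use $f(\la x)$, and you spell out the endpoint convention $d/\infty=0$, which the paper leaves implicit.
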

\begin{proof}
	Choose $\eta \in \sch$ with $\supp \eta \subseteq B$, for any $0<\la <1$, take $f=\eta_{\la}$. Then $f\in L_{B}^{q}$ for any $0<q\leq \infty$. If we have $L_{B}^{p} \hookrightarrow L_{B}^{r}$, then \begin{align*}
		\norm{f}_{r} \lesssim \norm{f}_{p}.
	\end{align*}
	By scaling, we have $\la^{-d/r} \lesssim \la^{-d/p}$. Let $\la \rightarrow 0$, we have $p\leq r$.
\end{proof}

\begin{prop}
	[Discretization of Besov] \label{prop-discret-besov}
	Let $0<p,q<p_{0},q_{0}\leq \infty, s\in \real$. Then 
	\begin{description}
		\item[(1)] $B_{p_{0},q_{0}}^{s} \hookrightarrow \wpq  \Longrightarrow \ell_{q_{0}}^{s+d(1-1/p_{0}),1} \hookrightarrow \ell_{p}^{d/p,1}, \ \ell_{q_{0}}^{s,1} \hookrightarrow \ell_{p}^{0,1}.$
		\item[(2)] $B_{p_{0},q_{0}}^{s} \hookrightarrow \wpq  \Longrightarrow \ell_{q_{0}}^{s+d(1-1/p_{0}),1} \hookrightarrow \ell_{q}^{d/p,1}, \ \ell_{q_{0}}^{s,1} \hookrightarrow \ell_{q}^{0,1}. $
		\item[(3)] $\wpq \hookrightarrow B_{p_{0},q_{0}}^{s} \Longrightarrow \ell_{p}^{d/p,1} \hookrightarrow \ell_{q_{0}}^{s+d(1-1/p_{0}),1}, \ \ell_{p}^{0,1} \hookrightarrow \ell_{q_{0}}^{s,1}.$
		\item[(4)] $\wpq \hookrightarrow B_{p_{0},q_{0}}^{s} \Longrightarrow \ell_{q}^{d/p,1} \hookrightarrow \ell_{q_{0}}^{s+d(1-1/p_{0}),1},\  \ell_{q}^{0,1} \hookrightarrow \ell_{q_{0}}^{s,1}. $
	\end{description}
\end{prop}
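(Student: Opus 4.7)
The plan is to prove each of the four implications by constructing families of test functions whose Besov and Wiener amalgam norms can be computed in terms of a free nonnegative sequence $(a_j)_{j\ge 0}$, and then applying the hypothesized function-space embedding to these test functions. Letting $(a_j)$ range over all nonnegative sequences will yield the asserted sequence embeddings.

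Fix a Schwartz function $\psi$ with $\hat\psi$ supported in a small ball about the origin, and choose frequency centers $\xi_j\in D_j$ (e.g.\ $\xi_j=2^j e_1$). The first family of test functions is the unit-scale envelope $f_j(x) = \psi(x-x_j)\, e^{i\xi_j\cdot x}$, whose Fourier transform sits in a single unit cube $Q_{k_j}$ near $\xi_j$ and whose $L^{p_0}$-norm is independent of $j$. For $f = \sum_j a_j f_j$, the disjointness of the dyadic annuli $D_j$ gives $\|f\|_{B_{p_0,q_0}^s}\approx\|(2^{js}a_j)\|_{\ell^{q_0}}$, while the Wiener amalgam norm evaluates as $\|\psi\|_p\|(a_j)\|_{\ell^q}$ when the spatial shifts coincide ($x_j\equiv 0$, so that at every point $x$ the $\ell^q_k$-sum activates over the distinct cubes $k_j$ but the envelope factors $|\psi(x)|$ are common) and as $\|\psi\|_p\|(a_j)\|_{\ell^p}$ when $|x_j-x_{j'}|\to\infty$ fast enough that the spatial envelopes are essentially disjoint. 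Invoking the assumed embedding yields the second sequence embedding in each of (1)--(4).

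The second family is the dilated envelope $g_j(x) = 2^{jd}\psi(2^j(x-y_j))\, e^{i\xi_j\cdot x}$, for which $\|g_j\|_{p_0}\approx 2^{jd(1-1/p_0)}$ and $\hat{g}_j$ fills $B(\xi_j,2^j/10)$, intersecting $\sim 2^{jd}$ of the unit cubes $Q_k$. A direct computation using the concentration of $\hat\psi$ yields $\Box_k g_j(x)\approx \hat\psi((k-\xi_j)/2^j)\, e^{ikx}\,\check\sigma(x-y_j)$ for $k$ in the support of $\hat g_j$, so that $\|g_j\|_{B_{p_0,q_0}^s}$ contributes weight $2^{j(s+d(1-1/p_0))}$ on the Besov side, and summing in $\ell^q_k$ combined with the outer $L^p_x$-integration over spatially separated (respectively coincident) envelopes produces a sequence norm with weight $2^{jd/p}$ on the Wiener side, in $\ell^p$ (respectively $\ell^q$). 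Applying the assumed embedding to $f = \sum_j a_j g_j$ then produces the first sequence embedding in each of (1)--(4).

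Parts (3) and (4) follow from the same test functions with the assumed embedding direction reversed. The main technical obstacle lies in the Wiener amalgam norm calculation for the dilated family: one must carefully track the $\ell^q_k$-summation over the $\sim 2^{jd}$ contributing cubes in the Fourier support of $g_j$ and arrange the geometry of the spatial centers $y_j$ so that the outer $L^p_x$ integration delivers precisely the claimed exponent $d/p$. The Bernstein inequality (Lemma~\ref{lem-bernstein}) is the key tool for controlling individual $\Box_k g_j$ terms, and the disjointness of the dyadic annuli $D_j$ ensures that different $j$-contributions do not interfere. Once these norm computations are in hand, the four sequence embeddings are immediate consequences of the assumed function-space inclusion applied to $f=\sum_j a_j f_j$ or $f=\sum_j a_j g_j$.
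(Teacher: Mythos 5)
Your construction is essentially the one the paper uses: modulated unit-scale bumps for the unweighted sequence embeddings, and bumps filling a whole dyadic frequency block for the weighted ones, with coincident spatial centers producing an $\ell^q$ target and widely separated centers an $\ell^p$ target (the paper proves item (2) exactly this way with $f=\sum_j a_j\mathscr{F}^{-1}\psi_j$, $\psi_j=\psi(2^{-j}\cdot)$, and cites Guo et al.\ for (1) and (3)). The genuine gap sits precisely at the step you defer as ``the main technical obstacle'': the exponent this family delivers on the Wiener side is $d/q$, not $d/p$, and no arrangement of the spatial centers $y_j$ can change that. Concretely, from your own formula $\abs{\Box_k g_j(x)}\approx\abs{\hat\psi(2^{-j}(k-\xi_j))}\,\abs{\check\sigma(x-y_j)}$, the $\ell^q_k$-sum over the $\sim 2^{jd}$ contributing cubes produces the factor $(2^{jd})^{1/q}=2^{jd/q}$, while the outer $L^p_x$-integration acts on the $j$-independent profile $\check\sigma(\cdot-y_j)$ and contributes only the constant $\norm{\check\sigma}_p$; separating the $y_j$ converts the $j$-sum from $\ell^q$ to $\ell^p$ but introduces no power of $2^j$. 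Hence $f=\sum_j a_jg_j$ yields $\ell_{q_0}^{s+d(1-1/p_0),1}\hookrightarrow\ell_q^{d/q,1}$ (coincident centers) and $\hookrightarrow\ell_p^{d/q,1}$ (separated centers), and your assertion that the geometry can be arranged to ``deliver precisely the claimed exponent $d/p$'' is unsubstantiated and false for this family.

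Moreover, the weight $d/p$ cannot be correct as stated: taking $p_0=p=1$, $q_0\leq 1$, $q=\infty$, the embedding $\ell_{q_0}^{s+d(1-1/p_0),1}\hookrightarrow\ell_p^{d/p,1}$ would force $s\geq d$, whereas Lemma \ref{prop-bpq-to-wpq} shows $B_{1,q_0}^{s}\hookrightarrow W_{1,\infty}$ already for $s\geq\tau_1(1,\infty)=0$. The paper's own proof of item (2) derives the target $\ell_q^{d/q,1}$, and every downstream application (the necessity arguments in Theorems \ref{thm-Lr-to-wpq}, \ref{thm-bp0q-to-wpq} and \ref{thm-triebel-to-wiener}, which conclude $s>d(1/p_0+1/q-1)$) invokes the $d/q$ version; the $d/p$ in the displayed statement is a typo. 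So your outline is the right one, but you must correct the target weight to $d/q$ and actually carry out, rather than postulate, the almost-orthogonality argument for the separated-center limit.
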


\begin{proof}
	Proposition 4.1 and 4.2 in \cite{Guo2017Characterization} gave the proof of the (1) and (3) in the special case of $p_{0}=p,q_{0}=q$. The proof could be extended to the general case without any difference. Here, we only give the proof of (2). The proof of (4) is similar, we omit it.
	
	If we have $B_{p_{0},q_{0}}^{s} \hookrightarrow\wpq$, then we have \begin{align}\label{eq-bpqo-wpq}
		\norm{f}_{\wpq} \lesssim \norm{f}_{B_{p_{0},q_{0}}^{s}}, \ \forall f\in B_{p_{0},q_{0}}^{s}.
	\end{align}
	Choose $\psi \in \sch$, such that $\supp \psi \subseteq \sett{\xi\in \real^{d}: 3/4 \leq \abs{\xi} \leq 5/4}$ and $\psi(\xi) = 1$ when $7/8\leq \abs{\xi} \leq 9/8$. For any $j\geq 0$, denote $\psi_{j}(\xi) = \psi(2^{-j} \xi)$, $\wedge_{j}= \sett{k\in \Z^{d}: \psi_{j} \si_{k} = \si_{k}}$.
	Denote $f=\sum_{j\geq 0} a_{j} \FF \psi_{j}$. Then we have 
	\begin{align*}
		\norm{f}_{B_{p_{0},q_{0}}^{s}} &= \norm{a_{j} \norm{\FF \psi_{j}}_{p_{0}}}_{\ell_{j}^{q_{0}}} \approx \norm{a_{j} }_{\ell_{q_{0}}^{s+d(1-1/p_{0}),1}};\\
		 \norm{f}_{\wpq} &= \norm{\norm{\Box_{k} f}_{\ell_{k}^{q}}}_{p} \geq \norm{\norm{\Box_{k} f}_{\ell_{k\in \wedge_{j}}^{q}}}_{p} \\
		 &\geq \norm{a_{j} 2^{jd/q}}_{\ell_{j}^{q}} \norm{\FF \si}_{p} \approx \norm{a_{j}}_{\ell_{q}^{d/q,1}}.
	\end{align*}
	Take $f$ into \eqref{eq-bpqo-wpq}, we have $ \ell_{q_{0}}^{s+d(1-1/p_{0}),1} \hookrightarrow \ell_{q}^{d/q,1}$.
	
	Similarly, for any $j\geq 0$, we choose $k_{j} \in \wedge_{j}$, denote $f= \sum_{j\geq 0} a_{j} \FF \sigma_{k_{j}}$. Take $f$ into \eqref{eq-bpqo-wpq}, we have $\ell_{q_{0}}^{s,1} \hookrightarrow \ell_{p}^{0,1}.$
\end{proof}

\begin{prop}
	[Randomization of $L^{r}$] \label{prop-radomization}
	Let $0<p,q\leq \infty,0<r<\infty$. Then 
	\begin{description}
		\item[(1)] $L^{r} \hookrightarrow \wpqs \Longrightarrow \ell_{2}^{0,0} \hookrightarrow \ell_{q}^{s,0}.$
		\item[(2)] $\wpqs \hookrightarrow L^{r} \Longrightarrow \ell_{q}^{s,0} \hookrightarrow \ell_{2}^{0,0}.$
	\end{description}
\end{prop}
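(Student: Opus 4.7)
The natural approach is a \emph{Rademacher randomization}: for a finite sequence $(a_k)_{|k|\leq N}$ and a Schwartz function $\eta$ whose Fourier transform is supported in a very small ball $B(0,\delta)$ with $\delta$ much smaller than the size of $\supp \sigma_0$, set
\begin{equation*}
f_\omega(x) \;=\; \eta(x) \sum_{|k|\leq N} a_k\, r_k(\omega)\, e^{ikx},
\end{equation*}
where $\{r_k\}$ is a family of independent Rademachers. Because $\widehat{\eta}(\cdot - k)$ is supported in $B(k,\delta)$ and $\sigma_k$ equals $1$ on a neighbourhood of $k$ (after relabelling), the uniform frequency decomposition acts diagonally:
\begin{equation*}
\Box_k f_\omega(x) \;=\; a_k\, r_k(\omega)\, e^{ikx}\, \eta(x).
\end{equation*}

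The plan is then to compute both norms of $f_\omega$ in closed form. Since $|r_k(\omega)|=|e^{ikx}|=1$ pointwise, the Wiener norm is \emph{deterministic}:
\begin{equation*}
\bigl\|\langle k\rangle^{s}\Box_k f_\omega(x)\bigr\|_{\ell^{q}_k}
= |\eta(x)| \cdot \bigl\|\langle k\rangle^{s} a_k\bigr\|_{\ell^{q}_k},
\qquad
\|f_\omega\|_{\wpqs} = \|\eta\|_{p}\,\bigl\|\langle k\rangle^{s} a_k\bigr\|_{\ell^{q}}.
\end{equation*}
On the $L^r$ side, Khintchine's inequality gives, pointwise in $x$,
\begin{equation*}
\mathbb{E}_\omega |f_\omega(x)|^{r}
\;\approx_{r}\; |\eta(x)|^{r}\Bigl(\sum_k |a_k|^{2}\Bigr)^{r/2},
\end{equation*}
so integrating in $x$ and applying Fubini yields
$\mathbb{E}_\omega \|f_\omega\|_{L^{r}}^{r} \approx \|\eta\|_{r}^{r}\,\|a_k\|_{\ell^{2}}^{r}$.

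For (1), assume $L^{r}\hookrightarrow \wpqs$. Picking an $\omega$ for which $\|f_\omega\|_{L^{r}}$ is no bigger than its average gives
\begin{equation*}
\|\eta\|_{p}\,\bigl\|\langle k\rangle^{s} a_k\bigr\|_{\ell^{q}}
= \|f_\omega\|_{\wpqs}
\lesssim \|f_\omega\|_{L^{r}}
\lesssim \|\eta\|_{r}\,\|a_k\|_{\ell^{2}},
\end{equation*}
i.e.\ $\ell_{2}^{0,0}\hookrightarrow \ell_{q}^{s,0}$ for finitely supported sequences, and then by density in general. For (2), the symmetric argument selects $\omega$ with $\|f_\omega\|_{L^{r}}$ no smaller than its average, and the embedding $\wpqs \hookrightarrow L^{r}$ inverts the inequality, yielding $\ell_{q}^{s,0}\hookrightarrow \ell_{2}^{0,0}$.

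The only subtle point is making the decomposition $\Box_k f_\omega = a_k r_k(\omega) e^{ikx}\eta(x)$ truly hold on the nose; this is arranged by choosing $\eta$ so that $\supp\widehat{\eta}\subset B(0,\delta)$ for $\delta$ smaller than the gap on which $\sigma_k\equiv 1$ (the BAPU property allows us to assume such $\delta$ exists, or else we can rescale $\eta$). Once this is in place, the remainder is routine: the case $q=\infty$ is handled identically since the Wiener-norm computation is pointwise in $k$, and the case $r=\infty$ is excluded by hypothesis.
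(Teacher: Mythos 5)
Your proof is correct and follows essentially the same route as the paper, which simply invokes the randomization argument of Proposition 5.3 in Guo et al.\ (the case $r=p$) and notes that Khinchin's inequality works for all $0<r<\infty$; your modulated-bump construction with Rademacher signs, the deterministic computation of the $\wpqs$ norm, and the Khinchin/Fubini computation of $\mathbb{E}\|f_\omega\|_{L^r}^r$ are exactly that argument written out. The diagonal action $\Box_k f_\omega=a_k r_k(\omega)e^{ikx}\eta(x)$ is indeed legitimate, since the partition-of-unity condition $\sum_k\sigma_k\equiv 1$ together with $\supp\sigma\subseteq[-3/4,3/4]^d$ forces $\sigma\equiv 1$ near the origin, so choosing $\supp\widehat{\eta}$ small enough suffices.
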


\begin{proof}
	Proposition 5.3 in \cite{Guo2017Characterization} gave the proof of the (1) and (2) in the special case of $r=p$. Because the Khinchin's inequality holds for $0<r<\infty$, the proof could be extended to the general case without any difference, we omit it.
\end{proof}

\subsection{Proof of Theorem \ref{thm-Lr-to-wpq}} \label{sec-proof-Lr-wpq}

\begin{proof}We divide this proof into two parts.
	
	Sufficiency: by Lemma \ref{prop-Lp-to-wpq} and \ref{lemma-mpq-wpq}, for any condition in the theorem, we have $L^{s,r} \hookrightarrow W_{r,q} \hookrightarrow \wpq$, when $r\leq p.$
	
	Necessity: if we have $L^{s,r} \hookrightarrow \wpq$, then we have \begin{align}\label{eq-Lr-wpqs}
		\norm{f}_{\wpq^{-s}} \lesssim \norm{f}_{r},\ \forall f\in L^{r}. 
	\end{align}
	\begin{description}
		\item[(A)] By Proposition \ref{prop-low-frequency-scaling}, we have $r\leq p$.
		\item[(B)] For any $k \in \Z^{d}$, choose $\eta \in \sch$, with $\supp \widehat{\eta} \subseteq [-1/8,1/8]^{d}$, denote $f(x)= e^{ikx} \eta(x)$. Then we know that $\supp \widehat{f} \subseteq k+[-1/8,1/8]^{d}$. So, we have \begin{align*}
			&\norm{f}_{\wpq^{-s}} = \norm{ \norm{\inner{k}^{-s} \Box_{k} f}_{\ell_{k}^{q}}}_{p} = \inner{k}^{-s} \norm{f}_{p} \approx \inner{k}^{-s},\\
			&\norm{f}_{r} \approx 1.
		\end{align*}
		Take $f$ into \eqref{eq-Lr-wpqs}, we have $s\geq 0$.
		\item[(C)] When $1\leq r\leq 2$, by Lemma  \ref{lemma-bpq-fpq}, we have $B_{r,r} = F_{r,r} \hookrightarrow F_{r,2} \hookrightarrow L^{r} $. So, if we have $L^{r} \hookrightarrow \wpq^{-s}$, then we have $B_{r,r}^{s} \hookrightarrow \wpq$. Then, by (1) in Proposition \ref{prop-discret-besov}, we have $\ell_{r}^{s+d(1-1/r),1} \hookrightarrow \ell_{q}^{d/q,1}$. So, when $r\leq q$, we have $s\geq d(1/r+1/q-1)$; when $r>q,$ we have $s> d(1/r+1/q-1)$.
		\item[(D)] When $r=1\leq p, 0<q<\infty$, we prove that $L^{d/q,1} \hookrightarrow \wpq$ is not true. If not, we have \begin{align} \label{eq-r=1}
			\norm{f}_{\wpq^{-d/q}} \lesssim \norm{f}_{1},\ \forall f\in L^{1}.
		\end{align}
		Choose $\eta \in \sch$ such that $\widehat{\eta} (\xi)=1$, when $\xi \in [-1,1]^{d}$, denote $f(x)= t^{-d} \eta(t^{-1} x)$. So, we have $\widehat{f}(\xi) =1$, when $\xi \in t^{-1} [-1,1]^{d}.$ Denote $\wedge_{t} = \sett{ k\in \Z^{d}: k +[-1,1]^{d} \subseteq t^{-1} [-1,1]^{d}}$. Then for any $k\in \wedge_{t},$ we have $\Box_{k} f(x)= \FF  \si_{k}(x)=e^{ikx} \FF \si (x)$. So, we have \begin{align*}
			&\norm{f}_{1} = \norm{\eta}_{1} \approx 1;\\
			&\norm{f}_{\wpq^{-d/q}} = \norm{\norm{\inner{k}^{-d/q} \Box_{k} f}_{\ell_{k}^{q}} }_{p} \geq \norm{\norm{\inner{k}^{-d/q} \Box_{k} f}_{\ell_{k \in \wedge_{t}}^{q}} }_{p} \approx \norm{\inner{k}^{-d/q}}_{\ell_{k \in \wedge_{t}}^{q}}.
		\end{align*}
		Take $f$ into \eqref{eq-r=1}, let $t\rightarrow 0^{+},$ we have $\norm{\inner{k}^{-d/q}}_{\ell_{k}^{q}} \lesssim 1$, which is a contraction.
		\item[(E)] When $r<\infty, q<2$, if we have $L^{r} \hookrightarrow \wpq^{-s}$, by Proposition \ref{prop-radomization}, we have $\ell_{2}^{0,0} \hookrightarrow \ell_{q}^{-s,0}$. Then, by Lemma \ref{lem-embed-of-lr}, we have $s>d(1/q-1/2).$ 
	\end{description}
	In conclusion, when $r<\infty$, the necessity of (1) follows by (C) and (E); when $r=\infty,$ by (A), we know $p=r=\infty$, which is just the condition in Lemma \ref{prop-Lp-to-wpq}. The necessity of (2) and (3) follows by (B) and (C). The necessity of (4) follows by (D).	
\end{proof}

\subsection{Proof of Theorem \ref{thm-wpq-to-Lr}}
\begin{proof}
	By the dual argument of Theorem \ref{thm-Lr-to-wpq}, we only need to consider the case of $0<q<1$, in which case we have $\sionerq =0$. 
	
	We only need to prove that when $0<q<1$, the embedding $\wpq \hookrightarrow L^{s,r}$ is true if and only if $s\leq 0, p\leq r$.
	
	Sufficiency: by decomposition $f=\sum_{k} \Box_{k} f$, we have \begin{align*}
		\norm{f}_{r} = \norm{\sum_{k}\Box_{k} f}_{r} \leq \norm{\norm{\Box_{k} f}_{\ell_{k}^{1}}}_{r} = \norm{f}_{W_{r,1}}.
	\end{align*}
	Then by Lemma \ref{lemma-mpq-wpq}, we have $\norm{f}_{r}\leq \norm{f}_{W_{r,1}} \leq \norm{f}_{W_{p,1}} \leq \norm{f}_{\wpq}$, when $p\leq r, q<1$.
	
	Necessity: by Proposition \ref{prop-low-frequency-scaling}, we have $p\leq r$. By the same argument as in (B) of Subsection \ref{sec-proof-Lr-wpq}, we have $s\leq 0$.
\end{proof}

\section{Proof of Theorem \ref{thm-hr-to-wpq} and \ref{thm-wpq-to-hr}}
Firstly, we recall the  characterization of embedding from $h_{p}$ to $\wpq^{s}$, given in \cite{Guo2017Characterization}.
\begin{lemma}
	\label{prop-hp-wpq}
	Let $0<p<\infty,0<q\leq \infty, s\in \real.$ Then 
	\begin{description}
		\item[(1)] $h_{p} \hookrightarrow \wpq^{-s}$ if and only if $s\geq \tauonepq$ with strict inequality when $1/q< \min\sett{1/p,1/2}$.
		\item[(2)] $\wpq^{-s} \hookrightarrow h_{p}$ if and only if $s\leq \sionepq$ with strict inequality when $1/q>\max \sett{1/p,1/2}$.
	\end{description}
\end{lemma}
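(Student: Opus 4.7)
The plan is to identify $h_{p} = F_{p,2}^{0}$ via Lemma \ref{lemma-bpq-fpq}(8) and then compare $F_{p,2}^{0}$ with $\wpq^{-s}$ by exploiting the two decompositions: the dyadic one defining $F_{p,2}^{0}$ and the uniform one defining $\wpq^{-s}$. Since the dyadic annulus $D_{j}$ intersects about $2^{jd}$ unit frequency cubes, the overlap between the two decompositions controls the conversion factor between the $\ell^{2}$ aggregation on the Triebel side and the $\ell^{q}$ aggregation on the Wiener side, and this overlap count is exactly what produces the threshold $\tau_{1}(p,q)$.

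For sufficiency of the first claim, given $f \in h_{p}$ I would write $f = \sum_{j \geq 0} \triangle_{j} f$ and then $\triangle_{j} f = \sum_{k \in \Gamma_{j}} \Box_{k} \triangle_{j} f$ with $|\Gamma_{j}| \lesssim 2^{jd}$. Bernstein's inequality (Lemma \ref{lem-bernstein}) applied to individual frequency blocks, combined with Minkowski when $\min(p,q) \geq 1$ or the $p<1$ Young-type inequality (Lemma \ref{lem-young-p<1}) otherwise, lets me pass between the $L^{p}(\ell^{2})$ control supplied by $F_{p,2}^{0} = h_{p}$ and the $L^{p}(\ell^{q})$ control demanded by $\wpq^{-s}$. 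The three conversion factors $1$, $2^{jd(1/q-1/2)}$, $2^{jd(1/p+1/q-1)}$ corresponding to regions $(1)$, $(2)$, $(3)$ in Figure \ref{fig:tau1} arise respectively from doing nothing, from a H\"older loss inside $\Gamma_{j}$ when $q<2$, and from a Bernstein upgrade when $1/p+1/q>1$. Absorbing these factors into $\inner{k}^{-s}$ demands $s \geq \tau_{1}(p,q)$, and in the interior of region $(1)$ strict inequality appears as a tail-convergence requirement when summing the dyadic scales.

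For necessity I would run two families of tests. Dyadic bumps $f = \sum_{j \geq 0} a_{j} \FF \psi_{j}$ as in the proof of Proposition \ref{prop-discret-besov}, combined with $F_{p,2}^{0} \hookrightarrow B_{p,\max(p,2)}^{0}$ from Lemma \ref{lemma-bpq-fpq}(3), yield a weighted $\ell^{1}$ inclusion that pins $s \geq \tau_{1}(p,q)$ on the boundaries separating region $(1)$ from regions $(2)$ and $(3)$. In the interior of region $(1)$, with $q > \max(p,2)$, Khinchin applied as in Proposition \ref{prop-radomization} reduces the question to an embedding of weighted sequence spaces of the form $\ell_{2}^{0,0} \hookrightarrow \ell_{q}^{-s,0}$, which by Lemma \ref{lem-embed-of-lr} forces the inequality to be strict. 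The second statement then follows from the first by duality $(h_{p})^{*} = \mathrm{bmo}$ for $p = 1$ (a homogeneous Lipschitz space for $p < 1$) together with Lemma \ref{lemma-mpq-wpq}(2), after replacing $(p,q,s)$ by $(p',q',-s)$.

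The main obstacle I anticipate is treating $0 < p \leq 1$ uniformly with the $p > 1$ case: Bernstein followed by Minkowski must be replaced by the $p<1$ Young-type estimate of Lemma \ref{lem-young-p<1}, and the convolution stability $W_{p,\infty} * W_{p,\infty} \subseteq W_{p,\infty}$ from Lemma \ref{lem-convolution-wpq} must absorb the loss coming from the quasi-triangle inequality when summing blocks. The second delicate point is distinguishing strict from non-strict inequality on the three dividing lines, which cannot be settled by interpolation and requires the construction of explicit extremizers (lacunary modulated packets) in each configuration.
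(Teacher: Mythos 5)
First, a point of reference: the paper does not prove this lemma at all. It is recalled verbatim as a known result of \cite{Guo2017Characterization}, and the only in-paper material surrounding it is the proof of Theorems \ref{thm-hr-to-wpq}--\ref{thm-wpq-to-hr}, which take the lemma as an input. So there is no in-paper argument to compare yours against; your toolbox (dyadic versus uniform frequency decomposition, Bernstein, Khinchin, discretization into the weighted sequence spaces of Lemmas \ref{lem-embed-of-lr}--\ref{lem-embed-of-lr1}) is indeed the toolbox used in the cited source. However, the sketch as written has three concrete gaps. (i) The duality reduction for part (2) fails outside $1<p<\infty$, $1<q<\infty$: for $0<p\leq 1$ the dual of $h_{p}$ is a Campanato/Lipschitz space rather than $h_{p'}$ (and $p'=\infty$ is outside the stated range $p<\infty$), for $q\leq 1$ one has $q'=\infty$ so the dual pairing cannot be reversed, and in any case an embedding of quasi-Banach spaces is not implied by an embedding of duals. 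Part (2) must be proved directly by the symmetric argument, which is what the paper does for Theorem \ref{thm-wpq-to-hr}.

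(ii) Your strictness analysis is misplaced. The Khinchin test yields $\ell_{2}^{0,0}\hookrightarrow \ell_{q}^{-s,0}$, which by Lemma \ref{lem-embed-of-lr} forces a \emph{strict} inequality only when $q<2$; for $q\geq 2$ it gives only $s\geq 0$. So it cannot force strictness ``in the interior of region (1) with $q>\max(p,2)$'' as you claim, and indeed no strictness is needed there: $h_{2}=L^{2}=W_{2,2}\hookrightarrow W_{2,q}$ holds for every $q>2$ at $s=0=\tau_{1}(2,q)$. The region where strict inequality is genuinely required is $q<\min\sett{p,2}$, i.e.\ $1/q>\max\sett{1/p,1/2}$ (compare Theorem \ref{thm-hr-to-wpq} with $r=p$; note the lemma as printed appears to have this inequality reversed), and it is detected by the dyadic-bump test through Lemma \ref{lem-embed-of-lr1}(2) (an $\ell^{q_{1}}\hookrightarrow\ell^{q_{2}}$ inclusion with $q_{1}>q_{2}$) together with the Khinchin test for $q<2$. (iii) In the necessity step you invoke $F_{p,2}^{0}\hookrightarrow B_{p,\max(p,2)}^{0}$, but to transfer necessary conditions from Besov test functions to $h_{p}$ you need a Besov space embedded \emph{into} $h_{p}$, namely $B_{p,\min(p,2)}^{0}\hookrightarrow F_{p,2}^{0}=h_{p}$, the other half of Lemma \ref{lemma-bpq-fpq}(3); the inclusion you cite points the wrong way and yields no information about $h_{p}\hookrightarrow\wpq^{-s}$.
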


\begin{proof}
	[Proof of Theorem \ref{thm-hr-to-wpq}]
	We divide this proof into two parts.
	
	Sufficiency: by Lemma \ref{prop-hp-wpq} and \ref{lemma-mpq-wpq}, when Condition (1) or (2) holds, we have $h_{r} \hookrightarrow W_{r,q}^{-s} \hookrightarrow \wpq^{-s}$, when $r\leq p$.
	
	Necessity: if we have $h_{r} \hookrightarrow \wpq^{-s}$, then we have \begin{align}
		\label{eq-hr-wpq}
		\norm{f}_{\wpq^{-s}} \lesssim \norm{f}_{h_{r}},\ \forall f\in h_{r}.
	\end{align}
	\begin{description}
		\item[(A)] 	Choose $f$ as in the proof of Proposition \ref{prop-low-frequency-scaling}. For any $0<\la<1$ take $f_{\la}$ into \eqref{eq-hr-wpq}, we have \begin{align*}
			\norm{f_{\la}}_{p} \lesssim \norm{f_{\la}}_{h_{r}}.
		\end{align*}
		By the scaling of $h_{r}$, we have $\la^{-d/p} \lesssim \la^{-d/r}$. So, we have $r\leq p$.
		\item[(B)] When $1<r<\infty$, we know $h_{r}=L^{r}$, the results already proved in Theorem \ref{thm-Lr-to-wpq}.
		\item[(C)] When $0<r\leq 1$, then we have $\tauonerq=d(1/r+1/q-1).$ By Lemma \ref{lemma-bpq-fpq}, we have $B_{r,r} \hookrightarrow F_{r,2}=h_{r}$. If we have $h_{r} \hookrightarrow \wpq^{-s}$, then we have $B_{r,r} \hookrightarrow \wpq^{-s}$. Then by the same argument as in (C) of Subsection \ref{sec-proof-Lr-wpq}, we have $s\geq d(1/r+1/q-1)$ with strict inequality when $r>q$.
	\end{description}
\end{proof}	
\begin{proof}[Proof of Theorem \ref{thm-wpq-to-hr}]
	The proof is similar to the proof of Theorem \ref{thm-hr-to-wpq}. We give a sketch here. As for $h_{r}=L^{r}$ when $1<r<\infty$, we only need to consider the case of $0<r\leq 1$. The sufficiency follows by Lemma \ref{prop-hp-wpq} and \ref{lemma-mpq-wpq}. The necessity can be gotten by the same argument in (B) and (E) of Subsection \ref{sec-proof-Lr-wpq}.
\end{proof}

\section{Proof of Theorem \ref{thm-bp0q-to-wpq} and \ref{thm-wpq-to-bp0q}}

\begin{lemma}
	[Theorem 1.1 in \cite{Guo2017Characterization}] \label{prop-bpq-to-wpq}
	Let $0<p,q \leq \infty,s\in \real.$ Then \begin{description}
		\item[(1)] $\bpqs \hookrightarrow \wpq$ if and only if $s\geq \tauonepq$ with strict inequality when $p<q$.
		\item[(2)] $\wpq \hookrightarrow\bpqs$ if and only if $s\leq \sionepq$ with strict inequality when $p>q$.
	\end{description}
\end{lemma}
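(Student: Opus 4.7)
\emph{Plan for necessity in part (1).} Suppose $\bpqs \hookrightarrow \wpq$. The lower bound $s \geq \tauonepq$ splits into the three sub-inequalities $s \geq 0$, $s \geq d(1/q-1/2)$ and $s \geq d(1/p+1/q-1)$, each obtained from a dedicated test-function family. I would first apply Proposition \ref{prop-discret-besov}(1) with $p_{0}=p$, $q_{0}=q$, obtaining $\ell_{q}^{s,1} \hookrightarrow \ell_{p}^{0,1}$ and $\ell_{q}^{s+d(1-1/p),1} \hookrightarrow \ell_{p}^{d/p,1}$. Feeding these into Lemma \ref{lem-embed-of-lr1} yields $s \geq 0$ and $s \geq d(2/p-1)$, both with strict inequality when $q > p$. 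A direct check shows $d(2/p-1) \geq \tauonepq$ whenever $p < q$ (since $p < q$ forces us into regions (1) or (3) of $\tauonepq$), so these two outputs already supply the ``strict when $p<q$'' qualifier. For the remaining two branches in the regime $p \geq q$ I would use explicit test functions: a low-frequency dilation $\widehat{f_{\la}}(\xi) = \eta(\xi/\la)$ with $\hat\eta$ a bump near the origin gives $\|f_{\la}\|_{\wpq} \approx \la^{d/q}$ (about $\la^{d}$ active cubes, each contributing $L^{p}$-mass $\approx 1$) and $\|f_{\la}\|_{\bpqs} \approx \la^{s+d(1-1/p)}$, forcing $s \geq d(1/p+1/q-1)$ as $\la \to \infty$; and a randomized trigonometric polynomial $f_{N}(x) = \eta(x) \sum_{|k|\sim N}\eps_{k} e^{ik\cdot x}$ with Khintchine gives $\|f_{N}\|_{p} \lesssim N^{d/2}\|\eta\|_{p}$ for a good sign choice, $\|f_{N}\|_{\wpq} \approx N^{d/q}\|\eta\|_{p}$, and $\|f_{N}\|_{\bpqs} \approx N^{s+d/2}\|\eta\|_{p}$, yielding $s \geq d(1/q-1/2)$.

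\emph{Plan for sufficiency in part (1).} Given $s \geq \tauonepq$ (strict if $p < q$) I would Littlewood-Paley decompose $f = \sum_{j\geq 0}\Delta_{j}f$ and prove the key spectral-localization estimate
\begin{equation*}
\|g\|_{\wpq} \lesssim 2^{j\,\tauonepq}\,\|g\|_{p}, \qquad \supp \widehat{g} \subseteq D_{j},\ j \geq 1,
\end{equation*}
while the $j=0$ piece reduces to $\|\cdot\|_{\wpq} \lesssim \|\cdot\|_{p}$ via Lemma \ref{lem-compact-support} and the fact that only finitely many $\Box_{k}g$ are nonzero. The key estimate I would establish in each of the three regions defining $\tau_{1}$: in the region $\tauonepq = 0$ via Plancherel orthogonality $\sum_{k}\|\Box_{k}g\|_{2}^{2} \approx \|g\|_{2}^{2}$ together with a Fefferman-Stein vector-valued / Littlewood-Paley square-function estimate to cover $p \geq 2$; in the region $\tauonepq = d(1/q-1/2)$ by embedding $\ell^{q} \hookrightarrow \ell^{2}$ on the $\approx 2^{jd}$ active cubes (paying the factor $2^{jd(1/q-1/2)}$) and reducing to the previous case; in the region $\tauonepq = d(1/p+1/q-1)$ by applying Bernstein's inequality (Lemma \ref{lem-bernstein}) to upgrade $\|\Box_{k}g\|_{p}$ to $L^{\infty}$ before summing over the $\approx 2^{jd}$ active $k$. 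Summing in $j$: when $s > \tauonepq$, H\"older in $j$ absorbs the geometric factor $2^{j(\tauonepq - s)}$; at the endpoint $s = \tauonepq$ (permitted only when $p \geq q$) the embedding $\ell^{q} \hookrightarrow \ell^{p\wedge q}$ combined with the $r$-quasi-triangle inequality of $\wpq$ closes the estimate.

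\emph{Plan for part (2).} On the reflexive range $1 \leq p, q < \infty$, part (2) follows from part (1) by duality: Lemma \ref{lemma-mpq-wpq}(2) gives $(\wpq)^{*} = W_{p',q'}$ and Lemma \ref{lemma-bpq-fpq}(6) gives $(\bpqs)^{*} = B_{p',q'}^{-s}$, so $\wpq \hookrightarrow \bpqs$ is equivalent to $B_{p',q'}^{-s} \hookrightarrow W_{p',q'}$. A short arithmetic check shows $\tau_{1}(p',q') = -\sionepq$ and $p' < q' \iff p > q$, so the conclusion of part (1) applied at $(p',q',-s)$ reads exactly as $s \leq \sionepq$ with strict inequality when $p > q$. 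Corner cases ($p$ or $q$ outside $[1,\infty)$) are handled by mirror test-function computations: the dilation $f_{\la}$ and randomized polynomial $f_{N}$ now supply upper-bound counterexamples forcing the reversed inequalities $s \leq 0$, $s \leq d(1/q-1/2)$, $s \leq d(1/p+1/q-1)$, while sufficiency is proved by the same Littlewood-Paley machinery applied to the dual key estimate $\|\Delta_{j}f\|_{p} \lesssim 2^{-j\sionepq}\|\Delta_{j}f\|_{\wpq}$.

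\emph{Main obstacle.} The most delicate point is keeping track of the endpoint alternative at $s = \tauonepq$: whether a non-strict inequality suffices depends on the availability of $\ell^{q} \hookrightarrow \ell^{p\wedge q}$ in closing the $j$-sum, which must be matched carefully with the strict/non-strict alternative produced by Lemma \ref{lem-embed-of-lr1} in the necessity direction. A secondary difficulty is the quasi-Banach regime $\min(p,q) < 1$, where the standard Young inequality must be replaced by Lemma \ref{lem-young-p<1} and the quasi-triangle constant $r = \min(p,q,1)$ must be tracked consistently through every summation step.
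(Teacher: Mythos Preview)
This lemma is not proved in the present paper; it is quoted verbatim as Theorem~1.1 of \cite{Guo2017Characterization}, so there is no in-paper argument to compare against. Your outline---test-function/discretization families for necessity, a dyadic-piece estimate $\|g\|_{\wpq}\lesssim 2^{j\tauonepq}\|g\|_{p}$ established region by region for sufficiency, and duality for part~(2)---is essentially the strategy of that reference, so in spirit you are on the right track.

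There is, however, a genuine slip in the necessity argument for $p<q$. You invoke Proposition~\ref{prop-discret-besov}(1) as written to obtain $\ell_{q}^{s+d(1-1/p),1}\hookrightarrow\ell_{p}^{d/p,1}$ and hence, via Lemma~\ref{lem-embed-of-lr1}, the strict inequality $s>d(2/p-1)$. This conclusion is false: take $d=1$, $p=3/2$, $q=3$, so that $\tauonepq=0$; by the very lemma you are proving, the embedding $\bpqs\hookrightarrow\wpq$ holds for every $s>0$, in particular for $s=1/6<1/3=d(2/p-1)$. The source of the error is a typo in the statement of Proposition~\ref{prop-discret-besov}: the correct target space is $\ell_{p}^{d/q,1}$, not $\ell_{p}^{d/p,1}$ (compare the proof of part~(2) given there, and the way the paper actually applies the proposition in step~(C) of the proof of Theorem~\ref{thm-bp0q-to-wpq}, where the conclusion drawn is $s>d(1/p_{0}+1/q-1)$). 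With the corrected exponent you get $s>d(1/p+1/q-1)$ when $p<q$, which together with $s>0$ is exactly $s>\tauonepq$ in that range, since $d(1/q-1/2)\leq\max\{0,d(1/p+1/q-1)\}$ whenever $p<q$. Your auxiliary claim that ``$d(2/p-1)\geq\tauonepq$ whenever $p<q$'' is likewise false for $p>2$, but becomes moot once the typo is repaired.
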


\begin{lemma}
	[Theorem 6.1 in \cite{Wang2011Harmonic}] \label{prop-bpq-mpq}
	Let $0<p,q\leq \infty, s\in \real.$ Then $\bpqs \hookrightarrow \mpq$ if and only if $s\geq \taupq.$
\end{lemma}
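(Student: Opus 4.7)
The plan is to prove the equivalence $\bpqs\hookrightarrow\mpq\Leftrightarrow s\geq\taupq$ by handling necessity and sufficiency separately. Since $\taupq=d\max(0,1/q-1/p,1/p+1/q-1)$ is the pointwise maximum of three competing quantities, necessity will require three distinct extremizing families of test functions --- one for each term --- while sufficiency reduces to a single shell sub-estimate on each dyadic piece that must match the same three regimes.

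\textbf{Necessity.} The bound $s\geq 0$ will be forced by the single modulated bump $f_k(x)=e^{ikx}\eta(x)$ with $\widehat\eta$ supported in a small cube, since then $\norm{f_k}_{\mpq}\approx 1$ while $\norm{f_k}_{\bpqs}\approx\inner{k}^s$. The bound $s\geq d(1/q-1/p)$ is forced by a spatially separated train $f_j(x)=\sum_{k\in\Lambda_j}\eta(x-x_k)e^{ikx}$, where $\Lambda_j$ indexes the $\approx 2^{jd}$ integer points with $\inner{k}\sim 2^j$ and the $x_k$ lie on a grid coarse enough that the translates have essentially disjoint supports; then $\norm{f_j}_p\approx 2^{jd/p}$ while $\norm{f_j}_{\mpq}\approx 2^{jd/q}$. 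The bound $s\geq d(1/p+1/q-1)$ is forced by the coherent sum $f_j=\FF\varphi_j$, for which $\norm{f_j}_p\approx 2^{jd(1-1/p)}$ by Fourier scaling and $\norm{f_j}_{\mpq}\approx 2^{jd/q}$ via Lemma \ref{lem-compact-support} applied to each $\Box_k f_j$. In every case $\triangle_j f_j=f_j$, so comparing the ratio $\norm{f_j}_{\mpq}/\norm{f_j}_{\bpqs}$ yields the required lower bound on $s$.

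\textbf{Sufficiency.} I would write $f=\sum_{j\geq 0}\triangle_j f$ and use the crucial observation that $\Box_k\triangle_j f$ vanishes unless $\inner{k}\sim 2^j$ and that, for each $k$, only $O(1)$ values of $j$ contribute. The quasi-triangle inequality then yields
\begin{align*}
\norm{f}_{\mpq}^q\ \lesssim\ \sum_{j\geq 0}\sum_{k\in\Lambda_j}\norm{\Box_k\triangle_j f}_p^q,
\end{align*}
and the entire problem reduces to establishing the shell sub-estimate
\begin{align*}
\sum_{k\in\Lambda_j}\norm{\Box_k g}_p^q\ \lesssim\ 2^{jq\taupq}\norm{g}_p^q
\end{align*}
for every $g$ with spectrum in $\{|\xi|\sim 2^j\}$. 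I would prove this by interpolating three building blocks: the uniform bound $\norm{\Box_k g}_p\lesssim\norm{g}_p$ (from Young's inequality when $p\geq 1$ and from Lemma \ref{lem-young-p<1} when $p<1$), the Plancherel-type orthogonality $\sum_k\norm{\Box_k g}_2^2\approx\norm{g}_2^2$, and Bernstein's inequality (Lemma \ref{lem-bernstein}) to convert between $L^p$ and $L^2$ norms on the shell. Matching each of the three terms of $\taupq$ with the appropriate combination of these ingredients and summing the shell estimates with weight $2^{jsq}$ will produce $\norm{f}_{\mpq}\lesssim\norm{f}_{\bpqs}$ whenever $s\geq\taupq$.

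\textbf{Main obstacle.} The hardest step will be the quasi-Banach regime $p<1$: standard Young convolution inequalities fail, and Lemma \ref{lem-young-p<1} introduces a $2^{jd(1/p-1)}$ spectral-radius loss that must be absorbed exactly into the $d(1/p+1/q-1)$ term of $\taupq$, leaving essentially no room for slack. A secondary difficulty is that the three case-wise versions of the shell sub-estimate --- each tailored to a different region of $(1/p,1/q)$-space --- must be verified to patch together into the single clean inequality above, uniformly across all $p,q$.
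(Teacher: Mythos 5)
The paper does not prove this lemma at all — it is imported verbatim as Theorem 6.1 of \cite{Wang2011Harmonic} — and your outline reconstructs essentially the standard argument from that source (and from Wang–Huang's earlier work): the three extremizing families (modulated bump, separated wave train, coherent shell function $\FF\varphi_j$) for necessity, and for sufficiency the dyadic-shell estimate $\|\{\Box_k g\}_k\|_{\ell^q(L^p)}\lesssim 2^{j\tau(p,q)}\|g\|_{p}$ obtained by combining the uniform $L^p$ bound, the $\ell^2(L^2)$ orthogonality, Bernstein, and the $O(2^{jd})$ counting bound. The outline is correct, including your observation that for $p<1$ the $2^{jd(1/p-1)}$ loss from the quasi-Banach Young inequality together with the counting bound produces exactly the $d(1/p+1/q-1)$ exponent with no slack.
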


\begin{rem}
	Lu in \cite{Lu2021Sharp} gave the sharp condition of the more generalized embedding $B_{p_{0},q_{0}}^{s} \hookrightarrow\mpq$. If we regard the Besov space as a $\al$-modulation space with $\al=1$. Guo et al. in \cite{Guo2018Full} gave a characterization of the embedding between  $\al$-modulation spaces.
\end{rem}

\begin{proof}[Proof of Theorem \ref{thm-bp0q-to-wpq}]
	We divide this proof into two parts.
	
	Sufficiency:
	\begin{description}
		\item[(a)] When $p_{0}\leq p, p_{0} \geq q, s\geq \tauonepqo$, then by Lemma \ref{prop-bpq-to-wpq} and \ref{lemma-mpq-wpq}, we have $\bpoqs \hookrightarrow W_{p_{0},q} \hookrightarrow \wpq.$
		\item[(b)] When $p_{0} <q \leq p$, we know that $\tau(p_{0},q) = \tauonepqo$. So, when $s\geq \tauonepqo$, by Lemma \ref{prop-bpq-mpq}, we have $\bpoqs \hookrightarrow M_{p_{0},q}$. By Lemma \ref{lemma-mpq-wpq}, we have $M_{p_{0},q} \hookrightarrow M_{q,q} =W_{q,q} \hookrightarrow \wpq$.
		\item[(c)] When $p_{0} \leq p <q, s>\tauonepqo$, by Lemma \ref{prop-bpq-to-wpq} and \ref{lemma-mpq-wpq}, we have $\bpoqs \hookrightarrow W_{p_{0},q} \hookrightarrow\wpq.$
	\end{description}
	In conclusion, the sufficiency of Condition (1) follows by (a) and (b), the sufficiency of Condition (2) follows by (c).
	
	Necessity: \begin{description}
		\item[(A)] By Proposition \ref{prop-low-frequency-scaling}, we have $p_{0} \leq p.$
		\item[(B)] By Lemma \ref{lemma-bpq-fpq}, when $p_{0} < \infty$, for any $\eps >0$, we have $h_{p_{0}}^{s+\eps} \hookrightarrow \bpoqs$. Then if we have $\bpoqs \hookrightarrow \wpq$, then we have $h_{p_{0}}^{s+\eps} \hookrightarrow \wpq$. Then by Theorem \ref{thm-hr-to-wpq}, we have $s+\eps \geq \tauonepqo$. Take $\eps \rightarrow 0$, we have $s\geq \tauonepqo$. When $p_{0}=\infty,$ by (1), we have $p=\infty$. The result follows by Lemma \ref{prop-bpq-mpq}.
		\item[(C)] When $p_{0} \leq p <q$, we know $\tauonepqo=d \brk{0\vee (1/p_{0}+1/q-1)}$. If we have $\bpoqs \hookrightarrow \wpq$, by (1) in Proposition \ref{prop-discret-besov}, we have $s>0$ and $s>d(1/p_{0}+1/q-1).$
	\end{description}	
\end{proof}

\begin{rem}
	The proof of Theorem \ref{thm-wpq-to-bp0q} is similar to the proof above. For simplification, we omit it here.
\end{rem}

\section{Proof of Theorem \ref{thm-bpq0-to-wpq} and \ref{thm-wpq-to-bpq0}}
For the cases of $q_{0}\geq 2$ and $q_{0}<2$, Theorem \ref{thm-bpq0-to-wpq} is equivalent to the following two propositions. 

\begin{prop}[$q_{0}\geq 2$] \label{prop-q0>2}
	Let $0<p,q \leq \infty, s\in \real,2\leq q_{0} \leq \infty$. Moreover, we assume that $q\geq 2$ or $p\leq q_{0}$. Then $\bpqos \hookrightarrow \wpq$ if and only one of the following conditions is satisfied.
	\begin{description}
		\item[(1)] $ q_{0} \leq p\wedge q, s\geq \tauonepq;$
		\item[(2)] $p <q_{0} \leq q, s> \tauonepq;$
		\item[(3)] $q<q_{0}, s> \tauonepq.$
	\end{description}
	For visualization, one can see Figure \ref{fig:bpq0-large-wpq}.
\end{prop}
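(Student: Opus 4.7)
The argument mirrors the structure of the proof of Theorem \ref{thm-bp0q-to-wpq}, but now with $q_0$ (not $p_0$) as the varying parameter, which shifts the role of the embedding lemmas one must chain together.

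\textbf{Sufficiency.} I would split into the three listed cases. The pivotal observation for case (1) is that the hypotheses $q_0 \geq 2$ and $q_0 \leq p \wedge q$ force $p,q \geq 2$, so $1/p+1/q-1 \leq 0$ and $1/q-1/2 \leq 0$, giving $\tauonepq = 0 = \tau_1(p,q_0)$. Hence the chain
\[
B_{p,q_0}^s \hookrightarrow W_{p,q_0} \hookrightarrow \wpq
\]
works: the first inclusion is Lemma \ref{prop-bpq-to-wpq} applied with parameters $(p,q_0)$, which requires only $s \geq \tau_1(p,q_0)=0$ (no strict inequality since $p \geq q_0$), while the second is Lemma \ref{lemma-mpq-wpq}(1) since $q_0 \leq q$. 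For case (2), I would use $B_{p,q_0}^s \hookrightarrow B_{p,q}^s$ (Lemma \ref{lemma-bpq-fpq}(1), since $q_0 \leq q$) followed by $B_{p,q}^s \hookrightarrow \wpq$ from Lemma \ref{prop-bpq-to-wpq}; here $p < q_0 \leq q$ makes $p<q$, but the hypothesis $s > \tauonepq$ supplies the needed strictness. For case (3), the useful chain is $B_{p,q_0}^s \hookrightarrow B_{p,q}^{s-\eps} \hookrightarrow \wpq$ via Lemma \ref{lemma-bpq-fpq}(2) and Lemma \ref{prop-bpq-to-wpq}, with $\eps>0$ chosen small enough so that $s-\eps > \tauonepq$ (or $\geq$, if $p \geq q$).

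\textbf{Necessity.} Assuming $\bpqos \hookrightarrow \wpq$, I would first apply Proposition \ref{prop-discret-besov} with $p_0 = p$. Its parts (1) and (2) yield $\ell_{q_0}^{s+d(1-1/p),1} \hookrightarrow \ell_p^{d/p,1}$ and $\ell_{q_0}^{s+d(1-1/p),1} \hookrightarrow \ell_q^{d/q,1}$, together with the zero-weighted versions. By Lemma \ref{lem-embed-of-lr1}, these translate to $s \geq 0$ and $s \geq d(1/p+1/q-1)$, with strict inequality precisely when $q_0 > p$ or $q_0 > q$ respectively. These inequalities already cover the two terms $0$ and $d(1/p+1/q-1)$ in the definition of $\tauonepq$. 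The remaining term $d(1/q-1/2)$ is operative only when $q<2$, and in that regime the theorem's assumption forces $p \leq q_0$, hence (together with $q_0 \geq 2 > q$) we are necessarily in case (3). To recover $s > d(1/q-1/2)$ there, I would run a randomization argument analogous to Proposition \ref{prop-radomization}: test with $f = \sum_{k\in\Lambda} \eps_k \FF\sigma_k$ where $\Lambda$ lies in a single dyadic annulus $|k|\sim 2^j$ and $\eps_k=\pm 1$, apply Khinchin's inequality to get $\|f\|_{B_{p,q_0}^s} \approx 2^{js}|\Lambda|^{1/2}$ (for suitable signs) while the Wiener norm computes directly to $\|f\|_{\wpq} \approx |\Lambda|^{1/q}$; taking $|\Lambda| \approx 2^{jd}$ and $j\to\infty$ yields $s \geq d(1/q-1/2)$, and a multilevel variant (with a weighted sum over $j$ combined with Lemma \ref{lem-embed-of-lr1}) upgrades this to strict inequality when $q_0 > q$.

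\textbf{Matching to the three cases.} I would then verify case-by-case that the discretization and randomization bounds collapse to exactly $s \geq \tauonepq$ in case (1) and $s > \tauonepq$ in cases (2), (3). In case (1), the hypothesis forces $p,q \geq 2$, so $\tauonepq=0$ and the bound $s \geq 0$ is all one needs. In case (2), $q_0 > p$ gives strict $s > d(2/p-1)$, and since $p<q$ this dominates $d(1/p+1/q-1)$, handling the nontrivial subcase. In case (3), $q_0 > q$ provides strict inequalities on all three relevant bounds via discretization and randomization.

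\textbf{Main obstacle.} The delicate point is the randomization step establishing $s > d(1/q-1/2)$ in case (3) with full strict inequality, since the single-level test function only gives a non-strict bound; I expect the multilevel construction and careful tracking of how $q_0$ vs $q$ propagates through the weighted-$\ell^q$ reduction (via Lemma \ref{lem-embed-of-lr1}) to require the most bookkeeping. A second subtlety is verifying that the hypothesis $q \geq 2$ or $p \leq q_0$ precisely excludes the configurations in which these strict inequalities would otherwise fail to be automatically implied by the discretization argument alone.
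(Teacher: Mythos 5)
Your proposal is correct, and its sufficiency part together with the discretization part of the necessity coincides with the paper's argument: the same chains $B_{p,q_0}^s\hookrightarrow W_{p,q_0}\hookrightarrow \wpq$ (endpoint of case (1)) and $\bpqos\hookrightarrow \bpq^{s-\eps}\hookrightarrow\wpq$ (interior cases), and the same use of Proposition \ref{prop-discret-besov} with Lemma \ref{lem-embed-of-lr1} to extract the strict bounds $s>0$, $s>d(2/p-1)$ and $s>d(1/p+1/q-1)$ when $q_0>p$ or $q_0>q$. The one genuine divergence is your treatment of the term $d(1/q-1/2)$ when $q<2$ (which, under the standing hypothesis, places you in the regime $q<2\leq q_0$ with $p\leq q_0$, i.e.\ inside case (3)). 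The paper disposes of this by the chain $L^{s,p}\hookrightarrow F_{p,2}^{s}\hookrightarrow \bpqos\hookrightarrow\wpq$, valid since $q_0\geq p\vee 2$, and then quotes the strict inequality $s>\tau_1(p,q)=d(1/q-1/2)$ from the already-established Theorem \ref{thm-Lr-to-wpq}; you instead run a direct multilevel Khinchin randomization on Besov test functions and feed the result into Lemma \ref{lem-embed-of-lr1} to get strictness from $q_0>q$. Both routes ultimately rest on the same randomization mechanism (the paper's Theorem \ref{thm-Lr-to-wpq} uses Proposition \ref{prop-radomization} internally), so yours is a self-contained re-derivation at the cost of the bookkeeping you anticipate, while the paper's is shorter by recycling the Sobolev result. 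Two small points to tidy up if you carry out your version: build the test functions from a narrow bump $g$ with $\supp g\subseteq[-1/8,1/8]^d$ rather than from $\sigma_k$ itself, so that $\Box_k f$ is computed exactly (this is what Propositions \ref{prop-discret-besov} and \ref{prop-radomization} do); and note that Khinchin's inequality in $L^p$ needs $p<\infty$, so the endpoint $p=\infty$ of case (3) requires a separate remark (the paper's route via $F^s_{p,2}=L^{s,p}$ has the same restriction $1<p<\infty$ from Lemma \ref{lemma-bpq-fpq}(8)).
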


\begin{figure}
	\centering
	\includegraphics[width=0.5\linewidth]{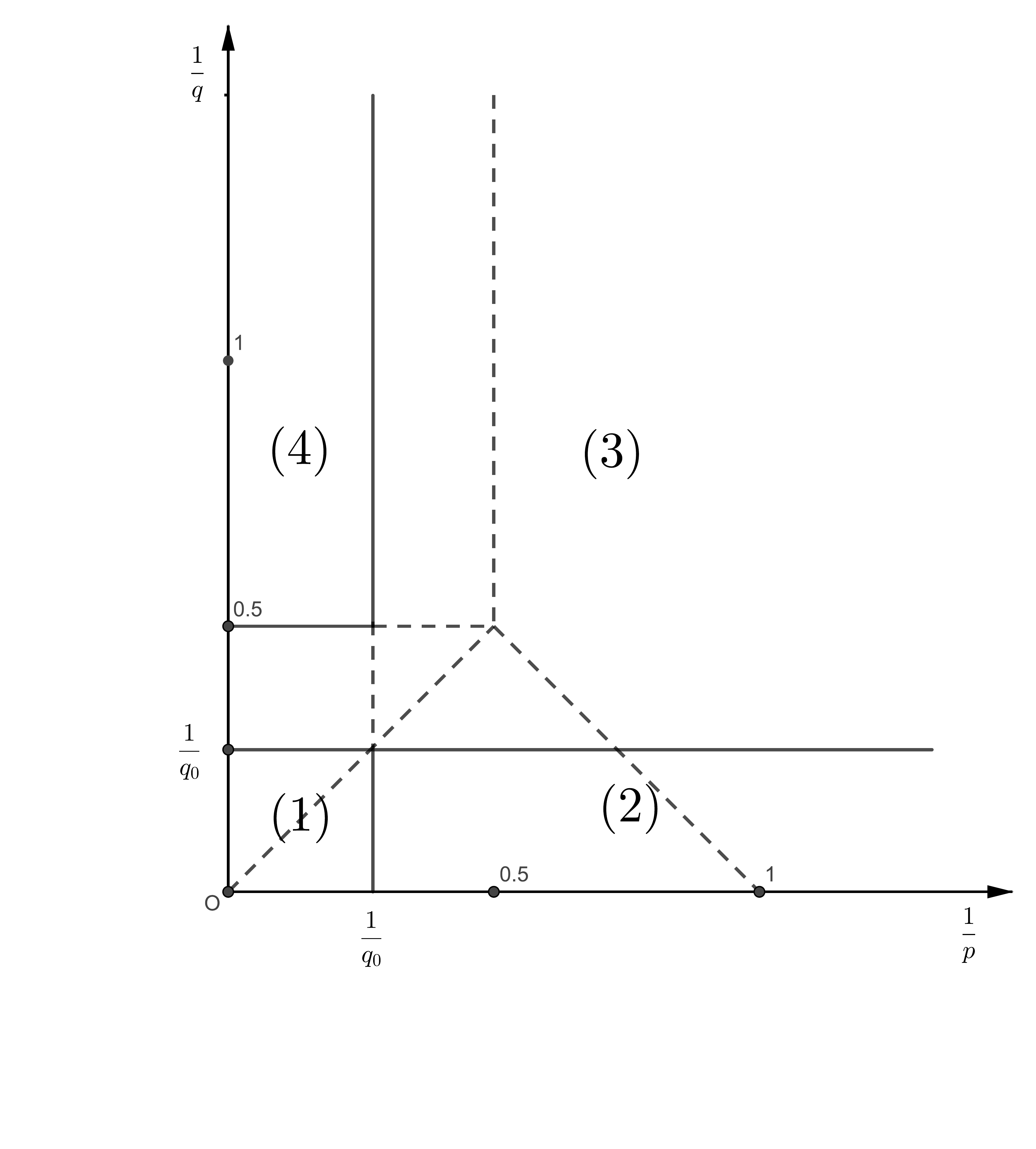}
	\caption{The index sets of Proposition \ref{prop-q0>2}}
	\label{fig:bpq0-large-wpq}
\end{figure}

\begin{prop}
	[$q_{0}<2$] \label{prop-q0<2}
	Let $0<p,q \leq \infty, s\in \real,q_{0}<2$. Moreover, we assume that $q\geq q_{0}$ or $p\leq 2$. 
	Then $\bpqos \hookrightarrow \wpq$ if and only one of the following conditions is satisfied.
	\begin{description}
		\item[(1)] $ q_{0} \leq p\wedge q, s\geq \tauonepq;$
		\item[(2)] $p <q_{0} \leq q, s> \tauonepq;$
		\item[(3)] $q<q_{0}, s> \tauonepq.$
	\end{description}
	For visualization, one can see Figure \ref{fig:bpq0-small-wpq}.
\end{prop}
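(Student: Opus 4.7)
The plan is to follow the template of Proposition~\ref{prop-q0>2}, splitting the argument into sufficiency and necessity with sub-case analysis on the three conditions; the new ingredients compared with the $q_{0}\geq 2$ case are Lemma~\ref{lemma-bpq-fpq}(3) on the sufficiency side (to exploit $q_{0}<2$) and a careful bookkeeping of strict versus non-strict inequalities on the necessity side.

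For sufficiency I treat the three cases in turn. In Case~(1), since $q_{0}<2$ and $q_{0}\leq p$ force $q_{0}\leq p\wedge 2$, Lemma~\ref{lemma-bpq-fpq}(3),(8) yields
\[
\bpqos\hookrightarrow B_{p,p\wedge 2}^{s}\hookrightarrow F_{p,2}^{s}=h_{p}^{s}\qquad(0<p<\infty),
\]
and Theorem~\ref{thm-hr-to-wpq} then gives $h_{p}^{s}\hookrightarrow\wpq$ under $s\geq\tauonepq$; the sub-conditions of that theorem are automatic because $q_{0}\leq q$ and $q_{0}<2$ imply $p\leq q$ or $2\leq q$ in the relevant sub-cases, while the edge case $p=\infty$ reduces to $B_{\infty,q_{0}}^{s}\hookrightarrow B_{\infty,q}^{s}\hookrightarrow W_{\infty,q}$ via Lemmas~\ref{lemma-bpq-fpq}(1) and~\ref{prop-bpq-to-wpq}. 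In Case~(2) the monotonicity $\bpqos\hookrightarrow B_{p,q}^{s}$ (Lemma~\ref{lemma-bpq-fpq}(1), since $q_{0}\leq q$) followed by Lemma~\ref{prop-bpq-to-wpq} closes the argument, the strict inequality $s>\tauonepq$ matching the Besov--Wiener requirement at $p<q$. In Case~(3), Lemma~\ref{lemma-bpq-fpq}(2) gives $\bpqos\hookrightarrow B_{p,q}^{s-\eps}$ for any $\eps>0$, and choosing $\eps$ small enough that $s-\eps>\tauonepq$ lets Lemma~\ref{prop-bpq-to-wpq} conclude.

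For necessity, suppose $\bpqos\hookrightarrow\wpq$. Applying both parts of Proposition~\ref{prop-discret-besov} with $p_{0}=p$ and Lemma~\ref{lem-embed-of-lr1} produces $s\geq d(2/p-1)$ and $s\geq 0$ (strict exactly when $q_{0}>p$), together with $s\geq d(1/p+1/q-1)$ and $s\geq 0$ (strict exactly when $q_{0}>q$). To extract the $d(1/q-1/2)$ piece of $\tauonepq$, which is only active for $p>2$ and $q<2$, I randomize: take $f=\sum_{k\in A_{N}}r_{k}\,e^{ik\cdot x}\,\phi(x)$, where $A_{N}$ is the integer lattice inside a dyadic annulus of radius $2^{N}$, the $r_{k}$ are Rademacher signs, and $\phi\in\sch$ has $\widehat{\phi}$ supported in a small cube. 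Khinchin's inequality yields $\norm{f}_{\bpqos}\lesssim 2^{N(s+d/2)}\norm{\phi}_{p}$ for a good realization, while $\norm{f}_{\wpq}\approx 2^{Nd/q}\norm{\phi}_{p}$ holds deterministically, and letting $N\to\infty$ in the embedding forces $s\geq d(1/q-1/2)$. Combining everything: in Case~(1) all discretization inequalities are non-strict and the randomization supplies the missing $d(1/q-1/2)$ contribution, giving $s\geq\tauonepq$; in Cases~(2) and~(3) the relevant discretization inequalities are already strict and, since the hypothesis $q\geq q_{0}$ or $p\leq 2$ collapses $\tauonepq$ to $d(0\vee(1/p+1/q-1))$, no randomization is needed and one obtains $s>\tauonepq$.

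The main obstacle is this final bookkeeping step against the case distinction: the randomization produces only a non-strict inequality $s\geq d(1/q-1/2)$, so one has to verify that this contribution is needed only in Case~(1) (where non-strict suffices), while in Cases~(2) and~(3) the strict inequalities coming from discretization already cover $\tauonepq$ (this relies on $d(2/p-1)\geq d(1/p+1/q-1)$ in Case~(2), which uses $p\leq q$). Checking that the hypothesis $q\geq q_{0}$ or $p\leq 2$ is exactly what enforces this alignment---and that outside it the $d(1/q-1/2)$ term would be active in a strict regime unreachable by the present tools---is the delicate point of the proof.
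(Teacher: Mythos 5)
Your overall architecture is sound and your treatment of Cases (2) and (3) in both directions is correct and essentially matches the paper, but there is a genuine error in your sufficiency argument for Case (1). You assert that $q_{0}\leq q$ and $q_{0}<2$ imply ``$p\leq q$ or $2\leq q$'', so that the non-strict branch of Theorem \ref{thm-hr-to-wpq} applies to $h_{p}^{s}\hookrightarrow\wpq$. This is false: take $p=3$, $q=1$, $q_{0}=1/2$, which lies in Case (1) and satisfies the standing assumption $q\geq q_{0}$, yet has $p>q$ and $q<2$. In that regime Theorem \ref{thm-hr-to-wpq} demands the \emph{strict} inequality $s>\tauonepq=d(1/q-1/2)$, so your chain $\bpqos\hookrightarrow B_{p,p\wedge 2}^{s}\hookrightarrow h_{p}^{s}\hookrightarrow\wpq$ misses exactly the endpoint $s=\tauonepq$ that Condition (1) asserts; passing through $h_{p}=F_{p,2}$ discards the gain coming from $q_{0}<2$, which is the whole point of this proposition. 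The gap is repairable: for $p\geq q$ one instead uses $\bpqos\hookrightarrow B_{p,q}^{s}\hookrightarrow\wpq$, where Lemma \ref{prop-bpq-to-wpq} is non-strict precisely when $p\geq q$, and your $h_{p}$ route is then only needed for $p<q$, where it does reach the endpoint. The paper takes a different path altogether, obtaining the full endpoint of Case (1) by interpolating the three boundary embeddings along $p=q$, $q=q_{0}$ and $p=q_{0}$ (the last being where $h_{q_{0}}$ legitimately enters).

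On the necessity side your route for Case (1) genuinely differs from the paper's: the paper feeds $B_{p,q}^{s+\eps}\hookrightarrow\bpqos$ into Lemma \ref{prop-bpq-to-wpq} and lets $\eps\to0$, whereas you randomize a lattice of modulated bumps in a dyadic annulus and apply Khinchin to extract $s\geq d(1/q-1/2)$ directly. Your computation is valid for $0<p<\infty$, but Khinchin gives no control of the $L^{\infty}$ norm, so the sub-case $p=\infty$, $q<2$ (which belongs to Case (1) and has $\tau_{1}(\infty,q)=d(1/q-1/2)>0$, while your discretization only yields $s\geq 0$ there) is not covered; the paper's $\eps$-loss reduction handles all $p$ uniformly and is considerably shorter. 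Your final bookkeeping for Cases (2) and (3) --- in particular the observation that $d(2/p-1)\geq d(1/p+1/q-1)$ because $p\leq q$ in Case (2), and that the standing hypothesis collapses $\tauonepq$ to $d\left(0\vee(1/p+1/q-1)\right)$ so that no randomization is needed --- is correct and agrees with the paper's use of Proposition \ref{prop-discret-besov}.
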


\begin{figure}
	\centering
	\includegraphics[width=0.5\linewidth]{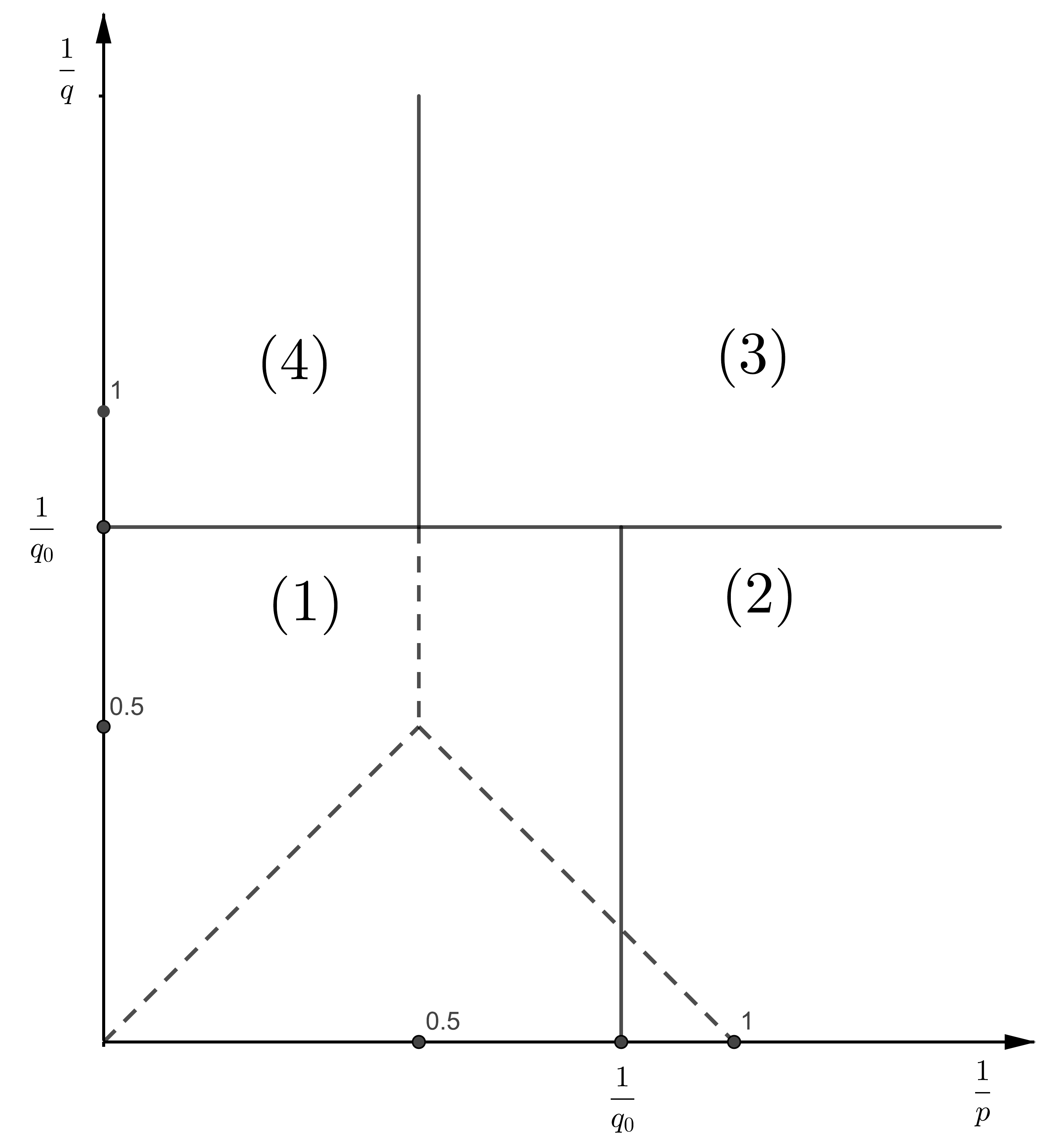}
	\caption{The index sets of Proposition \ref{prop-q0<2}}
	\label{fig:bpq0-small-wpq}
\end{figure}

\begin{proof}
	[Proof of Proposition \ref{prop-q0>2}] 	We divide this proof into two parts.
	
	Sufficiency: 
	\begin{description}
		\item[(a)] When $s>\tauonepq$, we can choose $0<\eps\ll 1$ such that $s-\eps >\tauonepq$. Then by Lemma \ref{lemma-bpq-fpq} and \ref{prop-bpq-mpq}, we have $\bpqos \hookrightarrow \bpq^{s-\eps} \hookrightarrow \wpq.$
		\item[(b)] When $q_{0} \leq p \wedge q$, with $q_{0}\geq 2$, we know that $\tauonepq=0=\tau_{1}(p,q_{0})$. Then by Lemma \ref{prop-bpq-mpq} and \ref{lemma-mpq-wpq}, we have $B_{p,q_{0}} \hookrightarrow W_{p,q_{0}} \hookrightarrow \wpq$, when $q\geq q_{0}.$
	\end{description}
	
	Necessity: \begin{description}
		\item[(A)] By Lemma \ref{lemma-bpq-fpq}, we know that for any $0<\eps\ll1$, we have $\bpq^{s+\eps} \hookrightarrow \bpqos$. If we have $\bpqos \hookrightarrow\wpq$, then we have $\bpq^{s+\eps} \hookrightarrow \wpq$. Then by Lemma \ref{prop-bpq-mpq}, we have $s+\eps\geq \tauonepq$. Let $\eps \rightarrow 0$, we have $s\geq \tauonepq.$
		\item[(B)] By (1) in Proposition \ref{prop-discret-besov}, if we know $\bpqos \hookrightarrow \wpq$, then we have $\ell_{q_{0}}^{s,1} \hookrightarrow \ell_{p}^{0,1}$, $\ell_{q_{0}}^{s+d(1-1/p),1} \hookrightarrow \ell_{p}^{d/p,1}$ and $\ell_{q_{0}}^{s,1} \hookrightarrow \ell_{q}^{0,1}$. Therefore, when $p<q_{0}$, we have $s>0\vee \brk{d(1/p+1/q-1)}$. When $q<q_{0}$, we have $s>0$.
		\item[(C)] When $q_{0} \geq p \geq 2 >q$, if we have $\bpqos \hookrightarrow \wpq$, then by Lemma \ref{lemma-bpq-fpq}, we have $L^{s,p} \hookrightarrow F_{p,2}^{s} \hookrightarrow \bpqos \hookrightarrow \wpq$. By Theorem \ref{thm-Lr-to-wpq}, we have $s> \tau_{1}(p,q) = d(1/q-1/2).$
	\end{description}
	In conclusion, the necessity of (1) follows by (A), the necessity of  (2) following by (B), the necessity of (3) follows by (B), (C).
\end{proof}

\begin{proof}
	[Proof of Proposition \ref{prop-q0<2}]
	By the same argument as in (a) and (A) of the proof of  Proposition \ref{prop-q0>2}, we only need to prove the sufficiency of Condition (1) and the necessity of Condition (2), (3).
	
	Sufficiency of (1): 
	\begin{description}
		\item[(a)] When $p=q,q_{0} \leq p$, we know that $W_{p,p} = M_{p,p}, s\geq \tau_{1}(p,p)= \tau(p,p)$. Then by Lemma \ref{prop-bpq-mpq} and \ref{lemma-bpq-fpq}, we have $\bpqos \hookrightarrow B_{p,p}^{s} \hookrightarrow M_{p,p} =W_{p,p}$.
		\item[(b)] When $q=q_{0}, p\geq q_{0},s\geq \tau_{1}(p,q_{0})$, by Lemma \ref{prop-bpq-to-wpq}, we have $\bpqos \hookrightarrow W_{p,q_{0}}$.
		\item[(c)] When $p=q_{0} \leq q$, by Lemma \ref{lemma-bpq-fpq}, we have $B_{q_{0},q_{0}}^{s} \hookrightarrow F_{q_{0},2}^{s} = h_{q_{0}}^{s}$. Then by Theorem \ref{thm-hr-to-wpq}, we have $h_{q_{0}}^{s} \hookrightarrow W_{q_{0},q}$ when $s\geq \tau_{1}(q_{0},q)$.
	\end{description}
	The sufficiency follows by the interpolation of (a), (b), (c).
	
	Necessity of (2): by the same argument as in (B) of the proof of Proposition \ref{prop-q0>2}, when $p<q_{0}$, we have $s>0 \vee \brk{d\brk{1/p+1/q-1}}.$
	
	Necessity of (3): by (2) in Proposition \ref{prop-discret-besov}, we have $\ell_{q_{0}}^{s+d(1-1/p),1} \hookrightarrow \ell_{q}^{d/q,1}$. So, when $q<q_{0}$, we have $s>d(1/p+1/q+1) = \tauonepq.$
\end{proof}

\begin{rem}
	As for the case of $q<q_{0} \wedge 2$ and $p>q_{0} \vee 2$ (see $(1/p,1/q)\in  (4)$ in Figure \ref{fig:bpq0-large-wpq} and \ref{fig:bpq0-small-wpq}), by the argument above, we know that the embedding $\bpqos \hookrightarrow\wpq$ holds when $s> \tauonepq = d(1/q-1/2)$. But we can not get the necessity of this condition. The reason, in some sense, is the lack of the randomization of  Besov spaces in contrast with Sobolev spaces. This is a remaining question.
\end{rem}

\begin{rem}
	The proof of Theorem \ref{thm-wpq-to-bpq0} is similar to the proof above. We omit it as well.
\end{rem}

\section{Proof of Theorem \ref{thm-mpqs-to-wpq} and \ref{thm-wpq-to-mpqs}}
We only give the proof of Theorem \ref{thm-mpqs-to-wpq}. The proof of Theorem \ref{thm-wpq-to-mpqs} is similar. We first consider the special case of $s=0$ in Theorem \ref{thm-mpqs-to-wpq}. We have

\begin{prop} \label{prop-mpq-to-wpq}
	Let $0<p,q,u,v \leq \infty$, then $M_{p,u} \hookrightarrow \wpq \hookrightarrow M_{p,v}$ if and only if $u\leq p \wedge q, v\geq p \vee q$.
\end{prop}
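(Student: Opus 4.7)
The sufficiency is essentially a two-step inclusion chain. If $u\leq p\wedge q$, then $u\leq p$ gives $M_{p,u}\hookrightarrow W_{p,u}$ by part (5) of Lemma \ref{lemma-mpq-wpq}, and $u\leq q$ gives $W_{p,u}\hookrightarrow W_{p,q}$ by part (1). Symmetrically, $v\geq p\vee q$ yields $W_{p,q}\hookrightarrow W_{p,v}\hookrightarrow M_{p,v}$. So the real content is the necessity, which I would prove by exhibiting two families of test functions that isolate the $u\leq q$ (resp.\ $v\geq q$) and $u\leq p$ (resp.\ $v\geq p$) obstructions separately.

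\textbf{Test function $A$ (frequency-spread, spatially-localized).} Fix $\phi\in\sch$ with $\widehat{\phi}$ supported in a small neighbourhood of the origin so that $\widehat{\phi}\,\sigma_k(\cdot+k)=\widehat{\phi}$. For a finite set $\Lambda_N\subset\Z^d$ of cardinality $N$, set
\[
f_N(x)=\phi(x)\sum_{k\in\Lambda_N} e^{ik\cdot x}.
\]
Then $\Box_k f_N(x)=e^{ik\cdot x}\phi(x)$ for $k\in\Lambda_N$ and $0$ otherwise (up to harmless boundary terms), so $\norm{\Box_k f_N}_p\approx\norm{\phi}_p$ and, pointwise, $\norm{\Box_k f_N(x)}_{\ell_k^q}\approx N^{1/q}|\phi(x)|$. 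This gives $\norm{f_N}_{M_{p,u}}\approx N^{1/u}$ and $\norm{f_N}_{\wpq}\approx N^{1/q}$. Plugging into either embedding yields, respectively, $u\leq q$ and $v\geq q$ after letting $N\to\infty$.

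\textbf{Test function $B$ (frequency-spread, spatially-spread).} Now pick widely separated points $\{x_k\}_{k\in\Lambda_N}\subset\real^d$ (so that the translates $\phi(\cdot-x_k)$ have essentially disjoint support) and set
\[
g_N(x)=\sum_{k\in\Lambda_N} e^{ik\cdot(x-x_k)}\phi(x-x_k).
\]
Each $\Box_k g_N$ is concentrated on $\{x_k\}$ with $|\Box_k g_N(x)|\approx|\phi(x-x_k)|$. Since at most one summand is nonzero at each $x$, $\norm{\Box_k g_N(x)}_{\ell_k^q}\approx\max_k|\phi(x-x_k)|$ and hence $\norm{g_N}_{\wpq}\approx N^{1/p}\norm{\phi}_p$, while $\norm{g_N}_{M_{p,u}}\approx N^{1/u}\norm{\phi}_p$. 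The embeddings then force $u\leq p$ and $v\geq p$.

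\textbf{Main obstacle.} The only nontrivial technical point is verifying the pointwise estimates for $\Box_k f_N$ and $\Box_k g_N$ with sufficient rigor: one must choose $\phi$ so that $\widehat{\phi}$ sits well inside the support of $\sigma$, and choose the translates $\{x_k\}$ so that the tails of $\phi(\cdot-x_k)$ do not interact in $L^p$ (easy for $p\geq 1$ by direct estimate, and for $0<p<1$ by choosing the spacing of the $x_k$ large enough and using the rapid decay of $\phi$). Once these approximations are justified, the scaling arguments above immediately give the four necessary inequalities $u\leq q$, $u\leq p$, $v\geq q$, $v\geq p$, completing the proof.
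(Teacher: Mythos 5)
Your proposal is correct and follows essentially the same route as the paper: sufficiency via the chain $M_{p,u}\hookrightarrow W_{p,p\wedge q}\hookrightarrow W_{p,q}\hookrightarrow W_{p,p\vee q}\hookrightarrow M_{p,v}$ from Lemma \ref{lemma-mpq-wpq}, and necessity via the same two test families (modulated bumps concentrated at one point for $u\leq q$, $v\geq q$; widely translated modulated bumps with almost orthogonality for $u\leq p$, $v\geq p$). The only cosmetic difference is that you specialize the coefficients to indicators of a set of size $N$ and compare powers of $N$, whereas the paper keeps general $a_k$ and reads off a sequence-space embedding; both conclude identically.
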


\begin{proof}	We divide this proof into two parts.
	
	Sufficiency: by the embedding relationship of $\mpq$ and $\wpq$ (Lemma \ref{lemma-mpq-wpq}), we have 
	\begin{align*}
		M_{p,u} \hookrightarrow M_{p,p\wedge q} \hookrightarrow W_{p,p\wedge q} \hookrightarrow \wpq; \\
		\wpq \hookrightarrow W_{p,p\vee q} \hookrightarrow M_{p,p\vee q} \hookrightarrow M_{p,v}. 
	\end{align*}
	
	Necessity:
	we only prove the part of $M_{p,u} \hookrightarrow \wpq$, one can prove the part of  $\wpq \hookrightarrow M_{p,v}$ in the same way.
	
	If we have $M_{p,u} \hookrightarrow \wpq$, which means that \begin{align}
		\label{eq mpq to wpq}
		\norm{f}_{\wpq} \lesssim \norm{f}_{M_{p,u}}.
	\end{align}
	\begin{description}
		\item[(i)]   Choose $\eta \in \sch$ such that $\Box_{0} \eta = \eta$, then take $f= \sum_{k\in \Z^{d}} a_{k} e^{ikx} \eta(x)$, then we know that $\Box_{k} f= a_{k} e^{ikx} \eta(x)$. Therefore, we have \begin{align*}
			\norm{f}_{M_{p,u}} \approx \norm{a_{k}}_{\ell_{k}^{u}}, \quad  \norm{f}_{\wpq} \approx \norm{a_{k}}_{\ell_{k}^{q}}.
		\end{align*}
		Take it into \eqref{eq mpq to wpq}, we have $u\leq q$.

		\item[(ii)]  Take $f^{N} = \sum_{k\in \Z^{d}} a_{k} T_{Nk} (e^{ikx} \eta(x))$, where $T_{Nk}f(x) = f(x-Nk)$. Then we have $\Box_{k} f^{N} = a_{k} T_{Nk} (e^{ikx} \eta(x))$. So, we know that \begin{align*}
			\norm{f^{N}}_{M_{p,u}} &\approx \norm{a_{k}}_{\ell_{k}^{u}},\\
			\lim_{N\rightarrow \infty} \norm{f^{N}}_{\wpq} &= \lim_{N\rightarrow \infty} \norm{ \norm{a_{k} T_{Nk} (e^{ikx} \eta(x))}_{\ell_{k}^{q}}}_{p} \\
			&= \lim_{N\rightarrow \infty} \norm{ \norm{a_{k} T_{Nk} (e^{ikx} \eta(x))}_{\ell_{k}^{p}}}_{p} \approx \norm{a_{k}}_{\ell_{k}^{p}},
		\end{align*}
		where we use the almost orthogonality of $\sett{T_{Nk} f}_{k\in \Z^{d}}$ to take the  limitation above. Take the estimates into \eqref{eq mpq to wpq}, we have $u\leq p$.
	\end{description}
\end{proof}
Then, we could give the proof of Theorem \ref{thm-mpqs-to-wpq}.
\begin{proof}	[Proof of Theorem \ref{thm-mpqs-to-wpq}]	We divide this proof into two parts.
	
	Sufficiency: by the embedding of $\mpqs$, we have $M_{p_{1},q_{1}}^{s} \hookrightarrow M_{p,p\wedge q}$, then by Proposition \ref{prop-mpq-to-wpq}, we have $M_{p,p\wedge q} \hookrightarrow \wpq$.
	
	Necessity: \begin{description}
		\item[(i)]  Take $f$ as in (i) of the proof of Proposition \ref{prop-mpq-to-wpq}, we can get $\ell_{q_{1}}^{s,0} \hookrightarrow \ell_{q}^{0,0}$.
		\item[(ii)]  Take $f$ as in (ii) of the proof of Proposition \ref{prop-mpq-to-wpq}, we can get $\ell_{q_{1}}^{s,0} \hookrightarrow \ell_{q}^{0,0}$.
		\item[(iii)]  By Proposition \ref{prop-low-frequency-scaling}, we have $p_{1} \leq p.$
	\end{description}
	Combine (i) and (ii), we have $\ell_{q_{1}}^{s,0} \hookrightarrow \ell_{p\wedge q}^{s,0}$. Then by Lemma \ref{lem-embed-of-lr}, we have the conditions as desired.
\end{proof}

\section{Proof of Theorem \ref{thm-mpqsal-to-wpq} and \ref{thm-wpq-mpqal}}

\subsection{Proof of Theorem \ref{thm-mpqsal-to-wpq}} 

We first give some propositions, which play a great role in our proofs.

\begin{prop}
	\label{prop-mpqal-to-w_infty2}
	Let $\al \in (0,1)$. We have $M_{\infty,2}^{0,\al} \hookrightarrow W_{\infty,2}$.
\end{prop}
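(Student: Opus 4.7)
My plan is to combine three ingredients: the $\al$-BAPU decomposition $f = \sum_{k} \Box_k^\al f$, a bounded-overlap geometry between the unit-cube family $\sett{\sigma_j}_{j \in \Z^d}$ and the $\al$-family $\sett{\eta_k^\al}_{k \in \Z^d}$, and the embedding $L^\infty \hookrightarrow W_{\infty,2}$ that is already available as a special case of Theorem \ref{thm-Lr-to-wpq} (take $s=0$, $r=p=\infty$, $q=2$, for which $\tauonerq = 0$ so that condition (2) there is satisfied).

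First, since every $\al$-ball has radius $R_k = \inner{k}^{\al/(1-\al)} \geq 1$ while $\supp \sigma_j$ sits in a unit cube, the finite-overlap property (Definition \ref{def-al-covering}(ii)) forces
$$K_j := \sett{k \in \Z^d : \supp \sigma_j \cap \supp \eta_k^\al \neq \emptyset}$$
to satisfy $|K_j| \leq C(d,\al)$ uniformly in $j$. Writing $\Box_j f = \sum_{k \in K_j} \Box_j \Box_k^\al f$ and applying Cauchy-Schwarz on the finite set $K_j$, one obtains the pointwise bound
$$|\Box_j f(x)|^2 \lesssim \sum_{k \in K_j} |\Box_j \Box_k^\al f(x)|^2.$$

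Summing in $j$ and swapping the order of summation (with $J_k := \sett{j : k \in K_j}$), this becomes
$$\sum_j |\Box_j f(x)|^2 \lesssim \sum_k \sum_{j \in J_k} |\Box_j \Box_k^\al f(x)|^2 \leq \sum_k \norm{\Box_k^\al f}_{W_{\infty,2}}^2,$$
valid for a.e.\ $x$ by the definition of the Wiener norm. Invoking the embedding $L^\infty \hookrightarrow W_{\infty,2}$ gives $\norm{\Box_k^\al f}_{W_{\infty,2}} \lesssim \norm{\Box_k^\al f}_\infty$, and taking the essential supremum in $x$ yields
$$\norm{f}_{W_{\infty,2}}^2 \lesssim \sum_k \norm{\Box_k^\al f}_\infty^2 = \norm{f}_{M_{\infty,2}^{0,\al}}^2,$$
as required.

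The main technical point is the uniform geometric bound $|K_j| = O(1)$. For small $|k|$ the $\al$-balls are near unit scale and boundedness is immediate; for large $|j|$ one has to show that among the integer points $k$ with $\inner{k}^{\al/(1-\al)}|k| \approx |j|$, only $O(1)$ satisfy the localization $|\inner{k}^{\al/(1-\al)}k - j| \lesssim R_k \approx |j|^\al$, which follows by comparing the angular width $\sim |j|^{\al-1}$ of admissible directions with the lattice density at radius $\sim |j|^{1-\al}$. Equivalently, this is the finite-overlap property of the $\al$-covering applied to balls of radius $\geq 1$. Everything else is a routine application of Cauchy-Schwarz/Fubini together with the already established $L^\infty$-to-$W_{\infty,2}$ embedding.
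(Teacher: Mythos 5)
Your proof is correct and follows essentially the same strategy as the paper's: exploit the bounded overlap of the $\al$-decomposition to obtain an $\ell^2_k$ almost-orthogonality bound, swap the $L^\infty_x$ norm with the $\ell^2_k$ sum, and finish each piece via $L^\infty \hookrightarrow W_{\infty,2}$ from Theorem \ref{thm-Lr-to-wpq}. The only difference is cosmetic: you run the argument on the discrete $\Box_j$ norm (Cauchy--Schwarz over the $O(1)$ set $K_j$ plus Fubini), while the paper works with the STFT norm and a compactly supported window, using $L^2_\xi$-orthogonality of the pieces $V_g(\eta_k^{\al}\widehat{f})$ together with Minkowski's inequality -- both hinge on the same geometric fact that bounded dilates of the supports of the $\eta_k^{\al}$ overlap boundedly.
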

\begin{proof}
	By the $\al$-BAPU in Definition \ref{def alpha mpq}, we have $\widehat{f} = \sum_{k\in \Z^{d}} \eta_{k}^{\al} \widehat{f}.$ By the property of STFT, we know $\abs{V_{g}f(x,\xi)} = \abs{V_{\widehat{g}} \widehat{f} (\xi,-x)}$, so we have \begin{align}\label{eq-w-infty-2}
		\norm{f}_{W_{\infty,2}} &= \norm{\|V_{g} \widehat{f} (\xi,x)\|_{L_{\xi}^{2}}}_{L_{x}^{\infty}} \notag\\
		&= \norm{\| \sum_{k\in \Z^{d}} V_{g} (\eta_{k}^{\al}\widehat{f}) (\xi,x)\|_{L_{\xi}^{2}}}_{L_{x}^{\infty}}. 
	\end{align}	
	If we choose window function $g$ with $\supp g \subseteq [0,1]^{d}$, then $\supp V_{g} (\eta_{k}^{\al}\widehat{f}) (\cdot,x) \subseteq \supp \eta_{k}^{\al} + [0,1]^{d}\subseteq 2 \ \supp \eta_{k}^{\al}$ for any $x\in \real^{d} $. By the definition of $\eta_{k}^{\al}$, we know $\sett{2 \ \supp \eta_{k}^{\al}}_{k\in \Z^{d}}$ are bounded overlapped. Then by the orthogonality of $L_{\xi}^{2}$, we have \begin{align*}
		\eqref{eq-w-infty-2} &\lesssim \norm{  \normo{\|V_{g} (\eta_{k}^{\al}\widehat{f}) (\xi,x)\|_{L_{\xi}^{2}} }_{\ell_{k}^{2}}}_{L_{x}^{\infty}} \\
		& \norm{ \normo{\normo{V_{g} (\eta_{k}^{\al}\widehat{f}) (\xi,x)}_{L_{\xi}^{2}}}_{L_{x}^{\infty}}}_{\ell_{k}^{2}} = \norm{\normo{\Box_{k}^{\al} f}_{W_{\infty,2}}}_{\ell_{k}^{2}},
	\end{align*}
	where we use the Minkowski's inequality at the last inequality. By Theorem \ref{thm-Lr-to-wpq}, we have $L^{\infty} \hookrightarrow W_{\infty,2}$, which means that $\norm{u}_{W_{\infty,2}} \lesssim \norm{u}_{\infty}$. Take this into the inequality above, we have \begin{align*}
		\eqref{eq-w-infty-2} \lesssim \norm{\normo{\Box_{k}^{\al} f}_{\infty}}_{\ell_{k}^{2}} = \norm{f}_{M_{\infty,2}^{0,\al}}.
	\end{align*}
	Combine the estimates above, we have $\norm{f}_{W_{\infty,2}} \lesssim \norm{f}_{M_{\infty,2}^{0,\al}}$ which means that $M_{\infty,2}^{0,\al} \hookrightarrow W_{\infty,2}$.
\end{proof}

\begin{prop}\label{prop-mpqal-to-w_infty1}
	Let $\al\in (0,1)$, $0<q\leq 1, s= \al d(1/q-1/2)$. Then we have $M_{\infty,q}^{s,\al} \hookrightarrow W_{\infty,q}.$
\end{prop}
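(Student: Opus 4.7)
My plan is to follow the scheme of the proof of Proposition~\ref{prop-mpqal-to-w_infty2}, replacing the $L^{2}$-orthogonality step with one adapted to the quasi-Banach regime $0<q\leq 1$, and inserting a Bernstein-type loss on each $\al$-frequency piece that is exactly compensated by the weight $\langle k\rangle^{s/(1-\al)}$ in the target norm. Using $|V_{g}f(x,\xi)|=|V_{\hat g}\hat f(\xi,-x)|$ and the $\al$-BAPU decomposition $\hat f=\sum_{k\in\Z^{d}}\eta_{k}^{\al}\hat f$,
\begin{align*}
\norm{f}_{W_{\infty,q}} = \norm{\,\norm{\sum_{k}V_{\hat g}(\eta_{k}^{\al}\hat f)(\xi,-x)}_{L^{q}_{\xi}}\,}_{L^{\infty}_{x}}.
\end{align*}
I choose the window so that $\hat g$ has compact support in the unit cube, as in Proposition~\ref{prop-mpqal-to-w_infty2}; then the $\xi$-supports of the summands $V_{\hat g}(\eta_{k}^{\al}\hat f)(\cdot,-x)$ are contained in $2\supp\eta_{k}^{\al}$ and therefore have bounded overlap $N(d)$, uniformly in $x$, by the $\al$-covering property.

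Since $q\leq 1$, at each fixed $\xi$ at most $N(d)$ of the terms are nonzero, so
\begin{align*}
\Bigl|\sum_{k} V_{\hat g}(\eta_{k}^{\al}\hat f)(\xi,-x)\Bigr|^{q} \leq N(d)^{q}\sum_{k}\bigl|V_{\hat g}(\eta_{k}^{\al}\hat f)(\xi,-x)\bigr|^{q}.
\end{align*}
Integrating in $\xi$ and using $\sup_{x}\sum_{k}\leq\sum_{k}\sup_{x}$ produces $\norm{f}_{W_{\infty,q}}^{q} \lesssim \sum_{k}\norm{\Box_{k}^{\al}f}_{W_{\infty,q}}^{q}$. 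The central local estimate I would then establish is a Bernstein-type inequality: if $F\in L^{\infty}$ has $\hat F$ supported in a ball of radius $R\geq 1$, then $\norm{F}_{W_{\infty,q}}\lesssim R^{d(1/q-1/2)}\norm{F}_{\infty}$. Indeed, choosing the window $\psi$ with $\hat\psi$ compactly supported, $V_{\psi}F(x,\cdot)$ is the Fourier transform of $F\cdot\overline{\psi(\cdot-x)}$ and hence, for each fixed $x$, has $\xi$-support inside an $O(R)$-ball; Plancherel gives $\norm{V_{\psi}F(x,\cdot)}_{L^{2}_{\xi}}\lesssim\norm{F}_{\infty}$, and H\"older on the support then yields $\norm{V_{\psi}F(x,\cdot)}_{L^{q}_{\xi}}\lesssim R^{d(1/q-1/2)}\norm{F}_{\infty}$, uniformly in $x$.

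Applying this with $F=\Box_{k}^{\al}f$ and $R=R_{k}:=\langle k\rangle^{\al/(1-\al)}$ (the radius of $\supp\eta_{k}^{\al}$) and summing,
\begin{align*}
\norm{f}_{W_{\infty,q}}^{q} \lesssim \sum_{k} R_{k}^{qd(1/q-1/2)}\norm{\Box_{k}^{\al}f}_{\infty}^{q} = \sum_{k}\langle k\rangle^{sq/(1-\al)}\norm{\Box_{k}^{\al}f}_{\infty}^{q} = \norm{f}_{M_{\infty,q}^{s,\al}}^{q},
\end{align*}
using that $s=\al d(1/q-1/2)$ forces the identity $R_{k}^{qd(1/q-1/2)}=\langle k\rangle^{sq/(1-\al)}$. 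The main obstacle is exactly this matching between the Bernstein loss on each $\al$-piece and the weight exponent in the $\al$-modulation norm; once it is recognized, the rest is a clean combination of the bounded-overlap argument of Proposition~\ref{prop-mpqal-to-w_infty2} with the $q$-th-power quasi-triangle inequality available for $q\leq 1$.
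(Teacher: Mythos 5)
Your proof is correct and follows essentially the same scheme as the paper's: decompose via the $\al$-BAPU, use the $q\leq 1$ quasi-triangle inequality together with Minkowski to reduce to the individual pieces $\Box_{k}^{\al}f$, and then pay a factor $\langle k\rangle^{\frac{\al d}{1-\al}(1/q-1/2)}$ on each piece, which is exactly the weight in $M_{\infty,q}^{s,\al}$. The only (harmless) difference is in how that local factor is obtained: the paper uses H\"older on the discrete $\ell^{q}\to\ell^{2}$ sum over the $\approx\langle k\rangle^{\al d/(1-\al)}$ unit cubes meeting $\supp\eta_{k}^{\al}$ followed by the embedding $L^{\infty}\hookrightarrow W_{\infty,2}$, whereas you prove the equivalent continuous Bernstein-type bound $\norm{F}_{W_{\infty,q}}\lesssim R^{d(1/q-1/2)}\norm{F}_{\infty}$ directly via Plancherel and H\"older on the $\xi$-support of the STFT, which is self-contained and equally valid.
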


\begin{proof}
	For $f=\sum_{k\in\Z^{d}}\Box_{k}^{\al} f $, we have 
	\begin{align}
		\label{eq-w-infty-1}
		\norm{f}_{W_{\infty,q}} &= \norm{ \normo{V_{g} f(x,\xi)}_{L_{\xi}^{q}}}_{L_{x}^{\infty}} \notag\\
		&= \norm{ \normo{\sum_{k\in\Z^{d}} V_{g} \brk{\Box_{k}^{\al} f} (x,\xi)}  }_{L_{x}^{\infty}}.
	\end{align}
	Then by quasi-triangular inequality of $L_{\xi}^{q}$ and  Minkowski's inequality, we have 
	\begin{align}\label{eq-w-infty-1-2}
		\eqref{eq-w-infty-1} &\leq \norm{ \normo{ \normo{ V_{g} \brk{\Box_{k}^{\al} f} (x,\xi) }_{L_{\xi}^{q}} }_{\ell_{k}^{q}}}_{L_{x}^{\infty}} \notag\\
		&\leq \norm{\normo{\normo{V_{g} \brk{\Box_{k}^{\al} f} (x,\xi)}_{L_{\xi}^{q}}}_{L_{x}^{\infty}}}_{\ell_{k}^{q}} \notag\\
		&= \norm{ \normo{\Box_{k}^{\al} f}_{W_{\infty,q}}}_{\ell_{k}^{q}}.
	\end{align}
	For any $k\in \Z^{d}$, denote $\wedge_{k} =\sett{ \ell\in \Z^{d}: \Box_{\ell} \Box_{k}^{\al} \neq 0}$. Obviously, we know that $\# \wedge_{k} \approx \inner{k}^{\al d/(1-\al)}$. Then  by H\"older's inequality, we have \begin{align*}
		\norm{\Box_{k}^{\al} f}_{W_{\infty,q}} &= \norm{ \normo{\Box_{\ell} \Box_{k}^{\al} f}_{\ell_{\ell \in \wedge_{k}} ^{q}}}_{\infty} \\
		& \leq \norm{ \normo{\Box_{\ell} \Box_{k}^{\al} f}_{\ell_{\ell \in \wedge_{k}} ^{2}}\brk{ \# \wedge_{k}}^{1/q-1/2}}_{\infty} \\
		&= \inner{k}^{\frac{\al d}{1-\al} (1/q-1/2)} \norm{\Box_{k}^{\al} f}_{W_{\infty,2}}.
	\end{align*}
	By Theorem \ref{thm-Lr-to-wpq}, we have $L^{\infty} \hookrightarrow W_{\infty,2}$. Use this embedding and take the estimate above into \eqref{eq-w-infty-1-2}, we have \begin{align*}
		\norm{f}_{W_{\infty,q}} \lesssim \norm{ \inner{k}^{\frac{\al d}{1-\al} (1/q-1/2)} \normo{ \Box_{k}^{\al} f}_{\infty} }_{\ell_{k}^{q}} = \norm{f}_{M_{\infty,q}^{s,\al}},
	\end{align*}
	which means that $M_{\infty,q}^{s,\al} \hookrightarrow W_{\infty,q}.$
\end{proof}

\begin{prop}
	\label{prop-mpqal-to-w_1infty}
	Let $\al\in (0,1), 0<p\leq 1, s>\al d(1/p-1)+d(1-\al)/p$. Then we have $M_{p,\infty}^{s,\al} \hookrightarrow W_{p,\infty}.$
\end{prop}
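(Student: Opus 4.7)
The plan is to reduce to a block-by-block estimate of $\Box_k^\al f$ and apply the $p<1$ Young inequality (Lemma \ref{lem-young-p<1}) at the frequency scale of the $\al$-decomposition. First I decompose $f=\sum_{k\in\Z^d}\Box_k^\al f$. Using $\|g\|_{W_{p,\infty}}=\|\sup_{\ell\in\Z^d}|\Box_\ell g|\|_p$ together with the sub-additivity of $\sup_\ell|\cdot|$ and of $\|\cdot\|_p^p$ for $p\le1$, I obtain
\begin{equation*}
\|f\|_{W_{p,\infty}}^p \;\le\; \sum_{k\in\Z^d}\|\Box_k^\al f\|_{W_{p,\infty}}^p.
\end{equation*}

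For a single block, I exploit that $\sigma_\ell(\xi)=\sigma(\xi-\ell)$ gives $|\FF\sigma_\ell|=|\FF\sigma|$ and hence the pointwise bound
\begin{equation*}
\sup_{\ell\in\Z^d}|\Box_\ell\Box_k^\al f(x)|\;\le\;(|\FF\sigma|*|\Box_k^\al f|)(x)
\end{equation*}
uniformly in $\ell$. Since $\FF\sigma$ has frequency support in a ball of radius $\sim1$ and $\Box_k^\al f$ has frequency support in a ball of radius $R_k\sim\langle k\rangle^{\al/(1-\al)}$, Lemma \ref{lem-young-p<1} (with the ordinary Young inequality handling the borderline $p=1$) yields
\begin{equation*}
\|\Box_k^\al f\|_{W_{p,\infty}} \;\lesssim\; \langle k\rangle^{\al d(1/p-1)/(1-\al)}\,\|\Box_k^\al f\|_p.
\end{equation*}

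Inserting the defining inequality $\|\Box_k^\al f\|_p\le\langle k\rangle^{-s/(1-\al)}\|f\|_{M_{p,\infty}^{s,\al}}$ and summing gives
\begin{equation*}
\|f\|_{W_{p,\infty}}^p\;\lesssim\;\|f\|_{M_{p,\infty}^{s,\al}}^p\sum_{k\in\Z^d}\langle k\rangle^{p[\al d(1/p-1)-s]/(1-\al)}.
\end{equation*}
The sum converges exactly when the exponent is strictly less than $-d$, which rearranges to the hypothesis $s>\al d(1/p-1)+d(1-\al)/p$. The main conceptual step is the sup-to-convolution reduction, but it follows essentially for free from the modulation structure of $\sigma_\ell$; the principal bookkeeping obstacle is matching the powers of $\langle k\rangle$ produced by the Young step with the $s/(1-\al)$ weight hidden in the $M_{p,\infty}^{s,\al}$ norm.
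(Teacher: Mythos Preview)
Your argument is correct and follows the same strategy as the paper: decompose via the $\al$-BAPU, use the $p\le1$ quasi-triangle inequality to reduce to single blocks, apply Lemma~\ref{lem-young-p<1} to obtain $\|\Box_k^\al f\|_{W_{p,\infty}}\lesssim\langle k\rangle^{\al d(1/p-1)/(1-\al)}\|\Box_k^\al f\|_p$, and then sum. The paper carries out the single-block step through the STFT definition rather than the discrete $\sup_\ell|\Box_\ell\cdot|$ bound, and packages the final summation as the embedding $M_{p,\infty}^{s,\al}\hookrightarrow M_{p,p}^{\al d(1/p-1),\al}$ from Lemma~\ref{lem-mpqsal} instead of writing the $\ell^p$ sum explicitly, but these are cosmetic differences.
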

\begin{proof}
	By the quasi triangular inequality as in the proof of Proposition \ref{prop-mpqal-to-w_infty1}, we have 
	\begin{align}
		\label{eq-w-1-infty}
		\norm{f}_{W_{p,\infty}} \lesssim \norm{\normo{\Box_{k}^{\al} f}_{W_{p,\infty}}}_{\ell_{k}^{p}}.
	\end{align}
	By STFT, we have 
	\begin{align}
		\label{eq-w-1-infty-2}
		\norm{\Box_{k}^{\al} f}_{W_{p,\infty}} &= \norm{ \normo{ V_{g} \brk{\eta_{k}^{\al} \widehat{f}} (\xi,x)}_{L_{\xi}^{\infty}}}_{L_{x}^{p}} \notag\\
		&= \norm{ \normo{ \FF \brk{ \eta_{k}^{\al} \widehat{f} T_{\xi} g}(x) }L_{\xi}^{\infty}}_{L_{x}^{p}}\notag\\
		&= \norm{\normo{\Box_{k}^{\al} f * \FF \brk{T_{\xi} g \widetilde{\eta_{k}^{\al}}} (x)}_{L_{\xi}^{\infty}}}_{L_{x}^{p}} \notag \\
		& \leq \norm{ \abs{\Box_{k}^{\al} f} * \norm{ \FF \brk{T_{\xi} g \widetilde{\eta_{k}^{\al}}} }_{L_{\xi}^{\infty}} (x) }_{L_{x}^{p}},
	\end{align}
	where $T_{\xi} f (x) =f(x-\xi)$ is the translation operator and  $\eta_{k}^{\al} \widetilde{\eta_{k}^{\al}}=\eta_{k}^{\al}.$
	By the properties of Fourier transform $\F $, we  have 
	\begin{align*}
		\norm{ \FF \brk{T_{\xi} g \widetilde{\eta_{k}^{\al}}} }_{L_{\xi}^{\infty}} &= \norm{ M_{\xi} \widehat{g} * \FF \widetilde{\eta_{k}^{\al}} }_{L_{\xi}^{\infty}} \\
		& \leq \norm{M _{\xi} \widehat{g}}_{L_{\xi}^{\infty}} * \abs{\FF \eta_{k}^{\al}} \\
		&= \widehat{g} *  \abs{\FF \eta_{k}^{\al}},
	\end{align*}
	where $M_{\xi} f(x) = e^{i\xi x} f(x)$ is the modulation operator, and we can assume $\widehat{g} \geq 0$. Take this into \eqref{eq-w-1-infty-2}, we have 
	\begin{align*}
		\norm{\Box_{k}^{\al}f}_{W_{p,\infty}} \lesssim \norm{\abs{\Box_{k}^{\al} f} * \widehat{g} * \abs{\FF \eta_{k}^{\al}}}_{p}.
	\end{align*}
	Then by Lemma \ref{lem-young-p<1}, we have \begin{align*}
		\norm{\Box_{k}^{\al} f}_{W_{p,\infty}} \lesssim \inner{k}^{\frac{\al d}{1-\al} (1/p-1)} \norm{\Box_{k}^{\al} f}_{p}.
	\end{align*}
	Take this into \eqref{eq-w-1-infty}, we have 
	\begin{align*}
		\norm{f}_{W_{p,\infty}} \lesssim \norm{\inner{k}^{\frac{\al d}{1-\al} (1/p-1)} \norm{\Box_{k}^{\al} f}_{p}}_{\ell_{k}^{p}} = \norm{f}_{M_{p,p}^{\al d(1/p-1),\al}}. 
	\end{align*}
	When $s>\al d(1/p-1) + d(1-\al)/p$, by Lemma \ref{lem-mpqsal}, we have $M_{p,\infty}^{s,\al} \hookrightarrow M_{p,p}^{\al d(1/p-1),\al}$, take this embedding into the estimate above, we have 
	\begin{align*}
		\norm{f}_{W_{p,\infty}} \lesssim \norm{f}_{M_{p,p}^{\al d(1/p-1),\al}} \lesssim \norm{f}_{M_{p,\infty}^{s,\al}},
	\end{align*}
	which means that $M_{p,\infty}^{s,\al} \hookrightarrow W_{p,\infty}.$
\end{proof}

\begin{prop}
	\label{prop-sgeq-0}
	Let $0<p,q\leq \infty,s\in \real, \al \in (0,1)$. Then 
	\begin{description}
		\item[(1)] $\mpqsal \hookrightarrow \wpq \Longrightarrow  \ell_{q}^{s/(1-\al),0} \hookrightarrow \ell_{p}^{0,0};$
		\item[(2)] $\wpq \hookrightarrow \mpqsal \Longrightarrow \ell_{p}^{0,0}  \hookrightarrow \ell_{q}^{s/(1-\al),0}.$
	\end{description}
	
\end{prop}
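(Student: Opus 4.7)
The strategy is a translated--modulated randomization argument that generalises step (ii) of the proof of Proposition \ref{prop-mpq-to-wpq} to the $\al$-modulation setting. Given an arbitrary sequence $(a_{k})_{k\in\Z^{d}}$, fix $\eta\in\sch$ whose Fourier transform $\widehat{\eta}$ is supported in a small cube around $0$; write $c_{k}:=\inner{k}^{\al/(1-\al)}k$ for the centre of the $k$-th $\al$-cell, and test both implications on the single family
\begin{align*}
f^{N}\;=\;\sum_{k\in\Z^{d}} a_{k}\, T_{Nk}\bigl(M_{c_{k}}\eta\bigr),
\end{align*}
where $T_{y}g(x)=g(x-y)$ and $M_{\xi}g(x)=e^{i\xi x}g(x)$, and $N\in\N$ is taken so large that the physical supports $Nk+\supp\eta$ are essentially disjoint.

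Because $\norm{\cdot}_{\wpq}$ is invariant under translations and modulations (as is readily seen from the transformation law of the STFT), and the atoms become asymptotically disjointly supported as $N\to\infty$, the usual almost-orthogonality computation gives
\begin{align*}
\lim_{N\to\infty}\norm{f^{N}}_{\wpq}\;\approx\;\norm{\eta}_{\wpq}\,\norm{(a_{k})}_{\ell^{p}}.
\end{align*}
For $\norm{f^{N}}_{\mpqsal}$ I analyse $\Box_{j}^{\al}f^{N}$. Since $\widehat{M_{c_{k}}\eta}$ lives in a unit cube around $c_{k}$ while $\supp\eta_{j}^{\al}$ is a ball of radius $\sim\inner{j}^{\al/(1-\al)}$ around $c_{j}$, the first-order estimate $|c_{k+m}-c_{k}|\sim\inner{k}^{\al/(1-\al)}|m|$ combined with the bounded overlap of the $\al$-covering shows that the set $N(j):=\{k:c_{k}\in\supp\eta_{j}^{\al}\}$ has cardinality $O(1)$ and every $k\in N(j)$ satisfies $\inner{k}\approx\inner{j}$. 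Together with the upper bound $\norm{\Box_{j}^{\al}(M_{c_{k}}\eta)}_{p}\lesssim\norm{\eta}_{p}$ and the matching lower bound $\norm{\Box_{k}^{\al}(M_{c_{k}}\eta)}_{p}\gtrsim\norm{\eta}_{p}$ (valid for the diagonal index $j=k$ by choosing $\widehat{\eta}$ narrow enough that $\eta_{k}^{\al}\equiv 1$ on $c_{k}+\supp\widehat{\eta}$), the physical-space almost orthogonality then produces the two-sided equivalence
\begin{align*}
\lim_{N\to\infty}\norm{f^{N}}_{\mpqsal}\;\approx\;\norm{\eta}_{p}\,\Bigl(\sum_{k\in\Z^{d}}\inner{k}^{sq/(1-\al)}|a_{k}|^{q}\Bigr)^{1/q}.
\end{align*}

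Feeding these asymptotic equivalences into the hypothesis $\mpqsal\hookrightarrow\wpq$ yields $\norm{(a_{k})}_{\ell^{p}}\lesssim\norm{\inner{k}^{s/(1-\al)}a_{k}}_{\ell^{q}}$, which is assertion (1); feeding them into $\wpq\hookrightarrow\mpqsal$ gives the reverse inequality, hence (2). The main technical difficulty I anticipate is the almost-orthogonality estimates when $p<1$ or $q<1$, where the triangle inequality is replaced by $p$-subadditivity and introduces extra multiplicative factors; my plan is to absorb these into the $\lesssim$ symbols by the same quasi-norm techniques as in the bounded-overlap sums appearing in the proofs of Propositions \ref{prop-mpqal-to-w_infty2} and \ref{prop-mpqal-to-w_infty1}, together with the $p$-Young inequality of Lemma \ref{lem-young-p<1}.
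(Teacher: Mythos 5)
Your proposal is correct and is essentially the paper's own argument: the atoms $T_{Nk}(M_{c_{k}}\eta)$ are exactly the test functions $T_{Nk}\bigl(\FF\sigma(\cdot-\xi_{k})\bigr)$ used in the paper (with $\eta=\FF\sigma$ and $c_{k}=\xi_{k}=\inner{k}^{\al/(1-\al)}k$), and the same almost-orthogonality limit $N\to\infty$ yields $\norm{(a_{k})}_{\ell^{p}}$ on the Wiener side and $\norm{(a_{k})}_{\ell_{q}^{s/(1-\al),0}}$ on the $\al$-modulation side. The bookkeeping you flag (bounded overlap of the $\al$-cells, diagonal lower bound, quasi-norm subadditivity for $p,q<1$) is handled at the same level of detail in the paper, so no changes are needed.
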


\begin{proof}
	We only prove the first assertion. The second assertion can be prove in a similar way.
	
	If we know $\mpqsal \hookrightarrow \wpq$, then we \begin{align} \label{eq-mpqsal-wpq}
		\norm{f}_{\wpq} \lesssim \norm{f}_{\mpqsal}.
	\end{align}
	For any $k\in\Z^{d}$, denote $\xi_{k} = \inner{k}^{\al/(1-\al)} k$. For any $N \geq 1, N \in \N$, take 
	$f^{N} = \sum_{k\in\Z^{d}} a_{k} T_{Nk} \FF \sigma(\xi-\xi_{k})$, where $\si \in \sch $ with $\supp \si \subseteq [-1/4,1/4]^{d}$. Then we know that $\Box_{k}^{\al} f^{N}=a_{k} T_{Nk} \brk{\FF \sigma(\xi-\xi_{k})},$ $\Box_{\ell} f^{N} = a_{\ell} T_{N\ell} \brk{\FF \si(\xi-\xi_{\ell})}$. So, we have 
	\begin{align*}
		\norm{f^{N}}_{\mpqsal} &= \norm{a_{k} \inner{k}^{s/(1-\al)} \normo{T_{Nk} \brk{\FF \sigma(\xi-\xi_{k})}}}_{\ell_{k}^{q}}\\
		& = \norm{a_{k} \inner{k}^{s/(1-\al)}}_{\ell_{k}^{q}} = \norm{a_{k}}_{\ell_{q}^{s/(1-\al),0}}.\\
		\norm{f^{N}}_{\wpq} &= \norm{\normo{\Box_{k} f^{N}}_{\ell_{k}^{q}} }_{p} \\
		&= \norm{\normo{a_{k} T_{Nk}  \brk{\FF \si(\xi-\xi_{k}) }}_{\ell_{k}^{q}}}_{p} = \norm{\normo{a_{k} T_{Nk}  \brk{\FF \si}}_{\ell_{k}^{q}}}_{p}.
	\end{align*}
	Take $N\rightarrow \infty$, use the almost orthogonality of $\sett{T_{Nk} \brk{\FF \si}}_{k\in \Z^{d}}$, we have \begin{align*}
		\lim_{N\rightarrow \infty} \norm{f^{N}}_{\wpq} = \norm{\normo{a_{k} \FF \si}_{\ell_{k}^{q}}}_{p} = \norm{a_{k}}_{\ell_{p}^{0,0}}.
	\end{align*}
	Take the estimates of $f^{N}$ into \eqref{eq-mpqsal-wpq}, we have $$ \norm{a_{k}} _{\ell_{p}^{0,0}} \lesssim \norm{a_{k}}_{\ell_{q}^{s/(1-\al),0}}, $$
	which means that $\ell_{q}^{s/(1-\al),0} \hookrightarrow \ell_{p}^{0,0}.$

\end{proof}

\begin{prop}
	\label{prop-sgeq-1/q-1/2}
	Let $0<q\leq \infty,0<p<\infty, s\in \real, \al \in (0,1)$. Then 
	\begin{description}
		\item[(1)] $ \mpqsal \hookrightarrow \wpq \Longrightarrow s\geq \al d(1/q-1/2); $
		\item[(2)] $ \wpq \hookrightarrow \mpqsal \Longrightarrow s\leq \al d(1/q-1/2).$
	\end{description}
\end{prop}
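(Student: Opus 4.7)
The plan is to adapt the randomization argument from Proposition \ref{prop-radomization} to the $\al$-covering setting by concentrating a Khinchin-randomized sum of unit-cube Fourier modes inside a single high-frequency $\al$-cube. Fix $k_0\in\Z^d$ with $|k_0|$ large and set $\xi_{k_0}=\inner{k_0}^{\al/(1-\al)}k_0$. The $\al$-cube $B_{k_0}$ has radius comparable to $\inner{k_0}^{\al/(1-\al)}$, so the set
\[
  \Lambda_{k_0}:=\sett{\ell\in\Z^d:\supp\si_{\ell}\subseteq B_{k_0}}
\]
satisfies $\#\Lambda_{k_0}\approx \inner{k_0}^{\al d/(1-\al)}$. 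Choose $\si\in\sch$ with $\supp\si\subseteq[-1/4,1/4]^d$ (compatible with the unit-cube decomposition) and test with
\[
  f_{\eps}(x)=\FF\si(x)\sum_{\ell\in\Lambda_{k_0}}\eps_{\ell}\,e^{i\ell x},
\]
where $\eps=(\eps_{\ell})$ are $\pm 1$ signs to be chosen.

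The three norm estimates are almost immediate. Since $\widehat{f_\eps}$ is supported in the single $\al$-cube $B_{k_0}$, only $\Box_{k_0}^{\al}f_\eps\approx f_\eps$ survives, giving $\norm{f_\eps}_{\mpqsal}\approx\inner{k_0}^{s/(1-\al)}\norm{f_\eps}_{p}$. On the Wiener side, $\Box_{\ell}f_\eps=\eps_\ell e^{i\ell x}\FF\si(x)$ for $\ell\in\Lambda_{k_0}$ and vanishes otherwise, so $|\Box_\ell f_\eps(x)|=|\FF\si(x)|$ uniformly in $\eps$ and $\ell$, whence
\[
  \norm{f_\eps}_{\wpq}\approx (\#\Lambda_{k_0})^{1/q}\norm{\FF\si}_{p}\approx \inner{k_0}^{\al d/(q(1-\al))},
\]
(with the obvious modification $(\#\Lambda_{k_0})^{0}=1$ for $q=\infty$). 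Finally, Khinchin's inequality applied pointwise in $x$ gives $\mathbb{E}\norm{f_\eps}_{p}^{p}\approx (\#\Lambda_{k_0})^{p/2}\norm{\FF\si}_{p}^{p}$, so there exist sign choices $\eps^{\pm}$ with
\[
  \norm{f_{\eps^{-}}}_{p}\lesssim (\#\Lambda_{k_0})^{1/2}\lesssim\norm{f_{\eps^{+}}}_{p}.
\]

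For assertion (1), insert $f_{\eps^{-}}$ into the hypothesis $\norm{f}_{\wpq}\lesssim\norm{f}_{\mpqsal}$ to obtain
\[
  \inner{k_0}^{\al d/(q(1-\al))}\lesssim \inner{k_0}^{s/(1-\al)+\al d/(2(1-\al))},
\]
and let $|k_0|\to\infty$ to extract $s\ge \al d(1/q-1/2)$. For assertion (2), insert $f_{\eps^{+}}$ into $\norm{f}_{\mpqsal}\lesssim\norm{f}_{\wpq}$ and let $|k_0|\to\infty$ to read off the reverse inequality $s\le \al d(1/q-1/2)$.

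The main technical point is ensuring the single-cube frequency localization: the unit cube $\supp\si_\ell$ must sit inside $B_{k_0}$, and $\widehat{f_\eps}$ must be annihilated by every $\Box_{k}^{\al}$ with $k\ne k_0$. This is handled by shrinking the radius defining $\Lambda_{k_0}$ by a small constant factor relative to the $\al$-BAPU, which costs only an $O(1)$ factor in $\#\Lambda_{k_0}$ and therefore does not affect the exponent comparison that yields the sharp threshold.
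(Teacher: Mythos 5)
Your proposal is correct and follows essentially the same route as the paper: a Khinchin-randomized sum of unit-cube pieces $\FF(\si(\cdot-\ell))$ concentrated in a single $\al$-cube, with $\#\Lambda_{k_0}\approx\inner{k_0}^{\al d/(1-\al)}$, the sign-invariant computation of the $\wpq$-norm, and the exponent comparison as $|k_0|\to\infty$. The only cosmetic difference is that you extract deterministic sign choices $\eps^{\pm}$ to treat both directions explicitly, whereas the paper works with the expectation and writes out only assertion (1), leaving (2) to symmetry.
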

\begin{proof}
	We only give the proof of the assertion (1). For any $k\in \Z^{d}$, denote $\wedge_{k} = \sett{ \ell \in \Z^{d} : \Box_{\ell} \Box_{k}^{\al} = \Box_{\ell}}$. One can easily see that $\# \wedge_{k} \approx \inner{k}^{\al d/(1-\al)}.$	Let $\overrightarrow{\om} = \sett{\om_{k}}_{k\in \Z^{d}}$ be a sequence of independent random variables (for instance, one can choose the Rademacher functions). Denote $f^{\overrightarrow{\om}} = \sum_{\ell \in \wedge_{k}} \om_{k} \FF (\si(\xi-\ell))$. Then by orthogonality, we have 
	\begin{align*}
		\norm{f^{\overrightarrow{\om}}}_{\wpq} &= \norm{\normo{\om_{\ell} \FF (\si(\xi-\ell)}_{\ell_{\ell \in \wedge_{k}}^{q}}}_{p} \\
		&= \brk{\# \wedge_{k}}^{1/q} = \inner{k}^{\frac{\al d}{q(1-\al)}};\\
		\norm{f^{\overrightarrow{\om}}}_{\mpqsal} &= \inner{k}^{s/(1-\al)} \norm{f^{\overrightarrow{\om}}}_{p}.
	\end{align*}
	Note that $0<p<\infty$, then by Khinchin's inequality, we have \begin{align*}
		\brk{\mathbb{E} \norm{f^{\overrightarrow{\om}}}_{p}^{p} }^{1/p} &\approx \norm{ \brk{\sum_{\ell \in \wedge_{k}} \abs{\FF \brk{\si(\xi-\ell)} }^{2} }^{1/2}}_{p} \\
		&\approx \brk{\# \wedge_{k}}^{1/2} = \inner{k}^{\frac{\al d}{2(1-\al)}}.
	\end{align*}
	Take these estimates of $f^{\overrightarrow{\om}}$ into \eqref{eq-mpqsal-wpq}, we have $$ \inner{k}^{\frac{\al d}{q(1-\al)}} \lesssim \inner{k}^{\frac{s}{1-\al} + \frac{\al d}{2(1-\al)}}.$$
Take $\inner{k} \rightarrow \infty$, we have $s\geq \al d(1/q-1/2).$
\end{proof}

\begin{prop}
	\label{prop-sgeq-1/p+1/q-1}
	Let $0<p,q\leq \infty,s\in \real, \al \in (0,1)$. Then 
	\begin{description}
		\item[(1)] $\mpqsal \hookrightarrow \wpq \Longrightarrow  \ell_{q}^{(s+\al d(1-1/p))/(1-\al),0} \hookrightarrow \ell_{p}^{\al d/((1-\al)q),0};$
		\item[(2)] $ \wpq \hookrightarrow \mpqsal\Longrightarrow  \ell_{p}^{\al d/((1-\al)q),0} \hookrightarrow \ell_{q}^{(s+\al d(1-1/p))/(1-\al),0}. $
	\end{description}
	
\end{prop}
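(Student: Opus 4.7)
The plan is to extract the claimed sequence embeddings by testing the assumed function-space embeddings against a single parametric family of trial functions whose $\mpqsal$- and $\wpq$-norms compute to the two weighted $\ell^q$ and $\ell^p$ norms, respectively. Since the construction is symmetric in the two spaces, (2) follows from the same computation as (1), so I will focus on (1).

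For a finitely supported sequence $\{a_k\}_{k\in\Z^d}$, I would set
\[
f^N = \sum_{k\in\Z^d} a_k\, T_{N v_k}\bigl(\FF\eta_k^{\al}\bigr),
\]
where $\{v_k\}_{k\in\Z^d}$ is an enumeration of $\Z^d$ by distinct integer vectors and $N\in\N$ is a large parameter to be sent to infinity. Because $\widehat{\FF\eta_k^{\al}}=\eta_k^{\al}$ is supported in $B_k$ and the $\al$-BAPU satisfies $\eta_k^{\al}\cdot\eta_k^{\al}\approx \eta_k^{\al}$ on the interior of $B_k$, the $\al$-block projection isolates the $k$-th summand: $\Box_k^{\al}f^N\approx a_k\, T_{Nv_k}\FF\eta_k^{\al}$. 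A direct computation (the dual Bernstein estimate) gives $\|\FF\eta_k^{\al}\|_p\approx|B_k|^{1-1/p}\approx\inner{k}^{\al d(1-1/p)/(1-\al)}$ for every $0<p\leq\infty$, so
\[
\|f^N\|_{\mpqsal}\approx\Bigl\|\inner{k}^{s/(1-\al)}\, |a_k|\, \inner{k}^{\al d(1-1/p)/(1-\al)}\Bigr\|_{\ell_k^q}=\|a_k\|_{\ell_q^{(s+\al d(1-1/p))/(1-\al),0}},
\]
uniformly in $N$.

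For $\|f^N\|_{\wpq}$, let $\wedge_k:=\{j\in\Z^d:\sigma_j\eta_k^{\al}=\sigma_j\}$, so that $\#\wedge_k\approx\inner{k}^{\al d/(1-\al)}$ and the $\wedge_k$ are essentially pairwise disjoint. Since $\Box_j$ commutes with translation and each such $j$ belongs to a unique $\wedge_{k(j)}$, one computes
\[
\Box_jf^N(x)=a_{k(j)}\,e^{ij(x-Nv_{k(j)})}\,\FF\sigma(x-Nv_{k(j)}),
\]
and grouping the $\ell_j^q$-norm by $k$ yields
\[
\bigl\|\Box_jf^N(x)\bigr\|_{\ell_j^q}\approx\Bigl(\sum_k |a_k|^q\,\inner{k}^{\al d/(1-\al)}\,|\FF\sigma(x-Nv_k)|^q\Bigr)^{1/q}.
\]
As $N\to\infty$ the translates $\FF\sigma(\cdot-Nv_k)$ become essentially disjointly supported, which collapses the $L_x^p$-norm to
\[
\lim_{N\to\infty}\|f^N\|_{\wpq}\approx\|\FF\sigma\|_p\,\Bigl\|\inner{k}^{\al d/(q(1-\al))}\,a_k\Bigr\|_{\ell_k^p}=\|\FF\sigma\|_p\,\|a_k\|_{\ell_p^{\al d/((1-\al)q),0}}.
\]
Inserting $f^N$ into the hypothesised embedding and sending $N\to\infty$ produces the sequence embedding in (1); assertion (2) is obtained by substituting $f^N$ into $\wpq\hookrightarrow\mpqsal$ and reading the two norms in the opposite direction.

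The main technical point will be the decoupling step leading to the $L^p$ collapse. For $1\leq p<\infty$ this is a routine disjoint-supports calculation, but for $0<p<1$ one must invoke the quasi-triangle inequality together with Lemma \ref{lem-young-p<1}, and for $p=\infty$ the limit has to be replaced by a direct supremum over $N$-separated bumps. A secondary minor issue is that $\#\wedge_k$ only matches $|B_k|$ up to boundary corrections; these introduce a bounded multiplicative constant but do not affect the exponents of $\inner{k}$, so the weighted sequence norms come out exactly as stated.
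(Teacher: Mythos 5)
Your proposal is correct and follows essentially the same route as the paper: the paper also tests the embedding on $f^N=\sum_k a_k T_{Nk}(\FF\eta_k^{\al})$, computes $\|f^N\|_{\mpqsal}\approx\|a_k\|_{\ell_q^{(s+\al d(1-1/p))/(1-\al),0}}$ via $\|\FF\eta_k^{\al}\|_p\approx\inner{k}^{\al d(1-1/p)/(1-\al)}$, bounds $\|f^N\|_{\wpq}$ from below by grouping the uniform blocks $\Box_\ell$ over $\wedge_k$ to pick up the factor $(\#\wedge_k)^{1/q}\approx\inner{k}^{\al d/(q(1-\al))}$, and passes to the limit $N\to\infty$ by almost orthogonality of the translates. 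Your extra remarks on the $0<p<1$ and $p=\infty$ decoupling are sensible refinements of a step the paper treats as routine.
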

\begin{proof}
	We only give the proof of assertion (1). For any $k\in \Z^{d}$, denote $\wedge_{k} = \sett{\ell\in \Z^{d} : \Box_{\ell} \Box_{k}^{\al} = \Box_{\ell}}$. For any $N\geq 1, N\in \N$, denote $f^{N}= \sum_{k\in\Z^{d}} a_{k} T_{Nk} \brk{\FF \eta_{k}^{\al}}.$ Then by orthogonality, we have 
	\begin{align*}
		\norm{f^{N}}_{\mpqsal} &=  \norm{ \normo{T_{Nk} \brk{\FF \eta_{k}^{\al}}}_{p} a_{k} \inner{k}^{s/(1-\al)}}_{\ell_{k}^{q}} \\
		&= \norm{a_{k} \inner{k}^{\frac{\al d}{1-\al} (1-\frac{1}{p}) +\frac{s}{1-\al}}} = \norm{a_{k}}_{\ell_{q}^{\frac{s+\al d(1-1/p)}{1-\al},0}};\\
		\norm{f^{N}}_{\wpq} &= \norm{ \normo{\Box_{\ell} f^{N}}_{\ell_{\ell}^{q}}}_{p} \geq \norm{ \normo{\normo{\Box_{\ell} f^{N}}_{\ell_{\ell \in \wedge_{k}}^{q}}}_{\ell_{k}^{q}} }_{p}\\
		&= \norm{\normo{ a_{k} T_{Nk} \normo{ \FF \si_{\ell}}_{\ell_{\ell \in \wedge_{k}}^{q}} }_{\ell_{k}^{q}} }_{p}\\
		& = \norm{ \normo{a_{k} T_{Nk} (\FF \si) \brk{\# \wedge_{k}}^{1/q}}_{\ell_{k}^{q}} }_{p}.
	\end{align*}
	Let $N \rightarrow \infty,$ use the almost orthogonality of $ \sett{T_{Nk} (\FF \si)}_{k\in \Z^{d}}$, we have 
	\begin{align*}
		\lim_{N\rightarrow \infty} \norm{f^{N}}_{\wpq} = \norm{a_{k} \inner{k}^{\frac{\al d}{q(1-\al)}}}_{\ell_{k}^{p}} = \norm{a_{k}}_{\ell_{p}^{\frac{\al d}{q(1-\al)},0 }}
	\end{align*} 
	Take the estimates of $f^{N}$ into \eqref{eq-mpqsal-wpq}, we have $$ \norm{a_{k}}_{\ell_{p}^{\frac{\al d}{q(1-\al)},0 }} \lesssim \norm{a_{k}}_{\ell_{q}^{\frac{s+\al d(1-1/p)}{1-\al},0}}, $$
	which means that $\ell_{q}^{\frac{s+\al d(1-1/p)}{1-\al},0} \hookrightarrow \ell_{p}^{\frac{\al d}{q(1-\al)},0 }.$
\end{proof}

Then we can prove Theorem \ref{thm-mpqsal-to-wpq}.
\begin{proof}
	[Proof of Theorem \ref{thm-mpqsal-to-wpq}] We divide the proof into two parts.
	
	Sufficiency:
	\begin{description}
		\item[(a)] When $p=q$, we know $\tauonepq=\taupq$, $W_{p,q} =M_{p,q}$. When $s\geq \al \taupq$, by Lemma \ref{lem-mpqsal-embed}, we have $\mpqsal \hookrightarrow \mpq =\wpq$.
		\item[(b)] When $p\geq q, p\leq 2$, we know $\tauonepq =\taupq = d(1/p+1/q-1).$ When $s\geq \al \tauonepq$, by Lemma \ref{lem-mpqsal-embed} and \ref{lemma-mpq-wpq}, we have $\mpqsal \hookrightarrow \mpq \hookrightarrow \wpq $.
		\item[(c)] When $p=\infty, q=2, s\geq \al \tauonepq =0$, by Proposition \ref{prop-mpqal-to-w_infty2}, we have $\mpqsal \hookrightarrow \wpq$.
		\item[(d)] When $p=\infty, 0<q \leq 1, s\geq \al \tauonepq =\al d(1/q-1/2)$, by Proposition \ref{prop-mpqal-to-w_infty1}, we have $\mpqsal \hookrightarrow \wpq$.
		\item[(e)] When $0<p\leq 1, q=\infty$, we know $\tauonepq = d(1/p-1)$. When $s> \al d(1/p-1) + d(1-\al)/p$, by Proposition \ref{prop-mpqal-to-w_1infty}, we have $\mpqsal \hookrightarrow \wpq$.
	\end{description}
	The sufficiency follows by interpolations of (a)-(e).
	
	Necessity:\begin{description}
		\item[(A)] By Proposition \ref{prop-sgeq-0}, we have  $\ell_{q}^{s/(1-\al),0} \hookrightarrow \ell_{p}^{0,0}$. By lemma \ref{lem-embed-of-lr}, we have $s\geq 0$. Moreover, when $q>p$, we have $s>d(1-\al) (1/p-1/q).$
		\item[(B)] When $p\geq q, 0<p \leq 2$, we know that $\tauonepq =\taupq$. If we have $\mpqsal \hookrightarrow \wpq$, then by Lemma \ref{lem-mpqsal-embed}, we have $\bpq^{s+(1-\al) \taupq} \hookrightarrow \mpqsal \hookrightarrow \wpq$. By Theorem \ref{thm-bp0q-to-wpq}, we have $ s+(1-\al) \taupq \geq \tauonepq$. So, we have $s\geq \al \tauonepq.$
		\item[C] When $0\leq p <\infty, 0<q\leq 2$, by Proposition \ref{prop-sgeq-1/q-1/2}, we have $s\geq \al d(1/q-1/2)$. 
		\item[(D)] When $p =\infty, 0<q \leq 2$, if $\mpqsal \hookrightarrow \wpq$ holds for some $s< \al d(1/q-1/2)$. Take interpolation with $M_{2,q}^{\al d(1/q-1/2),\al} \hookrightarrow W_{2,q}$ given in (b), we have $M_{p,q}^{s,\al} \hookrightarrow \wpq$ holds for some $s<\al d(1/q-1/2)$, which is contraction with (C).
		\item[(E)] By Proposition \ref{prop-sgeq-1/p+1/q-1}, we have $\ell_{q}^{(s+\al d(1-1/p))/(1-\al),0} \hookrightarrow \ell_{p}^{\al d/((1-\al)q),0}$. Then by Lemma \ref{lem-embed-of-lr}, we have $s\geq \al d(1/p+1/q-1)$. Moreover, when $p<q$, we have $s>\al d(1/p+1/q-1) + d(1-\al)(1/p-1/q).$
	\end{description}
	Combine (A)-(E), we can get the necessity as desired.
\end{proof}

\subsection{Proof of Theorem \ref{thm-wpq-mpqal}}

We first give some propositions, which play a great role in our proofs.
\begin{prop}
	\label{prop-wpq-mpqsal-1-infty}
	Let $0<p\leq 1, s=-\al d/2$. Then we have $W_{p,\infty} \hookrightarrow M_{p,\infty}^{s,\al}.$
\end{prop}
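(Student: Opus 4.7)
The plan is to reduce the embedding to the per-frequency estimate $\|\Box_k^\al f\|_p \lesssim \langle k\rangle^{\al d/(2(1-\al))} \|f\|_{W_{p,\infty}}$ for each $k\in\Z^d$, since taking the supremum in $k$ of $\langle k\rangle^{-\al d/(2(1-\al))}\|\Box_k^\al f\|_p$ then yields exactly the $M_{p,\infty}^{-\al d/2,\al}$ norm. Setting $N:=\langle k\rangle^{\al/(1-\al)}$ and $F(x):=\sup_{\ell\in\Z^d}|\Box_\ell f(x)|$ (so that $\|F\|_p=\|f\|_{W_{p,\infty}}$), the target rewrites as $\|\Box_k^\al f\|_p\lesssim N^{d/2}\|F\|_p$, matching $\al\sigma_1(p,\infty)=-\al d/2$ for $0<p\leq 1$.

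First, I would decompose $\Box_k^\al f=\sum_{\ell\in\tilde\wedge_k}\Box_\ell\Box_k^\al f$, where $\tilde\wedge_k:=\{\ell\in\Z^d:\Box_\ell\Box_k^\al\neq 0\}$ has cardinality $\#\tilde\wedge_k\approx N^d$, and each summand is frequency-localised to roughly $\ell+[-1,1]^d$. Using the identity $\Box_\ell\Box_k^\al f=\Box_\ell f\ast\FF(\tilde\sigma_\ell\eta_k^\al)$ for a cutoff $\tilde\sigma_\ell$ with $\tilde\sigma_\ell\sigma_\ell=\sigma_\ell$, and noting that $\tilde\sigma_\ell\eta_k^\al$ is supported in a unit cube around $\ell$ with uniform $L^\infty$ and derivative bounds in $k,\ell$, Lemma \ref{lem-young-p<1} on band-limited convolutions delivers the per-piece estimate $\|\Box_\ell\Box_k^\al f\|_p\lesssim \|\Box_\ell f\|_p$ with uniform constants. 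Combining this with the $L^p$ quasi-triangle inequality valid for $p\leq 1$ gives $\|\Box_k^\al f\|_p^p\lesssim \sum_{\ell\in\tilde\wedge_k}\|\Box_\ell f\|_p^p$.

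The main obstacle, and the technical heart of the proof, is upgrading the crude bound $\sum_{\ell\in\tilde\wedge_k}\|\Box_\ell f\|_p^p\leq N^d\|F\|_p^p$, which only yields the exponent $N^{d/p}$, to the sharp $N^{d/2}$ corresponding to $\al\sigma_1(p,\infty)$. The gain is of $L^2$-orthogonality nature: at $p=2$ it is immediate from Plancherel, the disjointness of the Fourier supports of $\{\Box_\ell\Box_k^\al f\}_{\ell\in\tilde\wedge_k}$, the estimate $|\eta_k^\al|\leq 1$, and the pointwise bound $\bigl(\sum_{\ell\in\tilde\wedge_k}|\Box_\ell f(x)|^2\bigr)^{1/2}\leq N^{d/2}F(x)$ (obtained by comparing $\ell^2$ and $\ell^\infty$ on $N^d$ terms). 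For the quasi-Banach range $0<p\leq 1$, the plan is to combine this $L^2$ orthogonality with the band-limited nature of $\Box_k^\al f$ (Fourier support of radius $\sim N$) to obtain a square-function-type estimate $\|\Box_k^\al f\|_p\lesssim \bigl\|\bigl(\sum_{\ell\in\tilde\wedge_k}|\Box_\ell\Box_k^\al f|^2\bigr)^{1/2}\bigr\|_p$, after which $\|\Box_k^\al f\|_p\lesssim N^{d/2}\|F\|_p$ follows directly. This vector-valued Littlewood--Paley-type control for the uniform decomposition in the $p\leq 1$ regime is the principal technical step, and is the reason that the argument cannot be obtained by a simple interpolation between the $\al=0$ and $\al=1$ endpoints (in line with the remark following Theorem \ref{thm-wpq-mpqal}).
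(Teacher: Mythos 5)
Your overall strategy matches the paper's in spirit: reduce to the single-block estimate $\|\Box_k^\al f\|_p\lesssim \langle k\rangle^{\al d/(2(1-\al))}\|f\|_{W_{p,\infty}}$ and extract the gain $N^{d/2}$ (with $N=\langle k\rangle^{\al/(1-\al)}$) from an $\ell^2$-versus-$\ell^\infty$ comparison over the $\approx N^d$ unit cubes meeting $\supp\eta_k^\al$. Moreover, the step you single out as the "principal technical step" is in fact the easy one: $\|\Box_k^\al f\|_p\lesssim\bigl\|\bigl(\sum_\ell|\Box_\ell\Box_k^\al f|^2\bigr)^{1/2}\bigr\|_p=\|\Box_k^\al f\|_{W_{p,2}}$ is exactly the embedding $W_{p,2}\hookrightarrow h_p\hookrightarrow L^p$ for $p\le 1$, already available from Theorem \ref{thm-wpq-to-hr}.

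The genuine gap is the step you claim "follows directly", namely $\bigl\|\bigl(\sum_{\ell}|\Box_\ell\Box_k^\al f|^2\bigr)^{1/2}\bigr\|_p\lesssim N^{d/2}\|F\|_p$. The $\ell^2$--$\ell^\infty$ counting bounds the square function pointwise by $N^{d/2}\sup_\ell|\Box_\ell\Box_k^\al f(x)|$, but this supremum is \emph{not} pointwise dominated by $F(x)=\sup_\ell|\Box_\ell f(x)|$: each $\Box_\ell\Box_k^\al f$ is a convolution of $\Box_\ell f$ against a kernel and spreads mass. Your stated pointwise inequality involves $\Box_\ell f$ where the square function involves $\Box_\ell\Box_k^\al f$, and that silent substitution is where the argument breaks. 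What is actually needed is $\|\sup_\ell|\Box_\ell\Box_k^\al f|\|_{L^p}=\|\Box_k^\al f\|_{W_{p,\infty}}\lesssim\|f\|_{W_{p,\infty}}$ uniformly in $k$; your per-piece Young inequality $\|\Box_\ell\Box_k^\al f\|_p\lesssim\|\Box_\ell f\|_p$ cannot produce this, since combined with the quasi-triangle inequality it only yields the lossy $\ell^p$ summation and the exponent $N^{d/p}$ you already rejected. The paper closes exactly this point with two lemmas you never invoke: the convolution algebra property $W_{p,\infty}*W_{p,\infty}\subseteq W_{p,\infty}$ for $p\le 1$ (Lemma \ref{lem-convolution-wpq}), giving $\|\Box_k^\al f\|_{W_{p,\infty}}\lesssim\|\FF\eta_k^\al\|_{W_{p,\infty}}\|f\|_{W_{p,\infty}}$, and the dilation estimate for $M_{\infty,p}$ (Lemma \ref{lem-scaling-mpq}), giving $\|\FF\eta_k^\al\|_{W_{p,\infty}}=\|\eta_k^\al\|_{M_{\infty,p}}\lesssim1$ uniformly in $k$ even though $\eta_k^\al$ lives at the large scale $N$. (The paper then realizes the $N^{d/2}$ gain via H\"older in the STFT frequency variable, $\|\Box_k^\al f\|_{W_{p,2}}\lesssim N^{d/2}\|\Box_k^\al f\|_{W_{p,\infty}}$, which is the same counting you propose.) Without these ingredients, or a substitute maximal or vector-valued convolution inequality valid in the quasi-Banach range, your argument does not close.
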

\begin{proof}
	By definition of $\mpqsal$, we have 
	\begin{align}
		\label{eq-M1-infty}
		\norm{f}_{M_{p,\infty}^{s,\al}} = \sup_{k\in \Z^{d}} \inner{k}^{\frac{s}{1-\al}} \norm{\Box_{k}^{\al} f}_{p}.
	\end{align}
	By Theorem \ref{thm-wpq-to-hr}, we have $W_{p,2} \hookrightarrow h_{p} \hookrightarrow L^{p}$. Then we have $\norm{\Box_{k}^{\al} f}_{p} \lesssim \norm{\Box_{k}^{\al} f}_{W_{p,2}}$. By STFT, we have 
	\begin{align*}
		\norm{\Box_{k}^{\al} f}_{W_{p,2}} = \norm{\normo{ V_{g} \brk{\eta_{k}^{\al} \widehat{f} }(\xi,x)}_{L_{\xi}^{2}}}_{L_{x}^{p}}.
	\end{align*}
	If we choose window function $g$ with $\supp g \subseteq [0,1]^{d}$, then $\supp V_{g} (\eta_{k}^{\al}\widehat{f}) (\cdot,x) \subseteq \supp \eta_{k}^{\al} + [0,1]^{d}\subseteq 2 \ \supp \eta_{k}^{\al}$ for any $x\in \real^{d} $. Denote the Lebesgue measure of a measurable set $A\subseteq \real^{d}$ by  $\abs{A} $  Then by using  H\"older's inequality into the estimate above, we have 
	\begin{align*}
		\norm{\Box_{k}^{\al} f}_{W_{p,2}} \lesssim \norm{\normo{ V_{g} \brk{\eta_{k}^{\al} \widehat{f} }(\xi,x)}_{L_{\xi}^{\infty}} \abs{\supp \eta_{k}^{\al} }^{1/2}}_{L_{x}^{p}} = \inner{k}^{\frac{\al d}{2(1-\al)}} \norm{\Box_{k}^{\al} f}_{W_{p,\infty}}.
	\end{align*}
	Take this estimate into \eqref{eq-M1-infty}, we have 
	\begin{align}\label{eq-M1-infty-2}
		\norm{f}_{M_{p,\infty}^{s,\al}} \lesssim \sup_{k\in \Z^{d}} \norm{\Box_{k}^{\al} f}_{W_{p,\infty}}.
	\end{align}
	Then by Lemma \ref{lem-convolution-wpq},  we have \begin{align*}
		\norm{\Box_{k}^{\al} f}_{W_{p,\infty}} \lesssim \norm{ \FF \eta_{k}^{\al}}_{W_{p,\infty}} \norm{f}_{W_{p,\infty}}.
	\end{align*}
	By the scaling of $M_{\infty,p}$ with $0<p\leq 1$ (Lemma \ref{lem-scaling-mpq}), we have 
	\begin{align*}
		\norm{\FF\eta_{k}^{\al} }_{W_{p,\infty}} = \norm{\eta_{k}^{\al}}_{M_{\infty,p}} \lesssim \norm{\eta(\xi-k)}_{M_{\infty,p}} = \norm{\eta}_{M_{\infty,p}} \lesssim1.
	\end{align*}
	Take the two estimates into \eqref{eq-M1-infty-2}, we have $$ \norm{f}_{M_{p,\infty}^{s,\al}} \lesssim \norm{f}_{W_{p,\infty}},$$
	which means that $W_{p,\infty} \hookrightarrow M_{p,\infty}^{s,\al}.$
\end{proof}

\begin{prop}\label{prop-wpq-mpqsal-1-2}
	Let $0<p\leq 1$. Then we have $W_{p,2} \hookrightarrow M_{p,2}^{0,\al}.$
\end{prop}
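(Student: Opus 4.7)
The strategy parallels that of Proposition \ref{prop-mpqal-to-w_infty2} but in the opposite direction, combining the embedding $W_{p,2}\hookrightarrow L^{p}$ for band-limited pieces with an $L^{2}$-almost orthogonality on the STFT side. First, from Theorem \ref{thm-wpq-to-hr} (with $r=p$, $q=2$, $s=0$) one has $W_{p,2}\hookrightarrow h_{p}\hookrightarrow L^{p}$, so each piece $\Box_{k}^{\al}f$ satisfies $\|\Box_{k}^{\al}f\|_{p}\lesssim \|\Box_{k}^{\al}f\|_{W_{p,2}}$, and hence
\begin{equation*}
\|f\|_{M_{p,2}^{0,\al}}^{2} = \sum_{k\in\Z^{d}}\|\Box_{k}^{\al}f\|_{p}^{2} \lesssim \sum_{k\in\Z^{d}}\|\Box_{k}^{\al}f\|_{W_{p,2}}^{2}.
\end{equation*}

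The remaining task is to dominate this sum by $\|f\|_{W_{p,2}}^{2}$. Using the identity $|V_{g}(\Box_{k}^{\al}f)(x,\xi)| = |V_{\widehat g}(\eta_{k}^{\al}\widehat f)(\xi,-x)|$ and choosing a window with $\widehat g$ compactly supported in $[0,1]^{d}$, the $\xi$-supports of $V_{\widehat g}(\eta_{k}^{\al}\widehat f)(\cdot,x)$ lie in $2\supp\eta_{k}^{\al}$ uniformly in $x$, which form a family with bounded overlap. Therefore, for each fixed $x$, the $L^{2}_{\xi}$-almost orthogonality gives
\begin{equation*}
\sum_{k\in\Z^{d}}\bigl\|V_{\widehat g}(\eta_{k}^{\al}\widehat f)(\cdot,x)\bigr\|_{L^{2}_{\xi}}^{2} \lesssim \bigl\|V_{\widehat g}\widehat f(\cdot,x)\bigr\|_{L^{2}_{\xi}}^{2} = \|V_{g}f(x,\cdot)\|_{L^{2}_{\xi}}^{2}.
\end{equation*}

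The crucial step, and the place where the hypothesis $p\leq 1$ must be handled carefully, is the interchange of the $\ell^{2}_{k}$ summation with the $L^{p}_{x}$ integration. Setting $b_{k}(x):=\|V_{\widehat g}(\eta_{k}^{\al}\widehat f)(\xi,x)\|_{L^{2}_{\xi}}\geq 0$, what is needed is the inequality
\begin{equation*}
\sum_{k\in\Z^{d}}\|b_{k}\|_{L^{p}}^{2} \lesssim \Bigl\|\bigl(\sum_{k\in\Z^{d}} b_{k}^{2}\bigr)^{1/2}\Bigr\|_{L^{p}}^{2}.
\end{equation*}
Since $2/p\geq 1$, applying Minkowski's integral inequality with exponent $2/p$ to the nonnegative functions $b_{k}^{p}$ yields
\begin{equation*}
\Bigl(\sum_{k}\bigl(\textstyle\int b_{k}^{p}\,dx\bigr)^{2/p}\Bigr)^{p/2} \leq \int\Bigl(\sum_{k} b_{k}^{2}\Bigr)^{p/2}dx,
\end{equation*}
and raising both sides to the power $2/p$ gives precisely the desired estimate. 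The main obstacle is exactly this interchange, since for $p<1$ the triangle inequality in $L^{p}$ points the wrong way; the trick is that it is $b_{k}^{p}$, not $b_{k}$, to which Minkowski at the admissible exponent $2/p\geq 1$ can be applied. Chaining the three estimates produces $\|f\|_{M_{p,2}^{0,\al}}\lesssim \|f\|_{W_{p,2}}$, giving $W_{p,2}\hookrightarrow M_{p,2}^{0,\al}$.
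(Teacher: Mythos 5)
Your proof follows essentially the same route as the paper's: control $\|\Box_{k}^{\al}f\|_{p}$ by $\|\Box_{k}^{\al}f\|_{W_{p,2}}$ via $W_{p,2}\hookrightarrow h_{p}\hookrightarrow L^{p}$, interchange the $\ell^{2}_{k}$ and $L^{p}_{x}$ (quasi-)norms by Minkowski's inequality with exponent $2/p\geq 1$, and conclude by the $L^{2}_{\xi}$-orthogonality of the pieces $V_{g}(\Box_{k}^{\al}f)(x,\cdot)$, whose bounded-overlap justification via compactly supported windows is the same device the paper uses in its Proposition on $M_{\infty,2}^{0,\al}\hookrightarrow W_{\infty,2}$. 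The only difference is that you spell out the Minkowski step explicitly where the paper simply cites it, so the argument matches the paper's in substance.
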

\begin{proof}
	When $0<p\leq 1$, by Theorem \ref{thm-wpq-to-hr}, we have $W_{p,2} \hookrightarrow h_{p} \hookrightarrow L_{p}$. Then by STFT and Minkowski's inequality,  we have 
	\begin{align*}
		\norm{f}_{M_{p,2}^{0,\al}} &= \norm{\normo{\Box_{k}^{\al} f}_{p}}_{\ell_{k}^{2}} \lesssim \norm{ \normo{\Box_{k}^{\al} f}_{W_{p,2}}}_{\ell_{k}^{2}} \\
		&= \norm{\normo{ \normo{ V_{g} \Box_{k}^{\al} f (x,\xi)}_{L_{\xi}^{2}}  }_{L_{x}^{p}}}_{\ell_{k}^{2}}\\
		&\leq \norm{\normo{\normo{V_{g} \Box_{k}^{\al} f}_{L_{\xi}^{2}} }_{\ell_{k}^{2}} }_{L_{x}^{p}} \\
		&\lesssim \norm{\normo{V_{g} f(x,\xi)}_{L_{\xi}^{2}}}_{L_{x}^{p}} = \norm{f}_{W_{p,2}},
	\end{align*}	
	where we use the orthogonality of $L_{\xi}^{2}$ at the last inequality.
\end{proof}

Then, we could give the proof of Theorem \ref{thm-wpq-mpqal}.

\begin{proof}[Proof of Theorem \ref{thm-wpq-mpqal}]
	The necessity of Theorem \ref{thm-wpq-mpqal} is similar to the proof of Theorem \ref{thm-mpqsal-to-wpq}. The sufficiency part follows by interpolations of the following conditions.
	\begin{description}
		\item[(a)] When $p=q,s \leq \al\tauonepq =\al \taupq$, by Lemma \ref{lem-mpqsal-embed}, we have $\wpq =\mpq \hookrightarrow \mpqsal$.
		\item[(b)] When $p\leq q, p\geq 2$, we know $\sionepq=\sipq$. By Lemma \ref{lemma-mpq-wpq} and \ref{lem-mpqsal-embed}, we have $\wpq \hookrightarrow \mpq \hookrightarrow \mpqsal.$
		\item[(c)] When $0<p \leq 1, q=\infty, s\leq \al \sionepq = -\al d/2$, by Proposition \ref{prop-wpq-mpqsal-1-infty}, we have $\wpq \hookrightarrow \mpqsal.$
		\item[(d)] When $0<p\leq 1, q=2, s\leq \al \sionepq =0$, by Proposition \ref{prop-wpq-mpqsal-1-2}, we have $\wpq \hookrightarrow \mpqsal.$
		\item[(e)] When $ p=\infty,0<q\leq 1, s< \al \sionepq + d(1-\al)(1/p-1/q)=-d(1-\al)/q$. In this case, we know that $\taupq = d/q.$
		By Theorem \ref{thm-wpq-to-bp0q} and Lemma \ref{lem-mpqsal-embed}, for $0<\eps\ll 1$, we have $\wpq \hookrightarrow \bpq^{-\eps} \hookrightarrow \mpqsal$.
	\end{description}
\end{proof}

\section{Proof of Theorem \ref{thm-triebel-to-wiener} and \ref{thm-wiener-to-triebel}} \label{sec-triebel-wpq}
First, we recall some results already known before.
\begin{lemma}[Proposition 3.4 in \cite{Guo2017Characterization}]
	\label{lem-discretization}
	Let $0<p,q,q_{0} \leq \infty, 0<p_{0} <\infty,s\in \real$. Then 
	\begin{description}
		\item[(1)] $\wpq \hookrightarrow F_{p_{0},q_{0}}^{s}$ if and only if $p\leq p_{0}$ and the following statement holds:
		\begin{align*}
			\norm{f}_{F_{p_{0},q_{0}}^{s}} \lesssim \norm{f}_{\wpq}, \mbox{ for any $f\in \sch'$ with support in $B(0,1)$.}
		\end{align*}
		\item[(2)] $F_{p_{0},q_{0}}^{s}\hookrightarrow \wpq$ if and only if $p_{0}\leq p$ and the following statement holds:
		\begin{align*}
			\norm{f}_{\wpq} \lesssim \norm{f}_{F_{p_{0},q_{0}}^{s}} \mbox{ for any $f\in \sch'$ with support in $B(0,1)$.}
		\end{align*}
		
	\end{description}
\end{lemma}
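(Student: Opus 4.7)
My plan is to break the biconditional into the index condition $p\leq p_0$ (respectively $p_0\leq p$) and the global-versus-local reduction, treating the two directions symmetrically.

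Necessity is the easy half. The restricted embedding is a tautological consequence of the global one. To force $p\leq p_0$ in part (1), I replay the low-frequency scaling of Proposition \ref{prop-low-frequency-scaling}: pick $\eta\in\sch$ with $\widehat{\eta}$ supported in $B(0,1)$ and set $f_\lambda(x)=\eta(\lambda x)$ for $0<\lambda<1$, so $\supp\widehat{f_\lambda}\subset B(0,\lambda)\subset B(0,1)$. Only the lowest dyadic block enters the Triebel norm and only $\Box_0$ enters the Wiener amalgam norm, giving $\|f_\lambda\|_{F_{p_0,q_0}^s}\approx\lambda^{-d/p_0}$ and $\|f_\lambda\|_{\wpq}\approx\lambda^{-d/p}$; letting $\lambda\to 0^+$ in the assumed embedding yields $p\leq p_0$. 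Part (2) is the same argument with the inequality reversed.

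For sufficiency the key device is a smooth spatial partition of unity $\{\chi_n\}_{n\in\Z^d}$ with $\chi_n=\chi(\cdot-n)$ and $\chi$ supported in $[-1,1]^d$. Each $\chi_n f$ has spatial support in $n+[-1,1]^d$, so translation by $-n$ (an isometry on both $F_{p_0,q_0}^s$ and $\wpq$) puts it into the hypothesis of the restricted estimate and yields $\|\chi_n f\|_{F_{p_0,q_0}^s}\lesssim\|\chi_n f\|_{\wpq}$ in case (1), with the reverse in case (2). On the Wiener side, the compact spatial support gives the equivalence $\|\chi_n f\|_{\wpq}\approx\|\chi_n f\|_{\F L^q}$ of Lemma \ref{lem-compact-support}, together with the standard discrete representation $\|f\|_{\wpq}^p\approx\sum_n\|\chi_n f\|_{\F L^q}^p$. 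Chaining these with the elementary $\ell^p\hookrightarrow\ell^{p_0}$ inclusion (valid precisely when $p\leq p_0$), I obtain in case (1):
\begin{align*}
\|f\|_{F_{p_0,q_0}^s}
\lesssim \Big(\sum_n\|\chi_n f\|_{F_{p_0,q_0}^s}^{p_0}\Big)^{1/p_0}
\lesssim \Big(\sum_n\|\chi_n f\|_{\F L^q}^{p_0}\Big)^{1/p_0}
\lesssim \Big(\sum_n\|\chi_n f\|_{\F L^q}^{p}\Big)^{1/p}
\approx \|f\|_{\wpq},
\end{align*}
and dually for (2) with $\ell^{p_0}\hookrightarrow\ell^p$.

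The delicate step is the first inequality of the display and its companion required for direction (2), i.e.\ the two-sided spatial estimate $\|f\|_{F_{p_0,q_0}^s}\approx\big(\sum_n\|\chi_n f\|_{F_{p_0,q_0}^s}^{p_0}\big)^{1/p_0}$. Because the Littlewood-Paley projections $\Delta_j$ are not spatially local, the pieces $\chi_n f$ are not genuinely disjoint in the Triebel sense, and one must absorb off-diagonal cross terms using the Schwartz decay of the convolution kernels of $\Delta_j$. I would route this through a Peetre maximal function characterization of $F_{p_0,q_0}^s$, which turns kernel decay into pointwise control by maximal averages of neighbouring blocks; in the quasi-Banach range $p_0\leq 1$ the quasi-triangle inequality has to be iterated carefully. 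This is the only genuinely technical component, and it is standard Triebel-Lizorkin machinery; once it is in hand, the rest of the argument is bookkeeping in the discrete Wiener amalgam representation.
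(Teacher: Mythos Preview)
The paper does not prove this lemma; it is quoted verbatim as Proposition~3.4 of \cite{Guo2017Characterization} and used as a black box in Section~\ref{sec-triebel-wpq}. So there is no in-paper proof to compare against.

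That said, your sketch is exactly the argument behind the cited result. The necessity half is correct as written. For sufficiency, the architecture---spatial partition of unity, translation invariance, the compact-support equivalence $\|\chi_n f\|_{\wpq}\approx\|\chi_n f\|_{\F L^q}$ from Lemma~\ref{lem-compact-support}, the amalgam identity $\|f\|_{\wpq}^p\approx\sum_n\|\chi_n f\|_{\F L^q}^p$, and the $\ell^p\hookrightarrow\ell^{p_0}$ step---is the standard route. You have correctly isolated the only nontrivial ingredient: the two-sided localization
\[
\|f\|_{F_{p_0,q_0}^s}^{p_0}\approx\sum_{n}\|\chi_n f\|_{F_{p_0,q_0}^s}^{p_0},
\]
which is a known theorem of Triebel (see e.g.\ \cite{Triebel1992Theory}, the pointwise multiplier and local means characterizations); your plan to obtain it via Peetre maximal functions is the usual one. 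One cosmetic point: with $\chi$ supported in $[-1,1]^d$ the translated pieces land in $B(0,\sqrt{d})$ rather than $B(0,1)$, so either shrink the support of $\chi$ or note that the hypothesis with $B(0,1)$ is equivalent by dilation to the same statement with $B(0,R)$ for any fixed $R>0$.
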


\begin{lemma}[Theorem 1.2 in \cite{Guo2017Inclusion}]
	\label{lem-triebel-mpq}
	Let $0<p\leq 1,0<q,r\leq \infty,s\in \real$. Then $F_{p,r}^{s} \hookrightarrow \mpq$ is true if and only if one of the following conditions is satisfied.
	\begin{description}
		\item[(1)] $p\leq q,s\geq d(1/p+1/q-1);$
		\item[(2)] $p>q, s> d(1/p+1/q-1).$
	\end{description}
\end{lemma}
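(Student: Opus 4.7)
The plan is to reduce the question to the parallel embedding $F_{p,r}^{s}\hookrightarrow M_{p,q}$, for which the sharp conditions are exactly (1) and (2) by Lemma \ref{lem-triebel-mpq}. The reduction is provided by Lemma \ref{lem-discretization}(2): applied with $p_{0}=p$ (so that $p_{0}\leq p$ is trivial), the embedding $F_{p,r}^{s}\hookrightarrow W_{p,q}$ is equivalent to the restricted inequality $\norm{f}_{W_{p,q}}\lesssim \norm{f}_{F_{p,r}^{s}}$ for $f\in\sch'$ with $\supp f\subseteq B(0,1)$. For such compactly supported $f$, Lemma \ref{lem-compact-support} yields $\norm{f}_{W_{p,q}}\approx \norm{f}_{M_{p,q}}$; hence the target embedding is equivalent to the local inequality $\norm{f}_{M_{p,q}}\lesssim \norm{f}_{F_{p,r}^{s}}$ on compactly supported distributions.

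Sufficiency is then immediate: under (1) or (2), Lemma \ref{lem-triebel-mpq} provides the global inequality $\norm{f}_{M_{p,q}}\lesssim \norm{f}_{F_{p,r}^{s}}$, which a fortiori holds for compactly supported $f$, so by the reduction $F_{p,r}^{s}\hookrightarrow W_{p,q}$. For necessity, suppose $F_{p,r}^{s}\hookrightarrow W_{p,q}$; the reduction yields the local inequality $\norm{f}_{M_{p,q}}\lesssim \norm{f}_{F_{p,r}^{s}}$ on compactly supported $f$. I would extract (1) or (2) from this local statement by adapting the counterexamples driving the necessity of Lemma \ref{lem-triebel-mpq} in \cite{Guo2017Inclusion}: a natural test family is $f(x)=\eta(x)\sum_{|k|\leq N}a_{k}e^{ikx}$ with $\eta\in\sch$ a fixed bump satisfying $\supp\eta\subseteq B(0,1)$ and $\hat{\eta}$ supported in a small ball, so that $\Box_{k}f\approx a_{k}e^{ikx}\eta$ and $\norm{f}_{M_{p,q}}\approx \norm{\eta}_{p}\norm{a_{k}}_{\ell_{k}^{q}}$, while $\triangle_{j}f$ isolates the active $k$'s in a single dyadic shell $|k|\approx 2^{j}$. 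Since $\supp f\subseteq B(0,1)$, the local inequality applies, and a direct comparison of the $M_{p,q}$- and $F_{p,r}^{s}$-norms as $N\to\infty$ forces $s\geq d(1/p+1/q-1)$.

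The main obstacle is the strict-versus-non-strict endpoint at $s=d(1/p+1/q-1)$ when $p>q$. As in the proof of Lemma \ref{lem-triebel-mpq}, the remedy is to introduce independent random signs $a_{k}=\varepsilon_{k}$ and apply Khintchine's inequality: this yields a cancellation factor of order $N^{d/2}$ in the $L^{p}$-norm of each dyadic piece of $f$, while $\norm{a_{k}}_{\ell_{k}^{q}}\approx N^{d/q}$ remains unchanged on the $M_{p,q}$ side; the $F_{p,r}^{s}$-norm, which involves $\ell_{k}^{p}$-type summation inside $L_{x}^{p}$, consequently grows strictly slower than the $M_{p,q}$-norm at the critical index precisely when $p>q$, producing the required strict inequality. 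A parallel (and potentially cleaner) route is to establish the natural $M_{p,q}$-analogue of Lemma \ref{lem-discretization}(2), which would let one upgrade the local inequality directly to the global embedding $F_{p,r}^{s}\hookrightarrow M_{p,q}$ and then apply Lemma \ref{lem-triebel-mpq} as a black box; this upgrade uses the decomposition $f=\sum_{j}\triangle_{j}f$, Bernstein's inequality (Lemma \ref{lem-bernstein}), and the Minkowski inequality to pass between the $L_{x}^{p}\ell_{k}^{q}$ and $\ell_{k}^{q}L_{x}^{p}$ orderings on bandlimited pieces. Once either route is executed, the rest of the proof is routine.
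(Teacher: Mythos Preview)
Your proposal is circular. The statement you are asked to prove is Lemma~\ref{lem-triebel-mpq}, i.e.\ the sharp condition for $F_{p,r}^{s}\hookrightarrow M_{p,q}$, yet your opening sentence invokes Lemma~\ref{lem-triebel-mpq} itself as an input, and the entire argument treats the \emph{target} embedding as $F_{p,r}^{s}\hookrightarrow W_{p,q}$. What you have written is an argument for Theorem~\ref{thm-triebel-to-wiener}, not for the lemma.

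Moreover, the paper does not prove Lemma~\ref{lem-triebel-mpq} at all: it is quoted verbatim as Theorem~1.2 of \cite{Guo2017Inclusion} and used as a black box in the proof of Theorem~\ref{thm-triebel-to-wiener}. There is thus no ``paper's own proof'' to compare against; a genuine proof attempt would have to reproduce or sketch the argument from \cite{Guo2017Inclusion}, which you have not done.

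If your intention was in fact Theorem~\ref{thm-triebel-to-wiener}, then your sufficiency argument coincides with the paper's (both pass through Proposition~\ref{prop-equivalent}, i.e.\ Lemmas~\ref{lem-discretization} and~\ref{lem-compact-support}, and then invoke Lemma~\ref{lem-triebel-mpq}). For necessity, however, the paper does not use randomized test functions or Khintchine's inequality as you propose. Instead it first obtains $s\geq d(1/p+1/q-1)$ from $h_{p}^{s+\eps}=F_{p,2}^{s+\eps}\hookrightarrow F_{p,r}^{s}\hookrightarrow W_{p,q}$ together with Theorem~\ref{thm-hr-to-wpq}, and for the strict inequality when $p>q$ it interposes a Besov space $B_{p_{1},p_{1}}^{s_{1}}$ with $p>p_{1}>q$ and $s_{1}-d/p_{1}=s-d/p$ via the embedding $B_{p_{1},p_{1}}^{s_{1}}\hookrightarrow F_{p,r}^{s}$ (Lemma~\ref{lemma-bpq-fpq}(5)), then applies the discretization Proposition~\ref{prop-discret-besov}(2). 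This route is shorter and avoids randomization entirely.
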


\begin{lemma}[Theorem 1.1 in \cite{Guo2017Inclusion}]
	\label{lem-mpq-triebel}
	Let $0<p\leq 1,0<q,r\leq \infty,s\in \real$. Then $\mpq \hookrightarrow F_{p,r}^{s}$ is true if and only one of the following conditions is satisfied.
	\begin{description}
		\item[(1)] $p\geq q,r\geq q,s\leq 0;$
		\item[(2)] $p\geq q,r<q,s<0;$
		\item[(3)] $p<q,s<d(1/q-1/p).$
	\end{description}
\end{lemma}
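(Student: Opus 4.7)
The plan is to obtain sufficiency by chaining into the Wiener--Triebel embedding of Theorem \ref{thm-wiener-to-triebel} and necessity by an interplay of direct test functions with a Besov sandwich. In cases (1) and (2) the hypothesis $p \geq q$ combined with $p \leq 1$ forces $q \leq 1 \leq 2$, so Theorem \ref{thm-wiener-to-triebel} is applicable; composing with $M_{p,q} \hookrightarrow W_{p,q}$ from Lemma \ref{lemma-mpq-wpq}(5) immediately gives the conclusion in both sub-cases, matching the corresponding sub-cases of Theorem \ref{thm-wiener-to-triebel}.

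For sufficiency in case (3) the inclusion $M_{p,q} \hookrightarrow W_{p,q}$ fails (only the opposite direction holds when $p < q$), so instead I would use the weight-shifting statement of Lemma \ref{lemma-mpq-wpq}(4): $M_{p,q}^{0} \hookrightarrow M_{p,p}^{s_1}$ for any $s_1 < d(1/q-1/p)$. Since $M_{p,p}^{s_1} = W_{p,p}^{s_1}$ (these norms coincide when the two indices agree) and the Bessel potential is an isomorphism on both scales, the problem reduces to $W_{p,p} \hookrightarrow F_{p,r}^{s-s_1}$. Choosing $s_1 \in (s,\,d(1/q-1/p))$ makes $s - s_1 < 0$, and Theorem \ref{thm-wiener-to-triebel} applied with $p$ playing the role of both $p$ and $q$ (still $p \leq 1 \leq 2$) then supplies the embedding whether $r \geq p$ or $r < p$.

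For necessity I would deploy a hierarchy of three tests. The single-frequency bump $f_k(x) = e^{ikx}\eta(x)$ with $\widehat{\eta}$ supported in a small cube at the origin yields $\|f_k\|_{M_{p,q}} \approx 1$ and $\|f_k\|_{F_{p,r}^s} \approx \langle k \rangle^s$, so sending $|k|\to\infty$ forces $s \leq 0$. To promote this to $s < 0$ in case (2), I would take the dyadic superposition $f = \sum_{j} a_j\, e^{ik_j x} \eta(x)$ with one $k_j$ of size $2^j$ per annulus: the embedding then reduces to the sequence inclusion $\ell_q^{0,1} \hookrightarrow \ell_r^{s,1}$, and Lemma \ref{lem-embed-of-lr1} in the regime $q > r$ delivers $s < 0$. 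For the sharp bound $s < d(1/q - 1/p)$ in case (3), I would sandwich $F_{p,r}^s \hookrightarrow B_{p,\,p\vee r}^s$ via Lemma \ref{lemma-bpq-fpq}(3) and then invoke the sharp Wang--Huang-type inclusion from modulation to Besov spaces, which in the regime $p < q,\, p \leq 1$ pins the critical exponent at $\sigma(p,q) = d(1/q - 1/p)$.

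The principal technical obstacle is this last step: at $0 < p \leq 1$ the natural direct test functions suffer pathologies that obscure the sharp constant. A Dirichlet-type superposition $\sum_{k \in \Lambda_j} e^{ikx}\eta(x)$ yields $\|f\|_p \approx 1$ because the peak of height $2^{jd}$ lives on a set of measure $2^{-jd}$, so the $L^p$-mass $2^{jd(p-1)}$ vanishes for $p < 1$; this produces only the weak bound $s \leq d/q$. A Khinchin-randomized analogue delivers only $s \leq d(1/q-1/2)$, which is weaker than $d(1/q-1/p)$ whenever $p < 2$. Routing through Besov spaces bypasses both pathologies and is where the sharp constant enters; the remaining bookkeeping is to verify that the Besov-side embedding is \emph{strict}, either directly when $p \vee r < q$ using the strict clause of the sharp Besov--modulation inclusion, or via an $\epsilon$-loss followed by the discretization of Lemma \ref{lem-embed-of-lr1} applied to $\ell_q^{0,1} \hookrightarrow \ell_{p \vee r}^{s,1}$ for the borderline configurations.
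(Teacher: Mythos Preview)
The paper does not prove this lemma at all: it is imported verbatim as Theorem~1.1 of \cite{Guo2017Inclusion} and then \emph{used as an input} in the proof of Theorem~\ref{thm-wiener-to-triebel}. Your sufficiency argument in cases (1) and (2), and in case (3) after reducing to $W_{p,p}$, appeals precisely to Theorem~\ref{thm-wiener-to-triebel}, so within the logical architecture of this paper the reasoning is circular. Inspect the proof of Theorem~\ref{thm-wiener-to-triebel}: for $p\geq q$ it passes through Proposition~\ref{prop-equivalent} to reduce $W_{p,q}\hookrightarrow F_{p,r}^{s}$ to $M_{p,q}\hookrightarrow F_{p,r}^{s}$, and then invokes Lemma~\ref{lem-mpq-triebel} itself; the sufficiency of condition (3) there in turn rests on condition (1). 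Every route you propose through Theorem~\ref{thm-wiener-to-triebel} therefore presupposes the very statement you are trying to establish.

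Your necessity arguments (the single-frequency bump for $s\leq 0$, the dyadic comb giving $\ell_{q}^{0,1}\hookrightarrow\ell_{r}^{s,1}$ and hence $s<0$ when $r<q$, and the Besov sandwich for the $p<q$ case) do not suffer from this circularity and are sound in outline. Note however that the last step requires the sharp characterization of $M_{p,q}\hookrightarrow B_{p,q_{0}}^{s}$ with mismatched second indices, which the present paper records only in the diagonal form $q_{0}=q$ (Lemma~\ref{prop-bpq-mpq}); you would need to import the general version from \cite{Lu2021Sharp} or \cite{Guo2018Full}. The upshot is that Lemma~\ref{lem-mpq-triebel} must genuinely be taken from \cite{Guo2017Inclusion} (or reproved by their direct methods) rather than deduced from the Wiener-space embeddings developed here, since those embeddings are downstream of it.
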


Then, we give some propositions which will be used in our proofs.
\begin{prop}
	\label{prop-equivalent}
	Let $0<q,r\leq \infty,0<p<\infty,s\in \real$. Then 
	\begin{description}
		\item[(1)] When $p\leq q$, $F_{p,r}^{s} \hookrightarrow \wpq$ if and only if $F_{p,r}^{s} \hookrightarrow \mpq$;
		\item[(2)] When $p\geq q$, $ \wpq \hookrightarrow F_{p,r}^{s}$ if and only if $\mpq \hookrightarrow F_{p,r}^{s}$.
	\end{description}
\end{prop}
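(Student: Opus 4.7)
My plan is to exploit the fact that the proposition only asserts an equivalence of embeddings in a regime where one side of the elementary inclusion in Lemma \ref{lemma-mpq-wpq}(5) gives one direction for free. Specifically, the necessity in (1) follows immediately from $\wpq\hookrightarrow\mpq$ (valid since $p\leq q$), yielding $F_{p,r}^{s}\hookrightarrow\wpq\hookrightarrow\mpq$, and the necessity in (2) follows from $\mpq\hookrightarrow\wpq$ (valid since $p\geq q$), yielding $\mpq\hookrightarrow\wpq\hookrightarrow F_{p,r}^{s}$. These are one-line arguments.

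The sufficiency directions are the substantive part, and for both of them the elementary inclusion points the wrong way, so it cannot be used directly. Instead, the plan is to invoke the discretization Lemma \ref{lem-discretization}: both $F_{p,r}^{s}\hookrightarrow\wpq$ and $\wpq\hookrightarrow F_{p,r}^{s}$ are equivalent (since the first indices coincide, the side conditions $p_0\leq p$ or $p\leq p_0$ are trivially $p\leq p$) to the corresponding inequality restricted to tempered distributions with support in $B(0,1)$. On this restricted class, Lemma \ref{lem-compact-support} gives the crucial identification $\norm{f}_{\mpq}\approx\norm{f}_{\wpq}$, so that the Wiener norm and modulation norm are interchangeable in the test inequality.

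Concretely, for (1) sufficiency, assume $F_{p,r}^{s}\hookrightarrow\mpq$. By Lemma \ref{lem-discretization}(2) (with $p_{0}=p$), it suffices to show $\norm{f}_{\wpq}\lesssim\norm{f}_{F_{p,r}^{s}}$ for all $f\in\sch'$ supported in $B(0,1)$; for such $f$, Lemma \ref{lem-compact-support} gives $\norm{f}_{\wpq}\approx\norm{f}_{\mpq}$, and the hypothesis closes the estimate. For (2) sufficiency, assume $\mpq\hookrightarrow F_{p,r}^{s}$. By Lemma \ref{lem-discretization}(1) it suffices to show $\norm{f}_{F_{p,r}^{s}}\lesssim\norm{f}_{\wpq}$ for $f$ supported in $B(0,1)$, and again Lemma \ref{lem-compact-support} converts the right-hand side to $\norm{f}_{\mpq}$, so the hypothesis applies.

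Because both Lemma \ref{lem-discretization} and Lemma \ref{lem-compact-support} are already in the paper's toolbox, no analytic obstacle remains; the only item that might cause concern is making sure that the side condition on the first indices in Lemma \ref{lem-discretization} is satisfied, but in this proposition the Triebel and Wiener spaces share the same $p$, so both versions of that side condition collapse to $p\leq p$ and impose nothing. In that sense, the content of the proposition is essentially a structural observation: $\wpq$ and $\mpq$ differ only in their \emph{spatial} summation structure, which the discretization lemma cancels out when one pairs them against a Triebel space with matching first index.
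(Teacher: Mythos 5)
Your proposal is correct and follows essentially the same route as the paper's proof: the necessity direction via the elementary inclusion $\wpq\hookrightarrow\mpq$ (resp.\ $\mpq\hookrightarrow\wpq$) from Lemma \ref{lemma-mpq-wpq}, and the sufficiency direction by reducing to distributions supported in $B(0,1)$ via Lemma \ref{lem-discretization} and then invoking the norm equivalence of Lemma \ref{lem-compact-support}. Your extra remark that the side condition on the first indices collapses to $p\leq p$ is a correct observation the paper leaves implicit.
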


\begin{proof}
	Because of the symmetry, we only give the proof of (1).
	
	When $p\leq q$, if we have $F_{p,r}^{s} \hookrightarrow \wpq$, then by Lemma \ref{lemma-mpq-wpq}, we know $F_{p,r}^{s} \hookrightarrow \wpq \hookrightarrow \mpq$. 
	
	On the other hand, if we have $F_{p,r}^{s} \hookrightarrow \mpq$, then  for any $f\in \sch'$ , we have \begin{align*}
		\norm{f}_{\mpq} \lesssim \norm{f}_{F_{p,r}^{s}}.
	\end{align*}
	
	By Lemma \ref{lem-compact-support}, we know that $\norm{f}_{\mpq}\approx \norm{f}_{\wpq}$ for $f\in \sch'$ with support in $B(0,1)$. Then by Lemma \ref{lem-discretization}, we have $F_{p,r}^{s} \hookrightarrow \wpq$.
\end{proof}

\begin{prop}
	\label{prop-fpq-discretization}
	Let $0 < p,p_{1},q,q_{1} \leq \infty$. Then 
	$\wpq \hookrightarrow F_{p_{1},q_{1}}^{s} \Longrightarrow \ell_{q}^{0,1} \hookrightarrow \ell_{q_{1}}^{s,1}$.
\end{prop}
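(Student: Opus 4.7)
My plan is to import the discretization strategy of Proposition~\ref{prop-discret-besov} to the Triebel--Lizorkin side, producing a single test function $f$ whose Wiener norm reproduces $\norm{(a_{j})}_{\ell^{q}}=\norm{(a_{j})}_{\ell_{q}^{0,1}}$ and whose Triebel norm reproduces $\norm{2^{js}a_{j}}_{\ell^{q_{1}}}=\norm{(a_{j})}_{\ell_{q_{1}}^{s,1}}$; inserting this $f$ into the hypothesized inequality $\norm{f}_{F_{p_{1},q_{1}}^{s}}\lesssim\norm{f}_{\wpq}$ will then yield the claim immediately.

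Concretely, I will fix $\eta\in\sch$ with $\widehat{\eta}$ supported in a small cube such as $[-1/4,1/4]^{d}$ and $\widehat{\eta}(0)\neq 0$, and pick lattice points $k_{j}\in\Z^{d}$ (for $j$ above some threshold $j_{0}$) with $|k_{j}|\approx 2^{j}$, for instance $k_{j}=2^{j}e_{1}$, so that two conditions hold: first, the sets $k_{j}+\supp\widehat{\eta}$ are pairwise well-separated, so that for each $k\in\Z^{d}$ at most one index $j$ contributes to $\Box_{k}$ applied to $\sum_{j}a_{j}\FF(\widehat{\eta}(\cdot-k_{j}))$; second, $\varphi_{j}\equiv 1$ on $k_{j}+\supp\widehat{\eta}$ for every such $j$. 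For a finitely supported $(a_{j})$ I take
\begin{align*}
f(x)=\sum_{j\ge j_{0}}a_{j}\,e^{ik_{j}\cdot x}\eta(x),\qquad \widehat{f}(\xi)=\sum_{j\ge j_{0}}a_{j}\,\widehat{\eta}(\xi-k_{j}).
\end{align*}

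The first condition gives $|\Box_{k}f(x)|\lesssim|a_{j(k)}|H(x)$ for some fixed Schwartz envelope $H$ independent of $k$ and the uniquely determined (or absent) nearest index $j(k)$, from which $\norm{f}_{\wpq}\lesssim\norm{(a_{j})}_{\ell^{q}}$ follows by $\ell^{q}$-summation in $k$ and an $L^{p}$-integration. The second condition, combined with the partition of unity $\sum_{j}\varphi_{j}\equiv 1$, forces $\triangle_{j'}f=a_{j'}e^{ik_{j'}\cdot x}\eta$ for $j'\ge j_{0}$ and $\triangle_{j'}f=0$ otherwise; in particular $|\triangle_{j'}f(x)|=|a_{j'}||\eta(x)|$ pointwise, so
\begin{align*}
\norm{f}_{F_{p_{1},q_{1}}^{s}}\approx\norm{2^{js}a_{j}}_{\ell^{q_{1}}}\norm{\eta}_{p_{1}}\approx\norm{(a_{j})}_{\ell_{q_{1}}^{s,1}}.
\end{align*}

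The main obstacle I anticipate is controlling the residual overlap between $\sigma_{k}$ and $\widehat{\eta}(\cdot-k_{j})$ for lattice points $k$ near but unequal to $k_{j}$: each such $k$ picks up only $O(1)$ nonzero contributions of Schwartz-decaying size, so the resulting bounded multiplicity is harmlessly absorbed into implicit constants provided $\supp\widehat{\eta}$ is chosen small enough. Endpoint modifications for $q,q_{1},p_{1}\in\{\infty\}$ and the handling of the finitely many low-frequency indices $j<j_{0}$ are routine.
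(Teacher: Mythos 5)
Your construction is essentially the paper's own proof: the author likewise picks one lattice point $k_{j}$ per dyadic annulus with $\varphi_{j}\sigma_{k_{j}}=\sigma_{k_{j}}$, tests with $f=\sum_{j}a_{j}e^{ik_{j}x}\check{g}(x)$ for $\widehat{g}$ supported in a small cube, and reads off $\norm{f}_{F_{p_{1},q_{1}}^{s}}\approx\norm{a_{j}}_{\ell_{q_{1}}^{s,1}}$ and $\norm{f}_{\wpq}\approx\norm{a_{j}}_{\ell_{q}^{0,1}}$. The only cosmetic difference is that the paper takes $\supp\widehat{g}\subseteq[-1/8,1/8]^{d}$ so that $\Box_{k}f$ vanishes identically for $k\neq k_{j}$, whereas you allow a bounded-multiplicity overlap and absorb it into constants; both are fine.
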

\begin{proof}
	For any $j\geq 0$, choose $k_{j} \in \wedge_{j}$, choose $g\in \sch$ with support in $[-1/8,1/8]^{d}$, take $f(x)=\sum_{j\geq 0} a_{j} \FF g(\cdot-k_{j})(x)=\sum_{j\geq 0} a_{j} e^{ik_{j} x} \check{g}(x)$. Then we have 
	\begin{align*}
		\triangle_{j}f=a_{j} \FF g(\cdot-k_{j});\quad \Box_{k}f= \Cas{ a_{j} \FF g(\cdot-k_{j}),& $k=k_{j};$\\
			0,&else.}
	\end{align*} 
	So, we know that \begin{align*}
		\norm{f}_{F_{p_{1},q_{1}}^{s}} \approx \norm{a_{j}}_{\ell_{q_{1}}^{s,1}},\quad \norm{f}_{\wpq} \approx \norm{a_{j}}_{\ell_{q}^{0,1}}.
	\end{align*}
	Therefore, we have $\wpq \hookrightarrow F_{p_{1},q_{1}}^{s} \Longrightarrow \ell_{q}^{0,1} \hookrightarrow \ell_{q_{1}}^{s,1}$.
\end{proof}

\begin{proof}[Proof of Theorem \ref{thm-triebel-to-wiener}]
	We divide this proof into two parts.
	
	Sufficiency: in case of Condition (1), by Proposition \ref{prop-equivalent}, we only need to prove that $F_{p,r}^{s} \hookrightarrow \mpq$, which is true by Lemma \ref{lem-triebel-mpq}.
	In case of Condition (2), by Lemma \ref{lem-triebel-mpq}, we know that $F_{p,r}^{s} \hookrightarrow \mpq$. By Lemma \ref{lemma-mpq-wpq}, we know that $\mpq \hookrightarrow \wpq$, when $p>q$. So, we have $F_{p,r}^{s} \hookrightarrow \wpq.$
	
	Necessity: if we have $F_{p,r}^{s} \hookrightarrow \wpq$, then by Lemma \ref{lemma-bpq-fpq}, we know that  for any $\eps >0$, $F_{p,2}^{s+\eps} \hookrightarrow F_{p,r}^{s} \hookrightarrow \wpq$. Then by the embedding relation of $h_{p}$ spaces and Wiener amalgam spaces (Theorem \ref{thm-hr-to-wpq}), we know that $s+\eps \geq d(1/p+1/q-1)$. Take $\eps \rightarrow 0$, we have $s\geq d(1/p+1/q-1)$.
	When $p>q$, we can choose $p_{1},s_{1}\in \real$, such that $p>p_{1}>q,s_{1}-d/p_{1} =s-d/p$. Then by Lemma \ref{lemma-bpq-fpq}, we have $B_{p_{1},p_{1}}^{s_{1}} \hookrightarrow F_{p,r}^{s} \hookrightarrow \wpq$. Then by (2) in Proposition \ref{prop-discret-besov}, we have $\ell_{p_{1}}^{s_{1}+d(1-1/p_{1}),1} \hookrightarrow \ell_{q}^{d/q,1}$. So, we have $s_{1} >d(1/p_{1}+1/q-1)$, which is equivalent to $s>d(1/p+1/q-1)$.

\end{proof}

\begin{proof}[Proof of Theorem \ref{thm-wiener-to-triebel}]
	Firstly, by Proposition \ref{prop-equivalent}, when $p\geq q$, $\wpq \hookrightarrow F_{p,r}^{s}$ is equivalent to $\mpq \hookrightarrow F_{p,r}^{s}$. Then by Lemma \ref{lem-mpq-triebel},	we can get the sharp conditions in (1) and (2). 
	
	For other cases, by the embedding between $W_{p,q}$ and $h_{p}$ (Theorem \ref{thm-hr-to-wpq}, \ref{thm-wpq-to-hr}), we can easily get the sufficiency of Condition (4), (5) and the necessity of Condition (3).
	
	Sufficiency of Condition (3): by Condition (1), we know that $W_{p,p} \hookrightarrow F_{p,p}$. Then by Theorem \ref{thm-wpq-to-hr}, we know that $W_{p,2} \hookrightarrow h_{p}=F_{p,2}$, then take interpolation, we have $\wpq \hookrightarrow F_{p,q} \hookrightarrow F_{p,r}$, when $r\geq q$.
	
	Necessity of Condition (4): If we have $\wpq\hookrightarrow F_{p,r}^{s}$, then by Proposition \ref{prop-fpq-discretization}, we have $\ell_{q}^{0,1} \hookrightarrow \ell_{r}^{s,1}$. So, when $r<q$, we have $s<0$.
	
\end{proof}

\begin{rem}
	When $q>2$, by the argument above, we could only get the sufficiency of $s<\sionepq= d(1/q-1/2)$ and the necessity of $s\geq d(1/q-1/2)$. As for the endpoint $s=d(1/q-1/2)$. I guess that the embedding $ \wpq \hookrightarrow F_{p,r}^{s}$ could only holds when $q\leq r$.
\end{rem}

\backmatter			




\bibliographystyle{sn-mathphys}
\bibliography{sn-bibliography}

\end{document}